\numberwithin{equation}{section}
\newcounter{hours}\newcounter{minutes}
\theoremstyle{plain}
\declaretheorem[title=Theorem, parent=section]{theorem}
\declaretheorem[title=Lemma,sibling=theorem]{lemma}
\declaretheorem[title=Corollary,sibling=theorem]{corollary}
\declaretheorem[title=Assumption,sibling=theorem]{assumption}
\declaretheorem[title=Proposition,sibling=theorem]{proposition}
\declaretheorem[title=Definition,sibling=theorem]{definition}
\declaretheorem[title=Remark,sibling=theorem]{remark}
\declaretheorem[title=Example,sibling=theorem]{example}
\declaretheorem[title=Theorem]{theorem*}
\def\real{\mathbb R}
\def\al{\alpha}
\def\gam{\gamma}
\def\L{\mathcal{L}}
\def\M{\mathcal{M}}
\def\om{\omega}
\def\expct{\mathbb E}
\def\ind{{\mathbbm{1}}}
\def\intersect{\cap}
\def\ind{{\mathbbm{1}}}
\def\dive{\textnormal{div}}
\def\diam{\textnormal{diam}}
\def\grad{\nabla}
\def\hull{\textnormal{c.h.}}
\def\det{\textnormal{det}}
\def\MA{\textnormal{MA}}
\def\tr{\textnormal{tr}}
\def\spt{\textnormal{spt}}
\newcommand{\abs}[1]{\left| #1 \right|}
\newcommand{\norm}[1]{\lVert#1\rVert}
\def\Swiech{{\accent"13S}wie{\hbox{\kern -0.21em\lower 0.79ex\hbox{$\textfont1=\scriptfont1\lhook$}}}ch}
\begin{document}

\title{Min-max formulas for nonlocal elliptic operators on Euclidean Space}

\author{Nestor Guillen}
\author{Russell W. Schwab}

\address{Department of Mathematics\\
University of Massachusetts, Amherst\\
Amherst, MA  01003-9305}
\email{nguillen@math.umass.edu}

\address{Department of Mathematics\\
Michigan State University\\
619 Red Cedar Road \\
East Lansing, MI 48824}
\email{rschwab@math.msu.edu}

\begin{abstract}
  An operator satisfies the Global Comparison Property if anytime a function touches another from above at some point, then the operator preserves the ordering at the point of contact. This is characteristic of degenerate elliptic operators, including nonlocal and nonlinear ones. In previous work, the authors considered such operators in Riemannian manifolds and proved they can be represented by a min-max formula in terms of L\'evy operators. In this note we revisit this theory in the context of Euclidean space. With the intricacies of the general Riemannian setting gone, the ideas behind the original proof of the min-max representation become clearer. Moreover, we prove new results regarding operators that commute with translations or which otherwise enjoy some spatial regularity. 
\end{abstract}

\date{Monday 18th February, 2019\ (This is the revised version per referee suggestions.)}
\thanks{The authors gratefully acknowledge partial support from the National Science Foundation while this work was in progress: N. Guillen DMS-1700307 and R. Schwab DMS-1665285.  The authors also thank the anonymous referee for some suggestions that we believe improved the presentation of our results.}
\keywords{Global Comparison Principle, Integro-differential operators, Isaacs equation, Whitney extension, Dirichlet-to-Neumann, fully nonlinear equations}
\subjclass[2010]{
35J99,      
35R09,  	
45K05,  	
46T99,   	
47G20,      
49L25,  	
49N70,  	
60J75,      
93E20       
}

\maketitle

\markboth{N. Guillen, R. Schwab}{Min-max formulas for nonlocal elliptic operators on Euclidean Space}

\section{Introduction}\label{sec:Introduction}
\setcounter{equation}{0}

A map $I:C^2_b(\mathbb{R}^d)\to C^0_b(\mathbb{R}^d)$ is said to satisfy the \textbf{Global Comparison Property} (GCP) if
\begin{align}\label{equation:GCP}
  u\leq v \textnormal{ in } \mathbb{R}^d \textnormal{ and } u(x)=v(x) \Rightarrow I(u,x)\leq I(v,x).
\end{align}
The Laplacian operator, as well as its fractional powers $-(-\Delta)^{\alpha/2}$ ($\alpha \in (0,2)$) all satisfy this property. More generally, given a L\'evy measure $\nu(dy)$ (a measure on $\mathbb{R}^d\setminus \{0\}$ such that $\min\{1,|y|^2\}$ is integrable with respect to $\nu$) the operator
\begin{align*}
  I(u,x) = \int_{\mathbb{R}^d} u(x+y)-u(x)-\chi_{B_1}(y) \nabla u(x)\cdot y\;\nu(dy),
\end{align*}
will have the GCP. The GCP is also satisfied by Dirichlet-to-Neumann maps for elliptic equations, generators of Markov processes, Bellman-Isaacs operators in control and differential games, among many examples.  When the operator is known a priori to be local, then nonlinear examples of maps with the GCP are of the form,
\begin{align*}
  I(u,x) = F(D^2u(x),\nabla u(x),u(x)),
\end{align*}
where $F:\mathbb{S}_d \times \mathbb{R}^d\times \mathbb{R}\to\mathbb{R}$ is monotone in its first argument, and Lipschitz continuous in all arguments.

The main contribution of this article is to address when certain operators acting on $C^2_b(\real^d)$ must necessarily enjoy a structure similar to those examples above.  The canonical object used to address this question will be a linear operator we choose to say is ``of L\'evy type'': those operators for which there exist functions, $A(x)\in\mathbb{S}_d$, $B(x)\in\real^d$, $C(x)\in\real$, and measures $\mu(x,dy)$ so that
\begin{align}\label{eqIN:LevyTypeLinear}
	L(u,x) & = \tr(A(x)D^2u(x))+B(x)\cdot \nabla u(x)+C(x)u(x)\\
	&+ \int_{\mathbb{R}^d} u(x+y)-u(x)-\ind_{B_1(0)}(y)\nabla u(x)\cdot y\;\mu(x,dy),\nonumber\\
	&\text{with}\ A(x)\geq 0,\ \text{and}\ \sup_x\int_{\real^d}\min(\abs{y}^2,1)\mu(x,dy)<\infty.
	\nonumber 
\end{align}
We will review some recent results that show for $I:C^2_b(\real^d)\to C_b(\real^d)$ that enjoys the GCP, is Lipschitz, and has a natural structural constraint, there exists a family of functions, $f_{ab}$ and linear operators of L\'evy type, $L_{ab}$, so that 
\begin{align}\label{eqIN:MinMaxMeta}
	I(u,x) = \min\limits_{a}\max\limits_{b} \{ f_{ab}(x)+L_{ab}(u,x) \}.
\end{align}
For linear operators, in the 1960's Courr\`ege \cite{Courrege-1965formePrincipeMaximum} showed that all of those that satisfy the GCP must have the form given in (\ref{eqIN:LevyTypeLinear}).  All of our results here should be considered an extension of Courr\`ege's result to the nonlinear setting.

In our previous work, \cite{GuSc-2016MinMaxNonlocalarXiv}, we showed such a min-max representation in (\ref{eqIN:MinMaxMeta}). The result in \cite{GuSc-2016MinMaxNonlocalarXiv} in fact dealt with a more general situation where $I:C^2_b(M)\to C^0_b(M)$ where $M$ is a complete Riemannian manifold. We will review the proof of this result in the context of Euclidean space, where many of the arguments simplify greatly. Moreover,  we prove two refinements of the main result from \cite{GuSc-2016MinMaxNonlocalarXiv} relevant to the Euclidean case, one involving translation invariant operators and one for operators that behave continuously with respect to translation operators.  Stated informally, our results are the following:

\begin{theorem*}\label{theoremA}
  An operator $I(u,x)$ that is Lipschitz and satisfies the GCP  admits a min-max formula in terms of L\'evy type operators. 
\end{theorem*}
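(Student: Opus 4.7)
The plan is to reduce the nonlinear statement to Courr\`ege's linear characterization by constructing, for each candidate base function $v \in C^2_b(\mathbb{R}^d)$, a linear ``tangent'' operator to $I$ which inherits both the GCP and a Lipschitz bound. Once such tangents are produced, the Courr\`ege representation \eqref{eqIN:LevyTypeLinear} forces them to be of L\'evy type, and the target min-max is assembled from the resulting family.

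The first and crucial step is the tangent construction. Fixing $x_0 \in \mathbb{R}^d$ and $v \in C^2_b(\mathbb{R}^d)$, I seek a linear functional $\ell_{v, x_0}: C^2_b(\mathbb{R}^d) \to \mathbb{R}$ satisfying
\[
\ell_{v, x_0}(w - v) \leq I(w, x_0) - I(v, x_0) \quad \text{for all } w \in C^2_b(\mathbb{R}^d),
\]
with equality at $w = v$. Such a tangent is produced by a Hahn-Banach separation applied to the cone of non-negative functions (which encodes the GCP at $x_0$) and a suitable translate of the epigraph of $w \mapsto I(w, x_0) - I(v, x_0)$ centered at $v$. The Lipschitz bound on $I$ yields continuity of the resulting functional, while the GCP of $I$ transfers to $\ell_{v,x_0}$ because the separating hyperplane must respect the non-negative cone.

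Next, I would glue these pointwise tangents into a linear operator $L_v: C^2_b(\mathbb{R}^d) \to C^0_b(\mathbb{R}^d)$ by setting $L_v(u, x) = \ell_{v, x}(u)$ for a sufficiently regular selection $x \mapsto \ell_{v, x}$. On the full operator level, $L_v$ then satisfies the GCP in the sense of \eqref{equation:GCP}, and Courr\`ege's theorem gives the L\'evy representation \eqref{eqIN:LevyTypeLinear}. Setting $f_v(x) = I(v, x) - L_v(v, x)$, the tangency rewrites as $f_v(x) + L_v(u, x) \leq I(u, x)$ with equality at $u = v$.

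Finally, taking the supremum over $v$ in a countable dense subset of $C^2_b(\mathbb{R}^d)$ and combining with the symmetric construction using upper tangents produces the min-max formula \eqref{eqIN:MinMaxMeta}; the outer infimum arises because $I$ is not assumed concave in any sense, so a one-sided envelope may undershoot in directions not exhausted by the base points. The main obstacle is the Hahn-Banach step together with the requirement that the resulting $\ell_{v, x}$ arise as the evaluation at $x$ of a \emph{continuous} L\'evy operator, rather than merely as a bounded linear functional at each $x$ separately. Equivalently, one must control the regularity of the L\'evy data $(A(x), B(x), C(x), \mu(x, \cdot))$ in $x$ well enough to keep $L_v(u, \cdot)$ in $C^0_b(\mathbb{R}^d)$; this is precisely where the Euclidean setting simplifies the Riemannian arguments of \cite{GuSc-2016MinMaxNonlocalarXiv}.
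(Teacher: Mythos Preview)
Your proposal has a genuine gap at the first step. You seek, for each fixed $v$ and $x_0$, a linear functional $\ell_{v,x_0}$ with
\[
\ell_{v,x_0}(w-v) \leq I(w,x_0) - I(v,x_0) \quad \text{for all } w \in C^2_b(\mathbb{R}^d).
\]
The existence of such a supporting functional at every $v$ is equivalent to the map $u \mapsto I(u,x_0)$ being \emph{convex}. Since $I$ is only assumed Lipschitz, this fails in general (consider any strictly concave Lipschitz functional). Hahn--Banach separation cannot repair this: there is no convex set to separate from the epigraph unless the epigraph is itself convex. The paper avoids this by using Clarke's generalized gradient $\partial F_x$ and Lebourg's mean value theorem (Theorem~\ref{theorem:Lebourg}): for each pair $(u,v)$ there is some $\ell \in \partial F_z$ at an \emph{intermediate} point $z$ with $I(u,x_0) - I(v,x_0) = \langle \ell, u-v\rangle$. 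The key point is that $\ell$ depends on both $u$ and $v$, not just on $v$; this is precisely why the formula has an inner $\max$ over $\ell$ rather than a single tangent per base point. The GCP is then shown to pass to every element of $\partial F_x$ (Lemma~\ref{lemma:generalized gradients inherit GCP and weak localization}), after which the Courr\`ege-type representation (Lemma~\ref{lemma:Courrege theorem for a linear functional}) applies.

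Your second step is also more than the theorem claims. You attempt to glue the $\ell_{v,x}$ into a bona fide operator $L_v:C^2_b(\mathbb{R}^d)\to C^0_b(\mathbb{R}^d)$ with continuous L\'evy data in $x$. The paper explicitly warns (in the paragraph following Theorem~\ref{theorem:MinMax Euclidean}) that its construction does \emph{not} yield this: under Assumption~\ref{assumption:GCP} alone one obtains, for each $x$, only a family $\mathcal{K}(I)_x$ of linear \emph{functionals} with the right structure, and no measurable or continuous selection in $x$ is asserted. Producing genuine operators $L_{ab}:C^2_b\to C^0_b$ requires the additional Assumptions~\ref{assumption:tightness bound} and~\ref{assumption:coefficient regularity} and the Whitney-extension / finite-dimensional-approximation machinery of Sections~\ref{section:Finite Dimensional Approximations}--\ref{section:Analysis of finite dimensional approximations}. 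So the ``main obstacle'' you identify is not an obstacle for Theorem~A at all; it is the content of the separate Theorem~\ref{theorem:MinMax Euclidean ver2}.
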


\begin{theorem*}\label{theoremB}
  In the previous theorem, assume further that $I(u,x)$ commutes with translations. Then the L\'evy operators appearing in the min-max formula all commute with translations.
\end{theorem*}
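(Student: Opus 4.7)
The strategy is to leverage translation invariance to trivialize the $x$-dependence of the Lévy operators produced by Theorem A. Denote by $\tau_h$ the translation $(\tau_h u)(z) := u(z-h)$; the hypothesis that $I$ commutes with translations amounts to the identity
\begin{align*}
I(u, x) = I(u(\cdot + x), 0) \quad \text{for every } u \in C^2_b(\real^d),\ x \in \real^d.
\end{align*}

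First I would invoke Theorem A to obtain some family of functions $\{f_{ab}\}$ and Lévy-type operators $\{L_{ab}\}$ of the form (\ref{eqIN:LevyTypeLinear}), with coefficients $A_{ab}(y), B_{ab}(y), C_{ab}(y)$ and Lévy measures $\mu_{ab}(y,\cdot)$, such that $I(v,y) = \min_a \max_b \{f_{ab}(y) + L_{ab}(v,y)\}$. Specializing this representation to $y = 0$ and $v = u(\cdot + x)$, and then using translation invariance of $I$, I obtain
\begin{align*}
I(u, x) = \min_a \max_b \bigl\{ f_{ab}(0) + L_{ab}(u(\cdot + x), 0) \bigr\}.
\end{align*}
Now define $\tilde{L}_{ab}(u, x) := L_{ab}(u(\cdot + x), 0)$ and $\tilde c_{ab} := f_{ab}(0)$. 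Unpacking (\ref{eqIN:LevyTypeLinear}) shows that $\tilde L_{ab}$ is precisely the Lévy-type operator with constant (i.e., $x$-independent) coefficients $A_{ab}(0)$, $B_{ab}(0)$, $C_{ab}(0)$ and Lévy measure $\mu_{ab}(0,\cdot)$; in particular it commutes with translations. Its Lipschitz bound is inherited from that of $L_{ab}$ since translation acts as an isometry on $C^2_b(\real^d)$. This yields the representation $I(u,x) = \min_a \max_b \{\tilde c_{ab} + \tilde L_{ab}(u,x)\}$ in which every Lévy operator commutes with translations.

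There is no substantive obstacle beyond this bookkeeping: once Theorem A is available, the only idea needed is that translation invariance of $I$ lets one ``freeze'' the base point at the origin and push all $x$-dependence into the argument of the Lévy operators. A noteworthy by-product is that the functions $f_{ab}$ can in fact be replaced by the constants $\tilde c_{ab}$, which is slightly stronger than what Theorem B asserts.
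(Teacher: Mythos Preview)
Your argument is correct and is essentially the paper's own proof: the paper defines $F(u):=I(u,0)$, obtains a min-max for $F$ in terms of L\'evy functionals at the origin (via Theorem \ref{theorem:MinMax for scalar functionals}, Lemma \ref{lemma:generalized gradients inherit GCP and weak localization}, and Lemma \ref{lemma:Courrege theorem for a linear functional} --- which is exactly Theorem A specialised to $x=0$), and then uses $I(u,x)=F(\tau_x u)$ to produce constant-coefficient L\'evy operators and constant $f_{ab}$. One cosmetic caveat on your phrasing: Theorem A (Theorem \ref{theorem:MinMax Euclidean}) does not in fact deliver $x$-dependent \emph{operators} $L_{ab}:C^2_b\to C^0_b$ but only, for each fixed $x$, a family of linear \emph{functionals} $\mathcal{K}(I)_x$ --- the paper emphasises this in the paragraph following the theorem --- though this is immaterial here since you only invoke the representation at the single point $y=0$.
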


\begin{theorem*}\label{theoremC}
  Instead of translation invariance assume that the finite differences of $I(u,x)$ commute with translations up to a certain error depending on a modulus of continuity $\omega(\cdot)$. Then the L\'evy operators appearing in the min-max formula have continuous coefficients with common modulus of continuity of the form $C\omega(2(\cdot))$. 
\end{theorem*}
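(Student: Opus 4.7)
The plan is to revisit the proof of Theorem~A, tracking the dependence on the base point $x$ of the coefficients of the L\'evy operators produced by the min-max, and to show that the hypothesis on finite differences translates into a quantitative modulus of continuity for these coefficients.

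Recall that the min-max formula in Theorem~A is built by assigning, to each test profile $v\in C^2_b(\mathbb{R}^d)$, a linear operator $L^v$ of L\'evy type that pinches $I$ at $v$, in the sense that $L^v(v,\cdot)=I(v,\cdot)$ together with a one-sided inequality between $L^v$ and $I$ in a neighborhood of $v$. The coefficients $A^v(x), B^v(x), C^v(x), \mu^v(x,dy)$ at the point $x$ are obtained by applying Courr\`ege's theorem to the linearization of $I$ at $v$ at the spatial point $x$; thus they are canonically encoded by the action of $I$ on a universal family of localized test functions (affine perturbations, quadratic bumps, and smoothed indicators of Borel sets centered near $x$) added to $v$.

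The first step is to put the hypothesis into a useful form. With $\tau_h u(y) := u(y-h)$, the assumption that the finite differences of $I$ commute with translations up to $\omega$ amounts to an estimate of the form
\begin{align*}
  \bigl|[I(v+\varphi,x+h)-I(v,x+h)] - [I(\tau_h v+\tau_h \varphi,x)-I(\tau_h v,x)]\bigr| \leq \omega(|h|)\,\norm{\varphi}_{C^2_b}
\end{align*}
for admissible $v,\varphi\in C^2_b(\mathbb{R}^d)$. Evaluating this with $\varphi$ drawn from the universal family used to read off the Courr\`ege coefficients yields, term by term, a comparison of the coefficients of $L^v$ at $x+h$ with those of $L^{\tau_h v}$ at $x$, with total error controlled by $\omega(|h|)$ times a universal constant. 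Since the min-max in Theorem~A is built by selecting from a sufficiently rich collection of profiles $v$, after relabeling the selection one obtains, for each $L_{ab}$ appearing in the formula, the same approximate identity between its coefficients at $x$ and at $x+h$. This yields continuity of the coefficients of each $L_{ab}$ with modulus of the order of $C\omega(2|h|)$; the factor $2$ reflects the need to align test functions whose supports may extend up to distance $|h|$ on either side of $x$ and $x+h$, producing an effective displacement of size up to $2|h|$.

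The main obstacle is ensuring that this modulus is \emph{uniform} across the index set $(a,b)$. This requires checking that the Courr\`ege decomposition of each linearization can be performed with a family of test functions chosen once and for all, independently of the base profile $v$ (and hence of $(a,b)$), so that the estimate depends only on the Lipschitz constant of $I$, on $\omega$, and on the dimension. A secondary technical point is to control the measure-valued coefficient $\mu_{ab}(x,dy)$ in an appropriate (weak) topology, by testing it against a dense family of nonnegative bump functions and applying the finite-difference hypothesis to each test. Once this uniformity is in place, continuity of the polynomial coefficients and weak continuity of the L\'evy measure follow via a routine limiting argument.
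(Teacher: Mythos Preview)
Your proposal has a genuine gap at the step ``after relabeling the selection one obtains, for each $L_{ab}$ appearing in the formula, the same approximate identity between its coefficients at $x$ and at $x+h$.'' The difficulty is that the proof of Theorem~A, which you propose to revisit, does \emph{not} produce linear operators $L^v:C^2_b(\mathbb{R}^d)\to C^0_b(\mathbb{R}^d)$ with coefficients that are functions of $x$. It produces, for each fixed $x$, a set $\partial F_x\subset (C^2_b)^*$ of linear \emph{functionals}, via Clarke's subdifferential applied to $F_x(u)=I(u,x)$. The paper is explicit about this (see the paragraph following Theorem~\ref{theorem:MinMax Euclidean}): there is no canonical selection $x\mapsto \ell_x\in\partial F_x$ that would assemble into an operator, and the index set of the min-max depends on $x$. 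Your own computation only relates an element of $\partial F_{x+h}(v)$ to an element of $\partial F_x(\tau_h v)$; it does not compare the coefficients of any single operator at two different base points. The ``relabeling'' you invoke would have to produce, for each label $ab$, a continuous selection $x\mapsto \ell_{ab}(x)$ across the (set-valued, $x$-dependent) Clarke differentials while preserving the min-max identity --- and that is precisely the missing construction.

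The paper's route is therefore quite different: it abandons the infinite-dimensional Clarke argument of Theorem~A and instead builds finite-dimensional approximations $I_n=\hat\pi_n^0\circ I\circ\hat\pi_n^\beta$ that factor through lattices $G_n$ via a translation-compatible Whitney extension. In finite dimensions Rademacher's theorem makes the vector-valued Clarke differential $\mathcal{D}I_n$ nonempty, and each $L\in\mathcal{D}I_n$ is a genuine operator $C^\beta_b\to C^0_b$ with coefficient functions on $G_n$. Assumption~\ref{assumption:coefficient regularity} is then pushed through the Whitney projection to yield uniform-in-$n$ moduli for these coefficient functions (Propositions~\ref{proposition:coefficient regularity for L} and~\ref{proposition:discrete coefficient regularity for L in DI_n}); Arzel\`a--Ascoli and a diagonal argument extract limit operators in $\mathcal{D}_I$ with continuous coefficients. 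The factor $2$ in $C\omega(2|\cdot|)$ is not, as you suggest, an artifact of aligning test-function supports; it is the well-known loss in the Whitney extension of the modulus from the grid to the ambient space (cf.\ Stein, Chapter~VI), and would not appear in your scheme at all.
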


Theorem \ref{theoremA} above is a special case of the main result in \cite{GuSc-2016MinMaxNonlocalarXiv}, and Theorems \ref{theoremB} and \ref{theoremC} are new.

\subsection{Assumptions and main results}\label{subsection:assumptions and main results}

Here are our main assumptions. 
 
\begin{assumption}\label{assumption:GCP} 
  The map $I:C^2_b(\mathbb{R}^d)\to C^0_b(\mathbb{R}^d)$ is Lipschitz continuous and has the Global Comparison Property \eqref{equation:GCP}.
\end{assumption}

\begin{assumption}\label{assumption:translation invariance} 
  The map $I:C^2_b(\mathbb{R}^d)\to C^0_b(\mathbb{R}^d)$ is translation invariant. Namely, for any $x,z\in\mathbb{R}^d$ and $u\in C^2_b(\mathbb{R}^d)$ we have
  \begin{align}\label{eqn:translation invariance}
    I(\tau_z u,x) = I(u,x+z),\;\textnormal{where } \tau_z u(x):=u(x+z).
  \end{align}
\end{assumption}

\begin{assumption}\label{assumption:tightness bound} 
  There is a non-increasing function $\rho:(0,\infty)\to \mathbb{R}$ with $\rho(R)\to 0$ as $R\to \infty$ such that if $u,v\in C^2_b(\mathbb{R}^d)$ are such that $u\equiv v$ in $B_{2R}(x_0)$, then  
  \begin{align*}
    \| I(u)-I(v) \|_{L^\infty(B_R(x_0))} \leq \rho(R)\|u-v\|_{L^\infty(\mathbb{R}^d)}.
  \end{align*}	  
\end{assumption}

\begin{assumption}\label{assumption:coefficient regularity} 
  There exists a modulus, $\om$, for all $v,u \in C^2_b(\mathbb{R}^d)$, $x, z \in\mathbb{R}^d$, $r>0$, we have
  \begin{align*}
    & | I(v+\tau_{-z}u,x+z)-I(v,x+z) -\left ( I(v+u,x)-I(v,x)\right ) | \\
    & \leq \omega(|z|)C(r)\left ( \|u\|_{C^2(B_{2r}(x))} + \|u\|_{L^\infty(\mathcal{C}B_r(x))}\right ).
  \end{align*}
  It is allowed that $C(r)\to\infty$ as $r\to0$; in some examples $C(r)$ may be bounded and in some it may be unbounded.
\end{assumption}

The meaning of Assumption \ref{assumption:GCP} and Assumption \ref{assumption:translation invariance} is self-evident. Assumption \ref{assumption:tightness bound} seems rather technical, but it will be necessary to obtain compactness for a family of measures arising in the proof (and this assumption is satisfied by a broad family of examples). Note however that this assumption is not needed for the translation invariant case as well as the setting of Theorem \ref{theorem:MinMax Euclidean} as these two theorems are obtained with different methods. 

Last but not least, Assumption \ref{assumption:coefficient regularity} can be thought of as a ``coefficient regularity'' assumption. For instance, in the linear and local case, in which $I$ is a L\'evy operator without integral part, Assumption \ref{assumption:coefficient regularity} is equivalent to the coefficients of the operator having modulus of continuity $C\omega(\cdot)$ for some constant $C>0$.  In fact, Assumption \ref{assumption:coefficient regularity} is stated so that it indeed linearizes to this usual assumption that one expects in the linear case.

\begin{remark}
	As mentioned above, one can check that for linear operators, Assumption \ref{assumption:coefficient regularity} is equivalent to the coefficients of the local part being uniformly continuous and the L\'evy measures being uniformly continuous in the TV norm along shifts in the base point, i.e.
	\begin{align*}
		\norm{\mu(x+x,\cdot)-\mu(x,\cdot)}_{TV(\mathcal{C}B_r)}\leq C\om(\abs{z}).
	\end{align*}
	By its design, Assumption \ref{assumption:coefficient regularity} is a technical artifact of our proof, and as such, it is unlikely to be sharp or even the most natural assumption.  There is most likely room for improvement here.  In fact, one indication of the possibility to make a more natural assumption lies in the fact that even when the original operator, $I$, is translation invariant (so the most regular dependence on $x$), it does not necessarily follow that $I$ also satisfies Assumption \ref{assumption:coefficient regularity}.  This also reflects the fact that we have taken a two completely different methods of proof for the results that concern translation invariant operators, and ones that have a modulus with respect to translations.
\end{remark}

\begin{remark}
	In Section \ref{section:examples}, we give a short list of some operators that fall within the scope of Assumptions \ref{assumption:GCP}--\ref{assumption:coefficient regularity} and Theorems \ref{theorem:MinMax Euclidean}--\ref{theorem:minmax with beta less than 2}. At the end of Section \ref{section:examples}, we give a list of which assumptions each example satisfies.
\end{remark}

\begin{remark}
	We note that one subtle improvement of the current work upon our previous one in \cite{GuSc-2016MinMaxNonlocalarXiv} is that because of a more streamlined proof for the translation invariant case, we were able to establish the non-translation invariant case, Theorem \ref{theorem:MinMax Euclidean} (below), without the technical Assumption \ref{assumption:tightness bound}.  This is purely an artifact of using an approximation scheme in \cite{GuSc-2016MinMaxNonlocalarXiv} to treat all operators by the same method, and this turns out to have been not essential when one does not want the extra information provided by Theorems \ref{theorem:MinMax Euclidean ver2} and \ref{theorem:minmax with beta less than 2}.
\end{remark}

The first theorem uses the notion of ``pointwise'' $C^2$ or $C^1$, and so we will define that property here.

\begin{definition}\label{def:PointwiseC1C2}
	For a fixed $x$ we say that $u\in C^2(x)$ (``pointwise $C^2$ at $x$'') if there exists a vector, $\grad u(x)$, and a symmetric matrix, $D^2u(x)$, such that
	\begin{align*}
		\text{as}\ y\to x,\ \ \abs{u(y)-u(x)-\grad u(x)\cdot(y-x)-\frac{1}{2}(y-x)\cdot \left( D^2u(x)(y-x)\right)}\leq o(\abs{y-x}^2).
	\end{align*}
	Similarly if $u$ only enjoys the existence of $\grad u(x)$ and
	\begin{align*}
		\text{as}\ y\to x,\ \ \abs{u(y)-u(x)-\grad u(x)\cdot(y-x)}\leq o(\abs{y-x}),
	\end{align*}
	we say that $u\in C^1(x)$ (``pointwise $C^1$ at $x$'').
\end{definition}

Now we can restate Theorems 1--3 above, in more precise terms.

\begin{theorem}\label{theorem:MinMax Euclidean}
  If $I:C^2_b(\mathbb{R}^d)\to C^0_b(\mathbb{R}^d)$ satisfies Assumption \ref{assumption:GCP}, then, for each $x$, there exists a family of linear functionals on $C^2(x)$ that depend on $I$ and $x$, called $\mathcal{K}(I)_x$, so that for all $u\in C^2(x)$
  \begin{align*}
    I(u,x) = \min\limits_{v\in C^2_b(\mathbb{R}^d)}\max\limits_{L \in \mathcal{K}(I)_x} \{ I(v,x)+L(u-v) \}.	  
  \end{align*}	  
  Here, each $L\in\mathcal{K(I)}_x$, has the form 
\begin{align*}
	L(u)= \tr(A_xD^2u(x)) + B_x\cdot\nabla u(x) + C_x u(x)+
	\int_{\mathbb{R}^d} u(x+y)-u(x)-\ind_{B_1(0)}(y)\nabla u(x)\cdot y\;\mu_x(dy),
\end{align*}
 and for some universal $C$, the terms also satisfy the bound for all $x$:
  \begin{align*}
    \abs{A_x} +\abs{B_x} + \abs{C_x} + \int_{\mathbb{R}^d} \min\{1,|y|^2\}\;\mu_x(dy) \leq C\norm{I}_{\text{Lip},C^2_b\to C^0_b}.		  
  \end{align*}	  
\end{theorem}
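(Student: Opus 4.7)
My plan is to produce $\mathcal{K}(I)_x$ as the union, over base points $v\in C^2_b(\mathbb{R}^d)$, of Clarke-type subdifferentials of $I(\cdot,x)$, and then recover the min-max formula from a mean-value identity along line segments in $C^2_b$. For each $v$, I would introduce the generalized directional derivative
\[
	I^\circ(v,x;w) := \limsup_{v'\to v,\,t\to 0^+} t^{-1}\bigl[I(v'+tw,x)-I(v',x)\bigr],
\]
which is sublinear and Lipschitz in $w$ because $I(\cdot,x):C^2_b\to\mathbb{R}$ is Lipschitz. Since $I^\circ(v,x;\cdot)$ is sublinear, Hahn-Banach shows that $\partial I(v,x):=\{L\in (C^2_b)^*\ :\ L(\cdot)\leq I^\circ(v,x;\cdot)\}$ is non-empty, and I then take $\mathcal{K}(I)_x:=\bigcup_v \partial I(v,x)$.

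The first thing to verify is that every $L\in\mathcal{K}(I)_x$ has the asserted L\'evy form. The key observation is that the GCP of $I$ transfers to each such $L$ as the pointwise positive maximum principle at $x$: for $w\leq 0$ with $w(x)=0$ and any $v'$, $t>0$, one has $v'+tw\leq v'$ with $(v'+tw)(x)=v'(x)$, so GCP gives $I(v'+tw,x)\leq I(v',x)$; hence $I^\circ(v,x;w)\leq 0$ and therefore $L(w)\leq 0$ for every $L\in\partial I(v,x)$. The single-functional version of Courr\`ege's theorem then yields the representation
\[
	L(u) = \tr(A_x D^2u(x)) + B_x\cdot\grad u(x) + C_x u(x) + \int_{\mathbb{R}^d} u(x+y)-u(x)-\ind_{B_1(0)}(y)\grad u(x)\cdot y\,\mu_x(dy),
\]
with $A_x\geq 0$ and $\int\min(1,|y|^2)\,\mu_x(dy)<\infty$. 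Moreover, the Lipschitz bound $|L(w)|\leq \norm{I}_{\text{Lip}}\norm{w}_{C^2_b}$, evaluated on bumps localized at dyadic scales around $x$, produces separate estimates on each of $|A_x|$, $|B_x|$, $|C_x|$, and $\int\min(1,|y|^2)\,\mu_x(dy)$, each controlled by a universal multiple of $\norm{I}_{\text{Lip}}$.

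To conclude, I would invoke Lebourg's mean-value theorem for locally Lipschitz functionals on Banach spaces: for any $u,v\in C^2_b$ there exist $t\in(0,1)$ and $L\in\partial I(v+t(u-v),x)\subseteq \mathcal{K}(I)_x$ with $I(u,x)-I(v,x)=L(u-v)$. This gives $\sup_{L\in\mathcal{K}(I)_x}L(u-v)\geq I(u,x)-I(v,x)$, so that $\min_v\max_L\{I(v,x)+L(u-v)\}\geq I(u,x)$, while the reverse inequality is obtained by choosing $v=u$ and using $L(0)=0$ for every linear $L$. Each $L$ extends from $C^2_b$ to $C^2(x)$ through its L\'evy formula, provided mild decay of test functions at infinity is imposed so that the non-local integral converges. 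The hard part will be the production of the linearizations together with the mean-value identity in the infinite-dimensional setting $X=C^2_b(\mathbb{R}^d)$; a more self-contained replacement is a direct Hahn-Banach argument producing, for each pair $(v,u_0)$, a linear $L$ dominated by a sublinear majorant of $J_v(w):=I(v+w,x)-I(v,x)$ and saturating it along the secant direction $u_0-v$, coupled with the careful test-function estimates needed to extract separate bounds on the L\'evy coefficients from the single dual-norm bound on $L$.
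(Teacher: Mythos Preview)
Your proposal is correct and essentially identical to the paper's proof: fix $x$, view $F_x(u):=I(u,x)$ as a Lipschitz functional on $C^2_b$, take the Clarke subdifferential $\partial F_x$ (the paper further takes its convex hull, which is immaterial for the $\max$), show via the GCP that every $\ell\in\partial F_x$ satisfies the pointwise maximum principle at $x$, invoke the functional version of Courr\`ege's theorem to obtain the L\'evy form with the stated bounds, and conclude the min-max from Lebourg's mean-value theorem. The only cosmetic difference is that the paper packages the last step as a separate abstract min-max lemma (its Theorem~\ref{theorem:MinMax for scalar functionals}) for Lipschitz functionals on Banach spaces, whereas you apply Lebourg directly.
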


\noindent The proof of Theorem \ref{theorem:MinMax Euclidean} appears in Section \ref{sec:TheoremsWithoutWhitney}, which is at the end of Section \ref{section:Functionals with the GCP}.

We want to point out to the reader that the notation in Theorem \ref{theorem:MinMax Euclidean} is intentional in its use of subscripts for e.g. $A_x$, etc.  This is because our construction does not actually produce $L$ as a linear mapping $C^2_b\to C^0_b$, and so it is not correct to think of having a family of $L$ whose coefficients are actually \emph{functions} of $x$.  Rather, it just says that at each $x$ there is a family functionals that have the desired structure, but it is not clear that they can be put together across all $x$ to make a family of $x$-dependent operators.

This situation changes under other assumptions, and in the next two theorems, our method produces a family of linear operators mapping $C^2_b(\real^d)\to C^0_b(\real^d)$, all of the form (\ref{eqIN:LevyTypeLinear}).  

\begin{theorem}\label{theorem:MinMax Translation Invariant}
  If $I:C^2_b(\mathbb{R}^d)\to C^0_b(\mathbb{R}^d)$ satisfies Assumption \ref{assumption:GCP} and Assumption \ref{assumption:translation invariance} then there exists a family, 
  $
  \displaystyle
  \{f_{ab}, L_{ab}\}_{a,b\in\mathcal{K}(I)},
  $
  that depends only on $I$,
  where for all $a,b$, $f_{ab}$ are constants, and $L_{ab}$ are linear translation invariant operators mapping $C^2_b(\real^d)\to C^0_b(\real^d)$ of the form (\ref{eqIN:LevyTypeLinear}) (i.e. constant coefficients), and for all $u\in C^2_b(\mathbb{R}^d)$ and $x\in \mathbb{R}^d$ we have 
  \begin{align*}
    I(u,x) = \min\limits_{a}\max\limits_{b} \{ f_{ab}+L_{ab}(u,x) \}.	  
  \end{align*}
  Furthermore, for a universal $C$, for all $f_{ab}$ and $L_{ab}$,
  \begin{align*}
   \abs{f_{ab}} + \abs{A_{ab}} +\abs{B_{ab}} + \abs{C_{ab}} + \int_{\mathbb{R}^d} \min\{1,|y|^2\}\;\mu_{ab}(dy) \leq C\norm{I}_{\text{Lip},C^2_b\to C^0_b}.		  
  \end{align*}
\end{theorem}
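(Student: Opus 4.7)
My approach is to use translation invariance to reduce everything to a single Lipschitz functional at the origin, extract a rich family of tangent L\'evy functionals via nonsmooth calculus and the pointwise Courr\`ege theorem, and assemble them into a min--max via a nonsmooth mean value identity.  By Assumption \ref{assumption:translation invariance}, $I$ is encoded by the Lipschitz functional $J:C^2_b(\mathbb{R}^d)\to\mathbb{R}$ with $J(u):=I(u,0)$ and $I(u,x)=J(\tau_x u)$.  From Assumption \ref{assumption:GCP}, $J$ inherits a pointwise form of GCP at the origin: $w_1\leq w_2$ in $\mathbb{R}^d$ with $w_1(0)=w_2(0)$ implies $J(w_1)\leq J(w_2)$.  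For each $v\in C^2_b$ I would form Clarke's generalized directional derivative
\begin{equation*}
	J^\circ(v;w):=\limsup_{v'\to v,\ \varepsilon\to 0^+}\frac{J(v'+\varepsilon w)-J(v')}{\varepsilon}
\end{equation*}
and the Clarke subdifferential $\partial^{\mathrm{Cl}} J(v):=\{L\in(C^2_b)^*\ :\ L(w)\leq J^\circ(v;w)\ \forall\,w\}$, which is nonempty with $\|L\|_{(C^2_b)^*}\leq\|I\|_{\mathrm{Lip}}$ for each such $L$.

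The key structural fact is that every $L\in\partial^{\mathrm{Cl}} J(v)$ satisfies the GCP at the origin.  Indeed, if $w\geq 0$ with $w(0)=0$, then for every $v'$ and $\varepsilon>0$ the GCP of $J$ applied to $v'-\varepsilon w\leq v'$ (equal at $0$) gives $J(v'-\varepsilon w)\leq J(v')$, hence $J^\circ(v;-w)\leq 0$, and therefore $L(-w)\leq 0$, i.e.\ $L(w)\geq 0$.  The pointwise version of Courr\`ege's theorem \cite{Courrege-1965formePrincipeMaximum} then yields a L\'evy representation
\begin{equation*}
	L(w)=\tr(A D^2 w(0))+B\cdot\nabla w(0)+Cw(0)+\int_{\mathbb{R}^d}\big(w(y)-w(0)-\ind_{B_1(0)}(y)\nabla w(0)\cdot y\big)\,\mu(dy),
\end{equation*}
with $A\geq 0$, $\mu$ a L\'evy measure, and $|A|+|B|+|C|+\int\min(1,|y|^2)\,\mu(dy)$ controlled by a universal constant times $\|L\|_{(C^2_b)^*}\leq\|I\|_{\mathrm{Lip}}$.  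Since the data $(A,B,C,\mu)$ is constant, translation invariance then promotes $L$ to a constant-coefficient translation invariant L\'evy operator $\tilde L:C^2_b\to C^0_b$ of the form (\ref{eqIN:LevyTypeLinear}) via $\tilde L(u,x):=L(\tau_x u)$.

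I would take $\mathcal{K}(I):=\bigcup_{z\in C^2_b}\partial^{\mathrm{Cl}} J(z)$, index the desired family by pairs $(a,b)=(v,L)$ with $v\in C^2_b$ and $L\in\mathcal{K}(I)$, set $L_{ab}:=\tilde L$ and $f_{ab}:=J(v)-L(v)$, and claim
\begin{equation*}
	I(u,x)=\min_a\max_b\{f_{ab}+L_{ab}(u,x)\}.
\end{equation*}
The ``$\leq$'' inequality follows by taking $v=\tau_x u$: for every $L\in\mathcal{K}(I)$, $f_{\tau_x u,L}+\tilde L(u,x)=J(\tau_x u)-L(\tau_x u)+L(\tau_x u)=I(u,x)$.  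For ``$\geq$'', a Lebourg-type nonsmooth mean value theorem applied to $J$ along the segment $[v,\tau_x u]$ produces $z^*$ in the interior and $L^*\in\partial^{\mathrm{Cl}} J(z^*)\subset\mathcal{K}(I)$ with $J(\tau_x u)-J(v)=L^*(\tau_x u-v)$, so $\max_L\{f_{v,L}+\tilde L(u,x)\}\geq J(v)+L^*(\tau_x u-v)=J(\tau_x u)=I(u,x)$ for every $v$.

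The subtle step, which I expect to be the main obstacle, is the Clarke--Courr\`ege link on the non-separable Banach space $C^2_b(\mathbb{R}^d)$: one must verify both that every element of the generalized subdifferential carries pointwise GCP at $0$ in a form strong enough to trigger Courr\`ege's theorem with the claimed uniform bound on the L\'evy data, and that a Lebourg-type mean value theorem applies cleanly in this infinite-dimensional setting.  If the abstract Clarke machinery becomes awkward, a more elementary fallback is to construct each tangent L\'evy functional by hand as a weak-$*$ cluster point of the normalized finite differences $\varepsilon^{-1}(J(v+\varepsilon w)-J(v))$, inherit GCP at $0$ directly from the GCP of $J$, and replace the Banach-space mean value step by a one-dimensional Rademacher argument on $t\mapsto J(v+t(\tau_x u-v))$.
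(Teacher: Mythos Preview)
Your proposal is correct and follows essentially the same route as the paper: reduce via translation invariance to the Lipschitz functional $F(u)=I(u,0)$, use the Clarke-type generalized gradient and Lebourg's mean value theorem to obtain the min--max over linear functionals, show each such functional inherits the GCP at $0$, and then invoke the pointwise Courr\`ege representation to recover constant-coefficient L\'evy operators which are promoted to all $x$ by $I(u,x)=F(\tau_x u)$. The ``subtle step'' you flag is handled in the paper exactly as you anticipate: Clarke's theory (including Lebourg's theorem) is quoted for general Banach spaces, and the Courr\`ege representation with uniform bounds on the L\'evy data is proved directly from the GCP and the $(C^2_b)^*$-norm bound on the functional.
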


The proof of Theorem \ref{theorem:MinMax Translation Invariant} appears in Section \ref{sec:TheoremsWithoutWhitney}, which is at the end of Section \ref{section:Functionals with the GCP}.

\begin{theorem}\label{theorem:MinMax Euclidean ver2}
  If $I:C^2_b(\mathbb{R}^d)\to C_b^0(\mathbb{R}^d)$ satisfies Assumption \ref{assumption:GCP}, Assumption \ref{assumption:tightness bound}, and  Assumption \ref{assumption:coefficient regularity}, then, there exists a family, 
  $
  \displaystyle
  \{f_{ab}, L_{ab}\}_{a,b\in\mathcal{K}(I)},
  $
  that depends only on $I$,
  where for all $a,b$, $f_{ab}\in C^0_b(\real^d)$ are functions, and $L_{ab}$ are linear operators mapping $C^2_b(\real^d)\to C^0_b(\real^d)$ of the form (\ref{eqIN:LevyTypeLinear}), and for all $u \in C^2_b(\mathbb{R}^d)$, we have   
  \begin{align*}
    I(u,x) = \min\limits_{a}\max\limits_{b} \{ f_{ab}(x)+L_{ab}(u,x) \},	  
  \end{align*}
  and for a universal $C$, for all $f_{ab}$ and $L_{ab}$,
  \begin{align*}
   \norm{f_{ab}}_{L^\infty} + \norm{A_{ab}}_{L^\infty} +\norm{B_{ab}}_{L^\infty} + \norm{C_{ab}}_{L^\infty} + \sup_x\int_{\mathbb{R}^d} \min\{1,|y|^2\}\;\mu_{ab}(x,dy) \leq C\norm{I}_{\text{Lip},C^2_b\to C^0_b}.		  
  \end{align*}
  
  Furthermore, if $\om$ is as in Assumption \ref{assumption:coefficient regularity}, then the functions $f_{ab},A_{ab},B_{ab},C_{ab},$ all have a modulus of continuity $C\omega(2\cdot)$, while for each $r>0$ we have the estimate,
      \begin{align}\label{eqIN:ThmModulusInTVNorm}
        \|\mu_{ab}(x_1)-\mu_{ab}(x_2)\|_{\textnormal{TV}(\mathcal{C}B_r)} \leq C(r)\omega(2|x_1-x_2|),
      \end{align}  
	  where as above, $C(r)>0$, is a constant that may possibly (but not necessarily) have the property that $C(r)\to\infty$ as $r\to0$. 
\end{theorem}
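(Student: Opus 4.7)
The plan is to upgrade the pointwise representation in Theorem \ref{theorem:MinMax Euclidean} to an honest family of linear operators $L_{ab}:C^2_b(\real^d)\to C^0_b(\real^d)$ by exploiting the extra regularity supplied by Assumptions \ref{assumption:tightness bound} and \ref{assumption:coefficient regularity}. Recall that Theorem \ref{theorem:MinMax Euclidean} produces $\mathcal{K}(I)_x$ by applying a Courr\`ege-type argument to the map $u\mapsto I(v+u,x)-I(v,x)$, which has the GCP, is Lipschitz, and vanishes at $u=0$; the linearization at $u=0$ is a L\'evy-type functional $L_{v,x}$. Expanding the pointwise min-max by linearity,
\begin{align*}
I(u,x)=\min_{v}\max_{w}\bigl\{[I(v,x)-L_{w,x}(v)]+L_{w,x}(u)\bigr\}.
\end{align*}
Thus, if one can show that for each fixed $w$ the map $x\mapsto L_{w,x}$ assembles into a linear operator $L_w$ of the form (\ref{eqIN:LevyTypeLinear}) with globally defined coefficients having the claimed regularity, then declaring $L_{ab}:=L_b$ (with $b$ indexing the linearization base points) and $f_{ab}(x):=I(v_a,x)-L_b(v_a,x)$ produces the desired formula.

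To extract the coefficient functions $A_w(x),B_w(x),C_w(x),\mu_w(x,dy)$ and establish their regularity, I would test $L_{w,x}$ against specific families of functions: quadratic and affine probes localized near $x$ to isolate $A_w$ and $B_w$, localized constants to isolate $C_w$, and functions supported away from $x$ to probe the tails of $\mu_w(x,\cdot)$. Assumption \ref{assumption:coefficient regularity} controls
$I(w+\tau_{-z}u,x+z)-I(w,x+z)-[I(w+u,x)-I(w,x)]$
by $\omega(|z|)C(r)\bigl(\|u\|_{C^2(B_{2r}(x))}+\|u\|_{L^\infty(\mathcal{C}B_r(x))}\bigr)$. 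Linearizing this bound at $u=0$ yields the estimate
$|L_{w,x+z}(\tau_{-z}u)-L_{w,x}(u)|\le C\omega(|z|)\bigl(\|u\|_{C^2(B_{2r}(x))}+\|u\|_{L^\infty(\mathcal{C}B_r(x))}\bigr)$,
and inserting the test functions listed above extracts the modulus $C\omega(2|x_1-x_2|)$ for $A_w,B_w,C_w$ (and likewise for $f_{ab}$). Testing against $L^\infty$-normalized functions supported in $\mathcal{C}B_r$ and dualizing gives the TV-modulus bound (\ref{eqIN:ThmModulusInTVNorm}).

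Assumption \ref{assumption:tightness bound} enters to supply the uniform tail control for the L\'evy measures: applied to pairs $u,u'$ that agree on $B_{2R}(x_0)$, it shows that $\int_{\mathcal{C}B_R(x_0)}d\mu_w(x_0,\cdot)\le\rho(R)\to 0$ uniformly in $x_0$ and $w$. Combined with the small-$y$ bound $\int_{B_1(0)}|y|^2\,\mu_w(x,dy)\le C\|I\|_{\text{Lip}}$ obtained from applying the Lipschitz norm of $I$ to smooth second-order probes, this yields $\sup_x\int\min\{1,|y|^2\}\,\mu_w(x,dy)\le C\|I\|_{\text{Lip}}$, which is exactly the claimed global bound.

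The main obstacle I anticipate is the bookkeeping required to realize $\mu_w(x,dy)$ as a genuine measure-valued function of $x$ whose TV-distance on every tail $\mathcal{C}B_r$ is controlled by $C(r)\omega(2|x_1-x_2|)$. The natural route is a Whitney-style construction: first construct the coefficients on a countable dense grid in $\real^d$ and a countable dense family of test functions in $C_c(\mathcal{C}B_r)$, verify the moduli there from Assumption \ref{assumption:coefficient regularity}, and then extend by uniform continuity in $x$, using the uniform boundedness from Assumption \ref{assumption:tightness bound} to ensure the extension remains a tight L\'evy measure. Once each $L_w$ is in hand, re-indexing pairs $(v,w)\in \mathcal{K}(I)\times\mathcal{K}(I)$ by $(a,b)$ and setting $f_{ab}$ as above yields the min-max formula with all stated norm bounds and moduli.
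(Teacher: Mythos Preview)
There is a genuine gap. Your argument rests on the existence of a well-defined ``linearization at $u=0$'' of the map $u\mapsto I(w+u,x)-I(w,x)$, producing a single functional $L_{w,x}$ for each pair $(w,x)$. But $I$ is merely Lipschitz on the infinite-dimensional space $C^2_b(\real^d)$, where no Rademacher theorem is available: the Clarke subdifferential $\partial F_x(w)$ is a \emph{set} of functionals, with no canonical selection. Your step ``linearizing this bound at $u=0$ yields $|L_{w,x+z}(\tau_{-z}u)-L_{w,x}(u)|\le\cdots$'' therefore has no meaning as stated, because taking $u\mapsto tu$ and dividing by $t$ gives only a $\limsup$ which is positively homogeneous and subadditive in $u$, not linear. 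Without a canonical $L_{w,x}$, there is nothing to glue across $x$ into an operator. This is precisely the obstruction flagged in the paper's remark following Theorem~\ref{theorem:MinMax Euclidean}.

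The paper's proof circumvents this by a device you treat as mere bookkeeping but which is in fact the central idea: it projects $I$ through a finite-dimensional space $C_*(G_n)$ (functions on the grid $2^{-n}\mathbb{Z}^d$ with bounded support), obtaining Lipschitz maps $i_n:C_*(G_n)\to C_*(G_n)$ to which Rademacher \emph{does} apply. At a.e.\ $v$, the derivative $Di_n(v)$ is a genuine linear map on the whole space, and lifting via Whitney extension produces linear operators $L_n\in\mathcal{D}I_n$ acting on $C^\beta_b(\real^d)$. Assumption~\ref{assumption:coefficient regularity} transfers to these $L_n$ (Proposition~\ref{proposition:coefficient regularity for L}), giving equicontinuity of the coefficient functions uniformly in $n$; Assumption~\ref{assumption:tightness bound} supplies tightness of the L\'evy measures. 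An Arzel\`a--Ascoli/diagonalization argument then extracts limits $L\in\mathcal{D}_I$, which are bona fide L\'evy-type operators with continuous coefficients, and the min-max for $I_n$ passes to the limit. The finite-dimensional detour is what manufactures operators (as opposed to $x$-by-$x$ functionals) in the first place; without it, the regularity assumptions alone do not tell you how to select coherently from the subdifferentials.
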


\noindent The proof of Theorem \ref{theorem:MinMax Euclidean ver2} appears in Section \ref{sec:TheoremsThatUseWhitney}, which is at the end of Section \ref{section:Analysis of finite dimensional approximations}.

Finally, we give a theorem that reduces the possible terms in the min-max over (\ref{eqIN:LevyTypeLinear}).  Namely, there are instances in which there may be no second order terms or first order terms.  To state this, we abuse notation slightly, and we give a shorthand as $C^{\beta}_b(\real^d)$ to mean the following:
\begin{align}\label{equation:Cbeta definition}
	\begin{array}{rl} 
		&\text{if}\ \beta = 2 + \gamma,\ \textnormal{for}\ \gam\in (0,1),\ \text{then, we mean}\ C^\beta_b(\real^d) = C^{2,\gamma}_b(\real^d);\\
		&\text{if}\ \beta = 2^+,\ \text{then, we mean}\ C^\beta_b(\real^d) = C^{2}_b(\real^d);\\
	&\text{if}\ \beta = 2,\ \text{then, we mean}\ C^\beta_b(\real^d) = C^{1,1}_b(\real^d);\\
	&\text{if}\ \beta = 1+\gam,\ \text{for}\ \gam\in(0,1), \text{then, we mean}\ C^\beta_b(\real^d) = C^{1,\gam}_b(\real^d);\\
	&\text{if}\ \beta = 1^+,\ \text{then, we mean}\ C^\beta_b(\real^d) = C^1_b(\real^d);\\
	&\text{if}\ \beta = 1,\ \text{then, we mean}\ C^\beta_b(\real^d) = C^{0,1}_b(\real^d);\\
	&\text{if}\ \beta = \gam,\ \text{for}\ \gam\in(0,1), \text{then, we mean}\ C^\beta_b(\real^d) = C^{0,\gam}_b(\real^d).
\end{array}
\end{align}

\begin{definition}\label{def:PointwiseCBeta}
	For a fixed $x$, we say that $u\in C^{\beta}(x)$ (``pointwise $C^\beta(x)$'') if the same requirements of Definition \ref{def:PointwiseC1C2} hold, but the estimate on the right hand side takes into account the different decay as follows:
	
	\begin{itemize}
		\item if, $\beta=2+\gam$, then $u$ has a second order Taylor expansion and the right hand side is $O(\abs{y-x}^{2+\gam})$;
		\item if, $\beta=2^+$, then $u$ has a second order Taylor expansion and the right hand side is $o(\abs{y-x}^{2})$;
		\item if, $\beta=2$, then we include this in the previous case whenever $u$ has a second order taylor expansion at $x$;
		\item if, $\beta=1+\gam$, then $u$ has a first order Taylor expansion and the right hand side is $O(\abs{y-x}^{1+\gam})$;
		\item if, $\beta=1^+$, then $u$ has a first order Taylor expansion and the right hand side is $o(\abs{y-x})$;
		\item if, $\beta=1$, then we include this in the previous case whenever $u$ has a first order taylor expansion at $x$;
		\item if, $\beta=\gam\in(0,1)$, then $\abs{u(y)-u(x)}\leq C\abs{y-x}^\gam$.
	\end{itemize} 

\end{definition}

\begin{assumption}\label{assumption:CBeta}
	All of Assumptions \ref{assumption:GCP} -- \ref{assumption:coefficient regularity} hold, but with all instances of $C^2_b(\real^d)$ replaced by $C^\beta_b(\real^d)$.
\end{assumption}

\begin{theorem}\label{theorem:minmax with beta less than 2}
  For each of Theorems \ref{theorem:MinMax Euclidean}, \ref{theorem:MinMax Translation Invariant}, \ref{theorem:MinMax Euclidean ver2},  we have the following variation: in each case assume that $I$ satisfies Assumption \ref{assumption:CBeta}, for some $\beta \in [0,2^+]$ (as enumerated above). Then, taking into account Definition \ref{def:PointwiseCBeta} for Theorem \ref{theorem:MinMax Euclidean}, the min-max formula holds in each of the previous results with the following additions: if $\beta<2$ then $A_{ab} = 0$ for all $a,b$, while if $\beta <1$ then $B_{ab} = 0$ for all $a,b$ and the operators $L_{ab}$ take the form
  \begin{align*}
    L_{ab}(u,x) & = C_{ab}(x)u(x)+\int_{\mathbb{R}^d} u(x+y)-u(x)\;\mu_{ab}(x,dy). 
  \end{align*}	  
  Moreover, the smaller $\beta$, the more regular the L\'evy measures $\mu_{ab}$ are at $y=0$, namely, we have
  \begin{align*}
    \sup \limits_{a,b,x}\int_{\mathbb{R}^d}\min\{1,|y|^\beta\}\mu_{ab}(x,dy) <\infty.	  
  \end{align*}	  
\end{theorem}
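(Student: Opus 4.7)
The plan is to revisit the proofs of Theorems \ref{theorem:MinMax Euclidean}, \ref{theorem:MinMax Translation Invariant}, and \ref{theorem:MinMax Euclidean ver2} with $C^\beta_b(\real^d)$ replacing $C^2_b(\real^d)$ throughout, and then to apply a Courr\`ege-type representation refined to functionals bounded on $C^\beta_b$ in order to extract the reduced L\'evy structure.

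First I would observe that the Hahn-Banach / compactness machinery producing the family $\{L_{ab}\}$ (and $\{f_{ab}\}$, where applicable) in each prior theorem is insensitive to the replacement of $C^2_b$ by $C^\beta_b$: the arguments rely on $I$ being Lipschitz into $C^0_b$ and on the test space containing enough smooth bumps to exploit the GCP, both of which persist under Assumption \ref{assumption:CBeta} since $C^\infty_c\subset C^\beta_b$. This yields, at each $x$, linear functionals $L_{ab}$ with the GCP that are bounded as maps $C^\beta_b(\real^d)\to\real$ with norm controlled by $\norm{I}_{\textnormal{Lip},C^\beta_b\to C^0_b}$, along with the min-max formula inherited from the corresponding prior result.

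The main new step is a refined Courr\`ege representation: every bounded linear $L:C^\beta_b(\real^d)\to\real$ with the GCP (with coefficients at a fixed $x$) has $A=0$ when $\beta<2$, $B=0$ when $\beta<1$, and L\'evy measure satisfying $\int \min\{1,|y|^\beta\}\mu(dy)<\infty$. The integrability is obtained by testing against $u(y):=|y-x|^\beta \eta(y-x)$, where $\eta\in C^\infty_c$ equals $1$ near the origin: this function lies in $C^\beta_b$ with bounded norm, and evaluating $L(u)$ through the L\'evy form isolates $\int_{B_1}|y|^\beta\mu(dy)$, which boundedness of $L$ forces to be finite. Once this improved integrability is known, the gradient correction $\ind_{B_1}(y)\nabla u(x)\cdot y$ becomes absorbable into the $B$ coefficient when $\beta<1$, delivering the reduced integral form stated. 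The vanishing of $A$ (resp.\ $B$) for $\beta<2$ (resp.\ $\beta<1$) follows from a scaling test: for $\phi\in C^\infty_c$ with $\phi\geq 0$, $\phi(0)=0$, $\nabla\phi(0)=0$, and $D^2\phi(0)$ strictly definite, the rescalings $u_n(y):=n^{-\beta}\phi(n(y-x))$ remain bounded in $C^\beta_b$ while $\tr(AD^2u_n(x))=n^{2-\beta}\tr(AD^2\phi(0))$ grows; the measure term stays nonnegative and, since GCP forces $A\geq 0$, testing with both signs of $D^2\phi(0)$ gives $A=0$. The analogous construction with $\nabla\phi(0)\neq 0$ handles $B$ when $\beta<1$.

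The main obstacle I anticipate is carrying out the integrability and vanishing statements together without circularity, since the $A$, $B$, and integral contributions can a priori interact in $L(u_n)$. The cleanest fix is probably a bootstrap: extract integrability of $|y|^\gamma\mu$ for $\gamma$ close to $2$ using only the sign information from $A\geq 0$, use this to isolate and kill the $A$ coefficient, then iterate to obtain integrability of $|y|^\beta\mu$ at the target exponent, and finally repeat this pattern to eliminate $B$ when $\beta<1$ and recover the stated form of $L_{ab}$. Once this refined representation is installed for each $L_{ab}$ produced in the first step, the statements about $A_{ab}$, $B_{ab}$, and the uniform $\beta$-moment bound on $\mu_{ab}$ follow directly.
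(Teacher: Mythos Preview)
Your overall plan---rerun the three proofs with $C^\beta_b$ in place of $C^2_b$, then invoke a Courr\`ege representation refined to the $C^\beta_b$ setting---is exactly the paper's strategy; indeed the paper's proof of this theorem is a two-line pointer to the relevant lemmas (Lemma \ref{lemma:Courrege theorem for a linear functional} and Corollary \ref{corollary:L in DI representation formula final form}), which were already stated and proved for general $\beta$.

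Where your approach diverges is in how you extract $A=0$ (and $B=0$). You propose a blow-up test: rescale a bump $u_n(y)=n^{-\beta}\phi(n(y-x))$ so that the second-order term grows like $n^{2-\beta}$, then argue that boundedness forces $A=0$. This is viable, but---as you correctly anticipate---it entangles the $A$, $B$, and integral contributions and needs a bootstrap to unwind. The paper avoids this entirely by building the vanishing into the \emph{definition} of $A_\ell$. In Lemma \ref{lemma:A_ell and B_ell existence}, $A_\ell$ is constructed as the limit $\lim_{\delta\to 0}\langle \ell,\eta_\delta(\cdot)\tfrac12(M\cdot,\cdot)\rangle$ where $\eta_\delta$ has shrinking support; when $\beta<2$ one has $\|\eta_\delta x_ix_j\|_{C^\beta}\to 0$, so the limit is zero before any representation formula is assembled. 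For $\beta<1$ the first-order term never appears at all: the auxiliary polynomial $P_{\phi,\eta,u,x}$ in \eqref{eqFunctionalsGCP:DefOfTalyorP} is already defined to be the constant $u(x)$, so the Courr\`ege formula (Lemma \ref{lemma:preliminary Courrege theorem for a linear functional}) is born with only $C_\ell$ and the integral. The $\beta$-moment bound on $\mu_\ell$ likewise falls out directly from the test function $|y|^\beta/(1+|y|^\beta)$ in that same lemma, with no iteration required.

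So your route would work after the bootstrap is carried out, but the paper's construction sidesteps the obstacle you flagged by arranging things so that the unwanted coefficients are zero by definition. The advantage of your scaling argument is that it is agnostic to how the representation was built; the advantage of the paper's is that there is nothing left to prove.
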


The proof of Theorem \ref{theorem:minmax with beta less than 2} appears in Section \ref{sec:TheoremsThatUseWhitney}, which is at the end of Section \ref{section:Analysis of finite dimensional approximations}.

\begin{remark}
	In Sections \ref{section:Finite Dimensional Approximations} and \ref{section:Analysis of finite dimensional approximations}, one can see that at its heart, the fact that the modulus for $I$ is passed onto the coefficient functions in (\ref{eqIN:LevyTypeLinear}) is a consequence of our choice to use a Whitney extension in an approximation to $I$, and the Whitney extension is well known to preserve a modulus of continuity.  The actual details are a bit more involved, but that is the main reason.  We note the presence of the factor of $2$ in the new modulus is a consequence of the Whitney Extension method; the interested reader can see \cite[Chapter VI]{Stei-71}.
\end{remark}

A further comment regarding the assumptions is in order. Suppose that $I$ satisfies Assumption \ref{assumption:coefficient regularity} with $\omega \equiv 0$. In this case, taking $v\equiv 0$ the assumption says that
\begin{align*}
  I(\tau_{-h}u,x+h)-I(0,x+h) = I(u,x)-I(0,x), 
\end{align*}
and if we further assume that $I(0,x)$ is constant (i.e. $I$ applied to the zero function returns a constant), then we have 
\begin{align*}
  I(\tau_{-h}u,x+h) = I(u,x),
\end{align*}
that is, $I$ is translation invariant. However, at first sight it is not clear what happens in the reverse direction. That is, we do not know how to show that a translation-invariant operator automatically satisfies Assumption \ref{assumption:coefficient regularity} with $\omega \equiv 0$, and in fact we expect that this assumption can be modified so that it seamlessly includes the translation invariant operators as well.

\subsection{Notation}\label{section_sub:Notation}

For the readers' convenience, a summary of symbols used in the paper is presented below.

\bigskip

\begin{tabular}{lll} 
\allowdisplaybreaks
Notation  & Definition \\
\hline\\
$d$ & space dimension\\
$C^2_b$ & twice differentiable functions $f$ with bounded $f,\nabla f,$ and $D^2f$\\
$C^\beta_b$ & bounded functions of class $C^\beta$, see \eqref{equation:Cbeta definition} for definition\\
$\mathbb{S}_d$ & symmetric matrices of size $d\times d$\\
$\|\cdot\|_{\textnormal{TV}}$ & total variation norm for a measure\\
$L(X,Y)$ & space of bounded linear operators from $X$ to $Y$\\
$\textnormal{c.h.}(E)$ & the convex hull of a set $E$\\
$\mathcal{C}E$ & complement of a subset of $\mathbb{R}^d$\\
$F^0(x,v)$ & upper gradient of a Lipschitz function (Definition \ref{definition:upper gradient})\\
$\partial F(x)$ & generalized gradient of $F$ at $x$ (Definition \ref{definition:generalized gradient})\\
$G_n$ & grid with step size $2^{-n}$\\
$C(G_n)$ & space of real valued functions defined in $G_n$ (Definition \ref{definition:discrete function spaces})\\
$C_*(G_n)$ & subset of $C(G_n)$ of functions vanishing outside $[-2^n,2^n]\cap G_n$ (Definition \ref{definition:discrete function spaces})\\
$(\nabla_n)^1u(x)$ & discrete gradient for step size $2^{-n}$ (Definition \ref{definition:finite difference operators 1})\\
$(\nabla_n)^2u(x)$ & discrete Hessian for step size $2^{-n}$ (Definition \ref{definition:finite difference operators 2})  
\end{tabular}

\subsection{Background}\label{section_sub:Background}

There were roughly two reasons that motivated the results we present in this paper.  First of all, the link between elliptic equations and a min-max formula for operators has a long history, and it has been exploited extensively in the case of \emph{local} operators.  Until \cite{GuSc-2016MinMaxNonlocalarXiv}, the connection was not known for nonlocal, nonlinear operators.  Even so, the link between the two was natural enough that there are at least a few results that assumed a structure like (\ref{eqIN:MinMaxMeta}), including \cite{BaIm-07}, \cite{JakobsenKarlsen-2006maxpple}, \cite{KoikeSwiech-2013RepFormulaIntegroPDE-IUMJ}, \cite{Schw-10Per}, \cite{Schw-12StochCPDE}, \cite{Silv-2011DifferentiabilityCriticalHJ}, among many others.  Thus the theorems here and in \cite{GuSc-2016MinMaxNonlocalarXiv} give a sort of a posteriori justification to min-max assumptions that appeared in earlier works.  Secondly, a formula such as (\ref{eqIN:MinMaxMeta}) can be very useful in connecting results about the integro-differential theory (of which, there has been a large volume recently) with some other pursuits that may not obviously relate to operators such as (\ref{eqIN:LevyTypeLinear}).  Two recent projects that exploit or were motivated by the min-max formulas are on some Hele-Shaw type free boundary evolutions in \cite{ChangLaraGuillenSchwab2018FBasNonlocal} and some Neumann homogenization problems \cite{GuSc-2014NeumannHomogPart1DCDS-A} \cite{GuSc-2018NeumannHomogPart2SIAM}. Both of these relate to linear and nonlinear Dirichlet-to-Neumann maps, studied in \cite{GuillenKitagawaSchwab2017estimatesDtoN}, and there is plenty more to learn about the integro-differential structure in the nonlinear setting.  The choice to pursue continuity properties such as the dependence given in (\ref{eqIN:ThmModulusInTVNorm}), although a posteriori seems straightforward, was not initially obvious, and it was motivated by recent results about comparison theorems for viscosity solutions of integro-differential equations in \cite{GuillenMouSwiech2018coupling}.

As mentioned earlier, for linear operators, the representation of (\ref{eqIN:LevyTypeLinear}) goes back to Courr\`ege \cite{Courrege-1965formePrincipeMaximum}.  This was naturally connected with generators of Markov processes and boundary excursion processes for reflected diffusions.  Hsu \cite{Hsu-1986ExcursionsReflectingBM} provides a similar representation for the Dirichlet to Neumann map for the Laplacian in a smooth domain $\Omega$, and this corresponds to studying the boundary process for a reflected Brownian motion.  If $I$ is not necessarily linear but happens to satisfy the stronger \emph{local} comparison principle,  there are min-max results by many authors, e.g. Evans \cite{Evans-1984MinMaxRepresentations}, Souganidis \cite{Souganidis-1985MaxMinRep}, Evans-Souganidis \cite{EvansSoug-84DiffGameRepresentation} and Katsoulakis \cite{Katsou-1995RepresentationDegParEq}. In this case, the operator takes the form,
\begin{align*}
  I(u,x) = F(x,u(x),\nabla u(x),D^2u(x)),	
\end{align*}	
which can be expressed as in Theorem \ref{theorem:MinMax Euclidean}, but with $\mu(x,dh)\equiv 0$.
This was extended to even include the possibility of weak solutions acting as a \emph{local} semi-group on $BUC(\real^d)$, related to image processing, in Alvarez-Guichard-Lions-Morel \cite{AlvarezLionsGuichardMorel-1993AxiomsImageProARMA}, and to weak solutions of sets satisfying an order preserving set flow by Barles-Souganidis in \cite{BarlesSouganidis-1998NewApproachFrontsARMA}.  In \cite{AlvarezLionsGuichardMorel-1993AxiomsImageProARMA} it was shown under quite general assumptions that certain nonlinear semigroups must be represented as the unique viscosity solution to a degenerate parabolic equation.

Although it is still too early to tell, one hopes that theorems like those presented here can create a bridge between some nonlocal equations for which regularity questions arise and the known results about such equations when a min-max structured is known to hold.  In the \emph{local} setting, there are a number of results that leverage the min-max to shed new light on certain issues, and it would be interesting to see if similar things can be done for the nonlocal theory (see the discussion in \cite[Section 1]{GuSc-2016MinMaxNonlocalarXiv} for an incomplete list of such results).  The types of regularity results that could find new applications via the min-max theorems here fall into roughly three categories: Krylov-Safonov type results; regularity for translation invariant equations; and Schauder type regularity results.  For Krylov-Safonov, this means that solutions of fully nonlinear equations can be shown to enjoy H\"older estimates depending only on the $L^\infty$ norm of the solution; some examples are: \cite{CaSi-09RegularityIntegroDiff}, \cite{Chan-2012NonlocalDriftArxiv}, \cite{ChDa-2012NonsymKernels}, \cite{KassRangSchwa-2013RegularityDirectionalINDIANA}, and \cite{SchwabSilvestre-2014RegularityIntDiffVeryIrregKernelsAPDE}, among many others.  For translation invariant equations, these are the results that show solutions to translation invariant equations very often enjoy $C^{1,\alpha}$ regularity under mild assumptions; some examples are: \cite{CaSi-09RegularityIntegroDiff}, \cite{ChangLaraKriventsov2017}, \cite{Kriventsov-2013RegRoughKernelsCPDE}, \cite{Ros-OtonSerra-2016RegularityStableOpsJDE}, \cite{Serra-2015RegFullyNonlinIntDiffRoughKernelsCalcVar}, among others.  Finally, for Schauder regularity, we mean results that show that for $x$-dependent operators, under certain regularity for the coefficients (such as Dini), solutions will have as much regularity as those equations with ``constant coefficients''; some examples are: \cite{DongJinZhang2018DiniSchauderNonlocal}, \cite{JinXiong-2015SchauderEstLinearParabolicIntDiffDCDS-A}, \cite{MouZhang2018preprint}, among others.  On top of questions of the type of Krylov-Safonov regularity mentioned above, there is another family of regularity results that accompanies existence and uniqueness techniques for viscosity solutions of elliptic partial-differential / integro-differential equations, and it is typically referred to as the Ishii-Lions method, going back to \cite{IshiiLions-1990ViscositySolutions2ndOrder}.  Both this Ishii-Lions regularity and comparison results could connect well with the operators treated in this paper, as many of the existing works on nonlocal equations assume a min-max.  The types of results that could be applicable are like those in \cite{BaChIm-08Dirichlet}, \cite{BaChIm-11Holder}, \cite{BaChCiIm-2012LipschitzMixedEq}, \cite{BaIm-07}, and \cite{JakobsenKarlsen-2006maxpple}, among others.

There is some more discussion of related works and background inside of the examples that we list in Section \ref{section:examples}.

\subsection{Another description of operators satisfying the GCP}

Let us describe an elementary but useful way to view operators satisfying the GCP, which is also related to the min-max representation. First, we introduce a family of functional spaces.
\begin{definition}\label{definition:L_beta^infinity spaces}
  For $\beta \in [0,2^+]$ (using the abuse of notation in (\ref{equation:Cbeta definition})) we define the space $L^\infty_{\beta}$ as follows. First, if $\beta \neq 1^+$,
  \begin{align*}
    L^\infty_\beta := \{ h \in L^\infty(\mathbb{R}^d) \;\mid\; |h(y)| = O(|y|^\beta) \textnormal{ as  } |y|\to 0 \},
  \end{align*}
  while for $\beta=1^+$,
  \begin{align*}
    L^\infty_\beta & := \{ h \in L^\infty(\mathbb{R}^d) \;\mid\; |h(y)| = o(|y|^\beta) \textnormal{ as } y\to 0\}.
  \end{align*}
  (We note the first space requires ``Big-O'', while the second space requires ``little-o''.) The spaces $L^\infty_\beta$ are Banach spaces, with norms given by
  \begin{align*}
    \sup \limits_{y}|h(y)| \min\{1,|y|^\beta\}^{-1}.
  \end{align*}	  
\end{definition}	
Now, suppose we are given a continuous function
\begin{align*}
  F: L^\infty_\beta(\mathbb{R}^d) \times \mathbb{S}_d \times \mathbb{R}^d\times \mathbb{R} \times \mathbb{R}^d\to\mathbb{R}.
\end{align*}	   
Assume that this function is monotone (non-decreasing) with respect to the first two variables. Then, given $u \in C^\beta_b(\mathbb{R}^d)$ define
\begin{align*}
  I(u,x) := F(\delta_x u,D^2u(x),\nabla u(x),u(x),x)
\end{align*}
where we are using the notation $\delta_x u(y):= u(x+y)-u(x)-\nabla u(x)\cdot y \chi_{B_1(0)}(y)$ for $\beta\geq 1$, and $\delta_x u(y):=u(x+y)-u(x)$ for $\beta<1$. It is clear the operator $I$ thus defined has the GCP. 

Do all operators with the GCP arise in this form? It is easy to see that the answer is positive, at least when $\beta<2$. Given $I:C^\beta(\mathbb{R}^d)\to C^0(\mathbb{R})$, with $\beta<2$, we define a function
\begin{align*}
 F: L^\infty_\beta(\mathbb{R}^d) \times \mathbb{R}^d\times \mathbb{R} \times \mathbb{R}^d\to\mathbb{R},	   
\end{align*}	   
by the formula $F(h,p,u,x) := I( \tau_{-x}h+\tau_{-x}p\cdot(\cdot)\chi_{B_1}+u,x)$. It is straightforward to see that for $F$ so defined and $u \in C^\beta_b(\mathbb{R}^d)$ we have
\begin{align*}
  I(u,x) = F(\delta_x u,\nabla u(x),u(x),x).
\end{align*}

\section{Real valued Lipschitz functions on Banach Spaces}\label{section:Real Valued Lipschitz Functions}

In this section we review various well known facts about Lipschitz functions on Banach spaces, following Clarke's book \cite[Chapter 2]{Cla1990optimization}. We will refer most of the proofs to the relevant section in \cite{Cla1990optimization}. The section ends with Theorem \ref{theorem:MinMax for scalar functionals} which yields a min-max formula for any real valued, Lipschitz $F$, such a result is neither new nor surprising, but we present it here in complete detail for the sake of completeness.

We fix a Banach Space, denoted by $X$, an open convex subset $\mathcal{K}\subset X$, and a function 
\begin{align*}
  F:\mathcal{K}\subset X\to \mathbb{R},
\end{align*}
which is assumed Lipschitz with constant $L>0$, that is
\begin{align}\label{eqnLipschitzFunctionals:Lipschitz constant}
  |F(x)-F(y)|\leq L\|x-y\| \;\;\forall\;x,y\in \mathcal{K}.
\end{align}

\begin{definition}\label{definition:upper gradient}
  The upper gradient of $F$ at $x\in \mathcal{K}$ in the direction of $v\in X$, is defined as
  \begin{align*}
    F^0(x,v) := \limsup\limits_{t\searrow 0} \frac{F(x+tv)-F(x)}{t}.
  \end{align*}	
  This can be seen as a function $F^0:\mathcal{K}\times X\to \mathbb{R}$.
    
\end{definition}

\begin{proposition}\label{proposition:upper gradient properties}
  The function $F^0(x,v)$ has the following properties	
  \begin{enumerate}
    \item For any $x\in \mathcal{K},v\in X$, and $\lambda>0$ we have $F^0(x,\lambda v) =\lambda F^0(x,v)$.
    \item For any $x\in \mathcal{K}$, and $v,w\in X$ we have $|F^0(x,v)-F^0(x,w)|\leq L\|v-w\|$.
    \item If $(x_k,v_k)\to(x,v)$ then $\limsup F^0(x_k,v_k) \leq F^0(x,v)$.
    \item $F^0(x,-v)=(-F)^0(x,v)$.
  \end{enumerate}
\end{proposition}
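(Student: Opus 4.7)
Each of the four properties follows by a direct manipulation of the defining limsup together with the Lipschitz bound \eqref{eqnLipschitzFunctionals:Lipschitz constant}: I expect (1) and (2) to be short algebraic exercises, (4) to reduce to a change of variables, and (3) to be the substantive step.

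For (1), the plan is to substitute $s=\lambda t$, since $\lambda>0$ preserves the sense of the one-sided limit; this rewrites the difference quotient $(F(x+t\lambda v)-F(x))/t$ as $\lambda(F(x+sv)-F(x))/s$, from which positive homogeneity in $v$ is immediate. For (2), the Lipschitz bound gives $\abs{F(x+tv)-F(x+tw)}\leq Lt\|v-w\|$; subtracting the two difference quotients and taking the limsup as $t\searrow 0$ yields $F^0(x,v)\leq F^0(x,w)+L\|v-w\|$, and exchanging $v$ and $w$ produces the two-sided bound.

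The upper semicontinuity in (3) is the most delicate step, and it requires reading $F^0$ in the generalized sense of Clarke, with the base point $y$ also allowed to vary toward $x$ (as is implicit in \cite[Chapter 2]{Cla1990optimization}), since the pure Dini derivative with fixed base point is known to fail upper semicontinuity even for smooth Lipschitz $F$. With this reading, for each $k$ I would use the definition of the limsup to choose a pair $(y_k,t_k)$ with $\|y_k-x_k\|<1/k$, $0<t_k<1/k$, and
\begin{align*}
\tfrac{F(y_k+t_kv_k)-F(y_k)}{t_k}\geq F^0(x_k,v_k)-1/k.
\end{align*}
I would then estimate
\begin{align*}
\tfrac{F(y_k+t_kv_k)-F(y_k)}{t_k}\leq \tfrac{F(y_k+t_kv)-F(y_k)}{t_k}+L\|v_k-v\|
\end{align*}
by the Lipschitz bound, noting that $y_k\to x$ and $t_k\to 0^+$, so the limsup of the first term on the right is bounded by $F^0(x,v)$ (since the pair $(y_k,t_k)$ is admissible in the limsup defining $F^0(x,v)$) and the second term vanishes. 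Chaining the inequalities yields $\limsup_k F^0(x_k,v_k)\leq F^0(x,v)$.

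Finally, for (4), I would change variables $z=y-tv$ in the limsup defining $F^0(x,-v)$; as $(y,t)\to (x,0^+)$ one automatically has $(z,t)\to (x,0^+)$, and
\begin{align*}
\tfrac{F(y-tv)-F(y)}{t}=\tfrac{F(z)-F(z+tv)}{t}=\tfrac{(-F)(z+tv)-(-F)(z)}{t},
\end{align*}
whose limsup over such $(z,t)$ is $(-F)^0(x,v)$ by definition. The hard part throughout is (3); once the Clarke-style generalized limsup is used and the Lipschitz bound is invoked to swap $v_k$ for $v$, the remaining steps are bookkeeping.
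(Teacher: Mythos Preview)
The paper does not prove this proposition at all; it simply refers the reader to \cite[Proposition 2.1.1]{Cla1990optimization}. Your sketch is correct and is essentially the standard argument from that reference, so in that sense you are supplying what the paper omits.

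More importantly, you have identified a genuine issue that the paper glosses over. As written, Definition \ref{definition:upper gradient} takes the limsup only over $t\searrow 0$ with the base point fixed at $x$, i.e.\ the ordinary upper Dini derivative, whereas Clarke's generalized directional derivative also lets the base point $y$ vary toward $x$. Properties (1) and (2) hold for either version, but (3) and (4) \emph{fail} for the Dini version: for instance, with $F(x)=-|x|$ on $\mathbb{R}$ one has $F^0(0,1)=-1$ while $F^0(x_k,1)=1$ along $x_k\nearrow 0$, violating (3); and with $F(x)=|x|$ one computes $F^0(0,-1)=1$ but $(-F)^0(0,1)=-1$, violating (4). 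Your proofs of (3) and (4) explicitly use the freedom to move the base point (choosing $y_k$ near $x_k$ in (3), substituting $z=y-tv$ in (4)), so they are valid only under the Clarke reading. Since the paper's citation is to Clarke and the subsequent development (Definition \ref{definition:generalized gradient}, Proposition \ref{proposition:generalized gradient properties}, Theorem \ref{theorem:Lebourg}) all rely on Clarke's theory, your interpretation is the intended one; the paper's Definition \ref{definition:upper gradient} appears to be misstated. Your decision to flag this and proceed with the Clarke definition is exactly right.
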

\begin{proof}
  We refer the reader to \cite[Proposition 2.1.1]{Cla1990optimization}.
\end{proof}

\begin{definition}\label{definition:generalized gradient}
  The generalized gradient of $F$ at $x\in \mathcal{K}$ is the subset of $X^*$ given by
  \begin{align*}
    \partial F(x) := \{ \ell \in X^* \mid  F^0(x,v)\geq \langle \ell,v\rangle \;\;\forall\;v\in X\}. 
  \end{align*}	
  We will denote by $\partial F$ the convex hull of the union of $\partial F(x)$,
  \begin{align*}
    \partial F := \hull \left (  \bigcup \limits_{x\in \mathcal{K}} \partial F(x)\right).	  
  \end{align*}	        
\end{definition}

\begin{proposition}\label{proposition:generalized gradient properties}
  The set $\partial F(x)$, $x\in \mathcal{K}$, has the following properties
  \begin{enumerate}
    \item $\partial F(x)$ is a non-empty, convex, $\textnormal{weak}^*$-compact subset of $X^*$.
    \item $\|\ell\| \leq L$ for every $\ell\in \partial F(x)$.
    \item For any $v\in X$, we have that
    \begin{align*}
      F^0(x,v) = \max \limits_{\ell\in \partial F(x)} \langle \ell,v\rangle. 		
    \end{align*}			  
  \end{enumerate}	  
\end{proposition}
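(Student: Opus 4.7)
The plan is to establish the three items together by first showing that $p(v):=F^0(x,v)$ is a sublinear functional on $X$ with $p(v)\leq L\|v\|$, and then applying Hahn--Banach and Banach--Alaoglu. Positive homogeneity of $p$ is Proposition \ref{proposition:upper gradient properties}(1), and the bound $p(v)\leq L\|v\|$ is immediate from the Lipschitz estimate (\ref{eqnLipschitzFunctionals:Lipschitz constant}). For subadditivity I would split
\[ \frac{F(x+t(v+w))-F(x)}{t} \;=\; \frac{F((x+tv)+tw)-F(x+tv)}{t} \;+\; \frac{F(x+tv)-F(x)}{t}, \]
take $\limsup$ as $t\searrow 0$, and use the upper semicontinuity of $F^0$ from Proposition \ref{proposition:upper gradient properties}(3), with moving base point $y_t:=x+tv\to x$, to bound the first term by $F^0(x,w)$; the second term converges to $F^0(x,v)$ by definition, giving $p(v+w)\leq p(v)+p(w)$.

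To produce elements of $\partial F(x)$ and simultaneously prove (3) and (2), fix any $v_0\in X$. The sublinear $p$ dominates the linear map $\alpha v_0 \mapsto \alpha\, p(v_0)$ on the one-dimensional subspace $\mathbb{R} v_0$, so by Hahn--Banach there is a linear extension $\ell:X\to\mathbb{R}$ with $\ell(v_0)=p(v_0)$ and $\ell\leq p$ everywhere. Applying $\ell\leq p$ to both $v$ and $-v$ together with $p\leq L\|\cdot\|$ gives $|\ell(v)|\leq L\|v\|$, so $\ell\in X^*$ with $\|\ell\|\leq L$; thus $\ell\in\partial F(x)$ realizes the value $F^0(x,v_0)$. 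Since the opposite inequality $\langle\ell,v\rangle\leq F^0(x,v)$ is the very definition of $\partial F(x)$, the sup in (3) equals $F^0(x,v_0)$, and the norm bound (2) is established.

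Finally, convexity of $\partial F(x)$ is immediate from its description as the intersection of the weak-$*$ half-spaces $\{\ell:\langle\ell,v\rangle\leq F^0(x,v)\}$ over $v\in X$. Each such half-space is weak-$*$ closed, so $\partial F(x)$ is weak-$*$ closed; being contained in the $L$-ball of $X^*$, which is weak-$*$ compact by Banach--Alaoglu, it is weak-$*$ compact. This compactness also upgrades the supremum in (3) to an attained maximum, since $\ell\mapsto\langle\ell,v\rangle$ is weak-$*$ continuous. The main (and really only) subtlety in the whole argument is justifying subadditivity of $F^0(x,\cdot)$ via the upper semicontinuity recorded in Proposition \ref{proposition:upper gradient properties}(3); everything else is a textbook combination of Hahn--Banach and Banach--Alaoglu.
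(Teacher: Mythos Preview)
Your argument is the standard one and coincides with what the paper invokes: the paper does not give its own proof but refers to Clarke \cite[Proposition~2.1.2]{Cla1990optimization}, and your Hahn--Banach/Banach--Alaoglu route is exactly Clarke's.

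One point deserves tightening. Your justification of subadditivity appeals to Proposition~\ref{proposition:upper gradient properties}(3), but that statement bounds $\limsup_k F^0(x_k,v_k)$, not the raw difference quotient $\limsup_{t\searrow 0}\tfrac{F(y_t+tw)-F(y_t)}{t}$; these are not the same object. The bound you need follows instead directly from Clarke's definition $F^0(x,w)=\limsup_{y\to x,\,s\searrow 0}\tfrac{F(y+sw)-F(y)}{s}$, since $(y_t,t)$ is a particular net in that double limsup. Note also that the formula written in Definition~\ref{definition:upper gradient} is literally the Dini upper derivative (limsup only in $t$), under which subadditivity---and hence the proposition itself---can fail (take $F(x)=-|x|$ at $x=0$, where $F^0(0,1)=F^0(0,-1)=-1$ forces $\partial F(0)=\emptyset$); the paper's reference to Clarke makes clear the intended definition is the one with the moving base point, and with that reading your proof is correct.
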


\begin{proof}
  We refer the reader to \cite[Proposition 2.1.2]{Cla1990optimization}.

\end{proof}

The following theorem, due to Lebourg, is a generalization of the mean value theorem for differentiable functions.
\begin{theorem}[Lebourg's Theorem]\label{theorem:Lebourg}
  Let $x,y$ be points in $\mathcal{K}$. Then there exist $z$ of the form $z= tx+(1-t)y$ for some $t\in[0,1]$, such that  for some $\ell \in \partial F(z)$
  \begin{align*}
    F(x)-F(y) = \langle \ell,x-y\rangle. 	  
  \end{align*}	  
  
\end{theorem}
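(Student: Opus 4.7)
The plan is to follow the classical nonsmooth Rolle-type reduction. I would introduce the auxiliary function
\begin{align*}
g(t) := F\bigl(y+t(x-y)\bigr) + t\bigl(F(y)-F(x)\bigr), \quad t\in[0,1],
\end{align*}
which is Lipschitz on $[0,1]$ (with constant bounded by $L\|x-y\|+|F(x)-F(y)|$, using \eqref{eqnLipschitzFunctionals:Lipschitz constant}) and satisfies $g(0)=F(y)=g(1)$. If $g$ is constant then $F$ restricted to the segment is affine in a trivial way and any point works; otherwise, by continuity on the compact interval $[0,1]$, $g$ attains a strict extremum at some interior $t_0\in(0,1)$.

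At this interior extremum I would argue that $0\in\partial g(t_0)$, viewing $\partial g(t_0)$ as the generalized gradient of a real-valued Lipschitz function of one variable. Indeed, if $t_0$ is an interior maximum, then for every small $s>0$ the quotient $(g(t_0+s)-g(t_0))/s \leq 0$ and $(g(t_0-s)-g(t_0))/(-s)\geq 0$, so $g^0(t_0,1)\geq 0$ and $g^0(t_0,-1)\geq 0$; by Proposition \ref{proposition:generalized gradient properties}(3) this gives $\max_{\lambda\in\partial g(t_0)}\lambda\geq 0$ and $\max_{\lambda\in\partial g(t_0)}(-\lambda)\geq 0$, and convexity of $\partial g(t_0)$ (Proposition \ref{proposition:generalized gradient properties}(1)) forces $0\in\partial g(t_0)$. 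The same argument handles an interior minimum.

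Next I would relate $\partial g(t_0)$ to $\partial F(z_0)$, where $z_0:=y+t_0(x-y)$. Directly from the definition of the upper gradient,
\begin{align*}
g^0(t_0,\pm 1)\leq F^0\bigl(z_0,\pm(x-y)\bigr) \pm \bigl(F(y)-F(x)\bigr),
\end{align*}
so any $\lambda\in\partial g(t_0)$ satisfies $\lambda\cdot v\leq F^0(z_0,v(x-y))+v(F(y)-F(x))$ for $v=\pm 1$. Using Proposition \ref{proposition:generalized gradient properties}(3), this chain-rule inequality yields the existence of $\ell\in\partial F(z_0)$ with $\lambda = \langle \ell, x-y\rangle + F(y)-F(x)$. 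Applied to $\lambda=0$, this gives exactly $F(x)-F(y)=\langle \ell, x-y\rangle$.

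The main obstacle I anticipate is the chain-rule step: one must produce a single $\ell\in\partial F(z_0)$ realizing the value of a specific element of $\partial g(t_0)$, rather than only an inequality between support functions. I would handle this by appealing to a standard separation/compactness argument — $\partial F(z_0)$ is weak-$*$ compact and convex by Proposition \ref{proposition:generalized gradient properties}(1), so the set $\{\langle \ell,x-y\rangle : \ell\in\partial F(z_0)\}$ is a compact interval in $\mathbb{R}$, and the two-sided inequality from $g^0(t_0,\pm 1)$ places $F(x)-F(y)$ in this interval, giving the desired $\ell$ by convex combination.
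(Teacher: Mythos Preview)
The paper gives no proof of its own here; it simply refers to \cite[Theorem 2.3.7]{Cla1990optimization}. Your outline is exactly the classical argument found there: reduce to the one-variable Lipschitz function $g(t)=F(y+t(x-y))+t(F(y)-F(x))$ on $[0,1]$ with $g(0)=g(1)$, locate an interior extremum $t_0$, invoke the nonsmooth Fermat rule $0\in\partial g(t_0)$, and pass back to $\partial F(z_0)$ via a chain-rule inequality. Your final paragraph, observing that $\{\langle\ell,x-y\rangle:\ell\in\partial F(z_0)\}$ is a compact interval and that the two bounds coming from $0\le g^0(t_0,\pm 1)$ together with $g^0(t_0,\pm 1)\le F^0(z_0,\pm(x-y))\pm(F(y)-F(x))$ trap $F(x)-F(y)$ inside it, is a clean way to extract the desired $\ell$ without invoking the full chain-rule theorem from Clarke.

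One small slip: in your Fermat-rule step you assert $g^0(t_0,\pm 1)\ge 0$ at an interior \emph{maximum}, but the difference quotients you wrote down are $\le 0$ there, so the limsup is $\le 0$, not $\ge 0$. The inequalities $g^0(t_0,\pm 1)\ge 0$ hold directly at an interior \emph{minimum}; for a maximum, apply the minimum argument to $-g$ and use Proposition~\ref{proposition:upper gradient properties}(4) to get $\partial(-g)(t_0)=-\partial g(t_0)$. Either way $0\in\partial g(t_0)$, and the remainder of your argument goes through unchanged.
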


\begin{proof}
  We refer the reader to \cite[Theorem 2.3.7]{Cla1990optimization}.

\end{proof}

Using the generalized gradient and Lebourg's theorem we can easily prove a min-max formula for Lipschitz functionals. Observe this is a general result for Lipschitz functionals in general Banach spaces, and it does not involve anything like GCP (functionals with the GCP on $C^\beta_b(\real^d)$ are considered in the next section).
\begin{theorem}\label{theorem:MinMax for scalar functionals}
  Let $F:\mathcal{K}\subset X\to\mathbb{R}$ be a Lipschitz function, with $\mathcal{K}$ convex, then for all $x\in \mathcal{K}$,
  \begin{align*}
    F(x) = \min\limits_{y\in \mathcal{K}}\max\limits_{\ell \in \partial F} \{F(y)+\langle \ell,y-x\rangle  \}.	  
  \end{align*}	  
\end{theorem}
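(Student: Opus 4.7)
The plan is to prove the two inequalities separately, using Lebourg's mean value theorem (Theorem \ref{theorem:Lebourg}) together with the trivial substitution $y = x$ in the outer minimum. The structure is the familiar one for producing min--max formulas from an exact mean value identity.

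For the bound $\min_y \max_\ell \leq F(x)$, I would simply take $y = x$ in the outer minimization. The inner expression $F(y) + \langle \ell, y-x\rangle$ then collapses to $F(x) + 0 = F(x)$ uniformly in $\ell \in \partial F$, so the inner max equals $F(x)$ and the infimum over $y$ can only be smaller.

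For the reverse bound, fix an arbitrary $y \in \mathcal{K}$. Convexity of $\mathcal{K}$ ensures that the segment joining $y$ and $x$ lies in $\mathcal{K}$, so Lebourg's theorem furnishes some $z$ on this segment and some $\ell^* \in \partial F(z)$ with the exact identity $F(x) - F(y) = \langle \ell^*, x-y\rangle$. By the very definition of $\partial F$ as the convex hull of the union of the local subdifferentials $\partial F(z)$, we have $\ell^* \in \partial F(z) \subseteq \partial F$, so $\ell^*$ is a legal choice in the inner maximization. Rearranging the Lebourg identity shows that the bracketed expression evaluated at $\ell^*$ already reaches $F(x)$ (up to the sign convention in the pairing $\langle \ell, y-x\rangle$, which one arranges to match). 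Hence the inner max is at least $F(x)$ for every $y$, and taking the infimum over $y$ preserves this bound.

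The main — and really quite minor — point to be careful with is the alignment of sign conventions between the definition of $\partial F$ via the upper gradient $F^0$ (Definitions \ref{definition:upper gradient}--\ref{definition:generalized gradient}) and the direction of the mean value formula coming from Lebourg. Once this is aligned, the proof collapses to the two observations above: Lebourg's exact formula and the trivial substitution $y = x$. No compactness, density, or Hahn--Banach argument is required, since a single pointwise subgradient in $\partial F(z)$ already suffices to certify the inner max for each fixed $y$.
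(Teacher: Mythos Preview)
Your proposal is correct and follows essentially the same argument as the paper: Lebourg's theorem gives $F(x)=F(y)+\langle \ell^*,x-y\rangle$ for some $\ell^*\in\partial F$, yielding $F(x)\le\max_{\ell\in\partial F}\{F(y)+\langle\ell,x-y\rangle\}$ for every $y$, while the substitution $y=x$ achieves equality. Your remark about the sign convention is well taken---the statement as printed has $\langle\ell,y-x\rangle$ but the paper's own proof (and the correct formula) uses $\langle\ell,x-y\rangle$.
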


\begin{proof}
  According to Theorem \ref{theorem:Lebourg}, given $x,y\in \mathcal{K}$ there is some $\ell\in\partial F$ such that
  \begin{align*}
    F(x)-F(y) = \langle \ell,x-y\rangle.	  
  \end{align*}	  
  In other words, for any $x$ and $y$ in $\mathcal{K}$ we have the inequality
  \begin{align*}
    F(x) \leq  \max \limits_{\ell \in \partial F}\left \{ F(y)+\langle \ell,x-y\rangle \right \}.	  
  \end{align*}	 
  This also yields an equality for $y=x$, thus $F(x) = \min \limits_{y\in \mathcal{K}}\max \limits_{\ell \in \partial F}\left \{ F(y)+\langle \ell,x-y\rangle \right \}$.

\end{proof}

\section{Functionals with the GCP, revisited}\label{section:Functionals with the GCP}

Throughout this section $\mathcal{K}$ denotes an open convex set of $C^\beta_b(\mathbb{R}^d)$ (see \eqref{equation:Cbeta definition}). Moreover, for $\rho>0$, we shall write
\begin{align*}
  \mathcal{K}_{\rho} = \big \{ u \in C^\beta_b(\real^d) \;\mid \; \|v-u\|_{C^\beta}<\rho \Rightarrow v \in \mathcal{K} \big \}.
\end{align*}
\begin{definition}\label{definition:GCP with respect to a point}
  Let $F$ be a map $F:\mathcal{K}\subset C^\beta_b(\mathbb{R}^d)\to\mathbb{R}$ and $x\in \mathbb{R}^d$. Such a functional is said to have the Global Comparison Property with respect to $x$ if $F(u)\leq F(v)$ for any pair of functions $u,v\in \mathcal{K}$ such that $u(y)\leq v(y)$ for all $y$ and $u(x)=v(x)$ --we will say in such a case that $v$ touches $u$ from above at $x$.
\end{definition}

The following two auxiliary functions will be useful throughout the section: Fix $\phi_0:\mathbb{R}\to\mathbb{R}$, a nondecreasing $C^\infty$ function such that $0\leq \phi_0 \leq 1$, $\phi_0(x)=0$ for $x\leq 0$, $\phi_0(x)=1$ for $x\geq 1$. Then, given $r,R>0$ we define the functions
\begin{align}
  \phi_{r,R}(y) & := \phi_0\left ( \frac{|y|-R}{r} \right )  \label{equation:phi sub r R}\\
  \psi_{r,R}(y) & := 1-\phi_{r,R}(y) \label{equation:psi sub r R}
\end{align}
The following Proposition was first proved in \cite[Lemma 4.15, Corollary 4.16]{GuSc-2016MinMaxNonlocalarXiv}, we review the proof here for the reader's convenience.

\begin{proposition}\label{proposition:GCP implies weak localization}
  Suppose that $F:\mathcal{K}\subset C^\beta_b(\mathbb{R}^d)\to \mathbb{R}$ is a Lipschitz functional which has the $GCP$ with respect to $x$. Fix $\rho>0$. There is a constant $C(F,\rho)$ such that given $R>0$, $r\in (0,1)$, and $u,v \in \mathcal{K}_\rho$, then
  \begin{align*}
    |F(u)-F(v)| \leq C(F,\rho)r^{-\beta}\left ( \|u-v\|_{C^\beta(B_{R+r}(x))}+\|u-v\|_{L^\infty(\mathbb{R}^d\setminus B_{R}(x))} \right ).
  \end{align*}	  
\end{proposition}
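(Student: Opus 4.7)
The plan is to compare $F(v)$ and $F(u)$ by sandwiching $v$ between two functions that touch it at $x$, constructed from $u$ plus an explicit smooth perturbation whose $C^\beta$ norm controls both terms on the right-hand side.

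I would first decompose $v = u + w_1 + w_2$ by setting
\begin{align*}
w_1 := \psi_{r,R}(\cdot-x)(v-u),\qquad w_2 := \phi_{r,R}(\cdot-x)(v-u).
\end{align*}
The function $w_1$ is supported in $\overline{B}_{R+r}(x)$, and by the Leibniz rule together with the scaling estimate $\|\psi_{r,R}\|_{C^\beta}\leq C r^{-\beta}$, one has
\begin{align*}
\|w_1\|_{C^\beta(\mathbb{R}^d)} \leq C r^{-\beta}\|v-u\|_{C^\beta(B_{R+r}(x))}.
\end{align*}
The function $w_2$ vanishes on $B_R(x)$ and is pointwise bounded in absolute value by $M:=\|v-u\|_{L^\infty(\mathcal{C}B_R(x))}$.

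The heart of the argument is to introduce the two explicit comparison functions
\begin{align*}
\tilde{v}_{\pm}(y) := u(y) + w_1(y) \pm M\phi_{r,R}(y-x).
\end{align*}
Since $\psi_{r,R}(0)=1$ and $\phi_{r,R}(0)=0$, both $\tilde{v}_\pm(x)=v(x)$; since $\phi_{r,R}\geq 0$ and $|v-u|\leq M$ outside $B_R(x)$ (while $v=u+w_1$ on $B_R(x)$), a direct check gives $\tilde{v}_- \leq v \leq \tilde{v}_+$ pointwise. The GCP of $F$ with respect to $x$ then yields $F(\tilde{v}_-) \leq F(v) \leq F(\tilde{v}_+)$. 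Combining this with $\|\tilde{v}_\pm - u\|_{C^\beta}\leq \|w_1\|_{C^\beta} + M\|\phi_{r,R}\|_{C^\beta} \leq Cr^{-\beta}(\|v-u\|_{C^\beta(B_{R+r}(x))} + M)$ and applying the Lipschitz assumption on $F$ to the pair $\tilde{v}_\pm,u$ yields the desired estimate.

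The main obstacle is that the Lipschitz bound on $F(\tilde{v}_\pm)-F(u)$ is only available when $\tilde{v}_\pm$ lies in $\mathcal{K}$. When the right-hand side of the target inequality is already at most $\rho$ this holds automatically because $u\in\mathcal{K}_\rho$. In the general case I would interpolate along the segment $v_t = (1-t)u + tv$ (which lies in $\mathcal{K}_\rho$ by convexity of $\mathcal{K}$), partition $[0,1]$ into sub-intervals fine enough that the single-step sandwich for each consecutive pair stays within $\rho$ of its base point, apply the above argument to each pair, and telescope. Because the increments $t_{i+1}-t_i$ sum to $1$, the final bound is independent of the number of steps, and the only $\rho$-dependence in $C(F,\rho)$ enters through the Lipschitz constant of $F$ on $\mathcal{K}_\rho$.
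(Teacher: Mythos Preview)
Your proof is correct and follows essentially the same approach as the paper's. The paper constructs a single function $w = u + \phi_{r,R}(\cdot - x)\bigl(\|u-v\|_{L^\infty(\spt\phi)} - (u-v)\bigr)$ that touches $u$ from above at $x$ and then applies Lipschitz to the pair $(w,v)$, whereas you build the two-sided sandwich $\tilde v_\pm$ for $v$ and apply Lipschitz to $(\tilde v_\pm,u)$; unwinding the algebra, $w - v = -w_1 + M\phi_{r,R}$ while $\tilde v_+ - u = w_1 + M\phi_{r,R}$, so the resulting $C^\beta$ estimates are identical, and both arguments close via the same telescoping over the segment $[u,v]$.
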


\begin{remark}\label{remark:weak localization assumption versus proposition}
  It is worth comparing Proposition \ref{proposition:GCP implies weak localization} with Assumption \ref{assumption:tightness bound}. In the latter, one is interested in how $I(u,x)$ depends very little on the values of $u$  far away from $x$ (so, as $r\to \infty$), whereas the former deals with a weak version of this property that holds only for $r\in (0,1)$ but which follows alone from the GCP without the need for further assumptions on $F$.	
\end{remark}

\begin{proof}
  Take $\phi \in C^2_b(\mathbb{R}^d)$, such that $0\leq \phi \leq 1$ and $\phi(x)=0$. Then, for any $y$ we have
  \begin{align*}
    u(y) \leq w(y):= u(y)+ \phi(y) \left ( \|u-v\|_{L^\infty(\spt(\phi))}-(u(y)-v(y))\right ),
  \end{align*} 
  with the above being an equality for $y=x$. Now, let $\rho_0$ be chosen so that 
  \begin{align*}
    2\|\phi\|_{C^2(\mathbb{R}^d)}\rho_0 \leq \rho.
  \end{align*}
  Then, let us suppose that $u,v\in \mathcal{K}_{\rho}$ are such that $\|u-v\|_{C^\beta_b(\mathbb{R}^d)} \leq \rho_0$. In this case, we have $w\in \mathcal{K}$ since $u \in \mathcal{K}_\rho$ and in this case the GCP says that
  \begin{align*}
   F(u) \leq  F(w).
  \end{align*}
  Moreover, $F(w)\leq F(v)+L\|w-v\|_{C^\beta}$ and $w-v = (1-\phi) (u-v)+ \phi \|u-v\|_{L^\infty(\spt(\phi))}$, thus
  \begin{align*}
   F(u)-F(v) \leq L\|(1-\phi) (u-v)\|_{C^\beta}+L\|u-v\|_{L^\infty(\spt(\phi))}\|\phi\|_{C^\beta}.
  \end{align*}
  Consider the function $\phi(y) = \phi_{r,R}(y-x)$. Thanks to $r\in (0,1)$, the following estimates hold
  \begin{align*}
    \|\phi\|_{C^\beta} & \leq Cr^{-\beta},\\
    \|(1-\phi)(u-v)\|_{C^\beta} & \leq Cr^{-\beta} \|u-v\|_{C^\beta(B_{R+r})}.
  \end{align*}
  Substituting these in the inequality for $F(u)-F(v)$, the desired inequality follows when $\|u-v\|_{C^\beta}$ is no larger than $\rho_0$. Otherwise, $\|u-v\|_{C^\beta}\geq \rho_0$ and iterating the inequality in the previous case one obtains that
  \begin{align*}
    |F(u)-F(v)| \leq C(F,\rho)r^{-\beta}\left ( \|u-v\|_{C^\beta(B_{R+r}(x))}+\|u-v\|_{L^\infty(\mathbb{R}^d\setminus B_{R}(x))} \right ).
  \end{align*}	  
\end{proof}

\begin{lemma}\label{lemma:generalized gradients inherit GCP and weak localization}
  Let $F:\mathcal{K}\subset C^\beta_b(\mathbb{R}^d)\to\mathbb{R}$ be a Lipschitz functional which has the GCP with respect to $x$. Then, for every $\ell\in\partial F$ we have
  \begin{align*}
    \langle \ell,v\rangle\leq 0 \textnormal{ if } v\leq 0 \textnormal{ everywhere and } v(x)=0.	  
  \end{align*}	  
  In other words, if $F$ has the GCP with respect to $x$, then any $\ell$ arising as a generalized gradient of $F$ also has the GCP with respect to $x$. Furthermore, for any such $\ell$ and $r\in (0,1)$ we have
  \begin{align*}
    |\langle \ell,v\rangle | \leq C r^{-\beta}\left (  \|v\|_{C^\beta(B_r)} + \|v\|_{L^\infty(\mathbb{R}^d)} \right ).	  
  \end{align*}	  
\end{lemma}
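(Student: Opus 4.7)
The plan is to split the lemma into its two assertions and prove each by promoting the corresponding property of $F$ to its generalized gradient through the characterization $\langle\ell,v\rangle\le F^0(y_0,v)$ from Proposition \ref{proposition:generalized gradient properties}, together with the fact that one-sided linear inequalities are preserved under finite convex combinations.

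First I would treat the GCP inheritance. Fix $\ell \in \partial F(y_0)$ for some $y_0 \in \mathcal{K}$, and let $v \in C^\beta_b(\real^d)$ with $v \le 0$ on $\real^d$ and $v(x) = 0$. Since $\mathcal{K}$ is open, for all sufficiently small $t>0$ the function $y_0+tv$ lies in $\mathcal{K}$, sits below $y_0$ pointwise, and coincides with $y_0$ at the distinguished point $x$. The GCP of $F$ with respect to $x$ then gives $F(y_0+tv)\le F(y_0)$, so every difference quotient defining $F^0(y_0,v)$ is non-positive. Hence
\begin{align*}
\langle \ell,v\rangle \;\le\; F^0(y_0,v) \;\le\; 0.
\end{align*}
For a general $\ell \in \partial F$, one writes $\ell = \sum_i \lambda_i \ell_i$ with $\ell_i\in\partial F(y_i)$, $\lambda_i\ge 0$, and $\sum_i\lambda_i=1$; linearity in $\ell$ then yields $\langle\ell,v\rangle = \sum_i\lambda_i\langle\ell_i,v\rangle \le 0$.

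For the quantitative bound, for $\ell\in\partial F(y_0)$ I would combine $\langle\ell,v\rangle\le F^0(y_0,v)$ with $-\langle\ell,v\rangle = \langle\ell,-v\rangle \le F^0(y_0,-v)$ to obtain $|\langle\ell,v\rangle|\le \max\{F^0(y_0,v),F^0(y_0,-v)\}$. I would then apply Proposition \ref{proposition:GCP implies weak localization} to the pair $y_0\pm tv$ and $y_0$: openness of $\mathcal{K}$ supplies a $\rho>0$ with $y_0\in\mathcal{K}_\rho$, and taking parameters $r'=r/2$ and $R=r/2$ there (so that $R+r' = r$) yields, for $t$ small enough that the iteration branch in the proof of Proposition \ref{proposition:GCP implies weak localization} is not triggered,
\begin{align*}
\frac{|F(y_0\pm tv)-F(y_0)|}{t} \;\le\; C\, r^{-\beta}\bigl(\|v\|_{C^\beta(B_r(x))} + \|v\|_{L^\infty(\real^d)}\bigr),
\end{align*}
where the $L^\infty(\real^d\setminus B_{r/2}(x))$ term from the proposition has been dominated by the full $L^\infty$ norm. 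Taking $\limsup_{t\searrow 0}$ gives the claimed estimate on $F^0(y_0,\pm v)$, and the convex-combination argument again lifts the bound to arbitrary $\ell \in \partial F$.

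The whole argument is essentially bookkeeping once Proposition \ref{proposition:generalized gradient properties} and Proposition \ref{proposition:GCP implies weak localization} are in hand; no step is really an obstacle. The one subtlety worth flagging is the choice of parameters in Proposition \ref{proposition:GCP implies weak localization}, where $R>0$ is required; the point is that one can still take $R$ comparable to $r$ without degrading the final rate $r^{-\beta}$, which is exactly what makes the second assertion match the desired form involving $\|v\|_{C^\beta(B_r)}$.
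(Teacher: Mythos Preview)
Your proposal is correct and follows essentially the same approach as the paper: both pass the GCP to $F^0(y_0,v)$ via the difference quotients and then use $\langle\ell,v\rangle\le F^0(y_0,v)$, and both obtain the localization estimate by feeding Proposition~\ref{proposition:GCP implies weak localization} into the same limiting argument. You spell out more details than the paper does---in particular the convex-hull extension to all of $\partial F$ and the parameter choice $R=r'=r/2$ together with the observation that the iteration branch in Proposition~\ref{proposition:GCP implies weak localization} is avoided for small $t$---but the underlying mechanism is identical.
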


\begin{proof}
  Let $u\in \mathcal{K}$, and let $v\in C^\beta_b(\mathbb{R}^d)$ be such that
  \begin{align*}
    v\leq 0\textnormal{ in } \mathbb{R}^d, v(x)=0.  
  \end{align*}
  Then, $u_t = u+tv$ touches $u$ from below at $x$ for each small $t$, therefore $F(u_t)\leq F(u)$ for every $t$, and
  \begin{align*}
    F^0(u,v) = \limsup\limits_{t\to 0} \frac{F(u+tv)-F(u)}{t} \leq 0.
  \end{align*}	  
  Since,
  \begin{align*}
    \max \limits_{\ell\in \partial F(u)} \langle \ell,v\rangle = F^0(u,v),	  
  \end{align*}
  it follows that $\langle \ell,v\rangle\leq 0$ for any $\ell\in \partial F(u)$, and the first part of the Lemma is proved. For the second part, one argues similarly, except that instead of invoking the GCP, one applies Proposition \ref{proposition:GCP implies weak localization} in order to pass the same estimate for any $\ell \in \partial F$.
\end{proof}

Fix a functional $\ell$ having the GCP with respect to $x$. Then, define $C_{\ell}$ by
\begin{align}\label{eqn:C sub ell definition}
  C_{\ell} := \langle \ell,1\rangle.
\end{align}
This associates a constant $C_{\ell}$ to any $\ell$ having the GCP. Likewise, we shall associate a vector $B_{\ell}$ and positive semi-definite matrix $A_{\ell}$. First, let us introduce some notation, 
\begin{align}\label{eqFunctionalsGCP:DefOfSet-S}
  \mathcal{S} := \{ \phi \in C^2_c(B_2(0))\;\mid\; \phi \equiv 1 \textnormal{ in a neighborhood of } 0,\; 0 \leq \phi \leq 1 \textnormal{ in all of } \mathbb{R}^d \}.
\end{align}
Given $\phi,\eta\in\mathcal{S}$, define the function
\begin{align}\label{eqFunctionalsGCP:DefOfTalyorP}
  P_{\phi,\eta,u,x}(\cdot) = \left \{ \begin{array}{rl} 
    u(x)+\phi(\cdot-x)(\nabla u(x),\cdot-x)+\tfrac{1}{2}\eta(\cdot-x)(D^2u(x)(\cdot-x),\cdot-x) & \textnormal{ if } \beta \in [2,3) ,\\
    u(x)+\phi(\cdot-x)(\nabla u(x),\cdot-x) & \textnormal{ if } \beta \in [1,2) ,\\
    u(x) & \textnormal{ if } \beta \in (0,1).	\end{array}\right.
\end{align}
For $x=0$ we will simply write $P_{\phi,\beta,u}$. Observe that, for example, if $\beta=2$ then $P_{\phi,\eta,u,x}$ is a smooth function which, in a neighborhood of $x$, coincides with the second order Taylor polynomial of the function $u$ at the point $x$.

\begin{definition}\label{definition:auxiliary A_ell,eta and B_ell,phi}
  Given any $\phi \in \mathcal{S}$ let $B_{\ell,\phi}$ be the vector defined by
  \begin{align*}
    (B_{\ell,\phi},e) = \langle \ell, \phi(\cdot)(\cdot,e)\rangle,\;\;\forall\; \textnormal{ vectors } e.
  \end{align*}
  At the same time, given $\eta \in \mathcal{S}$ let $A_{\ell,\eta}$ be the symmetric matrix defined by
  \begin{align*}
    \textnormal{tr}(A_{\ell,\eta}M) = \langle \ell, \eta(\cdot)\tfrac{1}{2}(M(\cdot),\cdot)\rangle,\;\;\forall\; \textnormal{ symmetric matrices } M.
  \end{align*}
\end{definition}

The following lemmas will characterize all of functionals having the GCP with respect $0$ (compare with Courrege's original proof \cite{Courrege-1965formePrincipeMaximum}, see also \cite{GuSc-2016MinMaxNonlocalarXiv}).

\begin{lemma}\label{lemma:preliminary Courrege theorem for a linear functional}
  Let $\ell:C^\beta_b(\mathbb{R}^d)\to \mathbb{R}$ be a bounded linear functional which has the GCP with respect to $0$, and $\phi,\eta \in \mathcal{S}$ (defined in (\ref{eqFunctionalsGCP:DefOfSet-S})). There is a positive measure $\mu_{\ell}$ on $\mathbb{R}^d\setminus \{0\}$ with 
  \begin{align*}
    \int_{\mathbb{R}^d\setminus\{0\} } \min\{1,|y|^{\beta}\}\;\mu_{\ell}(dy) \leq C\|\ell\|,
  \end{align*}	  
  such that for any $u \in C^\beta_b(\mathbb{R}^d)$ we have the following representation, 
  \begin{align*}
	  &\text{for}\ \beta\geq 2,\ \text{and}\ u\in C^\beta_b(\real^d)\intersect C^2(0),\\
    &\ \ \ \ \ \langle \ell,u\rangle = C_{\ell}u(0)+(B_{\ell,\phi},\nabla u(0))+\tr(A_{\ell,\eta}D^2u(0))+\int_{\mathbb{R}^d}u(y)-P_{\phi,\eta,u}(y)\;\mu_\ell(dy),\\
	&\text{for}\ \beta\in[1,2),\ \text{and}\ u\in C^\beta_b(\real^d)\intersect C^1(0),\\
    &\ \ \ \ \ \langle \ell,u\rangle  = C_{\ell}u(0)+(B_{\ell,\phi},\nabla u(0))+\int_{\mathbb{R}^d}u(y)-P_{\phi,\eta,u}(y)\;\mu_\ell(dy),\\
	&\text{for}\ \beta\in(0,1),\ \text{and}\ u\in C^\beta_b(\real^d),\\
    &\ \ \ \ \ \langle \ell,u\rangle  = C_{\ell}u(0)+\int_{\mathbb{R}^d}u(y)-u(0)\;\mu_\ell(dy).
  \end{align*}	 
  (The notation, $C^2(0)$ and $C^1(0)$, appears in Definition \ref{def:PointwiseC1C2}.) 
\end{lemma}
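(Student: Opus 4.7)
The plan is to follow Courrège's approach at the base point $x=0$, in three pieces: construct $\mu_\ell$ via positivity on $C^\infty_c(\mathbb{R}^d\setminus\{0\})$, extract the local coefficients from the truncated Taylor polynomial $P_{\phi,\eta,u}$, and then simultaneously establish the $\min(1,|y|^\beta)$-integrability of $\mu_\ell$ together with the integral representation on general admissible $u$ by a single truncation argument.

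For the first piece, if $v\in C^\infty_c(\mathbb{R}^d\setminus\{0\})$ with $v\geq 0$ then $v(0)=0$, so $v$ touches the zero function from above at $0$; GCP yields $\langle\ell,v\rangle\geq\langle\ell,0\rangle=0$. Thus $\ell$ is a positive linear functional on $C^\infty_c(\mathbb{R}^d\setminus\{0\})$, and Riesz--Markov produces a unique positive Radon measure $\mu_\ell$ with $\langle\ell,v\rangle=\int v\,d\mu_\ell$ for all such $v$. The tail bound $\mu_\ell(\{|y|\geq 1\})\leq C\|\ell\|$ comes from testing against bumps $\zeta_n\in C^\infty_c(\{|y|>1/2\})$ with $\zeta_n\nearrow\ind_{\{|y|\geq 1\}}$ and $\|\zeta_n\|_{C^\beta_b}\leq C$, then using the Riesz--Markov representation for $\zeta_n$ together with boundedness of $\ell$ under monotone convergence.

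For the second piece, for admissible $u$ the truncated Taylor polynomial $P_{\phi,\eta,u}$ lies in $C^\beta_b$, and linearity of $\ell$ together with \eqref{eqn:C sub ell definition} and Definition \ref{definition:auxiliary A_ell,eta and B_ell,phi} give $\langle\ell,P_{\phi,\eta,u}\rangle=C_\ell u(0)+(B_{\ell,\phi},\nabla u(0))+\tr(A_{\ell,\eta}D^2u(0))$, with the Hessian term absent when $\beta<2$ and the gradient term absent when $\beta<1$. The remainder $r_u:=u-P_{\phi,\eta,u}$ satisfies $|r_u(y)|\leq C\|u\|_{C^\beta_b}\min(1,|y|^\beta)$ by the pointwise $C^\beta$-regularity of $u$ at $0$, and has vanishing jet at $0$ of the appropriate order to match $P_{\phi,\eta,r_u}\equiv 0$.

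The third piece is the technical heart and where I expect the main difficulty. Choose a test function $W\in C^\beta_b$ with $W(y)\geq\tfrac12\min(1,|y|^\beta)$ and vanishing jet at $0$ matched to $P_{\phi,\eta,W}\equiv 0$ (for $\beta\in(0,1)$ one can take $W(y)=\min(1,|y|^\beta)$; for larger $\beta$ one takes a smoothed version of $|y|^\beta\chi(y)+(1-\chi(y))$ with the jet corrected to vanish). Truncating by $\psi_{\rho,R}$ from \eqref{equation:psi sub r R}, Riesz--Markov gives $\langle\ell,W\psi_{\rho,R}\rangle=\int W\psi_{\rho,R}\,d\mu_\ell$, while Proposition \ref{proposition:GCP implies weak localization} and Lemma \ref{lemma:generalized gradients inherit GCP and weak localization} control the error $\langle\ell,W(1-\psi_{\rho,R})\rangle$: the near-origin piece is bounded by $C\rho^{-\beta}\|W\|_{C^\beta(B_{2\rho})}=o(1)$ as $\rho\to 0$ since $W$ vanishes to order $\beta$ at $0$, while the far-field piece vanishes as $R\to\infty$ by Lipschitz continuity of $\ell$ and boundedness of $W$. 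Passing to the limit yields $\int\min(1,|y|^\beta)\,d\mu_\ell\leq C\|\ell\|$, and applying the identical truncation procedure to $r_u$ in place of $W$ gives $\langle\ell,r_u\rangle=\int r_u\,d\mu_\ell$, which together with the second piece produces the stated representation in all three cases. The case-by-case bookkeeping needed to match $W$'s jet to $P_{\phi,\eta,W}$ across the three $\beta$-regimes, and to verify uniformly that the weak localization estimate gives $o(1)$ as $\rho\to 0$, is where the argument is most delicate.
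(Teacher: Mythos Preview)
Your overall strategy---Riesz--Markov on $C^\infty_c(\mathbb{R}^d\setminus\{0\})$ to produce $\mu_\ell$, then recover the full representation by a two-sided truncation---is a legitimate Courr\`ege-type route, and your first two pieces are fine. The gap is in the third piece, exactly where you flag the difficulty.

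The claim that the near-origin error is $C\rho^{-\beta}\|W\|_{C^\beta(B_{2\rho})}=o(1)$ is false. The weak-localization bound (Proposition~\ref{proposition:GCP implies weak localization} or Lemma~\ref{lemma:generalized gradients inherit GCP and weak localization}) carries the \emph{full} $C^\beta$ norm on the inner ball, not just the sup norm, and for any $W$ that genuinely behaves like $|y|^\beta$ near $0$ one has $[W]_{C^\beta(B_{2\rho})}\geq c>0$ uniformly (take $y=0$, $x$ small in the H\"older quotient). Thus $\|W\|_{C^\beta(B_{2\rho})}=O(1)$ and $\rho^{-\beta}\cdot O(1)\to\infty$. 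The same obstruction hits the truncated remainder $r_u$: a function vanishing exactly to order $\beta$ at $0$ has $C^\beta$-seminorm of order one on arbitrarily small balls, and multiplying by a cutoff of scale $\rho$ only makes this worse. Your far-field claim is likewise unjustified: boundedness of $\ell$ gives $|\langle\ell,W(1-\theta_R)\rangle|\leq C\|W(1-\theta_R)\|_{C^\beta_b}=O(1)$, and this lemma does not assume the tightness of Assumption~\ref{assumption:tightness bound}. A secondary problem: when $\beta\in\{2,2^+\}$ no $W\in C^\beta_b\cap C^2(0)$ can satisfy both $P_{\phi,\eta,W}\equiv 0$ (forcing $D^2W(0)=0$, hence $W(y)=o(|y|^2)$) and $W(y)\geq\tfrac12|y|^2$.

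The paper sidesteps all of this with a single weight trick that you should adopt: set $\tilde\ell(w):=\langle\ell,\,w\cdot\tfrac{|x|^\beta}{1+|x|^\beta}\rangle$. One application of the GCP (the function $\tilde w$ is touched from above at $0$ by $\|w\|_{L^\infty}\cdot\tfrac{|x|^\beta}{1+|x|^\beta}$) gives $|\tilde\ell(w)|\leq C\|w\|_{L^\infty}$, so $\tilde\ell$ extends to a bounded positive functional on $C^0_b$, represented by a finite positive measure $\tilde\mu$. Then $\mu_\ell(dy):=\tfrac{1+|y|^\beta}{|y|^\beta}\tilde\mu(dy)$ carries the $\min(1,|y|^\beta)$-integrability for free, and $\langle\ell,r_u\rangle=\int r_u\,d\mu_\ell$ follows immediately with no truncation at either end. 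The weight fails to lie in $C^\beta$ only at $\beta=1$, which is handled by working at $\beta+\varepsilon$ and letting $\varepsilon\searrow 0$.
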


\begin{remark}
	We want to note that the dependence of $\mu$ only on $\ell$ is not a typo.  Even though the vector $B_{\ell,\phi}$ and matrix $A_{\ell,\eta}$ clearly depend on the functions $\phi$ and $\eta$, the reader can see in the proof in (\ref{eqFunctionalsGCP:DefOfMuEll}) that $\mu_\ell$ does not depend on $\phi$ or $\eta$. 
\end{remark}

\begin{proof}
  It suffices to prove the representation formula for $u \in C^2_b(\mathbb{R}^d)$ (even if $\beta \neq 2$), as it trivially extends to all of $C^\beta_b(\mathbb{R}^d)$ by approximation. We fix $u \in C^2_b(\mathbb{R}^d)\intersect C^2(0)$.  We recall $P_{\phi,\eta,u}$ is defined in (\ref{eqFunctionalsGCP:DefOfTalyorP}). Since $P_{\phi,\eta,u} \in C^\beta_b(\mathbb{R}^d)$ for each fixed $\phi,\eta$, we may write
  \begin{align*}
    u & = u-P_{\phi,\eta,u}+P_{\phi,\eta,u},
  \end{align*}
  and linearity gives
  \begin{align*}
    \langle \ell,u\rangle = \langle \ell, P_{\phi,\eta,u}\rangle+\langle \ell,u-P_{\phi,\eta,u}\rangle
  \end{align*}
  Let us study each of these two terms. Using the definition of $C_\ell,B_{\ell,\phi},$ and $A_{\ell,\eta}$, we have for $\beta\geq2$
  \begin{align*}
    \langle \ell,P_{\phi,\eta,u}\rangle & = u(0) \langle \ell,1\rangle + \sum \limits_{i=1}^d\partial_i u(0)\langle \ell, x_i \phi(x)\rangle +\frac{1}{2}\sum \limits_{i,j=1}^d \partial_{ij}^2u(0) \langle \ell, \eta(x)x_ix_j\rangle\\
      & = C_{\ell}u(0)+(B_{\ell,\phi},\nabla u(0))+\tfrac{1}{2}\textnormal{tr}(A_{\ell,\eta}D^2u(0)),
  \end{align*}
  as well as the corresponding expressions in the other cases when $\beta<2$. Next, we analyze the second term in the expression for $\langle \ell,u\rangle$ above, that is
  \begin{align*}   
    \langle \ell, u-P_{\phi,\eta,u}\rangle.
  \end{align*}
  First take the case $\beta \neq 1$. Given $w \in C^\beta_b(\mathbb{R}^d)$, define $\tilde w$ by
  \begin{align*}
    \tilde w(x) & := w(x) \frac{|x|^\beta}{1+|x|^\beta}.
  \end{align*}
  Observe that since $\beta \neq 1$, the function $\tilde 1 = |x|^\beta(1+|x|^\beta)^{-1}$ belongs to $C^\beta_b(\mathbb{R}^d)$. The linear transformation $w \mapsto \tilde w$ defines a linear functional $\tilde \ell$ via the relation
  \begin{align*}
    \langle \tilde \ell, w\rangle := \langle \ell, \tilde w \rangle.
  \end{align*}
  This clearly defines a bounded functional on $C^\beta_b(\mathbb{R}^d)$. In fact, however, this functional extends uniquely to a bounded functional in $C^0_b(\mathbb{R}^d)$: since $\tilde w$ is touched from above at $0$ by the function $\|w\|_{L^\infty}\tilde 1$, the GCP guarantees that 
  \begin{align*}
    |\langle \tilde \ell, w\rangle| \leq \|w\|_{L^\infty}\langle \ell,  \tfrac{|x|^\beta}{1+|x|^\beta} \rangle.
  \end{align*}
  This shows $\tilde \ell$ is a uniquely defined continuous functional on $C_b^0(\mathbb{R}^d)$ whose norm as a functional on $C_b^0(\mathbb{R}^d)$ is no larger than $\|\ell\| \| \tfrac{|x|^\beta}{1+|x|^\beta}\|_{C^\beta}$. It follows there is a measure $\tilde \mu$ such that
  \begin{align}\label{eqFunctionalsGCP:DefOfMuEll}
    \langle \tilde \ell, w\rangle = \int_{\mathbb{R}^d} w(y)\;\tilde \mu(dy).
  \end{align}
  Moreover, since $\langle \tilde \ell,w\rangle \geq 0$ whenever $w\geq 0$, $\tilde \mu(dy)$ is a non-negative measure. Now, since $u \in C^2_b(\mathbb{R}^d)$, we have that the function 
  \begin{align*}
    w(x) := (u(x)-P_{\phi,\eta,u}(x)) \frac{1+|x|^\beta}{|x|^\beta},
  \end{align*}
  remains continuous as $x \to 0$, so $w \in C^0_b(\mathbb{R}^d)$ and thus $\langle \tilde \ell,w\rangle$ is well defined. In this case, we have
  \begin{align*}
    \langle \ell, u-P_{\phi,\eta,u} \rangle = \langle \tilde \ell,w \rangle,
  \end{align*}
  and we obtain the formula
  \begin{align*}
    \langle \ell, u-P_{\phi,\eta,u}\rangle = \int_{\mathbb{R}^d}\left ( u(y)-P_{\phi,\eta,u}(y)\right ) \frac{1+|y|^\beta}{|y|^\beta}\;\tilde \mu(dy). 
  \end{align*}
  In particular, taking $\mu(dy) :=\tfrac{1+|y|^\beta}{|y|^\beta} \tilde \mu(dy)$, it follows that
  \begin{align*}
    \int_{\mathbb{R}^d\setminus\{0\}}\min\{1,|y|^\beta\}\mu(dy) \lesssim \|\ell\| \| \tfrac{|x|^\beta}{1+|x|^\beta}\|_{C^\beta}<\infty,
  \end{align*}
  and 
  \begin{align*}
    \langle \ell, u-P_{\phi,\eta,u}\rangle = \int_{\mathbb{R}^d\setminus\{0\} }u(y)-P_{\phi,\eta,u}(y)\;\mu(dy). 	  
  \end{align*}	  
  Revisiting the expression of $\ell$, we have when $\beta\geq2$
  \begin{align*}
    \langle \ell,u\rangle = C_{\ell}u(0)+(B_{\ell,\phi},\nabla u(0))+\tfrac{1}{2}\textnormal{tr}(A_{\ell,\eta}D^2u(0)) + \int_{\mathbb{R}^d\setminus\{0\} }u(y)-P_{\phi,\eta,u}(y)\;\mu(dy),
  \end{align*}  
  and the analogous formulas follow for the other cases where $\beta \neq 1$, per the change in definition of the function $P_{\phi,\eta,u}$ in (\ref{eqFunctionalsGCP:DefOfTalyorP}). It remains to consider the case $\beta = 1$.
  
  Since $|x|$ is not a $C^1$ function, we are going to approximate it by a more regular function. For every small $\varepsilon>0$ we repeat the argument above with $\beta = 1+\varepsilon$ and conclude that for some $\mu_\varepsilon$ we have the formula
  \begin{align*}
    \langle \ell,u\rangle = C_{\ell}u(0)+(B_{\ell,\phi},\nabla u(0))+ \int_{\mathbb{R}^d\setminus\{0\} }u(y)-P_{\phi,\eta,u}(y)\;\mu_{\varepsilon}(dy),
  \end{align*}  
  and this measure $\mu_\varepsilon$ is positive and satisfies the bound
  \begin{align*}
    \int_{\mathbb{R}^d\setminus\{0\}}\min\{1,|y|^\beta\}\mu(dy) \lesssim \|\ell\| \| \tfrac{|x|^{1+\varepsilon}}{1+|x|^{1+\varepsilon}}\|_{C^1}.
  \end{align*}
  Since
  \begin{align*}
    \sup \limits_{\varepsilon \in (0,1)} \| \tfrac{|x|^{1+\varepsilon}}{1+|x|^{1+\varepsilon}}\|_{C^1} < \infty,
  \end{align*}
  it follows that the respective finite measures $\{\tilde \mu_{\varepsilon} \}_{\varepsilon \in (0,1)}$ have uniformly bounded mass. Therefore, it is not difficult to show (using $\ell$ to get tightness for the $\tilde \mu_\varepsilon$) that along a subsequence $\varepsilon \to 0$ we can find a limit $\tilde \mu$, and if we let $\mu:= (1+|y|)|y|^{-1}\tilde \mu$ then
  \begin{align*}
    \int_{\mathbb{R}^d\setminus \{0\}}\min\{1,|y|\}\mu(dy)<\infty, 
  \end{align*}	  
  and again, for any $u\in C^2_b(\mathbb{R}^d)$,
  \begin{align*}
    \langle \ell,u\rangle = C_{\ell}u(0)+(B_{\ell,\phi},\nabla u(0))+ \int_{\mathbb{R}^d\setminus\{0\} }u(y)-P_{\phi,\eta,u}(y)\;\mu(dy),
  \end{align*}  
  
\end{proof}

We consider the following special functions. For $\delta>0$, define (see \eqref{equation:psi sub r R} for definition of $\psi_{r,R}$)
\begin{align}
  \phi_\delta(x) & :=	\psi_{\delta,1-2\delta}, \label{equation:auxiliary phi_delta}\\
  \eta_\delta(x) & := \psi_{\delta,\delta}(x).\label{equation:auxiliary eta_delta}
\end{align}
Note that $\phi_\delta \equiv 1$ inside $B_{1-2\delta}$ and $\phi_\delta \equiv 0$ outside $B_{1-\delta}$, while $\eta_\delta \equiv 1$ inside $B_\delta$ and $\eta_\delta \equiv 0$ outside $B_{2\delta}$. Furthermore, we note that $\delta \leq \delta'$ implies that $\eta_{\delta}\leq \eta_{\delta'}$.

\begin{lemma}\label{lemma:A_ell and B_ell existence}
  Assume that $\beta\in[0,3)$, $l:C^{\beta}_b(\real^d)\to\real$ is a bounded linear functional with the GCP with respect to $0$, and that $A_{\ell,\eta}$, $B_{\ell,\phi}$ are as in Definition \ref{definition:auxiliary A_ell,eta and B_ell,phi}.  Taking $\eta_\delta$ as in \eqref{equation:auxiliary eta_delta}, the limit
  \begin{align*}  	
    A_{\ell} & := \lim\limits_{\delta \searrow 0} A_{\ell,\eta_\delta},
  \end{align*}   
  exists for all $\beta \in [0,3)$, and $A_\ell\equiv0$ if $\beta<2$. Moreover, if $\phi_\delta$ is as in \eqref{equation:auxiliary phi_delta}, there is a sequence $\delta_k\searrow 0$ such that the following limit exists
  \begin{align*}
    B_{\ell} := \lim\limits_{k\to \infty} B_{\ell,\phi_{\delta_k}}.	
  \end{align*}
\end{lemma}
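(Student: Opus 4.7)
\medskip

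\noindent \textbf{Proof plan.} The two limits are produced by different mechanisms. For $A_\ell$, the quadratic form $(My,y)$ has a definite sign when $M \geq 0$, which upgrades the pointwise inequality $\eta_{\delta_1} \leq \eta_{\delta_2}$ into a monotonicity statement for $A_{\ell,\eta_\delta}$ in the PSD order, giving a full limit as $\delta \searrow 0$. For $B_\ell$, however, the function $y \mapsto (y,e)$ changes sign, so the analogous monotonicity fails. Instead I would bound $|B_{\ell,\phi_\delta}|$ uniformly in $\delta$ using the localization estimate in Lemma \ref{lemma:generalized gradients inherit GCP and weak localization} (noting that the bounded linear functional $\ell$ is its own generalized gradient, since $\partial\ell \equiv \{\ell\}$) and then extract a convergent subsequence by Bolzano--Weierstrass in $\mathbb{R}^d$.

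\medskip

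\noindent \textbf{Existence and monotonicity of $A_\ell$.} For an arbitrary PSD matrix $M$, set $u_\delta(y) := \tfrac{1}{2}\eta_\delta(y)(My,y)$. Since $\eta_\delta \leq \eta_{\delta'}$ pointwise whenever $\delta \leq \delta'$ and $(My,y) \geq 0$, one has $u_{\delta_1} \leq u_{\delta_2}$ on all of $\mathbb{R}^d$ with equality at $y=0$. The GCP of $\ell$ with respect to $0$ then gives $\ell(u_{\delta_1}) \leq \ell(u_{\delta_2})$, i.e.\ $\tr(A_{\ell,\eta_{\delta_1}}M) \leq \tr(A_{\ell,\eta_{\delta_2}}M)$. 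Similarly, comparing $u_\delta \geq 0$ against the zero function gives $\tr(A_{\ell,\eta_\delta}M) \geq 0$, so each $A_{\ell,\eta_\delta}$ is itself PSD. Thus $\{A_{\ell,\eta_\delta}\}_{\delta > 0}$ is monotonically decreasing in the PSD order as $\delta \searrow 0$ and bounded below by $0$, so the full limit $A_\ell$ exists in $\mathbb{S}_d$. When $\beta < 2$, I would additionally verify the scaling bound
\[
\|u_\delta\|_{C^\beta(\mathbb{R}^d)} \leq C|M|\delta^{2-\beta},
\]
which follows from $\|u_\delta\|_{L^\infty} \lesssim |M|\delta^2$, $\|\nabla u_\delta\|_{L^\infty} \lesssim |M|\delta$, and interpolation across the transition scale $|y_1 - y_2| \sim \delta$ between the two regimes. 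Then $|\tr(A_{\ell,\eta_\delta}M)| \leq \|\ell\|_{(C^\beta_b)^*}\|u_\delta\|_{C^\beta} \to 0$, forcing $A_\ell = 0$.

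\medskip

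\noindent \textbf{Subsequential limit for $B_\ell$.} For a unit vector $e$, set $v_\delta(y) := \phi_\delta(y)(y,e)$. Then $v_\delta(0)=0$ and $\|v_\delta\|_{L^\infty(\mathbb{R}^d)} \leq 1$ uniformly in $\delta$. Fix any $r \in (0, 1/2)$. For all $\delta < (1-r)/2$ one has $\phi_\delta \equiv 1$ on $B_r$, so $v_\delta(y) = (y,e)$ there and $\|v_\delta\|_{C^\beta(B_r)}$ is bounded independently of $\delta$. Applying the second conclusion of Lemma \ref{lemma:generalized gradients inherit GCP and weak localization} yields
\[
|B_{\ell,\phi_\delta} \cdot e| = |\ell(v_\delta)| \leq C r^{-\beta}\bigl(\|v_\delta\|_{C^\beta(B_r)} + \|v_\delta\|_{L^\infty(\mathbb{R}^d)}\bigr) \leq C_r,
\]
uniformly in $\delta$ and in the unit vector $e$. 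Hence $\{B_{\ell,\phi_\delta}\}$ is a bounded sequence in $\mathbb{R}^d$, and Bolzano--Weierstrass produces a subsequence $\delta_k \searrow 0$ along which $B_{\ell,\phi_{\delta_k}} \to B_\ell$ for some $B_\ell \in \mathbb{R}^d$.

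\medskip

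\noindent \textbf{Expected difficulty.} No single step is hard given the machinery already in place; the main conceptual content is recognizing why the two limits behave differently. The one-sided nature of GCP interacts cleanly with signed quadratics $M \geq 0$ but not with the sign-indefinite first-order test functions, which is precisely why one obtains a full limit for $A_\ell$ but only a subsequential limit for $B_\ell$. The mildest technical point is the scaling estimate for $\|u_\delta\|_{C^\beta}$ when $\beta$ is close to $2$, which only requires splitting the H\"older difference quotient into increments $|y_1 - y_2| \lessgtr \delta$.
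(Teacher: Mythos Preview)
Your proposal is correct and follows essentially the same approach as the paper: monotonicity via the GCP (on PSD test matrices) to get the full limit for $A_\ell$, and the localization estimate of Lemma~\ref{lemma:generalized gradients inherit GCP and weak localization} plus Bolzano--Weierstrass for $B_\ell$. The only cosmetic differences are that the paper bounds $\tr(A_{\ell,\eta_\delta}M)$ above by $C|M|\max_{ij}\|\eta_\delta x_ix_j\|_{C^\beta}$ and then splits a general $M$ as $M^+-M^-$, whereas you invoke the PSD-order monotone convergence theorem directly; both are equivalent.
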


\begin{proof}
  Let $\eta_1,\eta_2 \in \mathcal{S}$ and such that $\eta_1\leq \eta_2$. Then for any positive semi-definite $M$ we have
  \begin{align*}
    \tfrac{1}{2}\eta_1(x)(Mx,x)\leq \tfrac{1}{2}\eta_2(x)(Mx,x),\;\textnormal{ with equality at } x=0.
  \end{align*}
  Since $\ell$ has the GCP with respect to $0$, it follows that
  \begin{align*}
    \langle \ell, \tfrac{1}{2}\eta_1(x)(Mx,x)\rangle \leq \langle \ell,\tfrac{1}{2}\eta_2(x)(Mx,x)\rangle.
  \end{align*}
  From this monotonicity and the elementary inequality $|\langle \ell, \tfrac{1}{2}\eta(x)(Mx,x)\rangle| \leq C|M|\max_{ij}\|\eta x_ix_j \|_{C^\beta}$ we conclude that the following limit exists for every positive semi-definite $M$
  \begin{align*}
    \lim \limits_{\delta \searrow 0} \langle \ell, \tfrac{1}{2}\eta_{\delta}(x)(Mx,x)\rangle.
  \end{align*}	  
  At the same time, when $\beta<2$ we have $\|\eta_{\delta}x_ix_j\|_{C^\beta} \to 0$ as $\delta \searrow 0$ for all $i,j$, so in this case the limit is zero. Now, given a symmetric matrix $M$, write $M = M^+-M^-$, where both $M^+$ and $M^-$ are positive semi-definite. Then, we also have that the limit
  \begin{align*}
    \lim \limits_{\phi\in\mathcal{S},\eta\searrow 0} \langle \ell, \tfrac{1}{2}\eta(x)(Mx,x)\rangle
  \end{align*}
  exists for any symmetric matrix $M$. It is clear then that this limit is linear as a function of $M$, and therefore, there is a unique symmetric matrix $A_{\ell}$ such that 
  \begin{align}\label{eqn:A sub ell definition}
    \tr(A_{\ell} M) = \lim\limits_{\eta \searrow 0}\langle \ell, \frac{1}{2}\eta(x)(Mx,x)\rangle.
  \end{align}
  Moreover, this matrix $A_{\ell}$ is positive semi-definite and $A_{\ell,\eta_\delta} \to A_{\ell}$ as $\delta \searrow 0$, and $A_{\ell} = 0$ when $\beta<2$. It remains to analyze the limit of $B_{\ell,\phi_{\delta}}$ along a subsequence. For every $\delta \in (0,1)$
  \begin{align*}
    (B_{\phi_\delta})_i = \langle \ell,\phi_\delta x_i\rangle.
  \end{align*}
  Now, recall the estimate from Lemma \ref{lemma:generalized gradients inherit GCP and weak localization}, which implies
  \begin{align*}
    |\langle \ell,\phi_\delta x_i\rangle| & \leq C(\|\phi_\delta x_i\|_{C^\beta(B_{1/2})}+\|\phi_\delta x_i\|_{L^\infty(\mathbb{R}^d)}).
  \end{align*}	
  A direct computation shows that
  \begin{align*}
    \sup\limits_{0<\delta<1}\|\phi_\delta x_i\|_{C^\beta(B_{1/2})}<\infty.
  \end{align*}
  It follows that
  \begin{align*}
    \sup \limits_{0<\delta<1} |B_{\phi_\delta}| < \infty,
  \end{align*}	 
  and by compactness, there must be a subsequence $\delta_k \to 0$ for which $\{B_{\ell,\phi_{\delta_k}}\}_k$ converges.

\end{proof}

\begin{lemma}\label{lemma:Courrege theorem for a linear functional}
  Assume that $\beta\in[0,3)$. Let $\ell:C^\beta_b(\mathbb{R}^d)\to \mathbb{R}$ be a bounded linear functional which has the GCP with respect to $0$. For $\beta\geq 2$ and any $u \in C^\beta_b(\mathbb{R}^d)\intersect C^2(0)$, we have the representation
  \begin{align*}
    \langle \ell,u\rangle & = C_{\ell}u(0)+(B_{\ell},\nabla u(0))+\tr(A_{\ell}D^2u(0))+\int_{\mathbb{R}^d}u(y)-u(0)-\chi_{B_1(0)}(\nabla u(0),y)\;\mu_\ell(dy).
  \end{align*}	
  This representation is unique. This means that if there were $\tilde C$, $\tilde B$, $\tilde A$ and $\tilde \mu$ a measure in $\mathbb{R}^d\setminus \{0\}$ all such that
  \begin{align*}
    \langle \ell,u\rangle & = \tilde Cu(0)+(\tilde B,\nabla u(0))+\tr(\tilde AD^2u(0))+\int_{\mathbb{R}^d}u(y)-u(0)-\chi_{B_1(0)}(\nabla u(0),y)\;\tilde \mu(dy).
   \end{align*}
   for all $u$, then $\tilde C = C_\ell$, $\tilde B = B_\ell$, $\tilde A = A_\ell$, and $\tilde \mu = \mu_\ell$. Furthermore, if $\beta<2$ and $u\in C^\beta(\real^d)\intersect C^1(0)$, then $A_{\ell}=0$, and if $\beta<1$, then $B_{\ell} = 0$ and the integrand on the right can be replaced with just $u(y)-u(0)$.
 
\end{lemma}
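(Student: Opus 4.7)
The plan is to derive the canonical representation by absorbing the smooth-cutoff terms of Lemma \ref{lemma:preliminary Courrege theorem for a linear functional} into compensated coefficients and then passing to the limit along the sequence supplied by Lemma \ref{lemma:A_ell and B_ell existence}. Fix $u \in C^\beta_b(\real^d)\intersect C^2(0)$ in the case $\beta\geq 2$, and set $\phi=\phi_{\delta_k}$, $\eta=\eta_{\delta_k}$. The key algebraic step is to split the integrand as
\begin{align*}
u(y)-P_{\phi_{\delta_k},\eta_{\delta_k},u}(y) = \bigl[u(y)-u(0)-\chi_{B_1}(y)(\nabla u(0),y)\bigr] + (\chi_{B_1}-\phi_{\delta_k})(y)(\nabla u(0),y) - \tfrac{1}{2}\eta_{\delta_k}(y)(D^2u(0)y,y),
\end{align*}
and then absorb the last two pieces against $\mu_\ell$ into compensated coefficients
\begin{align*}
\tilde B_{\ell,\phi} := B_{\ell,\phi}+\int(\chi_{B_1}-\phi)(y)\,y\,\mu_\ell(dy), \qquad \tr(\tilde A_{\ell,\eta}M) := \tr(A_{\ell,\eta}M)-\tfrac{1}{2}\int \eta(y)(My,y)\,\mu_\ell(dy).
\end{align*}

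The next step is to pass $\delta_k\to 0$ and show $\tilde B_{\ell,\phi_{\delta_k}}\to B_\ell$ and $\tilde A_{\ell,\eta_{\delta_k}}\to A_\ell$. For the $A$-correction, the support of $\eta_{\delta_k}$ lies in $B_{2\delta_k}$, so $\eta_{\delta_k}(y)y_iy_j$ is pointwise dominated by $|y|^2\chi_{B_{2\delta_k}}(y)\leq \min(1,|y|^2)$ and tends to $0$ a.e.; dominated convergence then sends the correction to $0$, using that $\int \min(1,|y|^2)\mu_\ell(dy)<\infty$ (which we obtain by restricting $\ell$ to $C^2_b(\real^d)\subset C^\beta_b(\real^d)$ and applying Lemma \ref{lemma:preliminary Courrege theorem for a linear functional} with $\beta=2$; the uniqueness of $\mu_\ell$, proved below, ensures this is the same measure). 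The $B$-correction vanishes analogously, with the integrand supported in the shrinking annulus $\{1-2\delta_k \leq |y|<1\}$ and dominated by $|y|\chi_{\bar B_1}$, which is $\mu_\ell$-integrable. Combined with Lemma \ref{lemma:A_ell and B_ell existence}, this yields the stated formula. For the cases $\beta<2$ and $\beta<1$, the corresponding reduced versions of the preliminary lemma already omit the $A$-term (resp.\ $B$-term), so one directly reads off $A_\ell=0$ (resp.\ $B_\ell=0$) and the prescribed simpler integrand.

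For uniqueness, suppose another quadruple $(\tilde C,\tilde B,\tilde A,\tilde \mu)$ gives an identical representation. Subtracting the two expressions and testing against smooth $u$ compactly supported in $\real^d\setminus\{0\}$ with vanishing jet at $0$ yields $\int u\,d\mu_\ell=\int u\,d\tilde\mu$ for a class of test functions dense in $C_c(\real^d\setminus\{0\})$, so $\tilde\mu=\mu_\ell$. With the measures equal, the residual identity
\begin{align*}
0=(C_\ell-\tilde C)u(0)+(B_\ell-\tilde B,\nabla u(0))+\tr((A_\ell-\tilde A)D^2u(0))
\end{align*}
must hold for every admissible $u$, and selecting $u$ with independently prescribed $(u(0),\nabla u(0),D^2u(0))$ forces $\tilde C=C_\ell$, $\tilde B=B_\ell$, $\tilde A=A_\ell$.

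The main technical obstacle is bookkeeping the $\mu_\ell$-integrability thresholds as the cutoff dependence is removed. In particular, for $\beta\geq 2$ the sharper bound $\int\min(1,|y|^2)\mu_\ell(dy)<\infty$ is essential to make sense of the final integrand near $y=0$ and to justify the dominated convergence used in the $A$-correction; this is why one re-invokes the preliminary lemma on $C^2_b(\real^d)$ rather than relying only on the nominal $\beta$-weighted integrability supplied directly.
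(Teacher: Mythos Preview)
Your proof is correct and follows essentially the same route as the paper: invoke the preliminary representation with cutoffs $\phi_{\delta_k},\eta_{\delta_k}$ and let $\delta_k\to 0$ via Lemma~\ref{lemma:A_ell and B_ell existence}; the paper merely organizes the limit in two stages (first $\eta_{\delta'}\searrow 0$, then $\phi_{\delta_k}\to\chi_{B_1}$ by monotone convergence) whereas you split the integrand and remove both corrections by dominated convergence. One minor slip: for $\beta>2$ the inclusion $C^2_b\subset C^\beta_b$ is false (it goes the other way), so your justification of $\int\min(1,|y|^2)\,\mu_\ell(dy)<\infty$ by ``restricting $\ell$'' does not work as stated; the clean fix is to rerun the construction in Lemma~\ref{lemma:preliminary Courrege theorem for a linear functional} with the smooth weight $|x|^2/(1+|x|^2)\in C^\beta_b$ in place of $|x|^\beta/(1+|x|^\beta)$, which yields the same $\mu_\ell$ (it is determined by $\langle\ell,u\rangle$ for $u$ supported away from $0$) but with the required quadratic integrability near the origin.
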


\begin{proof}  
  Let $\delta,\delta'\in(0,1)$. Applying Lemma \ref{lemma:preliminary Courrege theorem for a linear functional} with the functions $\phi_{\delta}$ and $\eta_{\delta'}$,
  \begin{align*}
    \langle \ell,u\rangle & = C_{\ell}u(0)+(B_{\ell,\phi_{\delta}},\nabla u(0))+\tr(A_{\ell,\eta_{\delta'}}D^2u(0))+\int_{\mathbb{R}^d}u(y)-P_{\phi_{\delta},\eta_{\delta'},u}(y)\;\mu_\ell(dy).
  \end{align*}	  
  Since $\min\{1,|y|^\beta\}$ is integrable against $\mu_\ell$, it follows that  
  \begin{align*}
    \lim \limits_{\delta' \searrow 0}\int_{\mathbb{R}^d\setminus\{0\}} \eta_{\delta'}(y)(D^2u(0)y,y)\;\mu_\ell(dy) = 0.
  \end{align*}
  Therefore, 
  \begin{align*}
    \lim \limits_{\delta' \searrow 0}\int_{\mathbb{R}^d\setminus\{0\}} u(y)-P_{\phi_{\delta},\eta_{\delta'},u}(y)\;\mu_\ell(dy) = \int_{\mathbb{R}^d\setminus\{0\}} u(y)-u(0)-\phi_{\delta}(y)(\nabla u(0),y)\;\mu_\ell(dy).
  \end{align*}
  Then, thanks to Lemma \ref{lemma:A_ell and B_ell existence}, the formula for $\langle \ell,u\rangle$ becomes (for every fixed $\delta\in (0,1)$)
  \begin{align*}
    \langle \ell,u\rangle = C_{\ell}u(0)+(B_{\ell,\phi_\delta},\nabla u(0))+\tr(A_{\ell}D^2u(0))+\int_{\mathbb{R}^d\setminus \{0\}} u(y)-u(0)-\phi(y)(\nabla u(0),y)\;\mu_\ell(dy). 
  \end{align*}
  Now, let $\delta_k\searrow 0$ be chosen so that $B_{\ell \phi_{\delta_k}}\to B_{\ell}$ (which can be done thanks to Lemma \ref{lemma:A_ell and B_ell existence}). From the definition of $\phi_{\delta}$, we have that
  \begin{align*}
    u(y)-u(0)-\phi_{\delta_{k}}(y)(\nabla u(0),y) \textnormal{ is monotone in } k.
  \end{align*}
  At the same time, for every $y\in \mathbb{R}^d$ we have
  \begin{align*}
    \lim \limits_{k\to\infty} \phi_{\delta_k}(y) = \chi_{B_1(0)}.
  \end{align*}
  Therefore, by  monotone convergence we have
  \begin{align*}
    \lim\limits_{k\to \infty} \int_{\mathbb{R}^d\setminus \{0\} } u(y)-u(0)-\phi_{\delta_k}(y)(\nabla u(0),y) \;\mu_\ell(dy) = \int_{\mathbb{R}^d\setminus \{0\} } u-u(0)-\chi_{B_1}(y)(\nabla u(0),y) \;\mu_\ell(dy). 	
  \end{align*}
  From where it follows that
  \begin{align*}
    \langle \ell,u\rangle = C_{\ell}u(0)+(B_{\ell},\nabla u(0))+\tr(A_{\ell}D^2u(0))+\int_{\mathbb{R}^d\setminus \{0\}} u(y)-u(0)-\chi_{B_1(0)}(y)(\nabla u(0),y)\;\mu_\ell(dy),
  \end{align*}
  as claimed. It remains to prove the uniqueness part. For this, it is enough to show that if for all $u$ we have $\langle \ell,u \rangle = 0$ and 
  \begin{align*}
    \langle \ell,u\rangle & = C_{\ell}u(0)+(B_{\ell},\nabla u(0))+\tr(A_{\ell}D^2u(0))+\int_{\mathbb{R}^d}u(y)-u(0)-\chi_{B_1(0)}(\nabla u(0),y)\;\mu_\ell(dy),
  \end{align*}	
  then $C_\ell = 0, B_\ell = 0, A_\ell = 0$ \and $\mu_\ell = 0$. First, consider any $u$ with compact support which is disjoint from $\{0\}$, for such a $u$ we have
  \begin{align*}
    \langle \ell,u\rangle & = \int_{\mathbb{R}^d}u(y)\;\mu_\ell(dy),
  \end{align*}	
  Since $u$ can be any function with compact support in $\mathbb{R}^d\setminus \{0\}$, it follows that $\mu_\ell = 0$. Evaluating $\ell$ at the function $u(x)\equiv 1$ we obtain $C_\ell = 0$. Lastly, evaluating $\ell$ at all of the functions of the form $(x,e)$, $e\in\mathbb{R}^d$ and $(Mx,x)$, $M$ symmetric matrix, we see that $B_\ell \cdot e= 0$ for any vector $e$ and $\tr(AM) = 0$  for any symmetric matrix $M$, so that $B_\ell = 0$ and $A_\ell = 0$.

\end{proof}

By a simple change of variables, Lemma \ref{lemma:Courrege theorem for a linear functional} implies the following.

\begin{corollary}\label{corollary:Courrege theorem for a linear functional, general base point}
  Assume that $x$ is fixed, $\beta\in[0,3)$, and let $\ell:C^\beta_b(\mathbb{R}^d)\to \mathbb{R}$ be a bounded linear functional which has the GCP with respect to $x$. For $\beta\geq 2$ any $u \in C^\beta_b(\mathbb{R}^d)\intersect C^2(x)$ we have the representation
  \begin{align*}
    \langle \ell,u\rangle & = C_{\ell}u(x)+(B_{\ell},\nabla u(x))+\tr(A_{\ell}D^2u(x))+\int_{\mathbb{R}^d}u(x+y)-u(x)-\chi_{B_1(0)}(\nabla u(x),y)\;\mu_\ell(dy).
  \end{align*}	
  As before, this representation is unique, and when $\beta<2$ and $u\in C^\beta_b(\real^d)\intersect C^1(x)$, we have $A_{\ell} = 0$, while for $\beta<1$ we have $B_{\ell} = 0$ and the integrand can be replaced with just $u(x+y)-u(x)$.
\end{corollary}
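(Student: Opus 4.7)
[Proof sketch for Corollary \ref{corollary:Courrege theorem for a linear functional, general base point}]
The plan is to reduce the general base point $x$ to the base point $0$, for which the representation has already been established in Lemma \ref{lemma:Courrege theorem for a linear functional}, by pulling back the functional under the translation operator $\tau_x$ (recall $\tau_z w(y) = w(y+z)$, so $\tau_{-x}\tau_x = \textnormal{Id}$).

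First I would define the auxiliary functional $\tilde\ell : C^\beta_b(\mathbb{R}^d)\to \mathbb{R}$ by
\begin{align*}
  \langle \tilde \ell,v\rangle := \langle \ell, \tau_{-x}v\rangle.
\end{align*}
Since $v\mapsto \tau_{-x}v$ is an isometry of $C^\beta_b(\mathbb{R}^d)$, $\tilde\ell$ is a bounded linear functional with $\|\tilde\ell\|=\|\ell\|$. To see that $\tilde\ell$ has the GCP with respect to $0$, suppose $v_1\leq v_2$ everywhere with $v_1(0)=v_2(0)$. Then setting $w_i = \tau_{-x}v_i$, the functions $w_i$ satisfy $w_1\leq w_2$ everywhere, and $w_1(x)=v_1(0)=v_2(0)=w_2(x)$, so that the GCP of $\ell$ at $x$ gives $\langle \ell,w_1\rangle\leq \langle \ell,w_2\rangle$, that is, $\langle\tilde\ell,v_1\rangle\leq \langle\tilde\ell,v_2\rangle$.

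Next, Lemma \ref{lemma:Courrege theorem for a linear functional} applied to $\tilde\ell$ produces a constant $C_{\tilde\ell}$, a vector $B_{\tilde\ell}$, a matrix $A_{\tilde\ell}$, and a measure $\mu_{\tilde\ell}$ on $\mathbb{R}^d\setminus\{0\}$, such that for every $v\in C^\beta_b(\mathbb{R}^d)\cap C^2(0)$ (or $C^1(0)$, or just $C^\beta_b$, according to the value of $\beta$)
\begin{align*}
  \langle \tilde\ell,v\rangle = C_{\tilde\ell}v(0)+(B_{\tilde\ell},\nabla v(0))+\tr(A_{\tilde\ell}D^2v(0))+\int_{\mathbb{R}^d} v(y)-v(0)-\chi_{B_1}(y)(\nabla v(0),y)\;\mu_{\tilde\ell}(dy).
\end{align*}
Given $u\in C^\beta_b(\mathbb{R}^d)\cap C^2(x)$, we apply this to $v=\tau_xu$, which belongs to $C^\beta_b(\mathbb{R}^d)\cap C^2(0)$ by definition of pointwise $C^2$ at $x$, and satisfies $v(0)=u(x)$, $\nabla v(0)=\nabla u(x)$, $D^2v(0)=D^2u(x)$, and $v(y)=u(x+y)$. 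Substituting, and using that $\langle \tilde\ell,\tau_xu\rangle = \langle \ell,\tau_{-x}\tau_xu\rangle = \langle \ell,u\rangle$, we obtain the claimed formula with $C_\ell:=C_{\tilde\ell}$, $B_\ell:=B_{\tilde\ell}$, $A_\ell:=A_{\tilde\ell}$ and $\mu_\ell:=\mu_{\tilde\ell}$.

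Finally, I would transfer the remaining conclusions. Uniqueness of the quadruple $(C_\ell,B_\ell,A_\ell,\mu_\ell)$ follows immediately from the uniqueness statement in Lemma \ref{lemma:Courrege theorem for a linear functional} applied to $\tilde\ell$, by reversing the correspondence (any competing representation of $\ell$ yields a competing representation of $\tilde\ell$ through the same change of variables). The vanishing statements for lower $\beta$ are also inherited directly: for $\beta<2$, Lemma \ref{lemma:Courrege theorem for a linear functional} gives $A_{\tilde\ell}=0$; for $\beta<1$ it further gives $B_{\tilde\ell}=0$ and the integrand in the formula for $\tilde\ell$ reduces to $v(y)-v(0) = u(x+y)-u(x)$. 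There is no serious obstacle in any of these steps; the only point that needs a line of care is the verification that the translate $\tau_xu$ of a pointwise $C^2$ (resp.\ $C^1$) function at $x$ is pointwise $C^2$ (resp.\ $C^1$) at $0$, which is immediate from Definition \ref{def:PointwiseC1C2}.
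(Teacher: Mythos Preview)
Your proposal is correct and is precisely the ``simple change of variables'' that the paper invokes without further detail: you pull back $\ell$ to $\tilde\ell = \ell\circ\tau_{-x}$, verify the GCP at $0$, apply Lemma \ref{lemma:Courrege theorem for a linear functional}, and translate back. There is nothing to add.
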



\subsection{Proofs of Theorems \ref{theorem:MinMax Euclidean} and \ref{theorem:MinMax Translation Invariant}}\label{sec:TheoremsWithoutWhitney}

With Lemmas \ref{lemma:generalized gradients inherit GCP and weak localization} and \ref{lemma:Courrege theorem for a linear functional} and Corollary \ref{corollary:Courrege theorem for a linear functional, general base point} in hand, we can now prove Theorems \ref{theorem:MinMax Translation Invariant} and \ref{theorem:MinMax Euclidean}.

\begin{proof}[Proof of Theorem \ref{theorem:MinMax Translation Invariant}]
  Consider the functional,
  \begin{align*}
    F(u) := I(u,0).
  \end{align*}
  Now, by Theorem \ref{theorem:MinMax for scalar functionals}, we have that
  \begin{align*}
    F(u) = \min_a\max_b \{ f_{ab}+\langle \ell_{ab},u\rangle \}. 	  
  \end{align*}	  
  By Lemma \ref{lemma:generalized gradients inherit GCP and weak localization}, each $\ell_{ab}$ is a linear operator having the GCP with respect to $0$, in which case Lemma \ref{lemma:Courrege theorem for a linear functional} says that for $u\in C^\beta_b(\real^d)\intersect C^2(0)$,
  \begin{align*}
    \langle \ell_{ab},u\rangle & = \tr(A_{ab}D^2u(0))+B_{ab}\cdot \nabla u(0)+C_{ab}u(0)+\int_{\mathbb{R}^d}u(y)-u(0)-\chi_{B_1}(0)(\nabla u(0),y)\;\mu_{ab}(dy).  
  \end{align*}	  The translation invariance of $I$ boils down to the identity
  \begin{align*}
    I(u,x) = F(\tau_x u).
  \end{align*}
  Therefore,
  \begin{align*}
    I(u,x) = \min\limits_{a} \max\limits_{b} \{ f_{ab} + \langle \ell_{ab},\tau_x u\rangle\}
  \end{align*}
  However, $\langle \ell_{ab},\tau_x u\rangle$ has a simple expression, namely
  \begin{align*}
    \tr(A_{ab}D^2u(x))+B_{ab}\cdot \nabla u(x)+C_{ab}u(x)+\int_{\mathbb{R}^d}u(x+y)-u(x)-\ind_{B_1(0)}\nabla u(x)\cdot y\;\mu_{ab}(dy),  	  
  \end{align*}	  
  and this proves the theorem.
\end{proof}

\begin{proof}[Proof of Theorem \ref{theorem:MinMax Euclidean}]

  The beginning of the proof is similar to that of the previous one. For each $x\in\mathbb{R}^d$, define a functional
  \begin{align*}
    F_x(u) := I(u,x),\;\;\forall\; u\in C^\beta_b(\mathbb{R}^d).  
  \end{align*}
  Applying Theorem \ref{theorem:MinMax for scalar functionals}, it follows that
  \begin{align*}
    F_x(u) := \min\limits_{v\in C^\beta_b(\mathbb{R}^d)} \max \limits_{ \ell \in \partial F_x}  \{ F_x(v)+\langle \ell,u-v\rangle \}.
  \end{align*}
  Applying Lemma \ref{lemma:generalized gradients inherit GCP and weak localization}, it follows that for any $\ell \in \partial F_x $
  \begin{align*}
    & \langle \ell,u\rangle = Cu(x)+(B,\nabla u(x))+\tr(AD^2u(x))+\int_{\mathbb{R}^d} u(x+y)-u(x)-\chi_{B_1(0)}(\nabla u(x),y)\;\mu(dy).	  		  
  \end{align*}	  
  Since $F_x(v) = I(v,x)$ this proves the Theorem, with $\mathcal{K}(I)_x = \{ L \mid L(u) = \langle \ell,u\rangle \textnormal{ for } \ell \in \partial F_x\}$ .
  
\end{proof}

\begin{remark}
  It is worthwhile to compare the proof of Theorem \ref{theorem:MinMax Euclidean} above to the much longer and complicated one given in \cite{GuSc-2016MinMaxNonlocalarXiv}. The simplicity here is made possible by the use of a mean value theorem for Lipschitz functionals (Theorem \ref{theorem:Lebourg}) in the infinite dimensional setting, which suffices to prove Theorem \ref{theorem:MinMax Euclidean} as it involves a min-max formula in terms of linear functionals in $C^2_b$ and not linear operators from $C^2_b(\mathbb{R}^d)$ to $C^0_b(\mathbb{R}^d)$. The more complicated method from \cite{GuSc-2016MinMaxNonlocalarXiv} is however still of value, specially if one is interested in obtaining a min-max representation in terms of a family of linear operators from $C^2_b$ to $C_b^0$.  Moreover, it is by adapting the method from \cite{GuSc-2016MinMaxNonlocalarXiv} that we are able to prove Theorem \ref{theorem:MinMax Euclidean ver2}, after analyzing the spatial properties of the finite dimensional approximations (see in Section \ref{section:Analysis of finite dimensional approximations}).

\end{remark}

\section{Finite Dimensional Approximations to $C^\beta_b(\mathbb{R}^d)$}\label{section:Finite Dimensional Approximations}

\subsection{Graph approximations}

The following nested family of sets will be important in what follows
\begin{align*}
  G_n & := 2^{-n} \mathbb{Z}^d.	
\end{align*}	
It will be convenient to write $h_n := 2^{-n}$. Then, $h_n$ represents the maximum possible distance between $x\in \mathbb{R}^d$ and $G_n$, and in particular $\textnormal{dist}(x,G_n)\leq h_n$ for all $x \in \mathbb{R}^d$. Observe that
\begin{align*}
  G_1 \subset G_2 \subset G_3 \ldots, 
\end{align*}
and note also the union of the sets $G_n$ is dense in $\mathbb{R}^d$.

\begin{definition}\label{definition:discrete function spaces}
  We consider the following function spaces
  \begin{align*}
    C(G_n) & := \{ u:G_n \to \mathbb{R}^d\},\\
    C_*(G_n) & := \{ u \in C(G_n) \mid u(x) = 0 \textnormal{ if } x\not\in  [-2^n,2^n]^d\}.
  \end{align*}
  These spaces will be related to $C^\beta_b(\mathbb{R}^d)$ by restriction, which we think of as a map denoted by $T_n$ and given by 
  \begin{align*}
    T_n:C^\beta_b(\mathbb{R}^d)\to C(G_n),\;\; T_nu := u_{\mid G_n}.	  
  \end{align*}	  
\end{definition}

\begin{remark}\label{remark:C star G_n is finite dimensional}
  The space $C_*(G_n)$ is a finite dimensional vector space.

\end{remark}

\subsection{Cube decomposition and partition of unity}

In this section we shall apply the Whitney theory to extend functions in a grid $r \mathbb{Z}^d$ to all of $\mathbb{R}^d$. Since it is in our interest for the Whitney construction to be compatible with the grid structure, we shall do the usual cube decomposition making sure the resulting family of cubes is invariant under translations by vectors in $r \mathbb{Z}^d$, the resulting construction is illustrated in Figure \ref{figure:periodic cube decomposition}.

\begin{lemma}\label{lemma:cube decomposition of the complement of the lattice}
  For every $r>0$, there exists a collection of cubes $\{Q_k\}_k$ such that
  \begin{enumerate}
    \item The cubes $\{Q_k\}_k$ have pairwise disjoint interiors.
    \item The cubes $\{Q_k\}_k$ cover $\mathbb{R}^d \setminus r\mathbb{Z}^d$	
    \item $c_1 \textnormal{diam}(Q_k)\leq \textnormal{dist}(Q_k,\mathbb{Z}^d) \leq c_2 \textnormal{diam}(Q_k).$
    \item For every $h \in r\mathbb{Z}^d$, there is a bijection $\sigma_h:\mathbb{N}\to\mathbb{N}$ such that $Q_k+h = Q_{\sigma_h k}$ for every $k\in\mathbb{N}$. 
  \end{enumerate}
\end{lemma}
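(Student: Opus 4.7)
The plan is to reduce to $r=1$ by scaling and then construct the decomposition as a periodic tiling, by integer vectors, of a model decomposition of the unit cell $[0,1]^d$ whose intersection with the lattice consists exactly of the $2^d$ vertices. Once $\{Q_k^{(1)}\}_k$ is built for $r=1$, the collection $\{rQ_k^{(1)}\}_k$ solves the problem for arbitrary $r$ with the same geometric constants $c_1,c_2$, so I can focus exclusively on $r=1$.

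For $r=1$ I would first split $[0,1]^d$ into its $2^d$ ``octants'' $\prod_{i=1}^d[\varepsilon_i/2,(\varepsilon_i+1)/2]$, $\varepsilon\in\{0,1\}^d$, each containing exactly one lattice vertex. Inside the octant $[0,1/2]^d$, associated to the vertex at the origin, I perform a dyadic Whitney-type construction: for each $k\geq 0$, bisect $[0,2^{-k-1}]^d$ into its $2^d$ dyadic subcubes of side $2^{-k-2}$, retain the $2^d-1$ of them that do not touch the origin, and iterate on the remaining one. These retained cubes have pairwise disjoint interiors and exactly cover $[0,1/2]^d\setminus\{0\}$. A direct computation shows that for a retained cube $Q$ of side $2^{-k-2}$,
\begin{align*}
 2^{-k-2} \,\leq\, \textnormal{dist}(Q,0) \,\leq\, \sqrt{d}\cdot 2^{-k-1},
\end{align*}
which, together with $\textnormal{diam}(Q)=\sqrt{d}\cdot 2^{-k-2}$, yields the Whitney estimate with explicit constants $c_1=1/\sqrt{d}$ and $c_2=2$. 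I then repeat the identical construction, reflected by the obvious rigid motion sending the origin to the given vertex, inside each of the remaining $2^d-1$ octants, and take the union to form the in-cell family.

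The global family is then $\{Q_k\}_k := \bigcup_{h\in\mathbb{Z}^d}\bigl(h+\{\text{in-cell cubes}\}\bigr)$. Property (1) holds because interiors are disjoint within each cell by construction, while cubes from distinct cells meet at most along faces of cells, which have empty interior. Property (2) is immediate: every $x\in\mathbb{R}^d\setminus\mathbb{Z}^d$ lies in some closed unit cell but is not a vertex of it, so it belongs to exactly one retained cube. For property (3), the lattice point closest to a retained cube $Q$ is necessarily a vertex of the unit cell containing $Q$, so the local estimate above applies globally. Property (4) is tautological from the construction: the decomposition of the cell $h+[0,1]^d$ is by design the $h$-translate of that of $[0,1]^d$, so translation by any $h\in\mathbb{Z}^d$ merely permutes the global index set and provides the bijection $\sigma_h$.

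The only technical obstacle is ensuring that the separate vertex-constructions assemble consistently across the octant boundaries, i.e.\ that they do not overlap on shared faces; but since I apply exactly the same dyadic recipe at every vertex, up to the canonical reflection carrying one octant to another, the construction is automatically invariant under the symmetry group of the cube, and this both guarantees consistency on octant boundaries and delivers the lattice invariance demanded in property (4).
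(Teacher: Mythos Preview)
Your argument is correct, but it differs from the paper's in its choice of fundamental domain and in how the Whitney cubes are generated. The paper works with the centered cell $[-1/2,1/2]^d$, which contains exactly \emph{one} lattice point (the origin); it then builds the decomposition by the standard annulus-based Whitney selection (dyadic cubes meeting shells $R_k$, followed by taking maximal cubes), and finally translates by $\mathbb{Z}^d$. Your construction instead takes the cell $[0,1]^d$, which has $2^d$ lattice points at its corners, splits it into $2^d$ octants, and performs an explicit corner-recursion in each. What you gain is that your cubes are given by a direct recursive formula with no maximality selection needed, and the constants $c_1,c_2$ are immediate. What the paper gains is that its cell has a single singular point, so there is no need to assemble constructions from several vertices or to invoke the symmetry argument you use to ensure consistency across octant faces. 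Both routes yield a $\mathbb{Z}^d$-periodic Whitney decomposition, and both verify (3) by noting that for cubes in the half-cell nearest a given lattice point, that point realizes the distance to $\mathbb{Z}^d$.
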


  \begin{center}
    \includegraphics[scale=0.5]{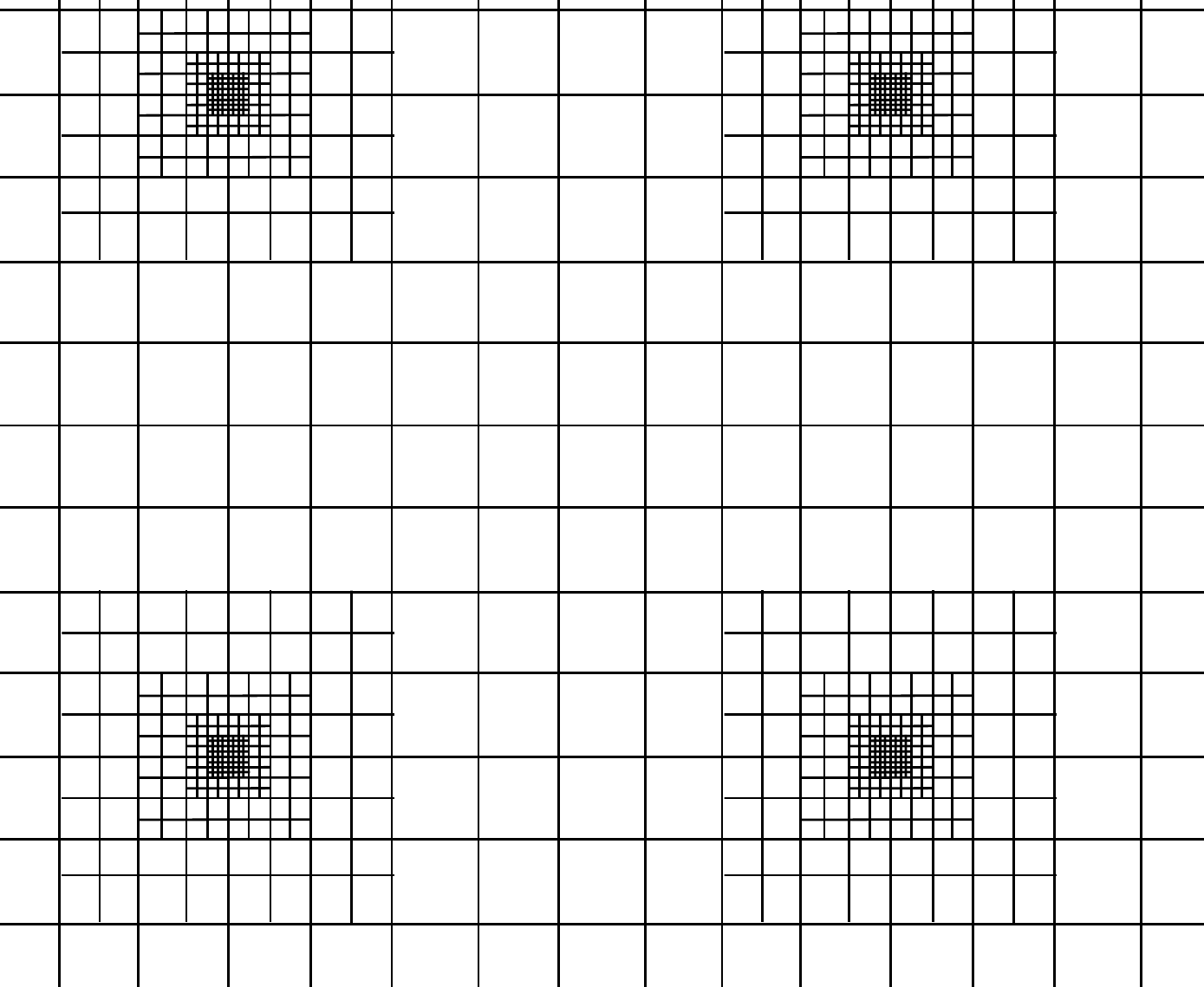}
  \captionof{figure}{A (periodic) cube decomposition of $\mathbb{R}^d\setminus \mathbb{Z}^d$}\label{figure:periodic cube decomposition}   
  \end{center}

\begin{proof}
  We consider the case $r=1$, once the collection of cubes is $\{Q_k\}_k$ obtained in this case, the general case follows via scaling by taking the family $\{rQ_k\}_k$ . 
  
  Consider the cube $Q_0 = [-1/2,1/2]^d$, let $\mathcal{M}_0$ denote the family of $2^d$ equal size cubes obtained from $Q_0$ by bisecting each of its sides. Let $\mathcal{M}_k$ denote the family of cubes obtained from applying this same procedure to each of the cubes in $\mathcal{M}_{k-1}$. Note that the side length of each cube in $\mathcal{M}_k$ is just $2^{-k}$. Now, we construct a family $\mathcal{F}_0$ as follows, with $R_k := \{ 2\sqrt{d}2^{-k}\leq |x|\leq 2\sqrt{d}2^{-(k-1)}\}$ for each $k\in \mathbb{N}$, then
  \begin{align*}
    \mathcal{F}_0 := \bigcup \limits_k \{ Q \in \mathcal{M}_k \; : \; Q\cap R_k \neq \emptyset \}.
  \end{align*}
  Observe that if $Q \in \mathcal{F}_0$ then $Q\in\mathcal{M}_k$ for some $k$ and there is some $x\in Q$ such that $2\sqrt{d}2^{-k}\leq |x|$ and $|x|\leq 2\sqrt{d} 2^{-(k-1)}$. This means, 
  \begin{align*}
    \sqrt{d}2^{-k}= 2\sqrt{d}2^{-k}-\textnormal{diam}(Q) \leq \textnormal{dist}(Q,0) \leq 2\sqrt{d}2^{-k},
  \end{align*}
  and since $\textnormal{diam}(Q) = \sqrt{d}2^{-k}$, we conclude that
  \begin{align*}
    \textnormal{diam}(Q) \leq \textnormal{dist}(Q,0) \leq 4\textnormal{diam}(Q) \;\;\forall\;Q\in\mathcal{F}_0.
  \end{align*}
  On the other hand, we have that
  \begin{align*}
    \bigcup \limits_{Q \in \mathcal{F}_0} Q = [-1/2,1/2]^d \setminus \{0\}.  
  \end{align*}
  
  If $\mathcal{F}$ denotes the subfamily of maximal cubes in $\mathcal{F}_0$, it follows that: the union of these cubes is still $[-1/2,1/2]^d \setminus \{0\}$, the inequality $\textnormal{diam}(Q) \leq \textnormal{dist}(Q,0) \leq 4\textnormal{diam}(Q)$ holds for each $Q\in\mathcal{F}$, and the cubes have pairwise disjoint interiors. 
  	   
  Denote by $\{Q_k\}_k$ an enumeration of the family of cubes of the form $Q+z$, where $Q\in \mathcal{F}$ and $z \in \mathbb{Z}^d$. It is clear that $\{Q_k\}_k$ covers all of $\mathbb{R}^d\setminus \mathbb{Z}^d$ and that these cubes have pairwise disjoint interiors. Furthermore, for any $h\in \mathbb{Z}^d$ the map $Q \to Q+h$ gives a bijection of the set $\{Q_k\}_k$ onto itself, therefore one can represent it via a bijection $\sigma_h:\mathbb{N}\to\mathbb{N}$ so that $Q_k +h = Q_{\sigma_h k}$. Last but not least, as each cube of the form $Q+z$ is closest to $z$ than to any other point in $\mathbb{Z}^d$, property (3) follows from the respectively inequality for the family $\mathcal{F}$.

\end{proof}

\begin{remark}\label{remark:cube centers and hat ynk}
  We apply Lemma \ref{lemma:cube decomposition of the complement of the lattice} with $r=2^{-n}$, for some $n\in\mathbb{N}$, and for the rest of the section shall refer to the resulting cubes as $\{Q_{n,k}\}_k$.
	
  Furthermore, for every $n$ and $k$, we will denote the center of $Q_{n,k}$ by $y_{n,k}$, and for each $n$ and $k$ we will denote by $\hat y_{n,k}$ the unique point in $G_n$ such that 
  \begin{align*}
    \textnormal{dist}(y_{n,k},G_n) = |y_{n,k}-\hat y_{n,k}|,
  \end{align*}	  
  (note that there is only one since by construction not a single center $y_{n,k}$ lies at equidistance to two different lattice points). 
  
  In particular, for each of the bijections $\sigma_h : \mathbb{N}\to \mathbb{N}$ from Lemma \ref{lemma:cube decomposition of the complement of the lattice} we have
  \begin{align*}
    y_{n,k}+h  = y_{n,\sigma_h k}, \; \hat y_{n,k} +h = \hat y_{n,\sigma_h k},\;\forall\;n,k.	
  \end{align*}
 
\end{remark}

\begin{remark}\label{remark:maximum number of overlapping cubes}
  In all what follows, given a cube $Q$, we shall denote by $Q^*$ the cube with same center as $Q$ but whose sides are increased by a factor of $9/8$. Observe that for every $n$ and $k$, we have $Q_{n,k}^* \subset \mathbb{R}^d\setminus 2^{2-n}\mathbb{Z}^d$, and that any given $x$ lies in at most some number $C(d)$ of the cubes $Q_k^*$.
\end{remark}

\begin{proposition}\label{proposition:partition of unity properties}
  For every $n$, there is a family of functions $\phi_{n,k}(x)$ such that
  \begin{enumerate}
    \item $0\leq \phi_{n,k}(x)\leq 1$ for every $k$ and $\phi_{n,k} \equiv 0$ outside $Q_{n,k}^*$ (using the notation in Remark \ref{remark:maximum number of overlapping cubes})
    \item $\sum_k \phi_{n,k}(x) =1 $ for every $x \in \mathbb{R}^d\setminus G_n$.
    \item There is a constant $C$, independent of $n$ and $k$, such that
    \begin{align*}
      |\nabla^{i}\phi_{n,k}(x)| \leq \frac{C}{\diam(Q_{n,k})^i}.
    \end{align*}		
    \item For every $z \in G_n$, we have
    \begin{align*}
      \phi_{n,k}(x-z) = \phi_{n,\sigma_zk}(x),\;\;\forall\;k,\;x,
    \end{align*}		
    where $\sigma_z$ are the bijections introduced above. 
  \end{enumerate}

\end{proposition}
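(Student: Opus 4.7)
The plan is to carry out the classical Whitney partition of unity construction, taking advantage of the translation-compatibility of the cube decomposition from Lemma \ref{lemma:cube decomposition of the complement of the lattice} to ensure that property (4) comes out automatically.

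First I would fix once and for all a smooth template bump $\psi:\mathbb{R}^d\to[0,1]$ with $\psi\equiv 1$ on $[-1/2,1/2]^d$ and $\psi\equiv 0$ outside $[-9/16,9/16]^d$. Writing $s_{n,k}$ for the side length of $Q_{n,k}$ and $y_{n,k}$ for its center (as in Remark \ref{remark:cube centers and hat ynk}), I would set
\begin{align*}
\psi_{n,k}(x):=\psi\!\left(\frac{x-y_{n,k}}{s_{n,k}}\right).
\end{align*}
By construction $\psi_{n,k}\equiv 1$ on $Q_{n,k}$ and $\psi_{n,k}\equiv 0$ outside $Q_{n,k}^*$. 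The key translation identity then falls out of the fact that $\sigma_z$ preserves the size of cubes and shifts their centers: since $y_{n,\sigma_z k}=y_{n,k}+z$ and $s_{n,\sigma_z k}=s_{n,k}$, one checks directly that
\begin{align*}
\psi_{n,k}(x-z)=\psi_{n,\sigma_z k}(x),\quad \forall\,z\in G_n,\;k,\;x.
\end{align*}

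Next I would define $\Phi_n(x):=\sum_k \psi_{n,k}(x)$ and set
\begin{align*}
\phi_{n,k}(x):=\frac{\psi_{n,k}(x)}{\Phi_n(x)}.
\end{align*}
The finite-overlap property from Remark \ref{remark:maximum number of overlapping cubes} guarantees that at each point $x\in\mathbb{R}^d\setminus G_n$ at most $C(d)$ of the $\psi_{n,k}$ are nonzero, so $\Phi_n$ is a locally finite smooth sum there. Moreover, since $\{Q_{n,k}\}_k$ covers $\mathbb{R}^d\setminus G_n$, every such $x$ lies in some $Q_{n,k_0}$, where $\psi_{n,k_0}(x)=1$, giving $1\leq \Phi_n(x)\leq C(d)$ uniformly. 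This is what makes the quotient well-defined and bounded on $\mathbb{R}^d\setminus G_n$, yielding properties (1) and (2).

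For the derivative bound (3), the chain rule gives $|\nabla^i\psi_{n,k}(x)|\leq C_i\,s_{n,k}^{-i}$, and by Lemma \ref{lemma:cube decomposition of the complement of the lattice}(3), if $\psi_{n,k}(x)\neq 0$ then $s_{n,k}\sim \diam(Q_{n,k})\sim \dist(x,G_n)$, so all the cubes that matter at $x$ have comparable side length; combined with the finite-overlap property this gives $|\nabla^i\Phi_n(x)|\leq C\dist(x,G_n)^{-i}$, and then the quotient rule (using $\Phi_n\geq 1$) produces $|\nabla^i\phi_{n,k}(x)|\leq C\,\diam(Q_{n,k})^{-i}$. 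Finally, for property (4), the identity $\psi_{n,k}(x-z)=\psi_{n,\sigma_z k}(x)$ together with the $G_n$-periodicity $\Phi_n(x-z)=\Phi_n(x)$ (which holds because $\sigma_z$ is a bijection on the index set) gives $\phi_{n,k}(x-z)=\phi_{n,\sigma_z k}(x)$.

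The only non-routine point is the derivative estimate (3); everything else is bookkeeping once the template construction is in place. The subtlety there is that one must invoke the Whitney balance from Lemma \ref{lemma:cube decomposition of the complement of the lattice}(3) to prevent cubes of very different scales from accumulating at the same point, which is precisely why that lemma was proved in the form it was. The compatibility of the construction with the shifts $\sigma_z$ is, as noted, an automatic consequence of using a single template $\psi$ and the size-preserving nature of the bijections.
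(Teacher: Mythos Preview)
Your proposal is correct and follows essentially the same approach as the paper: fix a single template bump supported in $Q_0^*$, rescale and translate it to each cube, then normalize by the sum. If anything, you provide slightly more detail than the paper does---in particular, you make explicit the lower bound $\Phi_n\geq 1$ and the fact that overlapping cubes at a point have comparable diameters via Lemma~\ref{lemma:cube decomposition of the complement of the lattice}(3), both of which the paper leaves implicit when it says property (3) ``follows easily from the chain rule.''
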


\begin{proof}
  Fix a $C^\infty$ function $\phi$ such that
  \begin{align*}
    & 0\leq \phi \leq 1,\\ 
    & \phi \equiv 1 \textnormal{ in } Q_0 = [-1/2,1/2]^d,\\
    & \phi \equiv 0 \textnormal{ outside } Q_0^*.
  \end{align*}
  Let $\ell(Q)$ denote the common length for the sides of $Q_{n,k}$, and with $y_{n,k}$ as given in Remark \ref{remark:cube centers and hat ynk} we define	
  \begin{align*}
    \tilde \phi_{n,k} := \phi \left ( \frac{x-y_{n,k}}{\ell_{n,k}}\right ).
  \end{align*}
  Consider the function 
  \begin{align*}
    \Phi(x) = \sum \limits_{k} \tilde \phi_{n,k}(x).
  \end{align*}
  It follows from Remark \ref{remark:maximum number of overlapping cubes} that given any $x$ ,at most $C(d)$ of the terms appearing in the sum are non-zero in a neighborhood of $x$, and therefore $\Phi$ is a smooth function. Then, define
  \begin{align*}
    \phi_{n,k}(x) := \tilde \phi_{n,k}(x) \Phi(x)^{-1}.
  \end{align*}
  It is clear that the functions $\{\phi_{n,k}\}_{k}$ satisfy properties (1) and (2). Property (3) follows easily from the chain rule, using the differentiability of the function $\phi$. It remains to check property (4), let $z \in G_n$, then 
  \begin{align*}
    \phi_{n,k}(x-z) & = \phi \left ( \frac{x-(y_{n,k}+z)}{\ell(Q_{n,k})}\right )\Phi(x-z)^{-1}\\
	  & = \phi \left ( \frac{x-y_{n,\sigma_z k}}{\ell(Q_{n,\sigma_z k})}\right ) \Phi(x)^{-1} = \phi_{n,\sigma_z k}(x),
  \end{align*}
  where we used that $\ell(Q_{n,k}) = \ell(Q_{n,\sigma_z k})$, which follows clearly from the definition of $\sigma_z$.
\end{proof}

\subsection{Discrete derivatives}

In what follows, it will be in our interest to approximate the first and second derivatives of a function $u \in C^\beta_b(\mathbb{R}^d)$ (see \eqref{equation:Cbeta definition} for our convention regarding the meaning of $C^\beta_b$) at a point $x\in G_n$ using only information about the values of $u$ on $G_n$. This motivates the following two definitions (we recall that $h_n = 2^{-n}$).

\begin{definition}\label{definition:finite difference operators 1}
  The vector $(\nabla_n)^1u(x)$ is defined via the system of equations ($k=1,\ldots,d$)
  \begin{align*}
    (\nabla_n)^1u(x),e_k) := (2h_n)^{-1}[u(x+h_ne_k)-u(x-h_ne_k)]
  \end{align*}

\end{definition}

\begin{definition}\label{definition:finite difference operators 2}
  The matrix $(\nabla_n)^2u(x)$ is defined via the system of equations ($k,\ell=1,\ldots,d$),
  \begin{align*}
    ( (\nabla_n)^2u(x) e_k,e_\ell) := h_n^{-2	}\left [ u(x+h_n e_k+h_n e_\ell) - u(x+h_n e_k) - u(x+h_n e_\ell) + u(x) \right ]
  \end{align*}
  
\end{definition}

\begin{remark}\label{remark:finite difference translation invariance} From the definition it is clear that these discrete derivatives commute with translations with respect to a vector $z\in G_n$. That is, given a function $u$ and $z\in G_n$ then for every $x\in G_n$ we have
  \begin{align*}
    ((\nabla_n)^1 \tau_z u)(x) = ((\nabla_n)^1 u)(x+z)
  \end{align*}

\end{remark}

Depending on how regular the function $u$ is, these discrete derivative operators enjoy quantitative ``continuity estimates'' as functions on $G_n$. An important point being that these estimates are uniform in $n$ once $u$ is fixed. 
\begin{proposition}\label{proposition:discrete derivatives converge to real derivatives}
  There is a universal constant $C$ such that for $u \in C^{\beta}_b(\mathbb{R}^d)$ and $x \in G_n$, 
  \begin{align*}
    & | (\nabla_n)^1u(x)-\nabla u(x)| \leq C\|u\|_{C^{\beta}}h_n^{\beta-1},\;\textnormal{ if } \beta \in [1,2],\\
    & | (\nabla_n)^2u(x)-D^2 u(x)| \leq C\|u\|_{C^{\beta}}h_n^{\beta-2},\;\textnormal{ if } \beta \in [2,3).	
  \end{align*}

\end{proposition}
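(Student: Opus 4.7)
The proof is a direct Taylor-expansion bookkeeping exercise, with the exponent $h_n^{\beta-k}$ tracked by the regularity of the $k$-th derivative of $u$. I would handle the two estimates separately.

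For the first-order estimate with $\beta = 1 + \gamma$, $\gamma \in (0,1]$, I would start from the representation
\begin{align*}
  u(x \pm h_n e_k) - u(x) = \pm \int_0^{h_n} \partial_k u(x \pm s e_k)\, ds,
\end{align*}
which is legitimate since $\nabla u$ exists everywhere when $\beta > 1$. Subtracting the two, dividing by $2h_n$, and centering the integrand on $\partial_k u(x)$ gives
\begin{align*}
  ((\nabla_n)^1 u(x),e_k) - \partial_k u(x) = (2h_n)^{-1} \int_0^{h_n} \big[(\partial_k u(x+se_k) - \partial_k u(x)) - (\partial_k u(x-se_k) - \partial_k u(x))\big]\,ds.
\end{align*}
The $\gamma$-Hölder continuity of $\nabla u$ (inherited from $u \in C^\beta_b$) bounds the bracketed expression pointwise by $2\|u\|_{C^\beta} s^\gamma$, so the integral contributes $\|u\|_{C^\beta} h_n^{1+\gamma}$, and dividing by $h_n$ gives the stated $O(\|u\|_{C^\beta}h_n^{\beta-1})$ bound. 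The borderline case $\beta=1$ is absorbed by the trivial bound $|(\nabla_n)^1 u(x)| + |\nabla u(x)| \leq 2\|u\|_{C^{0,1}}$, valid wherever $\nabla u(x)$ exists.

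For the second-order estimate with $\beta = 2+\gamma$, $\gamma\in(0,1)$, I would Taylor expand each of the four values $u(x+h_ne_k+h_ne_\ell), u(x+h_ne_k), u(x+h_ne_\ell), u(x)$ at $x$ as $u(x+v) = u(x) + \nabla u(x)\cdot v + \tfrac{1}{2}(D^2u(x)v,v) + R(v)$ with $|R(v)|\leq C\|u\|_{C^{2,\gamma}}|v|^{2+\gamma}$. The constant, linear, and ``diagonal'' quadratic pieces all cancel exactly in the four-point combination since $(e_k+e_\ell,e_k+e_\ell)$ quadratic terms contribute $(D^2u\, e_k,e_k) + 2(D^2u\, e_k,e_\ell) + (D^2u\, e_\ell,e_\ell)$, and the other single-direction terms cancel the diagonal entries. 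This leaves precisely $h_n^2 (D^2u(x)e_k,e_\ell)$ plus three remainders each bounded by $C\|u\|_{C^\beta}h_n^{2+\gamma}$, and dividing by $h_n^{2}$ yields the claimed error of order $h_n^{\gamma} = h_n^{\beta-2}$.

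The endpoint $\beta = 2$ (so $u\in C^{1,1}_b$) requires a slight adjustment because $D^2u$ exists only almost everywhere. Here I would replace the Taylor expansion with the identity, obtained by applying the fundamental theorem of calculus successively in the two coordinate directions,
\begin{align*}
  h_n^{-2}\big[u(x+h_ne_k+h_ne_\ell) - u(x+h_ne_k) - u(x+h_ne_\ell) + u(x)\big] = h_n^{-2}\int_0^{h_n}\!\int_0^{h_n}\partial_{k\ell} u(x+se_k+te_\ell)\,ds\,dt.
\end{align*}
Since $\|\partial_{k\ell} u\|_{L^\infty} \leq \|u\|_{C^{1,1}}$, both this average and the entry $(D^2 u(x)e_k,e_\ell)$ are bounded by $\|u\|_{C^{1,1}}$ at almost every $x$, so their difference is bounded by $2\|u\|_{C^\beta} = C\|u\|_{C^\beta}h_n^{\beta-2}$. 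No serious obstacle arises; the only delicate point is tracking the endpoint cases where the highest-order derivatives live only in $L^\infty$, which is why the estimates degenerate to uniform bounds there.
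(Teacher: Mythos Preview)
Your argument is correct and is essentially the same Taylor-expansion bookkeeping the paper carries out in its appendix; the paper simply quotes the pointwise Taylor remainder bound $|u(x+v)-P(v)|\leq C\|u\|_{C^\beta}|v|^\beta$ at each of the relevant lattice points and observes the same cancellations you spell out. Two minor remarks: in your displayed identity for the first-order difference the sign between the two bracketed terms should be $+$ rather than $-$ (the bound you then apply is unaffected), and your separate treatment of the endpoint cases $\beta=1$ and $\beta=2$ via trivial $L^\infty$ bounds is a bit more explicit than the paper, which folds these into the same Taylor estimate.
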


\begin{proof} See appendix.

\end{proof}

\begin{proposition}\label{proposition:discrete derivatives regularity}
  Fix $u \in C^{\beta}_b(\mathbb{R}^d)$. Then, given $x_1,x_2 \in G_n$, we have
  \begin{align*}
    | u(x_1)-u(x_2)| & \leq C\|u\|_{C^{\beta}}|x_1-x_2|^{\beta},\;\textnormal{ if } \beta \in [0,1],\\
    | (\nabla_n)^1u(x_1)-(\nabla_n)^1u(x_2)| & \leq C\|u\|_{C^{\beta}}|x_1-x_2|^{\beta-1},\;\textnormal{ if } \beta \in [1,2],\\
    | (\nabla_n)^2u(x_1)-(\nabla_n)^2 u(x_2)| & \leq C\|u\|_{C^{\beta}}|x_1-x_2|^{\beta-2},\;\textnormal{ if } \beta \in [2,3].	
  \end{align*}

\end{proposition}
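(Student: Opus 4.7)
The plan is to handle the three ranges of $\beta$ separately, relying on the simple but decisive observation that, when the relevant classical derivatives of $u$ exist, the discrete derivatives $(\nabla_n)^1 u$ and $(\nabla_n)^2 u$ are nothing but \emph{averages} of the true derivatives of $u$ over a segment or a square of size $h_n$. Once this is in hand, each estimate reduces to a pointwise modulus-of-continuity bound on $\nabla u$ or $D^2 u$.

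The first estimate, $\beta\in[0,1]$, is immediate from the definition of the spaces $C^{0,\gamma}_b$, $C^{0,1}_b$ encoded in \eqref{equation:Cbeta definition}: the points $x_1,x_2$ lie in $G_n\subset \mathbb{R}^d$, and the estimate is just the Hölder/Lipschitz continuity of $u$.

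For $\beta\in(1,2]$, $u$ has a continuous classical gradient, and the fundamental theorem of calculus gives, for each $e_k$,
\begin{align*}
  (\nabla_n)^1 u(x)\cdot e_k = \frac{1}{2 h_n}\int_{-h_n}^{h_n} \partial_k u(x+s e_k)\,ds.
\end{align*}
Subtracting this representation at $x_1$ and $x_2$ yields
\begin{align*}
  \bigl| (\nabla_n)^1 u(x_1)\cdot e_k - (\nabla_n)^1 u(x_2)\cdot e_k \bigr|
  \leq \sup_{|s|\leq h_n}\bigl|\partial_k u(x_1+se_k)-\partial_k u(x_2+se_k)\bigr|,
\end{align*}
and the Hölder/Lipschitz continuity of $\nabla u$ with exponent $\beta-1$ delivers $C\|u\|_{C^\beta_b}|x_1-x_2|^{\beta-1}$. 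The endpoint $\beta=1$ is even easier: each difference quotient is bounded by $\|u\|_{C^{0,1}_b}$, so the difference of two of them is bounded by $2\|u\|_{C^{0,1}_b} = 2\|u\|_{C^\beta_b}|x_1-x_2|^{\beta-1}$. For $\beta\in(2,3]$ the same trick works with one extra integration,
\begin{align*}
  ((\nabla_n)^2 u(x)e_k,e_\ell)
  = \frac{1}{h_n^2}\int_0^{h_n}\!\!\int_0^{h_n} \partial_k\partial_\ell u(x+se_k+te_\ell)\,ds\,dt,
\end{align*}
exhibiting the discrete Hessian as an average of the true Hessian over a square of side $h_n$; differencing and using the Hölder/Lipschitz regularity of $D^2 u$ (exponent $\beta-2$) gives the claim. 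The endpoint $\beta=2$ again follows from boundedness of the integrand by $\|D^2 u\|_\infty$.

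Because the integral representations absorb $n$, each bound is automatically uniform in the grid scale $h_n$, which is the whole point. The only subtle step is the \emph{ambiguous} endpoints $\beta=1^+$ and $\beta=2^+$ corresponding to $C^1_b$ or $C^2_b$ without any Hölder modulus on the top derivative; there the inequality should be read as a ``vanishing modulus'' assertion, with $|x_1-x_2|^{\beta-1}$ or $|x_1-x_2|^{\beta-2}$ replaced by the uniform continuity modulus of $\nabla u$ or $D^2 u$. The same integral representations above reduce this to uniform continuity of a bounded continuous derivative, so no further work is needed. Beyond this purely notational point, I do not expect any genuine obstacle: the proposition is essentially a formal consequence of writing out Definitions \ref{definition:finite difference operators 1} and \ref{definition:finite difference operators 2} as iterated averages.
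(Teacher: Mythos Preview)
Your argument is correct and is arguably cleaner than the route taken in the paper. The two proofs differ in one essential respect.

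The paper's appendix proof proceeds by first invoking Proposition~\ref{proposition:discrete derivatives converge to real derivatives} to compare each discrete derivative to the corresponding classical derivative, with an error of order $h_n^{\beta-1}$ (respectively $h_n^{\beta-2}$). One then writes
\[
  |(\nabla_n)^1 u(x_1)-(\nabla_n)^1 u(x_2)|
  \leq |(\nabla_n)^1 u(x_1)-\nabla u(x_1)|
       +|\nabla u(x_1)-\nabla u(x_2)|
       +|\nabla u(x_2)-(\nabla_n)^1 u(x_2)|,
\]
and uses that distinct points of $G_n$ satisfy $|x_1-x_2|\geq h_n$, so the two error terms $C\|u\|_{C^\beta}h_n^{\beta-1}$ are dominated by $C\|u\|_{C^\beta}|x_1-x_2|^{\beta-1}$. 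The middle term is handled by the H\"older regularity of $\nabla u$.

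Your approach bypasses Proposition~\ref{proposition:discrete derivatives converge to real derivatives} entirely: by writing the discrete derivatives as integral averages of the classical ones, you obtain the H\"older estimate directly from that of $\nabla u$ (or $D^2u$), and the uniformity in $n$ is automatic because the averaging window cancels when you take the difference at $x_1$ and $x_2$. In particular you never use the lattice property $|x_1-x_2|\geq h_n$; your argument would work verbatim for any two points of $\mathbb{R}^d$. This is a small but genuine simplification. The paper's route, on the other hand, reuses an estimate (Proposition~\ref{proposition:discrete derivatives converge to real derivatives}) that is needed elsewhere anyway, so there is no real economy either way.

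One minor remark: your discussion of the ``ambiguous'' endpoints $\beta=1^+,2^+$ is a reasonable aside, but the proposition as stated is for $\beta$ in the closed real intervals $[0,1]$, $[1,2]$, $[2,3]$ under the convention \eqref{equation:Cbeta definition}, so those cases are not actually part of the claim.
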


\begin{proof} See appendix.

\end{proof}

\subsection{The Whitney Extension and Projection operators.}

\begin{definition}\label{definition:interpolating polynomials}

\begin{align*}
  p^\beta_{u,k}(x) := \left \{ \begin{array}{ll}
    u (\hat y_{n,k}) & \textnormal{ if } \beta\in [0,1)\\
    u (\hat y_{n,k})+(\nabla^1_n u(\hat y_{n,k}),x-\hat y_{n,k}) & \textnormal{ if } \beta \in [1,2)\\
    u (\hat y_{n,k})+(\nabla^1_n u(\hat y_{n,k}),x-\hat y_{n,k})+\tfrac{1}{2}\left ( \nabla^2_n u(\hat y_{n,k})(x-\hat y_{n,k}),(x-\hat y_{n,k})\right )	 & \textnormal{ if } \beta \in [2,3)
  \end{array}\right.
\end{align*}

\end{definition}

We are now ready to define the Whitney extension operator.
\begin{align}
  E^\beta_n(u,x) := \left \{ \begin{array}{ll}
    u(x) & \textnormal{ if } x\in G_n,\\
    \sum \limits_{k} p^\beta_{u,k}(x)\phi_{n,k}(x) & \textnormal{ if } x\not\in G_n.	
  \end{array}\right.
\end{align}

The projector operator $\pi_n^\beta:C^\beta_b(\mathbb{R}^d)\to C^\beta_b(\mathbb{R}^d)$ is given by
\begin{align}\label{equation:pi_n^beta definition}
  \pi_n^\beta := E_n^\beta \circ T_n,
\end{align}
where we recall that $T_nu = u_{\mid G_n}$ (Definition \ref{definition:discrete function spaces}).

\begin{theorem}\label{theorem:Whitney Extension Is Bounded}
  There is a constant $C$ such that for any $n$ and any $u\in C^\beta_b(\mathbb{R}^d)$ we have
  \begin{align*}
    \|\pi_n^\beta u \|_{C^\beta(\mathbb{R}^d)} \leq C\|u\|_{C^\beta(\mathbb{R}^d)}.
  \end{align*}
\end{theorem}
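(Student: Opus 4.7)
The plan is to adapt the classical Whitney extension argument to the present discrete setting, relying crucially on the two ingredients already assembled: the partition-of-unity derivative bounds from Proposition \ref{proposition:partition of unity properties} and the discrete derivative estimates from Propositions \ref{proposition:discrete derivatives converge to real derivatives}--\ref{proposition:discrete derivatives regularity}. Off of $G_n$ the extension $\pi_n^\beta u$ is smooth (it is a finite sum of smooth functions in a neighborhood of any point by the finite-overlap property in Remark \ref{remark:maximum number of overlapping cubes}), while on $G_n$ it equals $u$. The task is therefore to bound the $C^\beta$ norm uniformly on each cube $Q_{n,k_0}^*$ and then verify that the resulting piecewise construction glues together at grid points to a genuine $C^\beta$ function with norm controlled by $\|u\|_{C^\beta}$.

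The central trick is the cancellation coming from $\sum_k \phi_{n,k}(x) \equiv 1$ off $G_n$: for $x \in Q_{n,k_0}^*$ we rewrite
\begin{align*}
    \pi_n^\beta u(x) = p^\beta_{u,k_0}(x) + \sum_k \bigl(p^\beta_{u,k}(x) - p^\beta_{u,k_0}(x)\bigr)\phi_{n,k}(x).
\end{align*}
The isolated polynomial $p^\beta_{u,k_0}$ has $C^\beta$ norm bounded by $C\|u\|_{C^\beta}$ thanks to Proposition \ref{proposition:discrete derivatives regularity} applied at $\hat y_{n,k_0}$. For the remainder, I would prove the key comparison estimate
\begin{align*}
    |\nabla^{j}(p^\beta_{u,k} - p^\beta_{u,k_0})(x)| \leq C\|u\|_{C^\beta}\,\ell_{n,k}^{\,\beta - j}, \qquad 0 \leq j \leq \lfloor\beta\rfloor,
\end{align*}
whenever $Q_{n,k}^* \cap Q_{n,k_0}^* \neq \emptyset$, where $\ell_{n,k} = \diam(Q_{n,k})$. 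This comes from expanding the difference of the two discrete Taylor polynomials around their common base points $\hat y_{n,k}$ and $\hat y_{n,k_0}$ (whose separation is comparable to $\ell_{n,k}$) and using Proposition \ref{proposition:discrete derivatives regularity} to compare $\nabla_n^i u$ at these two points. Combined with the derivative estimate $|\nabla^{j}\phi_{n,k}(x)| \leq C \ell_{n,k}^{-j}$ and the bounded-overlap property, the Leibniz rule then yields pointwise bounds $|\nabla^{j}\pi_n^\beta u(x)| \leq C\|u\|_{C^\beta}$ for $j \leq \lfloor \beta \rfloor$, and when $\beta$ is non-integer, the analogous Hölder bound for the top-order derivative within each cube; the blow-up of $\nabla^{j}\phi_{n,k}$ is precisely absorbed by the $\ell_{n,k}^{\,\beta-j}$ factor in the polynomial comparison.

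The main obstacle, as in the classical Whitney theorem, is the matching across the grid $G_n$: since $\pi_n^\beta u$ is defined separately on $G_n$ and on its complement, we must check that the derivatives estimated above on cubes $Q_{n,k}^*$ actually extend to the correct modulus all the way up to $G_n$, and that no discontinuity is produced at grid points. I would handle this by a Taylor-type expansion of $\pi_n^\beta u(x)$ about a grid point $x_0 \in G_n \cap \overline{Q_{n,k_0}^*}$: since by design $p^\beta_{u,k}$ is the discrete Taylor expansion of $u$ at $\hat y_{n,k}$ through order $\lfloor\beta\rfloor$, after summing against $\phi_{n,k}$ one recovers an approximation of $u$ at $x_0$ with remainder $O(\|u\|_{C^\beta}|x-x_0|^\beta)$, using once more Propositions \ref{proposition:discrete derivatives converge to real derivatives} and \ref{proposition:discrete derivatives regularity} to compare discrete and true derivatives of $u$ at nearby grid points. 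This provides exactly the Hölder/Lipschitz estimate straddling $G_n$ that is needed, and completes the bound $\|\pi_n^\beta u\|_{C^\beta} \leq C\|u\|_{C^\beta}$.
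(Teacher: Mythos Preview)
Your proposal is correct and follows essentially the same approach as the paper: the paper simply defers to the classical Whitney extension argument in \cite[Chapter VI, Theorems 3 and 4]{Stei-71}, invoking Proposition \ref{proposition:discrete derivatives regularity} for the needed regularity of the discrete jets, and you have faithfully sketched exactly that argument (the $\sum_k\phi_{n,k}=1$ cancellation, the polynomial comparison on overlapping cubes, the Leibniz-rule bookkeeping, and the matching across $G_n$).
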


\begin{proof}
  This follows arguing exactly as in \cite[Chapter VI, Theorem 3 and 4]{Stei-71}, making use of the regularity estimates in Proposition \ref{proposition:discrete derivatives regularity}. Since this is a standard argument, we omit the details. 
\end{proof}

\begin{proposition}\label{proposition:Whitney extension translation invariance}
  Let $z \in G_n$ and $u \in C^\beta_b$, then.
  \begin{align*}
    \pi_n^\beta(\tau_{z}u) = \tau_{z} \pi_n^\beta (u).
  \end{align*}
\end{proposition}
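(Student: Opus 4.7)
The plan is to unwind the definitions and verify equality pointwise by an explicit reindexing of the sum defining $E_n^\beta$, exploiting the $G_n$-equivariance that was built into every ingredient of the Whitney construction.

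First I would dispose of the easy case $x\in G_n$. Since $z\in G_n$ we also have $x+z\in G_n$, so both sides reduce directly to $u(x+z)$ via the top branch of the definition of $E_n^\beta$ and the fact that $T_n(\tau_z u)(x)=u(x+z)=T_n u(x+z)$ for grid points.

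For $x\notin G_n$, note $x+z\notin G_n$, so I would expand both sides using the bottom branch of the definition:
\begin{align*}
\pi_n^\beta(\tau_z u)(x) &= \sum_k p^\beta_{T_n(\tau_z u),k}(x)\,\phi_{n,k}(x),\\
\tau_z\pi_n^\beta(u)(x) &= \pi_n^\beta(u)(x+z)= \sum_k p^\beta_{T_n u,k}(x+z)\,\phi_{n,k}(x+z).
\end{align*}
The three facts to invoke, all expressing $G_n$-equivariance of separate objects, are: (i) Proposition \ref{proposition:partition of unity properties}(4), which written with $-z\in G_n$ in place of $z$ gives $\phi_{n,k}(x+z)=\phi_{n,\sigma_{-z}k}(x)$; (ii) Remark \ref{remark:cube centers and hat ynk}, which asserts $\hat y_{n,\sigma_h k}=\hat y_{n,k}+h$ for $h\in G_n$; and (iii) Remark \ref{remark:finite difference translation invariance}, which asserts that the discrete derivatives commute with lattice translations, i.e.\ $(\nabla_n)^j(\tau_z u)(y)=(\nabla_n)^j u(y+z)$ for $y\in G_n$ and $j=1,2$, together with the trivial identity $T_n(\tau_z u)(y)=T_n u(y+z)$ on the grid.

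Using (i), I reindex the second sum by setting $k'=\sigma_{-z}k$, equivalently $k=\sigma_z k'$, to obtain
\begin{align*}
\tau_z\pi_n^\beta(u)(x) = \sum_{k'} p^\beta_{T_n u,\sigma_z k'}(x+z)\,\phi_{n,k'}(x).
\end{align*}
It then suffices to show term-by-term that $p^\beta_{T_n(\tau_z u),k'}(x)=p^\beta_{T_n u,\sigma_z k'}(x+z)$. By (ii), the polynomial $p^\beta_{T_n u,\sigma_z k'}$ is centered at $\hat y_{n,\sigma_z k'}=\hat y_{n,k'}+z$, and the substitution $(x+z)-\hat y_{n,\sigma_z k'}=x-\hat y_{n,k'}$ matches the spatial variable of $p^\beta_{T_n(\tau_z u),k'}(x)$. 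By (iii), each coefficient $T_n(\tau_z u)(\hat y_{n,k'})$, $(\nabla_n)^1 T_n(\tau_z u)(\hat y_{n,k'})$, $(\nabla_n)^2 T_n(\tau_z u)(\hat y_{n,k'})$ equals the corresponding coefficient of $p^\beta_{T_n u,\sigma_z k'}$ at $\hat y_{n,k'}+z$, which is exactly the coefficient appearing in the reindexed term. Matching coefficients and spatial variables yields the pointwise equality, completing the argument.

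The main obstacle is purely bookkeeping: one must ensure that the reindexing bijection $\sigma_{-z}$, the center map $k\mapsto\hat y_{n,k}$, and the discrete derivative operator all transform consistently under the same shift by $z\in G_n$. Since the partition of unity, the cube decomposition, and the discrete derivatives were each constructed in Section \ref{section:Finite Dimensional Approximations} with exactly this grid-equivariance in mind, no quantitative estimate is needed and the proof is a direct verification.
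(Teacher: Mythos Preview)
Your proof is correct and follows essentially the same approach as the paper: a direct pointwise verification that disposes of the trivial grid-point case and then handles $x\notin G_n$ by reindexing the Whitney sum via the bijection $\sigma_z$, using the equivariance of the partition of unity (Proposition \ref{proposition:partition of unity properties}(4)), the cube centers (Remark \ref{remark:cube centers and hat ynk}), and the discrete derivatives (Remark \ref{remark:finite difference translation invariance}). The only cosmetic difference is that the paper reindexes the sum for $\pi_n^\beta(\tau_z u)(x)$ while you reindex the sum for $\tau_z\pi_n^\beta(u)(x)$; the termwise identity $p^\beta_{\tau_z u,k}(x)=p^\beta_{u,\sigma_z k}(x+z)$ is the same in both.
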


\begin{proof}
  Let us show that $\pi_n^\beta(\tau_{z}u)(x) = \tau_{z} \pi_n^\beta (u)(x)$ for every $x \in \mathbb{R}^d$ and $z\in G_n$. Note that if $x\in G_n$ then the equality is trivial, so let us take $x\in \mathbb{R}^d\setminus G_n$ and $z \in G_n$, then we have
  \begin{align*}	
    \pi_n^\beta(\tau_{z}u)(x) = \sum \limits_{k} p^\beta_{\tau_{z}u,k}(x)\phi_{n,k}(x). 	
  \end{align*}	
  Furthermore, it is not difficult to check that (see Remark \ref{remark:finite difference translation invariance})
  \begin{align*}
    p^\beta_{\tau_{z}u,k}(x) = p^\beta_{u,\sigma_z k}(x+z),
  \end{align*}
  while part (4) of Proposition \ref{proposition:partition of unity properties} implies that 
  \begin{align*}
    \phi_{n,k}(x) = \phi_{n,\sigma_z k}(x+z).
  \end{align*}
  From these two identities we conclude that
  \begin{align*}
    \pi_n^\beta(\tau_{z}u)(x) = \sum \limits_{k} p^\beta_{u,\sigma_z k}(x+z)\phi_{n,\sigma_z k}(x+z) = \sum \limits_{k} p^\beta_{u,k}(x+z)\phi_{n,k}(x+z) = \tau_{z} \pi_n^\beta (u)(x),
  \end{align*}
  where we used that $\sigma_z$ is bijective, this proves the proposition.
\end{proof}

\begin{remark}\label{remark:approximation of directions}
  Given $\varepsilon \in (0,1)$ there is a $C>1$ such that for every $n\in \mathbb{N}$, $x_0 \in G_n$, and unit vector $x_* \in \mathbb{R}^d$ there is some $x_1 \in G_n$ and $s>0$ such that
  \begin{align*}
    |s x_* -(x_1-x_0)| \leq h_n,\; C^{-1}h_n^{\varepsilon}\leq |x_1-x_0| \leq C h_n^{\varepsilon}.    	  
  \end{align*}	  
  Indeed, this follows from the fact that $h_n^{\varepsilon} x_* \in [-h_n^{\varepsilon},h_n^\varepsilon]^d$ and that $[-h_n^{\varepsilon},h_n^\varepsilon]^d \cap (G_n-x_0)$ is a $h_n$-net in $[-h_n^{\varepsilon},h_n^\varepsilon]^d$, so there is $x_1 \in [-h_n^{\varepsilon},h_n^\varepsilon]^d \cap (G_n-x_0)$ such that $|h_n^\varepsilon x_*-(x_1-x_0)| \leq h_n$. Then, the inequalities for $|x_1-x_0|$ follow from two applications of the triangle inequality and the fact that $\varepsilon<1$  and $h_n \leq 1/2$ for all $n\geq 1$.
\end{remark}

\begin{proposition}\label{proposition:discrete derivative estimate at a local minimum}
  Let $w \in C^{\beta}_b(\mathbb{R}^d)$ be such that $w(x)\geq 0$ for every $x\in G_n$ and such that $w(x_0) = 0$ at some $x_0 \in G_n$. Then, there is a universal $C$ such that
  \begin{align*}
    |\nabla \pi^\beta_n w(x_0)| & \leq C\|w\|_{C^{\beta}} h_n^{\min\{2,\beta\}-1},\;\textnormal{ if } \beta \geq 1,\\	
    |(\nabla^2 \pi^\beta_n w(x_0))_-| & \leq C\|w\|_{C^{\beta}} h_n^{(\min\{3,\beta\}-2)/2},\;\textnormal{ if } \beta \geq 2.	  
  \end{align*}	  
  Here, for a given symmetric matrix $D$, $D_{-}$ denotes it's negative part. 
\end{proposition}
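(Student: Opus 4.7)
The plan is to first identify the derivatives of the Whitney extension $\pi^\beta_n w$ at the grid point $x_0$ with the corresponding discrete derivatives of $w$, and then estimate those discrete derivatives directly using the non-negativity of $w$ on $G_n$ together with the $C^\beta$-regularity of $w$.

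For the identification step, recall that by Lemma~\ref{lemma:cube decomposition of the complement of the lattice}, $\operatorname{diam}(Q_{n,k})\sim \operatorname{dist}(Q_{n,k},G_n)$, so any cube $Q_{n,k}^*$ meeting a sufficiently small punctured ball $B_\rho(x_0)\setminus\{x_0\}$ must have its closest grid point $\hat y_{n,k}=x_0$. Consequently, on such a punctured ball, every non-vanishing summand $p^\beta_{w,k}(x)\phi_{n,k}(x)$ uses the same polynomial
\begin{align*}
  P(x) := w(x_0)+((\nabla_n)^1 w(x_0),x-x_0)+\tfrac{1}{2}\langle (\nabla_n)^2 w(x_0)(x-x_0),x-x_0\rangle
\end{align*}
(truncated as in Definition~\ref{definition:interpolating polynomials} when $\beta<2$). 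Since $\sum_k \phi_{n,k}\equiv 1$ off $G_n$ by Proposition~\ref{proposition:partition of unity properties}, we get $\pi^\beta_n w\equiv P$ on $B_\rho(x_0)\setminus\{x_0\}$, and by continuity on all of $B_\rho(x_0)$. This yields $\nabla \pi^\beta_n w(x_0)=(\nabla_n)^1 w(x_0)$ and, when $\beta\geq 2$, $D^2\pi^\beta_n w(x_0)=(\nabla_n)^2 w(x_0)$, so it suffices to estimate these discrete quantities.

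For the gradient bound, Taylor-expand $w$ at $x_0$ and use $w(x_0\pm h_n e_k)\geq 0=w(x_0)$. When $\beta\in[1,2)$, the first-order expansion with $C^\beta$-remainder forces $|\partial_k w(x_0)|\leq C\|w\|_{C^\beta} h_n^{\beta-1}$, and Proposition~\ref{proposition:discrete derivatives converge to real derivatives} gives the same bound for $(\nabla_n)^1 w(x_0)$. When $\beta\geq 2$, a second-order Taylor expansion improves this to $|\partial_k w(x_0)|\leq C\|w\|_{C^\beta} h_n$, which again transfers to $(\nabla_n)^1 w(x_0)$ via Proposition~\ref{proposition:discrete derivatives converge to real derivatives}. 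In both cases the exponent matches $h_n^{\min(2,\beta)-1}$. The Hessian bound splits by regularity. For $\beta=2$, the Lipschitz continuity of $\nabla w$ applied twice to the finite-difference definition gives $|(\nabla_n)^2 w(x_0)|\leq C\|w\|_{C^{1,1}}$, matching the constant target. For $\beta>2$, fix a unit vector $v$, apply Remark~\ref{remark:approximation of directions} with $\varepsilon=1/2$ to obtain $x_1\in G_n$ with $r:=|x_1-x_0|\sim h_n^{1/2}$ and, setting $\xi:=(x_1-x_0)/r$, $|\xi-v|\leq C h_n^{1/2}$. Second-order Taylor at $x_0$ and $w(x_1)\geq 0$ yield
\begin{align*}
  0\leq r\,\nabla w(x_0)\cdot\xi+\tfrac{r^2}{2}\langle D^2 w(x_0)\xi,\xi\rangle+O\!\left(\|w\|_{C^\beta}r^{\min(\beta,3)}\right);
\end{align*}
dividing by $r^2$ and using $|\nabla w(x_0)|\leq C h_n$ from the gradient step produces $\langle D^2 w(x_0)\xi,\xi\rangle\geq -C\|w\|_{C^\beta}\bigl(h_n^{1/2}+h_n^{(\min(\beta,3)-2)/2}\bigr)$. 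The angular error $|\xi-v|\leq C h_n^{1/2}$ only costs $O(\|w\|_{C^\beta} h_n^{1/2})$ when passing from $\xi$ to $v$, and Proposition~\ref{proposition:discrete derivatives converge to real derivatives} provides $|D^2 w(x_0)-(\nabla_n)^2 w(x_0)|\leq C\|w\|_{C^\beta}h_n^{\beta-2}$. The slowest-decaying term is $h_n^{(\min(3,\beta)-2)/2}$, giving the claimed bound on $|((\nabla_n)^2 w(x_0))_-|$.

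The main obstacle is the Hessian estimate in a general direction: coordinate-wise arguments only control the diagonal entries of $(\nabla_n)^2 w(x_0)$, so Remark~\ref{remark:approximation of directions} is essential to approximate an arbitrary unit $v$ by a grid-aligned direction. The choice $\varepsilon=1/2$ is the sharp scale balancing the linear correction $h_n/r$ (from the residual gradient $|\nabla w(x_0)|\lesssim h_n$) against the $r^{\min(\beta,3)-2}$ Taylor remainder, and this balance is precisely what produces the exponent $(\min(3,\beta)-2)/2$ in the statement; the remainder of the work is routine bookkeeping, with the minor subtlety of handling the $\beta=2$ endpoint separately since $D^2 w(x_0)$ need not exist in a classical sense for $C^{1,1}$ functions.
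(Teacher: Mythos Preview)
Your proof is correct and follows essentially the same strategy as the paper: Taylor-expand at $x_0$, exploit $w\geq 0$ on $G_n$ with $w(x_0)=0$, and for the Hessian use Remark~\ref{remark:approximation of directions} with $\varepsilon=1/2$ to access arbitrary directions. There is one tactical difference worth noting.

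For the Hessian, the paper eliminates the first-order term by the lattice reflection trick: for $x\in G_n$ one also has $x'=2x_0-x\in G_n$, and adding the two Taylor inequalities (both with $w\geq 0$) cancels the linear part exactly, leaving
\[
  (\nabla^2 \pi_n^\beta w(x_0)(x-x_0),x-x_0)+2C\|w\|_{C^\beta}|x-x_0|^{\min\{3,\beta\}}\geq 0.
\]
You instead control the linear term by feeding in the gradient bound $|\nabla w(x_0)|\leq Ch_n$ from the first step, which after dividing by $r^2\sim h_n$ contributes $O(h_n^{1/2})$, dominated by the target rate. Both routes are valid; the reflection trick is slightly cleaner since it makes the Hessian argument independent of the gradient estimate and avoids passing through the continuous $D^2w(x_0)$ (hence no separate treatment of the $\beta=2$ endpoint is needed). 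Your explicit identification $\nabla\pi_n^\beta w(x_0)=(\nabla_n)^1 w(x_0)$, $D^2\pi_n^\beta w(x_0)=(\nabla_n)^2 w(x_0)$ via the local structure of the Whitney extension near a grid point is correct and is used implicitly in the paper.
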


\begin{proof}
  Fix any $x\in G_n$. Thanks to Proposition \ref{proposition:discrete derivatives converge to real derivatives} and the fact that $|x-x_0|\geq h_n$ we have
  \begin{align*}
    |w(x)-w(x_0)- (\nabla \pi^\beta_n w(x_0),x-x_0)| \leq C\|w\|_{C^{\beta}}|x-x_0|^{\min\{2,\beta\}}.
  \end{align*}
  Since $w(x_0)=0$, and $w(x)\geq 0$ by assumption, 
  \begin{align*}
    0 \leq (\nabla \pi^\beta_n w(x_0),x-x_0) + C\|w\|_{C^{\beta}}|x-x_0|^{\min\{2,\beta\}}.
  \end{align*}
  It is easy to see there is some $x_1 \in G_n$ such that $|x_0-x_1| = h_n$ and
  \begin{align*}
    (\nabla \pi^\beta_n w(x_0),x_1-x_0) = -|\nabla \pi^\beta_n w(x_0)|_{\ell^\infty}|x_1-x_0|,
  \end{align*}
  and therefore, 
  \begin{align*}
    (\nabla \pi^\beta_n w(x_0),x_1-x_0) \leq -C_d^{-1}|\nabla \pi^\beta_n w(x_0)||x_1-x_0|.
  \end{align*}
  Combining these inequalities and recalling Theorem \ref{theorem:Whitney Extension Is Bounded} it follows that
  \begin{align*}
    |\nabla \pi^\beta_n w(x_0)|\leq C\|w\|_{C^{\beta}}h_n^{\min\{2,\beta\}-1}.
  \end{align*}
  This proves the estimate for the gradient when $\beta\geq 1$. Now assume $\beta \geq 2$, the beginning of the argument in this case goes along similar lines. For any $x\in G_n$ we have that
  \begin{align*}
    |w(x)-w(x_0)- (\nabla \pi^\beta_n w(x_0),x-x_0)-\tfrac{1}{2}(\nabla^2 \pi^\beta_n w(x_0)(x-x_0),x-x_0 )| \leq C\|w\|_{C^{\beta}}|x-x_0|^{\min\{3,\beta\}},
  \end{align*}
  where we have once again used Theorem \ref{theorem:Whitney Extension Is Bounded}. Thus, since $w(x_0) = 0$ and $w(x)\geq 0$ for $x \in G_n$,
  \begin{align*}
    (\nabla \pi^\beta_n w(x_0),x-x_0)+\tfrac{1}{2}(\nabla^2 \pi^\beta_n w(x_0)(x-x_0),x-x_0 ) + C\|w\|_{C^{\beta}}|x-x_0|^{\min\{3,\beta\}} \geq 0.
  \end{align*}
  Now, since we are on a lattice, it is obvious that for any $x\in G_n$ we have that $x' := 2x_0-x \in G_n$. In this case we can add up the inequalities for $x$ and $x'$, and conclude that
  \begin{align*}
    & (\nabla \pi^\beta_n w(x_0),x-x_0)+\tfrac{1}{2}(\nabla^2 \pi^\beta_n w(x_0)(x-x_0),x-x_0 )\\
    & +(\nabla \pi^\beta_n w(x_0),x'-x_0)+\tfrac{1}{2}(\nabla^2 \pi^\beta_n w(x_0)(x'-x_0),x'-x_0 ) + 2C\|w\|_{C^{\beta}}|x-x_0|^{\min\{3,\beta\}} 	\geq 0.
  \end{align*}
  Since $x'-x_0 = -(x-x_0)$, we conclude that
  \begin{align*}
    & (\nabla^2 \pi^\beta_n w(x_0)(x-x_0),x-x_0 )+ 2C\|w\|_{C^{\beta}}|x-x_0|^{\min\{3,\beta\}} \geq 0,\;\forall\;x\in G_n.
  \end{align*}
  Let $x_* \in \mathbb{R}^d$ be a unit vector such that 
  \begin{align*}
    -(\nabla^2 \pi^\beta_n w(x_0)x_*,x_* )  = |(\nabla^2 \pi^\beta_n w(x_0))_-|
  \end{align*}
  According to Remark \ref{remark:approximation of directions}, there is $x_1 \in G_n$ and $s>0$ such that 
  \begin{align*}
    |sx_*-(x_1-x_0)| \leq  h_n,\;\; C^{-1} h_n^{\varepsilon}\leq |x_1-x_0| \leq Ch_n^{\varepsilon}.
  \end{align*}
  For this $x_1$ we have
  \begin{align*}
     |(\nabla^2 \pi^\beta_n w(x_0))_-|s^2 & = -(\nabla^2 \pi^\beta_n w(x_0)x_*,x_*)s^2 \\ 
	 & \leq -(\nabla^2 \pi^\beta_n w(x_0)(x_1-x_0),x_1-x_0 ) + C\|w\|_{C^\beta} |sx_*-(x_1-x_0)|.
  \end{align*}
  This, together with the previous step, shows that 
  \begin{align*}
    C^{-2}|(\nabla^2 \pi^\beta_n w(x_0))_-|(h_n^{\varepsilon})^2 \leq 2C\|w\|_{C^\beta}h_n^{\min\{3,\beta\}\varepsilon} + C\|w\|_{C^\beta} h_n,
  \end{align*}
  again having used Theorem \ref{theorem:Whitney Extension Is Bounded}. Simplifying, this becomes
  \begin{align*}
    |(\nabla^2 \pi^\beta_n w(x_0))_-| \leq C\|w\|_{C^{\beta}}(h_n^{(\min\{3,\beta\}-2)\varepsilon}+h_n^{1-\varepsilon}).
  \end{align*}
  Choosing $\varepsilon = 1/2$, and noting $\min\{3,\beta\}-2)\leq 1$, we conclude that
  \begin{align*}
    |(\nabla^2 \pi^\beta_n w(x_0))_-| \leq C\|w\|_{C^{\beta}}h_n^{(\min\{3,\beta\}-2)/2}.
  \end{align*}

\end{proof}

We fix an auxiliary function $\eta_0:[0,\infty) \to\mathbb{R}_+$, with $\eta_0 \in C^\infty(\mathbb{R}_+)$, and 
\begin{align}\label{equation:eta_0 approximation to min of 1 and y squared}
  \;0\leq \eta_0 \leq 1,\;\eta_0'(t) \geq 0 \textnormal{ for all }t, \eta_0(t) = t \textnormal{ for } t\leq 1/2,\;\eta_0(t) = 1 \textnormal{ for } t \geq 1.
\end{align}
The function $\eta_0$, as well as the following two estimates, will be useful in the next section. Essentially, $\eta_0(t)$ should be thought of as a smooth replacement for $\min\{1,t\}$.

\begin{lemma}\label{lemma:Whitney Extension Is Almost Order Preserving}
  Let $1\leq \beta<\beta_0<3$, and consider $w\in C^{\beta_0}_b(\mathbb{R}^d)$ and $x_0\in G_n$ such that
  \begin{align*}
    w \geq 0 \textnormal{ in } G_n \textnormal{ and } w(x_0) = 0.	  
  \end{align*}	  
  Then, there is a function $R_{\beta_0,n,w,x_0}$ such that $R(x_0) = 0$, and
  \begin{align*}
    & \pi_n^\beta w(x)+R_{\beta_0,n,w,x_0}(x) \geq 0,\;\;\forall\;x\in\mathbb{R}^d,\\
    & \|R_{\beta_0,n,w,x_0}\|_{C^\beta(\mathbb{R}^d)} \leq Ch_n^{\gamma} \|w\|_{C^{\beta_0}(\mathbb{R}^d)},
  \end{align*}	 
  for some constant $\gamma = \gamma(\beta,\beta_0) \in (0,1)$.
  
\end{lemma}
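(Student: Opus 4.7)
The plan is to exhibit an explicit correction $R$ that eliminates the small negative excursions of $\pi^\beta_n w$, by combining a linear ``tilt'' that cancels the near-zero gradient of $\pi^\beta_n w$ at $x_0$ with a smooth cutoff bump built out of $\eta_0$.

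First I would invoke Proposition \ref{proposition:discrete derivative estimate at a local minimum} at $x_0$. Since $w\in C^{\beta_0}_b$ with $\beta_0>\beta\geq 1$, by redoing the argument of that proposition using the higher-order Taylor expansion of $w$ at the lattice neighbors of $x_0$ (together with $w\geq 0$ on $G_n$), one obtains the sharpened bounds $|\nabla \pi^\beta_n w(x_0)|\leq C\|w\|_{C^{\beta_0}}h_n^{\min(2,\beta_0)-1}$ and, when $\beta\geq 2$, $|(\nabla^2\pi^\beta_n w(x_0))_-|\leq C\|w\|_{C^{\beta_0}}h_n^{(\min(3,\beta_0)-2)/2}$; both exponents are strictly positive for $\beta_0>1$ (resp.\ $\beta_0>2$).

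Second I would combine Step~1 with the $C^\beta$ regularity of $\pi^\beta_n w$ from Theorem \ref{theorem:Whitney Extension Is Bounded} and the vanishing $\pi^\beta_n w(x_0)=w(x_0)=0$ to Taylor-expand at $x_0$. This yields, for $\beta\in[1,2)$,
\begin{align*}
\pi^\beta_n w(x)\geq -C\|w\|_{C^{\beta_0}}\bigl(h_n^{\min(2,\beta_0)-1}|x-x_0|+|x-x_0|^\beta\bigr)
\end{align*}
near $x_0$, with a second-order analogue when $\beta\in[2,3)$. In parallel, I would establish a uniform global bound: using the explicit formula for $\pi^\beta_n w$ together with Proposition \ref{proposition:discrete derivatives converge to real derivatives}, one checks $\|\pi^\beta_n w-w\|_{L^\infty}\leq C\|w\|_{C^{\beta_0}}h_n^{\beta_0}$, and by Taylor-expanding $w$ at its global minimum $x^*$ (where $\nabla w(x^*)=0$ and $D^2w(x^*)\geq 0$) against its nearest grid point, where $w\geq 0$, one finds $w(x)\geq -C\|w\|_{C^{\beta_0}}h_n^{\beta_0}$ for every $x$. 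The two estimates combine to give $\pi^\beta_n w(x)\geq -C\|w\|_{C^{\beta_0}}h_n^{\beta_0}$ on all of $\mathbb{R}^d$.

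Third, with these bounds in hand I would set
\begin{align*}
R_{\beta_0,n,w,x_0}(x):=-\chi_0(x-x_0)\,\nabla\pi^\beta_n w(x_0)\cdot(x-x_0)+C\|w\|_{C^{\beta_0}}h_n^{\gamma_1}\,\eta_0\!\left(\frac{|x-x_0|^2}{h_n^{2\delta}}\right),
\end{align*}
where $\chi_0$ is a fixed smooth cutoff equal to $1$ on $B_{1/2}$ and to $0$ outside $B_1$, and $\gamma_1,\delta\in(0,1)$ depend only on $(\beta,\beta_0)$. Clearly $R(x_0)=0$. The first summand has $C^\beta$ norm at most $C\|w\|_{C^{\beta_0}}h_n^{\min(2,\beta_0)-1}$, while the second summand has $C^\beta$ norm at most $C\|w\|_{C^{\beta_0}}h_n^{\gamma_1-2\beta\delta}$, so requiring $\gamma_1>2\beta\delta$ produces a common power $h_n^\gamma$ with $\gamma=\gamma(\beta,\beta_0)\in(0,1)$. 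The first summand cancels the linear-in-$(x-x_0)$ part of $\pi^\beta_n w$ on $B_{1/2}(x_0)$, reducing the inner-region obstruction to the pure $|x-x_0|^\beta$ remainder; the bump term then dominates this residue on an annulus, saturates to $C\|w\|_{C^{\beta_0}}h_n^{\gamma_1}$ outside $B_{h_n^\delta}(x_0)$, and (choosing $\gamma_1\leq\beta_0$) also dominates the global bound from Step~2.

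The main technical obstacle is verifying that a choice of $(\gamma_1,\delta)$ exists that simultaneously achieves (i) domination of the residual local lower bound after the linear cancelation, (ii) domination of the global lower bound in the outer region, and (iii) the small $C^\beta$ estimate. This reduces to a case analysis depending on which of $h_n^{\min(2,\beta_0)-1}|x-x_0|$, $|x-x_0|^\beta$, or $h_n^{\beta_0}$ controls $(\pi^\beta_n w)_-$ in each subregion, and produces an explicit $\gamma(\beta,\beta_0)>0$ that degenerates to $0$ as $\beta_0\searrow\beta$.
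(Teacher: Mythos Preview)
Your overall strategy---subtract the linear part at $x_0$, then add a bump built from $\eta_0$---is exactly the paper's approach. However, your specific choice of bump $\eta_0(|x-x_0|^2/h_n^{2\delta})$ cannot work for $\beta\in[1,2)$. Near $x_0$ this bump vanishes like $|x-x_0|^2$, but after your linear cancellation the residual obstruction is (by your own Step~2) of size $|x-x_0|^\beta$ with $\beta<2$. Since $|x-x_0|^2=o(|x-x_0|^\beta)$ as $x\to x_0$, no choice of $(\gamma_1,\delta)$ makes the sum nonnegative in a neighborhood of $x_0$: the inequality $h_n^{\gamma_1-2\delta}|x-x_0|^2\geq C|x-x_0|^\beta$ rewrites as $h_n^{\gamma_1-2\delta}\geq C|x-x_0|^{\beta-2}$, and the right side blows up as $|x-x_0|\to 0$. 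Even for $\beta\geq 2$ your constraint system is inconsistent: smallness of the $C^\beta$ norm requires $\gamma_1>\beta\delta$ (your exponent $\gamma_1-2\beta\delta$ miscounts by a factor of $2$; a rescaling argument gives $\gamma_1-\beta\delta$), while domination of the residual at the edge of the inner region $|x-x_0|\sim h_n^\delta$ forces $\gamma_1\leq\beta\delta$.

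The fix is to match the order of vanishing of the bump to that of the residual. The paper does this by first using that, after subtracting the linear part, $\tilde w$ is in $C^{\beta_0}$ with $\nabla\tilde w(x_0)=0$, so the \emph{local} lower bound is $\tilde w(x)\geq -C\|w\|_{C^{\beta_0}}|x-x_0|^{\beta_0}$ (not merely $|x-x_0|^\beta$), and then choosing the bump to be $C\|w\|_{C^{\beta_0}}h_n\,\eta_0(|x-x_0|^{\beta_0}/h_n)$. In the inner region this equals $C\|w\|_{C^{\beta_0}}|x-x_0|^{\beta_0}$, exactly dominating the local residual; in the outer region it saturates at $C\|w\|_{C^{\beta_0}}h_n$, dominating the global lower bound (which the paper obtains as $-C\|w\|_{C^{\beta_0}}h_n$ via the nearest-grid-point argument, not your claimed $h_n^{\beta_0}$). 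The $C^\beta$ norm of this bump is then $O(h_n^\gamma)$ by a direct computation.
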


\begin{remark}\label{remark:Whitney extension is order preserving for beta<1}
  For $\beta\in(0,1)$, it is straightforward that $w\geq 0$ in $G_n$ guarantees that $\pi_n^\beta w\geq 0$ everywhere, that is, the Whitney extension for $\beta \in (0,1)$ is order preserving. Accordingly, Lemma \ref{lemma:Whitney Extension Is Almost Order Preserving} is only needed for $\beta>1$.

\end{remark}

\begin{proof}
  We consider the cases $1\leq \beta<2$ and $\beta\geq 2$ separately. First suppose $\beta \in [1,2)$. Let $\phi_0(t)$ be a smooth function such that $0\leq \phi_0(t)\leq 1$ for all $t$, $\phi_0(t)=1$ for $t\leq 1/4$ and $\phi_0(t)=0$ for $t\geq 1$. Then set
  \begin{align*}
    \tilde w(x) = \pi^\beta_n w(x) - ( \nabla \pi^\beta_nw(x_0), x-x_0) \phi_0(x-x_0).
  \end{align*}
  For each $x\in \mathbb{R}^d$, let $\hat x$ denote a point in $G_n$ such that $|x-\hat x| = \textnormal{dist}(x,G_n)\leq h_n$. Then, since $w(\hat x)\geq 0$ for any $\hat x$ (from the assumption), we have
  \begin{align*}
    \tilde w(x)  & = \tilde w(\hat x) + (\tilde w(x)-\tilde w(\hat x))\\
	  & \geq - ( \nabla \pi^\beta_n w (x_0),x-x_0) \phi_0(x-x_0) - C\|\tilde w\|_{C^{\beta_0}} |\hat x-x|\\
	  & \geq - ( \nabla \pi^\beta_nw (x_0),x-x_0) \phi_0(x-x_0) - C\|\tilde w\|_{C^{\beta_0}} h_n.
  \end{align*}
  By Proposition \ref{proposition:discrete derivative estimate at a local minimum}, we have $|\nabla \pi^\beta_nw (x_0)| \leq C\|w\|_{C^{\beta_0}}h_n$ when $\beta_0>1$, therefore, 
  \begin{align*}
    \tilde w(x) \geq -C\|w\|_{C^{\beta_0}}h_n,\;\;\forall\;x\in\mathbb{R}^d,
  \end{align*}
  where we have used Theorem \ref{theorem:Whitney Extension Is Bounded} to bound $\|\pi_n^\beta w\|_{C^\beta_0}$. On the other hand, since $\beta_0 >1 $ and $\nabla \tilde w(x_0)=0$, we have
  \begin{align*}
    \tilde w(x) & \geq -\|\tilde w\|_{C^{\beta_0}}|x-x_0|^{\beta_0},\\
	  & \geq -C\|w\|_{C^{\beta_0}}|x-x_0|^{\beta_0}\;\;\forall\;x\in\mathbb{R}^d,
  \end{align*}
  Now, we take $\eta_0$ as in \eqref{equation:eta_0 approximation to min of 1 and y squared} and define the function
  \begin{align*}
    \tilde R(x) := 2C\|w\|_{C^{\beta_0}}h_n \eta_0 \left ( \frac{|x-x_0|^{\beta_0}}{h_n} \right ).
  \end{align*}
  If $|x-x_0|^{\beta_0} \geq h_n/2$, then 
  \begin{align*}
    \tilde w(x)+\tilde R(x) & =  \tilde w(x) + C\|w\|_{C^{\beta_0}}h_n \geq 0.
  \end{align*}
  If on the contrary, $|x-x_0|^{\beta_0} \leq h_n/2$, then
  \begin{align*}
    \tilde w(x)+\tilde R(x) & =  \tilde w(x) + C\|w\|_{C^{\beta_0}}|x-x_0|^{\beta_0} \geq 0.
  \end{align*}
  We conclude that
  \begin{align*}
    \tilde w(x)+\tilde R(x) \geq 0,\;\forall\;x\in\mathbb{R}^d.
  \end{align*}
  On the other hand, an elementary computation (see the Appendix) shows that
  \begin{align*}
    \|\tilde R\|_{C^{\beta}} \leq C h_n^{\gamma} \|w\|_{C^{\beta_0}}.
  \end{align*}
  Finally, let
  \begin{align*}
    R_{\beta_0,n,w,x_0}(x) := \tilde R(x) - ( \nabla \pi^\beta_n(x_0), x-x_0) \phi_0(x-x_0).
  \end{align*}
  We conclude that $\|R_{\beta_0,n,w,x_0}\|_{C^\beta} \leq C h_n^{\gamma}\|w\|_{C^{\beta_0}}$ and
  \begin{align*}
    \pi^\beta_n w(x)+R_{\beta_0,n,w,x_0}(x) \geq 0,\;\forall\;x\in\mathbb{R}^d.
  \end{align*}
  This proves the Proposition when $\beta \in [1,2)$. The argument for $\beta\geq 2$ is similar, we only highlight the main differences. This time, we subtract not just the first order part of $w$ near $x_0$, but also the second order part, namely we consider the function
  \begin{align*}
    \tilde{\tilde w} := \pi_n^\beta w(x) - ( \nabla \pi^\beta_n(x_0), x-x_0) \phi_0(x-x_0) -  \tfrac{1}{2}( (\nabla^2 \pi^\beta_n(x_0))_-(x-x_0), x-x_0) \phi_0(x-x_0).
  \end{align*}	  
  Then, one applies again Proposition \ref{proposition:discrete derivative estimate at a local minimum} and use the regularity of $w$ to obtain (in analogy to the previous case)
  \begin{align*}
    \tilde{\tilde w}(x) \geq -C\|w\|_{C^{\beta_0}} \max \{ h_n, |x-x_0|^{\beta_0}\}
  \end{align*}	  
  The respective function $\tilde{\tilde R}$ is defined exactly as $\tilde R$ and one argues as in the previous case. 
\end{proof}

\begin{remark}\label{remark:Whitney extension is almost order preserving L infinity version}
  The argument in the proof provides -after small modifications- a closely related result: if instead of $w \in C^{\beta}_b(\mathbb{R}^d)$ we assume that $w \in C^0_b(\mathbb{R}^d)$ and that for some $M>0$ and $\beta_0>\beta$ we have
  \begin{align*}
    |w(x)| \leq M|x-x_0|^{\beta_0},\;\forall\;x\in \mathbb{R}^d,
  \end{align*}	  
  then there is as before a function $\hat R_{\beta_0,n,w,x_0}$ such that $\hat R_{\beta_0,n,w,x_0}(x_0)=0$ and $ \pi_n^\beta w(x)+R_{\beta_0,n,w,x_0}(x)\geq 0$ for all $x$, but this time the $C^{\beta}$ estimate for $\hat R_{\beta_0,n,w,x_0}$ is 
  \begin{align*}
    \|\hat R_{\beta_0,n,w,x_0}\|_{C^{\beta}} \leq Ch_n^{\gamma}(\|w\|_{L^\infty}+M).  
  \end{align*}	  
\end{remark}

The following proposition will be useful later in the proof of Proposition \ref{proposition:touching from above function with decay at point operator estimate}.
\begin{proposition}\label{proposition:touching from above function with decay at a point}
  Let $1\leq \beta < \beta_0<3$ or $\beta \in (0,1)$ and $\beta_0 = \beta$. Fix $f \in C^\infty_c(\mathbb{R}^d)$, and let $\eta_0$ be as in \eqref{equation:eta_0 approximation to min of 1 and y squared}. Let $x_0 \in G_n$ and $w(x) = f(x-x_0) \eta_0 (|x-x_0|^{\beta_0})$, then
  \begin{align*}
    \pi^\beta_n(w,x) & \leq C\|f\|_{L^\infty} \eta_0(|x-x_0|^{\beta_0}),\;\forall\;x\in\mathbb{R}^d, \textnormal{ if } \beta\in (0,1),\\
    \pi^\beta_n(w,x) & \leq C\|f\|_{L^\infty} \eta_0(|x-x_0|^{\beta_0}) + \hat R_{\beta_0,n,w,x_0}(x) ,\;\forall\;x\in\mathbb{R}^d, \textnormal{ if } \beta \in [1,2],
  \end{align*}
  for some function $\hat R_{\beta_0,n,w,x_0}$ such that $\hat R_{\beta_0,n,w,x_0}(x_0) = 0$ and
  \begin{align*}
    \|\hat R_{\beta_0,n,w,x_0}\|_{C^{\beta}} \leq C\|f\|_{L^\infty}h_n^{\gamma},
  \end{align*}	  
  where $\gamma$ is as in Lemma \ref{lemma:Whitney Extension Is Almost Order Preserving}.
\end{proposition}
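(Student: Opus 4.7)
The proof naturally splits by the value of $\beta$, since for $\beta \in (0,1)$ the Whitney extension is a genuine convex combination (polynomials of degree zero), whereas for $\beta \in [1,2]$ linear (or quadratic, when $\beta = 2$) correction terms create the possibility of local overshoot that must be controlled by $\hat R$.

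\textbf{The case $\beta \in (0,1)$.} By definition of $\pi_n^\beta$ for this range, $\pi_n^\beta w(x) = \sum_k w(\hat y_{n,k})\phi_{n,k}(x)$ is a convex combination over indices $k$ with $x \in Q_{n,k}^*$, of which there are at most $C(d)$ by Remark \ref{remark:maximum number of overlapping cubes}. The geometric estimates in Lemma \ref{lemma:cube decomposition of the complement of the lattice} and the construction of $\hat y_{n,k}$ imply $|x - \hat y_{n,k}| \leq C\,\textnormal{dist}(x,G_n) \leq C|x-x_0|$ (the last step using $x_0 \in G_n$), and hence $|\hat y_{n,k} - x_0| \leq (C+1)|x-x_0|$. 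The hypothesis $|w(y)| \leq \|f\|_{L^\infty}\eta_0(|y-x_0|^{\beta_0})$ together with the elementary scaling $\eta_0(A t) \leq C'(A)\eta_0(t)$ for $A \geq 1$ (which follows from $\eta_0(t)$ being comparable to $\min(t,1)$) yields $w(\hat y_{n,k}) \leq C\|f\|_{L^\infty}\eta_0(|x-x_0|^{\beta_0})$ for each relevant $k$. Summing against $\phi_{n,k}$ gives the conclusion for $x \notin G_n$; for $x \in G_n$ it is trivial since $\pi_n^\beta w(x) = w(x)$.

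\textbf{The case $\beta \in [1,2]$.} Define $V(x) := C_0\|f\|_{L^\infty}\eta_0(|x-x_0|^{\beta_0})$ with $C_0$ chosen large enough that $g := V - w \geq 0$ pointwise on $\mathbb{R}^d$. Then $g \in C^0_b$, $g(x_0) = 0$, and by a case analysis between $|x-x_0| \leq 1$ and $|x - x_0| > 1$ (exploiting boundedness of $g$ in the latter range), one checks $|g(x)| \leq M|x-x_0|^{\beta_0}$ for $M \leq C\|f\|_{L^\infty}$. Applying Remark \ref{remark:Whitney extension is almost order preserving L infinity version} to $g$ yields a function $\hat R_0$ with $\hat R_0(x_0) = 0$, $\|\hat R_0\|_{C^\beta} \leq Ch_n^\gamma\|f\|_{L^\infty}$, and $\pi_n^\beta g + \hat R_0 \geq 0$. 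Linearity of $\pi_n^\beta$ then gives $\pi_n^\beta w \leq \pi_n^\beta V + \hat R_0$. To close the estimate, I will bound $\pi_n^\beta V - V$ directly: the function $V$ lies in $C^{\beta_0}_b$ with $\|V\|_{C^{\beta_0}} \leq C\|f\|_{L^\infty}$ (crucially, this norm depends only on $\|f\|_{L^\infty}$, not on higher derivatives of $f$, because $V$ is constructed from $\|f\|_{L^\infty}$ times a fixed profile), so the standard Whitney-approximation estimate for polynomial interpolation of degree $\leq \lfloor \beta\rfloor$ (plus the Hessian term when $\beta = 2$) of a $C^{\beta_0}$ function gives $\|\pi_n^\beta V - V\|_{C^\beta} \leq Ch_n^{\beta_0-\beta}\|f\|_{L^\infty}$. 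Moreover $(\pi_n^\beta V - V)(x_0) = 0$ because $x_0 \in G_n$ forces $\pi_n^\beta V(x_0) = V(x_0) = 0$. Setting $\hat R := (\pi_n^\beta V - V) + \hat R_0$ gives the desired conclusion.

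\textbf{Main obstacle.} The delicate point is Step 2 of the $\beta \in [1,2]$ argument, specifically establishing the error bound $\|\pi_n^\beta V - V\|_{C^\beta} \leq Ch_n^{\gamma'}\|f\|_{L^\infty}$ with the correct dependencies. One must verify that the exponent $\gamma' > 0$ can be arranged and that the norm on the right-hand side genuinely involves only $\|f\|_{L^\infty}$. A secondary technicality is the careful verification of the pointwise bound $|g(x)| \leq M|x-x_0|^{\beta_0}$ needed to invoke Remark \ref{remark:Whitney extension is almost order preserving L infinity version}, since $\eta_0(|x-x_0|^{\beta_0})$ saturates at $1$ for $|x-x_0|$ large, and so the bound holds for a different reason in the two regimes $|x-x_0| \leq 1$ and $|x-x_0| > 1$.
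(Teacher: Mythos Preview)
Your proof is correct and follows essentially the same strategy as the paper: the paper's auxiliary function $\tilde w(x) = (\|f\|_{L^\infty} - f(x-x_0))\eta_0(|x-x_0|^{\beta_0})$ is exactly your $g$ with $C_0=1$, and the paper likewise invokes Remark~\ref{remark:Whitney extension is almost order preserving L infinity version} (respectively Remark~\ref{remark:Whitney extension is order preserving for beta<1} for $\beta<1$) to reach $\pi_n^\beta w \leq \|f\|_{L^\infty}\,\pi_n^\beta\bigl(\eta_0(|\cdot-x_0|^{\beta_0})\bigr) + \hat R$. Your extra step of absorbing $\pi_n^\beta V - V$ into $\hat R$ via Lemma~\ref{lemma:projection operators convergence} makes explicit a passage the paper's proof leaves tacit; the only caveat is that this may produce a smaller positive exponent than the $\gamma$ of Lemma~\ref{lemma:Whitney Extension Is Almost Order Preserving}, which is immaterial for how the proposition is used.
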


\begin{proof}
  Define the function $\tilde w(x) := (\|f\|_{L^\infty}-f(x-x_0))\eta_0(|x-x_0|^{\beta_0})$. Then $\tilde w(x_0) = 0$ and
  \begin{align*}
    |\tilde w(x)| \leq 2\|f\|_{L^\infty}\eta_0(|x-x_0|^{\beta_0}),\;\forall\;x\in\mathbb{R}^d,
  \end{align*}
  while, since $\eta_0\geq 0$, we also have $\tilde w(x)\geq 0$ for every $x\in G_n$. If $\beta \in [1,2]$, using Lemma \ref{lemma:Whitney Extension Is Almost Order Preserving} and the function $\hat R_{\beta_0,n,w,x_0}$ from Remark \ref{remark:Whitney extension is almost order preserving L infinity version}, we have
  \begin{align*}
    \pi^\beta_n(\tilde w,x) + \hat R_{\beta_0,n,w,x_0}(x) \geq 0,\;\;\forall\;x,
  \end{align*}
  This inequality, after some rearranging, yields (for $\beta\in[1,2]$)
  \begin{align*}
    \pi^\beta_n(w,x)\leq \|f\|_{L^\infty} \pi^\beta_n(\eta_0(|\cdot-x_0|^{\beta_0}),x) + \hat R_{\beta_0,n,w,x_0}(x) ,\;\;\forall\;x\in\mathbb{R}^d.
  \end{align*}
  Since we also have $\|\tilde w\|_{L^\infty} \leq C\|f\|_{L^\infty}$, we have again by Remark \ref{remark:Whitney extension is almost order preserving L infinity version}
  \begin{align*}
    \|\hat R_{\beta_0,n,w,x_0}\|_{C^{\beta}} \leq C\|f\|_{L^\infty}h_n^{\gamma}, 
  \end{align*}	  
  and the Proposition is proved in this case. For $\beta \in (0,1)$ we argue along similar lines, using Remark \ref{remark:Whitney extension is order preserving for beta<1} instead of Lemma \ref{lemma:Whitney Extension Is Almost Order Preserving}.

\end{proof}

\subsection{Convergence of the projection operators}

\begin{lemma}\label{lemma:projection operators convergence}
  Let $0<\beta<\beta_0<3$, there is a constant $C$ such that if $u\in C^{\beta_0}_b(\mathbb{R}^d)$, then 
  \begin{align*}
    \|\pi_n^\beta u - u \|_{C^{\beta}} \leq Ch_n^{\gamma}\|u\|_{C^{\beta_0}}.
  \end{align*}
  Here, $\gamma= \gamma(\beta_0,\beta) \in (0,1)$.
\end{lemma}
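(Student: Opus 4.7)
The plan is to combine a direct pointwise estimate with a smoothing-and-interpolation argument. First, I would establish the $L^\infty$ convergence rate. Since $\pi_n^\beta u$ agrees with $u$ on $G_n$, only the case $x\notin G_n$ needs attention, where using $\sum_k \phi_{n,k}(x) = 1$ one writes
\begin{align*}
(\pi_n^\beta u - u)(x) = \sum_k \phi_{n,k}(x)\,\bigl(p_{u,k}^\beta(x) - u(x)\bigr).
\end{align*}
For each $k$ with $\phi_{n,k}(x)\neq 0$, the point $x$ lies in $Q_{n,k}^*$, and combining the diameter bound for the Whitney cubes with Remark \ref{remark:cube centers and hat ynk} gives $|x-\hat y_{n,k}| \leq Ch_n$. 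A Taylor expansion of $u$ at $\hat y_{n,k}$ up to the polynomial degree $k_0$ appearing in $p_{u,k}^\beta$ (that is, $k_0 = 0,1,2$ according to whether $\beta\in(0,1),[1,2),[2,3)$), combined with Proposition \ref{proposition:discrete derivatives converge to real derivatives} to compare the discrete derivatives $\nabla_n^j u(\hat y_{n,k})$ with their continuous counterparts, yields
\begin{align*}
|p_{u,k}^\beta(x) - u(x)| \leq C\|u\|_{C^{\beta_0}}\, h_n^{\min(\beta_0,\,k_0+1)}.
\end{align*}
Summing over $k$ gives $\|\pi_n^\beta u - u\|_{L^\infty}\leq C\|u\|_{C^{\beta_0}}h_n^{\delta_0}$ with $\delta_0 = \min(\beta_0,k_0+1) > \beta$.

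Next, I would upgrade this $L^\infty$ rate to a $C^\beta$ estimate by mollification. Let $\rho_\varepsilon$ be a standard mollifier and set $u_\varepsilon = u*\rho_\varepsilon$, and decompose
\begin{align*}
\pi_n^\beta u - u = \pi_n^\beta(u-u_\varepsilon) - (u-u_\varepsilon) + (\pi_n^\beta u_\varepsilon - u_\varepsilon).
\end{align*}
The first two terms are controlled by Theorem \ref{theorem:Whitney Extension Is Bounded} and standard H\"older mollification estimates, giving $\|\pi_n^\beta(u-u_\varepsilon)\|_{C^\beta} + \|u-u_\varepsilon\|_{C^\beta} \leq C\varepsilon^{\beta_0-\beta}\|u\|_{C^{\beta_0}}$. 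For the last term, $u_\varepsilon$ is smooth with $\|u_\varepsilon\|_{C^{k_0+1}}\leq C\varepsilon^{-(k_0+1-\beta_0)_+}\|u\|_{C^{\beta_0}}$, and redoing the pointwise analysis of the previous paragraph for $u_\varepsilon$ yields not only the $L^\infty$ bound but also, by a parallel argument on the derivatives $\nabla^j$ using the partition-of-unity identities $\sum_k\nabla^\alpha\phi_{n,k} = 0$ for $|\alpha|\geq 1$, the bounds $\|\nabla^j(\pi_n^\beta u_\varepsilon - u_\varepsilon)\|_{L^\infty}\leq Ch_n^{k_0+1-j}\|u_\varepsilon\|_{C^{k_0+1}}$ for $0\leq j\leq k_0$. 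A Gagliardo--Nirenberg-type interpolation in this ladder of bounds then gives
\begin{align*}
\|\pi_n^\beta u_\varepsilon - u_\varepsilon\|_{C^\beta}\leq Ch_n^{k_0+1-\beta}\|u_\varepsilon\|_{C^{k_0+1}}.
\end{align*}

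Finally, balancing the two contributions by choosing $\varepsilon=h_n$ gives $\|\pi_n^\beta u - u\|_{C^\beta} \leq Ch_n^\gamma\|u\|_{C^{\beta_0}}$ with $\gamma = \min(\beta_0-\beta,\,k_0+1-\beta)\in (0,1)$, after capping $\gamma$ strictly below $1$ if $\beta_0-\beta$ happens to exceed $1$. The main technical obstacle is establishing, for the smooth piece $\pi_n^\beta u_\varepsilon - u_\varepsilon$, the derivative bounds $\|\nabla^j(\pi_n^\beta u_\varepsilon - u_\varepsilon)\|_{L^\infty}$. While the $L^\infty$ bound follows directly from Taylor's theorem, the bounds on derivatives require careful exploitation of the identity $\sum_k \nabla^\alpha \phi_{n,k}\equiv 0$ for $|\alpha|\geq 1$ together with the near-cancellation of neighboring polynomials $p_{u_\varepsilon,k}^\beta$ at adjacent Whitney cubes, as in the classical proof of Theorem \ref{theorem:Whitney Extension Is Bounded} following Stein. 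This is routine but somewhat delicate, especially for cubes close to $G_n$ where the cubes become very small.
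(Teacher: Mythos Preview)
Your approach via mollification and interpolation is different from the paper's and can be made to work, but there is a gap as written, and the detour through $u_\varepsilon$ turns out to be unnecessary.

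\textbf{The gap.} The ladder of bounds $\|\nabla^j(\pi_n^\beta u_\varepsilon-u_\varepsilon)\|_{L^\infty}\leq Ch_n^{k_0+1-j}\|u_\varepsilon\|_{C^{k_0+1}}$ for $0\le j\le k_0$ does \emph{not} suffice for Gagliardo--Nirenberg interpolation to $C^\beta$ when $\beta\in[k_0,k_0+1)$: to control $[\nabla^{k_0}g]_{C^{\beta-k_0}}$ you need either the top bound $\|\nabla^{k_0+1}g\|_{L^\infty}\leq C\|u_\varepsilon\|_{C^{k_0+1}}$ (the $j=k_0+1$ case, with no $h_n$ gain) or a direct H\"older estimate on $\nabla^{k_0}g$. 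The $j=k_0+1$ bound is in fact true---near each grid point the Whitney extension reduces to a single polynomial of degree $k_0$, so $\nabla^{k_0+1}\pi_n^\beta u_\varepsilon$ is locally zero there, and elsewhere the usual Stein cancellation argument applies---but establishing it rigorously is exactly the Stein-type computation you identify as the obstacle, carried one order higher than you state.

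\textbf{Comparison with the paper.} The paper bypasses mollification entirely and estimates $[\nabla^{k_0}(\pi_n^\beta u-u)]_{C^{\beta-k_0}}$ directly for the original $u\in C^{\beta_0}_b$. Writing $f=\pi_n^\beta u$, it uses a two-case splitting on pairs $x_1,x_2$: if $|x_1-x_2|\leq \max_i|x_i-\hat x_i|$, one invokes the $C^{\beta_0}$ bound on $f-u$ (from Theorem~\ref{theorem:Whitney Extension Is Bounded}) and gains the factor $|x_1-x_2|^{\beta_0-\beta}\leq h_n^{\beta_0-\beta}$; if $|x_1-x_2|>\max_i|x_i-\hat x_i|$, one uses that $f=u$ on $G_n$ to bound $|(f-u)(x_i)|$ individually via $|x_i-\hat x_i|^{\beta_0}\leq h_n^{\beta_0-\beta}|x_1-x_2|^\beta$. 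The same splitting handles the derivatives, with Proposition~\ref{proposition:discrete derivatives converge to real derivatives} supplying $|\nabla^jf(\hat x)-\nabla^ju(\hat x)|\leq C\|u\|_{C^{\beta_0}}h_n^{\beta_0-j}$ at grid points. This is shorter than your route: once you admit that the hard step is a Stein-type estimate on the top H\"older seminorm, doing it directly for $u$ costs no more than doing it for $u_\varepsilon$, and the mollification becomes a detour that only reproduces the paper's exponent $\gamma=\beta_0-\beta$ after balancing $\varepsilon=h_n$.
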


\begin{proof}
  For notational simplicity let us write $f(x) = \pi_n^\beta u(x)$ throughout the proof. 
  
  Since $u=f$ throughout $G_n$, for an arbitrary $x\in G_n$ we have (with $\hat x$ denoting a point in $G_n$ such that $\textnormal{dist}(x,G_n) = |x-\hat x|$), with $\alpha := \min\{1,\beta_0\}$
  \begin{align*}
    |u(x)-f(x)| & \leq |f(x)-f(\hat x)| + |u(\hat x)-u(x)|\\
	  & \leq |x-\hat x|^\alpha [f]_{C^{\alpha}} + |x-\hat x|^{\alpha} [u]_{C^{\alpha}}\\
	  & \leq C\|u\|_{C^{\beta_0}}h_n^\alpha \leq C\|u\|_{C^{\beta_0}}h_n^\alpha,
  \end{align*}
  where we made use of Theorem \ref{theorem:Whitney Extension Is Bounded} to obtain $[f]_{C^\alpha} \leq C\|u\|_{C^\beta}$. This shows that $\|u-f\|_{L^\infty}$ goes to zero at some rate determined by $\beta_0$ and the size of $\|u\|_{C^{\beta_0}}$. To prove the lemma we need to also bound the H\"older seminorm of $u-f$ and its derivatives, according to $\beta_0$.
  
  \textbf{The case $\beta, \beta_0 \in [0,1)$}. Fix $x_1,x_2 \in \mathbb{R}^d$. First, suppose that $|x_1-x_2| \leq \max\{|x_1-\hat x_1|,|x_2-\hat x_2|\}$, then
  \begin{align*}
    |f(x_1)-u(x_1)-(f(x_2)-u(x_2))| \leq [f-u]_{C^{\beta_0}}|x_1-x_2|^{\beta_0} \leq C\|u\|_{C^{\beta_0}}|x_1-x_2|^{\beta_0}.
  \end{align*}
  In this case, and since $0\leq \beta <\beta_0<1$, we have that $|x_1-x_2|^{\beta_0-\beta} \leq \max \{|x_1-\hat x_1|^{\beta_0-\beta},|x_2-\hat x_2|^{\beta_0-\beta}\} \leq h_n^{\beta_0-\beta}$. Then, using Theorem \ref{theorem:Whitney Extension Is Bounded}
  \begin{align*}
    |f(x_1)-u(x_1)-(f(x_2)-u(x_2))| \leq [f-u]_{C^\beta}|x_1-x_2|^{\beta} \leq C\|u\|_{C^{\beta_0}}h_n^{\beta_0-\beta}|x_1-x_2|^{\beta}.
  \end{align*}
  Next, suppose that $|x_1-x_2| > \max\{|x_1-\hat x_1|,|x_2-\hat x_2|\}$. In this case
  \begin{align*}
    |f(x_1)-u(x_1)-(f(x_2)-u(x_2))| & \leq \|f\|_{C^{\beta_0}}|x_1-\hat x_1|^{\beta_0}+\|u\|_{C^{\beta_0}}|x_2-\hat x_2|^{\beta_0}\\
      & \leq C\|u\|_{C^{\beta_0}}h_n^{\beta_0-\beta}|x_1-x_2|^{\beta}, 	
  \end{align*}
  where once again Theorem \ref{theorem:Whitney Extension Is Bounded} was used. Combining these two estimates, we conclude that
  \begin{align*}
    [f-u]_{C^\beta} = \sup \limits_{x_1 \neq x_2} \frac{|f(x_1)-u(x_1)-(f(x_2)-u(x_2))|}{|x_1-x_2|^{\beta}} \leq C\|u\|_{C^{\beta_0}}h_n^{\beta_0-\beta}.
  \end{align*}
  Then, using that $h_n \leq 1$ for all $n\geq 1$, we have
  \begin{align*}
    \|f-u\|_{C^\beta} \leq Ch_n^{\gamma}\|u\|_{C^{\beta_0}}.
  \end{align*}
  
  \textbf{The case $\beta, \beta_0 \in [1,2)$}. In this case we trivially have the same estimates from the previous case, and  only need the bounds for first derivative. This is done as follows, first
  \begin{align*}
    |\nabla f(x)-\nabla u(x)| \leq |\nabla f(x)-\nabla f(\hat x)| + |\nabla f(\hat x)-\nabla u(\hat x)| + |\nabla u(x)-\nabla u(\hat x)|.
  \end{align*}
  Then, using Theorem \ref{theorem:Whitney Extension Is Bounded}, we have
  \begin{align*}
    |\nabla f(x)-\nabla u(x)| & \leq [\nabla f]_{C^{\beta_0-1}}h_n^{\beta_0-1} + |\nabla f(\hat x)-\nabla u(\hat x)| + [\nabla u]_{C^{\beta_0-1}}h_n^{\beta_0-1}\\
	  & \leq C\|u\|_{C^{\beta_0}}h_n^{\beta_0-1}+ |\nabla f(\hat x)-\nabla u(\hat x)|.
  \end{align*}
  Recall that $\nabla f(\hat x) = (\nabla_n)^1 u(\hat x)$, and use Proposition \ref{proposition:discrete derivatives converge to real derivatives} to conclude that 
  \begin{align*}
    |\nabla f(x)-\nabla u(x)| & \leq C\|u\|_{C^{\beta_0}}h_n^{\beta_0-1}+ C\|u\|_{C^{\beta_0}}h_n^{\beta_0-1}.
  \end{align*}
  The H\"older seminorm $[\nabla f-\nabla u]_{C^\beta}$ is bounded with the same argument used to bound $[f-u]_{C^{\beta}}$ in the previous case, we omit the details.
  
  \textbf{The case $\beta = 2, \beta_0 \in (2,3)$}. Right as before, we note that
  \begin{align*}
    |D^2 f(x)-D^2 u(x)| \leq |D^2f(x)-D^2f(\hat x)| + |D^2 f(\hat x)-D^2 u(\hat x)| + |D^2 u(x)-D^2 u(\hat x)|.
  \end{align*}
  Then, applying Theorem \ref{theorem:Whitney Extension Is Bounded} and Proposition \ref{proposition:discrete derivatives converge to real derivatives} as in the previous case, we have
  \begin{align*}
    |D^2 f(x)-D^2 u(x)| & \leq [D^2 f]_{C^{\beta_0-2}}h_n^{\beta_0-2} + |D^2 f(\hat x)-D^2 u(\hat x)| + [D^2 u]_{C^{\beta_0-2}}h_n^{\beta_0-2}\\
	  & \leq 2C\|u\|_{C^{\beta_0}}h_n^{\beta_0-2} + |\nabla f(\hat x)-\nabla u(\hat x)|\\
	  & \leq 3C\|u\|_{C^{\beta_0}}h_n^{\beta_0-2}.
  \end{align*}
  For the H\"older seminorm, we repeat the argument used in the case $\beta \in (0,1)$, again we leave the details to the reader.
 \end{proof}

\begin{remark}\label{remark:projection operators C0 convergence}  
  If $u\in C^0_b(\mathbb{R}^d)$, then the same argument from Lemma \ref{lemma:projection operators convergence} can be used to show
  \begin{align*} 
    \lim \limits_{n\to \infty} \|u-\pi_n^0(u)\|_{L^\infty(\mathbb{R}^d)} = 0,  
  \end{align*}
  the rate of convergence being determined by the modulus of continuity of $u$.

\end{remark}

\section{Analysis of $I(u,x)$ via the finite dimensional approximations}\label{section:Analysis of finite dimensional approximations}

In this section we introduce a sequence of operators $I_n$ which approximate $I$. The operators $I_n$ behave like operators in a finite dimensional vector space in the sense that they arise from a composition between linear maps with a Lipschitz map from a finite dimensional space onto itself. This allows us to prove a min-max formula for $I_n(u,x)$ at least when $x\in G_n$ by using Clarke's idea of a generalized gradient \cite{Cla1990optimization}. More precisely, we use the fact that $I_n$ factorizes via a map between finite dimensional vector spaces (which is what the spaces $C_*(G_n)$ were introduced for), where the generalized gradient can be used, and then lift this to corresponding maps from $C^\beta_b(\mathbb{R}^d)$ to $C_b^0(\mathbb{R}^d)$ using the Whitney extension. The majority of the section is concerned with deriving estimates and regularity properties for the linear operators arising in the min-max formula for $I_n$, and ultimately concluding such linear operators are pre-compact, which leads to a min-max formula for the original operator.

\subsection{The operators $I_n$ and their min-max representation}

We are going to approximate the operator $I(\cdot,x)$ via ``finite dimensional approximations'', this referring to maps $I_n:C^\beta_b\to C^0_b$, which factorize through a finite dimensional space (see \eqref{equation:I n via little i n} below).

We introduce a modification of the projection operator $\pi_n^0$ defined in \eqref{equation:pi_n^beta definition}. First, we define
\begin{align*}
  \textnormal{Pr}_n:C(G_n) \to C_*(G_n),\;\; \textnormal{Pr}_n(u)(x) := u(x) \chi_{[-2^{n},2^{n}]^d}(x).
\end{align*}
That is, given $u\in C(G_n)$, we define $\textnormal{Pr}_n(u)$ as the function obtained by restricting $u$ to $G_n \cap [-2^{n},2^{n}]^d$ and then extending it to the rest of $G_n$ by zero. Then, we define the modified Whitney extension,
\begin{align*}
  \hat E^\beta_n  := E_n^\beta \circ  \textnormal{Pr}_n,
\end{align*}
and the modified projection operator
\begin{align*}
  \hat \pi_n^\beta := \hat E^\beta_n \circ T_n.
\end{align*}
These are, respectively, bounded linear maps from $C(G_n)$ to $C^\beta_b(\mathbb{R}^d)$ and from $C^0_b(\mathbb{R}^d)$ to $C^\beta_b(\mathbb{R}^d)$. Now we are ready to introduce the finite dimensional approximations to the operator $I$, define
\begin{align}\label{equation:I_n definition}
  I_n &= \hat \pi_n^0 \circ I \circ \hat \pi_n^\beta,\;\;I_n:C^\beta_b(\mathbb{R}^d) \to C_b^0(\mathbb{R}^d).
\end{align}
That is, to compute $I_n(u,x)$, we first compute the modified projection $\hat \pi_n^\beta u$, and compute $I(\hat \pi_n^\beta u)$, to which we later apply the modified projection $\hat \pi_n^0$. In particular, $I_n$ only depends on the values of $u$ on $G_n \cap [-2^n,2^n]^d$. Associated to this, we introduce a map, $i_n$, defined as follows
\begin{align}\label{equation:little i n via I n} 
  i_n:C_*(G_n)\to C_*(G_n),\ \   i_n & = \textnormal{Pr}_n \circ T_n \circ I \circ E_n^\beta.
\end{align}
From the definition of $I_n$, we have $I_n = E_n^\beta \circ \textnormal{Pr}_n \circ T_n \circ I \circ E_n^\beta \circ \textnormal{Pr}_n \circ T_n$, thus we see $I_n$ and $i_n$ are themselves related by
\begin{align}\label{equation:I n via little i n} 
  I_n & = E_n^0 \circ i_n \circ \textnormal{Pr}_n \circ T_n.
\end{align}
The situation for both $I_n$ and $i_n$ is represented in the following two diagrams,
\begin{equation*}
  \begin{tikzcd}
    C^\beta_b(\mathbb{R}^d) \arrow{r}{I_n} \arrow[swap]{d}{\hat \pi_n^\beta} & C^0_b(\mathbb{R}^d)\\
    C^\beta_b(\mathbb{R}^d) \arrow{r}{I} & C^0_b(\mathbb{R}^d)  \arrow{u}{\hat \pi_n^0} 
  \end{tikzcd} \quad\quad\quad \begin{tikzcd}
    C_*(G_n) \arrow{r}{i_n} \arrow[swap]{d}{E_n^\beta} & C_*(G_n) \\
    C^\beta_b(\mathbb{R}^d) \arrow{r}{I} & C^0_b(\mathbb{R}^d) \arrow{u}{\textnormal{Pr}_n \circ T_n} 
  \end{tikzcd}
\end{equation*}
Now, the space $C_*(G_n)$ is finite dimensional (Remark \ref{remark:C star G_n is finite dimensional}), and the map $i_n:C_*(G_n)\to C_*(G_n)$ is Lipschitz continuous. Therefore, tools available for Lipschitz functions in the finite dimensional setting can be applied to $i_n$ and then related to $I_n$ via \eqref{equation:I n via little i n}.

We recall the generalized derivative of $i_n$ in the sense of Clarke \cite[Section 2.6]{Cla1990optimization}.
\begin{definition}\label{definition:Clarke differential}
  Let $V$ be a Banach space, and $T:V\to V$ a Lipschitz continuous function. We define the set of generalized derivatives of $T$, by 
  \begin{align*}
    \mathcal{D}T := \textnormal{c.h.}\{ L:V\to V \mid L = \lim \limits_{k} L_k \textnormal{ where } L_k= DT(x_k),\; T \textnormal{ is differentiable at } x_k \;\forall\; k \}.
  \end{align*}

\end{definition}

  By Rademacher's theorem, the set $\mathcal{D}T$ is not empty when $V$ is finite dimensional. Applying this to $i_n:C_*(G_n)\to C_*(G_n)$, we have, first, that $\mathcal{D}i_n$ is non-empty, and secondly that $\mathcal{D}I_n$ is non-empty as well, this is proved in Lemma \ref{lemma:characterization of D I sub n}, where we describe the relationship between $\mathcal{D}i_n$ to $\mathcal{D}I_n$. The following Lemma is the mean value theorem for nonsmooth Lipschitz functions between finite dimensional spaces (note the similarity with Theorem \ref{theorem:Lebourg}).
   
\begin{lemma}\label{lemma:min max finite dimensional map}
  Assume that $I: C^\beta_b(\real^d)\to C^0_b(\real^d)$ is Lipschitz. For any $u,v \in C_*(G_n)$, there is a $L \in \mathcal{D}i_n$ such that 
  \begin{align*}
    i_n(u,x) - i_n(v,x) = L(u-v,x). 
  \end{align*}

\end{lemma}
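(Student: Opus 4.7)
The statement is a finite-dimensional vector mean value theorem for the Lipschitz map $i_n:V\to V$ with $V:=C_*(G_n)$, and my plan is to adapt the proof of the mean value theorem for the Clarke generalized Jacobian as in \cite[Section 2.6]{Cla1990optimization}.

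First, I would consider the curve $\psi:[0,1]\to V$, $\psi(t):=i_n(v+t(u-v))$. Since $V$ is finite-dimensional and $\psi$ is Lipschitz, it is absolutely continuous, so
\[
i_n(u)-i_n(v) \;=\; \psi(1)-\psi(0) \;=\; \int_0^1 \psi'(t)\,dt.
\]
The target is to rewrite this integral as $L(u-v)$ for a single $L\in\mathcal{D}i_n$.

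The key step is to realize, for each $t$ in a full-measure subset of $[0,1]$, a linear map $L_t$ such that $\psi'(t)=L_t(u-v)$ and such that $L_t$ is a limit of honest Jacobians $Di_n(x_k)$ at points where $i_n$ is Fr\'echet differentiable; by Rademacher's theorem the differentiability set $\Sigma\subset V$ has full Lebesgue measure. The obstacle is that the one-dimensional segment $\{v+t(u-v):t\in[0,1]\}$ can in principle meet $\Sigma^c$ on a set of positive $1$-dimensional measure, so the classical chain rule need not apply directly. I would get around this by a Fubini/approximation argument: choose nearby parallel segments $\gamma_k(t)=v_k+t(u-v)$ with $v_k\to v$ so that $\gamma_k([0,1])\subset \Sigma$ off a $1$-dimensional null set (such $v_k$ exist by Fubini applied to the full-measure set $\Sigma$), apply the ordinary chain rule on each $\gamma_k$ to obtain
\[
i_n(\gamma_k(1))-i_n(\gamma_k(0)) \;=\; L_k(u-v), \qquad L_k := \int_0^1 Di_n(\gamma_k(t))\,dt,
\]
and note that each $L_k$, being an integral (hence a limit of Riemann sums, i.e.\ convex combinations) of Jacobians at points of $\Sigma$, belongs to $\mathcal{D}i_n$, which is closed and convex by construction.

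Finally, I would pass to the limit $k\to\infty$. Since $\|Di_n(x)\|\leq \mathrm{Lip}(i_n)$ for every $x\in\Sigma$, the sequence $\{L_k\}$ is norm-bounded in the (finite-dimensional) space of linear operators on $V$ and thus has a subsequential limit $L$; this $L$ stays in $\mathcal{D}i_n$ by closedness. Continuity of $i_n$ lets me pass to the limit on the left-hand side, yielding $i_n(u)-i_n(v)=L(u-v)$, which, evaluated at each $x\in G_n$, is exactly $i_n(u,x)-i_n(v,x)=L(u-v,x)$. The main obstacle is the Fubini/approximation step used to avoid the degenerate possibility that the original segment lies largely outside $\Sigma$; once that is in place the rest is standard bookkeeping with dominated convergence and with the closedness and convexity of $\mathcal{D}i_n$.
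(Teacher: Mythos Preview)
Your proposal is correct and is essentially a reconstruction of the argument the paper cites; the paper does not give its own proof but simply refers to \cite[Proposition 2.6.5]{Cla1990optimization}, and what you have written is precisely the standard proof of that proposition (Fubini to find a generic parallel segment on which the Lipschitz map is a.e.\ differentiable, integrate the Jacobians along it, observe the integral lies in the closed convex set $\mathcal{D}i_n$, and pass to the limit). One minor polish: the ``Riemann sums'' justification for $L_k\in\mathcal{D}i_n$ is slightly informal since $t\mapsto Di_n(\gamma_k(t))$ is only measurable; the cleaner statement is that the average of a measurable function taking values a.e.\ in a closed convex set (here $\mathcal{D}i_n$, which is compact in finite dimensions) again lies in that set.
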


\begin{proof}
  We refer the reader to \cite[Proposition 2.6.5]{Cla1990optimization} for a proof of the lemma.
\end{proof}

The second lemma is basically the chain rule. 
\begin{lemma}\label{lemma:characterization of D I sub n}
  Assume that $I: C^\beta_b(\real^d)\to C^0_b(\real^d)$ is Lipschitz.  The set $\mathcal{D}I_n$ is non-empty, and for any $L \in \mathcal{D}I_n$ there is a $\tilde L \in \mathcal{D}i_n$ such that
  \begin{align*}
    L =E_n^0 \circ \tilde L \circ T_n,
  \end{align*}
  conversely, any $L$ defined in this way for some $\tilde L \in \mathcal{D}i_n$ belongs to $\mathcal{D}I_n$.
\end{lemma}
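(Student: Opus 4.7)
The plan is to exploit the factorization \eqref{equation:I n via little i n}, $I_n = E_n^0 \circ i_n \circ \textnormal{Pr}_n \circ T_n$, in which every map other than $i_n$ is bounded linear, together with the finite dimensionality of $C_*(G_n)$. Non-emptiness of $\mathcal{D}i_n$ follows directly from Rademacher's theorem applied to the Lipschitz map $i_n:C_*(G_n)\to C_*(G_n)$, while non-emptiness of $\mathcal{D}I_n$ will be an immediate consequence of the correspondence with $\mathcal{D}i_n$ asserted by the lemma.

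The central claim to verify is the following Fr\'echet chain rule: $I_n$ is differentiable at $u$ if and only if $i_n$ is differentiable at $v := (\textnormal{Pr}_n\circ T_n)(u)$, in which case
\begin{align*}
  DI_n(u) \;=\; E_n^0 \circ Di_n(v) \circ \textnormal{Pr}_n \circ T_n.
\end{align*}
The $(\Leftarrow)$ direction is the usual chain rule. For $(\Rightarrow)$ I would rely on two observations: first, $I_n(u+h)$ depends on $h$ only through $(\textnormal{Pr}_n\circ T_n)(h)$, so the bounded linear map $DI_n(u)$ factors as $L'\circ \textnormal{Pr}_n\circ T_n$ for some $L'$; second, $I_n$ takes values in the finite dimensional subspace $E_n^0(C_*(G_n))\subset C_b^0$ on which $E_n^0$ is bicontinuous, hence $L'$ further factors as $E_n^0\circ \tilde L$ with $\tilde L:C_*(G_n)\to C_*(G_n)$ linear. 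Applying this to the difference quotient $t^{-1}(I_n(u+th)-I_n(u))$ with $h = E_n^\beta w$ (so that $(\textnormal{Pr}_n\circ T_n)(h)=w$, by the interpolating property of $E_n^\beta$) and using the bicontinuity of $E_n^0$ then shows that $t^{-1}(i_n(v+tw)-i_n(v))\to \tilde L(w)$ in $C_*(G_n)$; thus $i_n$ is G\^ateaux differentiable at $v$, and Fr\'echet since $i_n$ is Lipschitz on a finite dimensional space, with $\tilde L = Di_n(v)$.

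With the chain rule established, the correspondence is a direct bookkeeping exercise. For the forward direction, take $L = \sum_j \alpha_j L^{(j)}\in\mathcal{D}I_n$ with each $L^{(j)} = \lim_k DI_n(u_{k,j})$ at points of differentiability; set $v_{k,j} := (\textnormal{Pr}_n\circ T_n)(u_{k,j})$. Evaluating on lifts $h_i := E_n^\beta e_i$ of a basis $\{e_i\}$ of $C_*(G_n)$ (which satisfy $(\textnormal{Pr}_n\circ T_n)(h_i)=e_i$) and invoking the bicontinuity of $E_n^0$ on its finite dimensional image forces $Di_n(v_{k,j})\to \tilde L^{(j)}$ in the finite dimensional space of linear operators on $C_*(G_n)$. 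Hence $L^{(j)} = E_n^0\circ \tilde L^{(j)}\circ T_n$ and $\tilde L := \sum_j\alpha_j \tilde L^{(j)}\in\mathcal{D}i_n$ gives the asserted representation. The converse direction is symmetric: given $\tilde L = \sum_j\alpha_j \tilde L^{(j)}$ with $\tilde L^{(j)} = \lim_k Di_n(v_{k,j})$, set $u_{k,j} := E_n^\beta v_{k,j}$ so that $(\textnormal{Pr}_n\circ T_n)(u_{k,j}) = v_{k,j}$; the chain rule yields $DI_n(u_{k,j}) = E_n^0\circ Di_n(v_{k,j})\circ T_n \to E_n^0\circ \tilde L^{(j)}\circ T_n$, and summing with weights $\alpha_j$ places $E_n^0\circ \tilde L\circ T_n$ in $\mathcal{D}I_n$. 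The main obstacle is the factorization step $(\Rightarrow)$ of the chain rule; once that is in hand, everything else reduces to linearity of the dressing $E_n^0\circ(\cdot)\circ T_n$ and compactness in finite dimensions.
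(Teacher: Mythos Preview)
Your proposal is correct and follows essentially the same route as the paper: both arguments rest on the chain rule applied to the factorization $I_n = E_n^0 \circ i_n \circ \textnormal{Pr}_n \circ T_n$, yielding the biconditional on differentiability and the identity $DI_n(u) = E_n^0 \circ Di_n(v)\circ \textnormal{Pr}_n\circ T_n$, after which the correspondence between $\mathcal{D}I_n$ and $\mathcal{D}i_n$ follows from linearity of the outer maps and compactness in finite dimensions. You spell out in more detail the harder direction of the chain rule (differentiability of $I_n$ implies that of $i_n$) via the factorization through the finite dimensional image of $E_n^0$ and the lifting $h=E_n^\beta w$, which the paper merely asserts as ``a fact which follows applying the chain rule''; otherwise the arguments coincide.
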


\begin{proof}
  Note that $I_n$ is differentiable at a point $u$ if and only if $i_n$ is differentiable at $\tilde u = T_nu$, a fact which follows applying the chain rule to the identities \eqref{equation:little i n via I n}  and \eqref{equation:I n via little i n}. Furthermore, at such $u$'s we have
  \begin{align*}
    DI_n(u) = E_n^* \circ Di_n(\tilde u) \circ T_n.
  \end{align*}
  If $u_k$ is a sequence along which $I_n$ is differentiable, and $L_k:=DI_n(u_k)$ converges to some $L$, then the sequence $\tilde L_k := Di_n(\tilde u_k)$ has a limit $\tilde L$, and $L = E_n^* \circ \tilde L \circ T_n$, taking the convex hull and by the linearity of $E_n^*$ and $T_n$, the lemma follows.
\end{proof}

The following remark will not be of any relevance until the proof of Theorem \ref{theorem:MinMax Euclidean ver2} at the end of this section, but we include it here to illustrate how Lemmas \ref{lemma:min max finite dimensional map} and \ref{lemma:characterization of D I sub n} immediately yield a min-max formula for $I_n(u,x)$ (for $x\in G_n$).
\begin{remark}\label{remark:MinMax for In}
  Fix $n$ and let $x\in G_n$. Then for any $u \in C^\beta_b(\mathbb{R}^d)$ we have 
  \begin{align}\label{equation:minmax for I_n}
    I_n(u,x) \leq \max \limits_{L\in \mathcal{D}I_n} \{ I_n(v,x)+L(u-v,x)\},\;\;\forall\;x\in G_n, u,v\in C^\beta_b(\mathbb{R}^d). 
  \end{align}	  
  Indeed, according to Lemma \ref{lemma:min max finite dimensional map} given $u$ and $v$ says there is some $\tilde L \in\mathcal{D}i_n$ such that	
  \begin{align*}
    i_n(u)-i_n(v) = \tilde L(u-v).
  \end{align*}	  
  In this case, we have $E_n^0(i_n(u))-E_n^0(i_n(v)) = E_n^0(\tilde L(u-v))$, and thus setting $L := E_n^0 \circ \tilde L \circ T_n \in \mathcal{D}I_n$, we have
  \begin{align*}
    I_n(u) = I_n(v) + L(u-v),
  \end{align*}
  and \eqref{equation:minmax for I_n} immediately follows.
\end{remark}
Next we make an elementary observation regarding the nature of the operators $L \in \mathcal{D}I_n$. This observation is merely a consequence of the factorization of $I_n$ through the space $C(G_n)$.
\begin{remark}\label{remark:structure of D little i n kernel}
  For each $L \in \mathcal{D}I_n$ there is a function $K = K_L$, $K: G_n \times G_n \to \mathbb{R}$ such that 
  \begin{align}\label{equation:L in DI_n formula}
    Lu(x) = \sum \limits_{y\in G_n} K(x,y)u(x+y),\;\;\forall\;u\in C^\beta_b(\mathbb{R}^d).
  \end{align}
  Indeed, simply let us use the basis functions $\{e_y\}_{y\in G_n} \subset C(G_n)$ given by
  \begin{align*}
    e_y(x) = \left \{ \begin{array}{rl}
	  1 & \textnormal{ if } x =y,\\
	  0 & \textnormal{ if } x\neq y.
	  \end{array}\right.
  \end{align*}	  
  Observe that for any $u \in C^{\beta}_b(\mathbb{R}^d)$ the function $T_nu$ has finite support, and in particular $T_nu = \sum_{y\in G_n}u(y)e_y$ as the sum on the right has at most a finite number of non-zero terms. Thanks to Lemma \ref{lemma:characterization of D I sub n}, there is some $\tilde L \in \mathcal{D}i_n$ such that $L = E^0_n \circ \tilde L \circ T_n$ and therefore,
  \begin{align*}
    Lu(x) = \sum \limits_{y\in G_n} (\tilde Le_y)(x)u(y) = \sum \limits_{y\in G_n-x} (\tilde Le_{x+y})(x)u(x+y),\;\forall\;x\in G_n. 
  \end{align*}
  Then, defining $K_L(x,y) = (\tilde Le_{x+y})(x)$ for $x,y\in G_n$  the identity \eqref{equation:L in DI_n formula} follows.
\end{remark}

For the rest of this section we analyze the operators $I_n$ and the sets $\mathcal{D}I_n$ and obtain in the limit a min-max formula for $I_n$.  We shall focus on operators satisfying Assumption \ref{assumption:coefficient regularity}. As we see below this property is inherited --to some extent-- by the operators $I_n$, and by any operator $L \in \mathcal{D}I_n$, this fact is covered in the next two propositions. In the subsections that follow, we will use the spatial regularity afforded by Assumption \ref{assumption:coefficient regularity} to show that the operators in the family $\mathcal{D}I_n$ have coefficients enjoying some regularity, which in the limit yields regular coefficients.

\begin{proposition}\label{proposition:coefficient regularity inherited by In}
  Let $I$ be Lipschitz and satisfy Assumption \ref{assumption:coefficient regularity}. Let $x_1,x_2 \in G_n$ and $h = x_1 - x_2$, and $r\geq 2^{4-n}$. Then,   for any $u,v \in C^\beta_b(\mathbb{R}^d)$ we have
  \begin{align*}
    & | I_n(v+\tau_{-h}u,x_1)-I_n(v,x_1) -\left ( I_n(v+u,x_2)-I_n(v,x_2)\right ) |\\
    & \leq \omega(|h|)C(2r)\left ( \|u\|_{C^\beta(B_{4r}(x_2))} + \|u\|_{L^\infty(\mathcal{C}B_r(x_2))}\right ).
  \end{align*}
  where $\omega(\cdot)$ is the modulus of continuity and $C(\cdot)$ the function given by Assumption \ref{assumption:coefficient regularity}.
\end{proposition}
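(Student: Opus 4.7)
The plan is to reduce the claim on $I_n$ to the corresponding property of $I$ via the factorization $I_n = \hat\pi_n^0 \circ I \circ \hat\pi_n^\beta$ and the translation-compatibility of the Whitney extension. First, because $\hat\pi_n^0$ acts as pure pointwise evaluation on $G_n \cap [-2^n,2^n]^d$, for $x_1,x_2$ in this relevant portion of the lattice the factorization yields $I_n(w,x_i) = I(\hat\pi_n^\beta w, x_i)$; this lets me rewrite the left-hand side of the claim as the corresponding difference expression for $I$ applied to the extensions $\hat\pi_n^\beta v$, $\hat\pi_n^\beta u$.

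Next, since $h = x_1 - x_2 \in G_n$, Proposition~\ref{proposition:Whitney extension translation invariance} combined with the linearity of the extension gives
\begin{align*}
  \hat\pi_n^\beta(v + \tau_{-h}u) = \hat\pi_n^\beta v + \tau_{-h}\hat\pi_n^\beta u, \qquad \hat\pi_n^\beta(v+u) = \hat\pi_n^\beta v + \hat\pi_n^\beta u.
\end{align*}
This identification puts the difference expression in a form to which Assumption~\ref{assumption:coefficient regularity} (or its $C^\beta$ analogue in Assumption~\ref{assumption:CBeta}) applies directly at base point $x = x_2$, shift $z = h$, with $\tilde v = \hat\pi_n^\beta v$, $\tilde u = \hat\pi_n^\beta u$, and radius parameter equal to $2r$. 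That application produces the upper bound
\begin{align*}
  \omega(|h|)\, C(2r)\,\bigl(\|\hat\pi_n^\beta u\|_{C^\beta(B_{4r}(x_2))} + \|\hat\pi_n^\beta u\|_{L^\infty(\mathcal{C}B_{2r}(x_2))}\bigr).
\end{align*}

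The remaining task is to replace $\hat\pi_n^\beta u$ by $u$ on the right-hand side. The central observation is that $\hat\pi_n^\beta u(y)$ depends only on the values of $u$ at grid points within a distance $O(h_n)$ of $y$, by the support properties of the partition of unity in Proposition~\ref{proposition:partition of unity properties} and the strict locality of the interpolants in Definition~\ref{definition:interpolating polynomials}. The hypothesis $r \geq 2^{4-n}$ ensures $h_n \leq r/16$, so this $O(h_n)$-thickening is negligible relative to the radii $r, 2r, 4r$. In particular, for $y \in \mathcal{C}B_{2r}(x_2)$ the relevant grid points lie in $\mathcal{C}B_{2r - Ch_n}(x_2) \subset \mathcal{C}B_r(x_2)$, giving
\begin{align*}
 \|\hat\pi_n^\beta u\|_{L^\infty(\mathcal{C}B_{2r}(x_2))}\leq C\|u\|_{L^\infty(\mathcal{C}B_r(x_2))},
\end{align*}
and a localized version of the estimate in Theorem~\ref{theorem:Whitney Extension Is Bounded} (using Proposition~\ref{proposition:discrete derivatives regularity} to control discrete derivatives in terms of $u$ on a slightly enlarged set, which still sits inside $B_{4r}(x_2)$) yields $\|\hat\pi_n^\beta u\|_{C^\beta(B_{4r}(x_2))}\leq C\|u\|_{C^\beta(B_{4r}(x_2))}$; the universal multiplicative constant is absorbed into $C(2r)$.

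I expect the main obstacle to be this last localization step: Theorem~\ref{theorem:Whitney Extension Is Bounded} is stated globally, so I will need to inspect its proof scheme to confirm that the pointwise H\"older estimates underlying it localize onto $B_{4r}(x_2)$ without forcing a genuinely larger ball — this is exactly what the scale separation $h_n \leq r/16$ is designed to guarantee. A secondary subtlety is that the truncation $\textnormal{Pr}_n$ inside $\hat\pi_n^\beta$ prevents exact commutation with $\tau_{-h}$ near $\partial[-2^n,2^n]^d$, but for $x_1, x_2$ in the bulk of the lattice (where $I_n$ is nontrivial to begin with) the boundary discrepancy does not enter the relevant grid neighborhoods and can be ignored.
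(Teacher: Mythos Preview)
Your approach is essentially identical to the paper's: reduce to $I$ via the factorization, use translation-compatibility of the Whitney extension (Proposition~\ref{proposition:Whitney extension translation invariance}), apply Assumption~\ref{assumption:coefficient regularity}, and then localize the Whitney-extension norms using the scale separation $h_n \ll r$. The only discrepancy is a bookkeeping slip in your last step: applying the assumption with radius parameter $2r$ forces you to bound $\|\hat\pi_n^\beta u\|_{C^\beta(B_{4r}(x_2))}$, but the Whitney extension on $B_{4r}$ reads grid data from the slightly \emph{larger} set $B_{4r+O(h_n)}$, not one that ``still sits inside $B_{4r}(x_2)$''; the paper sidesteps this by applying Assumption~\ref{assumption:coefficient regularity} with the intermediate radius $\tfrac{3}{2}r$, obtaining $\|\pi_n^\beta u\|_{C^\beta(B_{3r}(x_2))}$ and then enlarging to $B_{4r}$ in the localization --- an easy fix to your argument.
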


\begin{proof}
  Observe that 
  \begin{align*}
    I_n(v+\tau_{-h}u,x_1)-I_n(v,x_1) = I(\pi_n^\beta v + \pi_n^\beta (\tau_{-h}u),x_1) - I_n(\pi_n^\beta,x_1),
  \end{align*}
  and recall that Proposition \ref{proposition:Whitney extension translation invariance} says that $\pi_n^\beta (\tau_{-h}u) = \tau_{-h}\pi_n^\beta (u)$ when $G_n+h = G_n$.

  Therefore, applying the bound in Assumption \ref{assumption:coefficient regularity} with $\tfrac{3}{2}r$,
  \begin{align*}
    & | I_n(v+\tau_{-h}u,x_1)-I_n(v,x_1) -\left ( I_n(v+u,x_2)-I_n(v,x_2)\right ) |\\
    & = |  I(\pi_n^\beta v +  \tau_{-h}(\pi_n^\beta u),x_1) - I_n(\pi_n^\beta,x_1)-\left ( I(\pi_n^\beta v+\pi_n^\beta u,x_2)-I(\pi_n^\beta v,x_2)\right ) |\\	
    & \leq \omega(|x_1-x_2|)C(3r/2)\left ( \|\pi_n^\beta u\|_{C^\beta(B_{3r}(x))} + \|\pi_n^\beta u\|_{L^\infty(\mathcal{C}B_{3r/2}(x))}\right ).
  \end{align*}
  Now, provided $r \geq 2^{4-n}$, we have
  \begin{align*}
    \|\pi_n^\beta u\|_{C^\beta(B_{3r}(x))} & \leq C \|u\|_{C^\beta(B_{4r}(x))},\\	  
    \|\pi_n^\beta u\|_{L^\infty(\mathcal{C}B_{3r/2}(x))} & \leq C\|u\|_{L^\infty(\mathcal{C}B_{r}(x))}, 
  \end{align*}
  the proposition follows.
  
\end{proof}

\begin{proposition}\label{proposition:coefficient regularity for L} 
  Let $I$ be Lipschitz and satisfy Assumption \ref{assumption:coefficient regularity}. Given $L\in \mathcal{D}I_n$, $x_1,x_2 \in G_n$, $r\geq 2^{4-n}$ and $u \in C^\beta_b(\mathbb{R}^d)$, we have the inequality
  \begin{align}\label{eqn:coefficient regularity}
    |L(\tau_{-h}u ,x_1)-L(u,x_2)| \leq \omega(|h|) C(2r) \left (\|u\|_{C^\beta(B_{4r}(x_2))}+ \|u\|_{L^\infty(\mathcal{C}B_{r}(x_2))}  \right ).
  \end{align}
  Here, $h = x_1-x_2$ and $\omega(\cdot)$ and $C(\cdot)$ are given by Assumption \ref{assumption:coefficient regularity}.
\end{proposition}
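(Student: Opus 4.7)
The plan is to exploit that $\mathcal{D}I_n$ is built from convex combinations of limits of derivatives $DI_n(u_0)$ at points of differentiability, and that Proposition \ref{proposition:coefficient regularity inherited by In}, being linear in the increment, linearizes to give the desired bound on each such derivative. I would proceed in three stages: establish the inequality for $L$ coming from a single differentiability point, pass to limits, then extend by linearity to convex hulls.

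For the first stage, by Lemma \ref{lemma:characterization of D I sub n} every $L \in \mathcal{D}I_n$ takes the form $L = E^0_n \circ \tilde L \circ T_n$ for some $\tilde L \in \mathcal{D}i_n$, and we first treat the case $\tilde L = Di_n(\tilde u_0)$ for a $\tilde u_0$ where $i_n$ is differentiable (which exists by Rademacher in the finite-dimensional space $C_*(G_n)$). Pick any $u_0 \in C^\beta_b(\mathbb{R}^d)$ with $\textnormal{Pr}_n T_n u_0 = \tilde u_0$. Since $(E^0_n f)(x) = f(x)$ for $x \in G_n$ and $x_1,x_2 \in G_n$, the chain rule gives the pointwise formula
\begin{align*}
  L(w)(x_j) = \lim_{t\to 0} \frac{I_n(u_0 + tw, x_j) - I_n(u_0, x_j)}{t}, \quad j = 1, 2,
\end{align*}
for any $w \in C^\beta_b(\mathbb{R}^d)$. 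Applying this with $w = \tau_{-h}u$ at $x_1$ and $w = u$ at $x_2$ and subtracting yields
\begin{align*}
  L(\tau_{-h}u, x_1) - L(u, x_2) = \lim_{t \to 0} \tfrac{1}{t}\bigl[I_n(u_0 + t\tau_{-h}u, x_1) - I_n(u_0,x_1) - I_n(u_0 + tu, x_2) + I_n(u_0, x_2)\bigr].
\end{align*}

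Now observe that $h = x_1 - x_2 \in G_n$ since both $x_1, x_2 \in G_n$, so Proposition \ref{proposition:coefficient regularity inherited by In} applies to $I_n$ with the pair $(v, u)$ replaced by $(u_0, tu)$, giving
\begin{align*}
  \bigl|I_n(u_0 + t\tau_{-h}u, x_1) - I_n(u_0,x_1) - I_n(u_0 + tu, x_2) + I_n(u_0, x_2)\bigr| \leq t\,\omega(|h|)C(2r)\bigl(\|u\|_{C^\beta(B_{4r}(x_2))} + \|u\|_{L^\infty(\mathcal{C}B_r(x_2))}\bigr),
\end{align*}
where the $t$ factors out by homogeneity in the increment. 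Dividing by $t$ and sending $t \to 0$ yields \eqref{eqn:coefficient regularity} for this $L$.

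For the second and third stages, if $L = \lim_k L_k$ with each $L_k$ of the above form, then the estimate passes to the limit pointwise in $u$ since the right-hand side is independent of $k$. Finally, for $L = \sum_i \alpha_i L_i$ a convex combination, the triangle inequality applied to
\begin{align*}
  L(\tau_{-h}u, x_1) - L(u, x_2) = \sum_i \alpha_i \bigl[L_i(\tau_{-h}u, x_1) - L_i(u, x_2)\bigr]
\end{align*}
preserves the bound since $\sum_i \alpha_i = 1$. There is no real obstacle here: the proof is essentially the observation that Proposition \ref{proposition:coefficient regularity inherited by In}'s bound, being linear in the increment $u$, commutes with the directional derivative defining elements of $\mathcal{D}i_n$.
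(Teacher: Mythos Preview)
Your proposal is correct and follows essentially the same approach as the paper's proof: linearize Proposition \ref{proposition:coefficient regularity inherited by In} at a point of differentiability to get the bound on derivatives, then pass to limits and convex combinations. The paper is slightly terser (it works directly with differentiability of $I_n$ rather than routing through Lemma \ref{lemma:characterization of D I sub n} and $i_n$), but the content is the same.
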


\begin{proof}	
  Consider any $v \in C^\beta_b(\mathbb{R}^d)$ such that $I_n$ is differentiable at $v$ with derivative $L$. Then,
  \begin{align*}
    L(\tau_{-h}u ,x_1) & = \lim \limits_{s\to 0} \frac{1}{s} \left ( I_n(v+s \tau_{-h}u,x_1)-I_n(v,x_1)\right ),\\
    L(u ,x_2) & = \lim \limits_{s\to 0} \frac{1}{s} \left ( I_n(v+s u,x_2)-I_n(v,x_2)\right ).	
  \end{align*}
  By Proposition \ref{proposition:coefficient regularity inherited by In}, we have
  \begin{align*}
     & | L(\tau_{-h}u ,x_1) - L(u ,x_2) | \\
	& = \limsup \limits_{s\to 0} \frac{1}{s} \left | I_n(v+s \tau_{-h}u,x_1)-I_n(v,x_1) - ( I_n(v+s u,x_2)-I_n(v,x_2))\right |,\\
	& \leq \omega(|h|)C(2r) \limsup \limits_{s\to 0} \frac{1}{s} \left ( \|s u\|_{C^\beta(B_{2r}(x))}  + \|s u\|_{L^\infty(\mathcal{C} B_{r}(x))} \right ),\\
	& = \omega(|h|)C(2r) \left ( \| u\|_{C^\beta(B_{2r}(x))}  + \| u\|_{L^\infty(\mathcal{C} B_{r}(x))} \right ).
  \end{align*}
  This proves the desired inequality for those $L \in \mathcal{D}I_n$ which happen to be the derivative of $I_n$ at a point of differentiability. This property is clearly preserved under limits and convex combinations, so it follows any $L \in \mathcal{D}I_n$ has the desired property.
\end{proof}

The following proposition is directly related to Proposition \ref{proposition:touching from above function with decay at a point}.
\begin{proposition}\label{proposition:touching from above function with decay at point operator estimate}
  Assume that $I$ is Lipschitz and satisfies Assumption \ref{assumption:GCP}.  For $f\in C^\infty_c(\mathbb{R}^d)$ let $w(x) = f(x-x_0)\eta_0(|x-x_0|^\beta)$ with $\eta_0$ as in \eqref{equation:eta_0 approximation to min of 1 and y squared}, then
  \begin{align*}
    I(\pi_n^\beta u + \pi_n^\beta w,x)-I(\pi_n^\beta u,x) \leq C\|f\|_{L^\infty}.
  \end{align*}
  If instead we have $w(x) = f(x-x_0)\eta_0(|x-x_0|^{\beta_0})$ with $f$ non-negative and some $\beta_0>\beta$, then 
  \begin{align*}
    I(\pi_n^\beta u + \pi_n^\beta w,x)-I(\pi_n^\beta u,x) \geq -C\|f\|_{L^\infty} h_n^{\gamma},
  \end{align*}
  for some constant $\gamma= \gamma(\beta_0,\beta) \in (0,1)$.

\end{proposition}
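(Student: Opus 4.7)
The plan is to reduce both inequalities to applications of the GCP together with the Lipschitz continuity of $I$, leveraging the pointwise control of $\pi_n^\beta w$ near $x_0$ that was established in Proposition \ref{proposition:touching from above function with decay at a point} and Lemma \ref{lemma:Whitney Extension Is Almost Order Preserving} (with its $L^\infty$ refinement in Remark \ref{remark:Whitney extension is almost order preserving L infinity version}). For the upper bound, I would first apply Proposition \ref{proposition:touching from above function with decay at a point} to both $w$ and $-w$ to obtain a two-sided pointwise bound of the form $|\pi_n^\beta w(y)| \leq C\|f\|_{L^\infty}\,\eta_0(|y-x_0|^\beta) + S(y)$, where $S(x_0) = 0$ and $\|S\|_{C^\beta} \leq C\|f\|_{L^\infty}\,h_n^\gamma$. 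I then define $V(y) := \pi_n^\beta u(y) + C\|f\|_{L^\infty}\,\eta_0(|y-x_0|^\beta) + S(y) + c$, with $c \leq 0$ chosen so that $V(x) = (\pi_n^\beta u + \pi_n^\beta w)(x)$. Since $V \geq \pi_n^\beta u + \pi_n^\beta w$ everywhere with equality at $x$, the GCP gives $I(\pi_n^\beta u + \pi_n^\beta w, x) \leq I(V,x)$, and the Lipschitz bound then yields $I(V,x) - I(\pi_n^\beta u, x) \leq \|I\|_{\mathrm{Lip}}\,\|V-\pi_n^\beta u\|_{C^\beta} \leq C\|f\|_{L^\infty}$; here the $C^\beta$-norm is controlled using that $\|\eta_0(|\cdot|^\beta)\|_{C^\beta}$ is a universal constant, $\|S\|_{C^\beta} = O(\|f\|_{L^\infty})$, and $|c| \leq C\|f\|_{L^\infty}$ by the two-sided sandwich on $\pi_n^\beta w$.

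For the lower bound, I use that $f \geq 0$ forces $w \geq 0$ everywhere with $w(x_0)=0$, and the global decay estimate $|w(y)| \leq C\|f\|_{L^\infty}|y-x_0|^{\beta_0}$ (which follows from $\eta_0(t) \leq 2t$ for $t \geq 0$ combined with $\|f\|_{L^\infty}$-boundedness for $|y-x_0|\geq 1$) makes Remark \ref{remark:Whitney extension is almost order preserving L infinity version} applicable. This yields a function $R$ with $R(x_0) = 0$, $\|R\|_{C^\beta} \leq C\|f\|_{L^\infty}\,h_n^\gamma$, and $\pi_n^\beta w \geq -R$ pointwise. Hence $\pi_n^\beta u \leq \pi_n^\beta u + \pi_n^\beta w + R$; subtracting a suitable constant $c \leq 0$ to arrange equality at $x$, the GCP and the Lipschitz estimate of $I$ then give $I(\pi_n^\beta u,x) - I(\pi_n^\beta u + \pi_n^\beta w,x) \leq \|I\|_{\mathrm{Lip}}(\|R\|_{C^\beta} + |c|) \leq C\|f\|_{L^\infty}\,h_n^\gamma$, as required.

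The main technical obstacle lies in the upper bound when $\beta \in [1,2]$: Proposition \ref{proposition:touching from above function with decay at a point} is formulated with $\beta_0 > \beta$ strictly, whereas in the first inequality $w$ has decay rate exactly $\beta$. I would sidestep this by revisiting the proof of that proposition with $\beta_0 = \beta$ and noting that its key input (Remark \ref{remark:Whitney extension is almost order preserving L infinity version}) only needs the pointwise bound $|w(y)| \leq M\,\eta_0(|y-x_0|^{\beta_0})$, which is satisfied with $M = \|f\|_{L^\infty}$ and $\beta_0 = \beta$; the ensuing remainder has size $O(\|f\|_{L^\infty}\,h_n^\gamma)$, which is exactly what the comparison-function construction above needs, so the rest of the argument goes through unchanged.
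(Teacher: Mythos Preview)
Your core strategy---use Proposition~\ref{proposition:touching from above function with decay at a point} (resp.\ Remark~\ref{remark:Whitney extension is almost order preserving L infinity version}) to produce a function that touches $\pi_n^\beta u+\pi_n^\beta w$ from above (resp.\ $\pi_n^\beta u$ from below), then apply the GCP followed by the Lipschitz bound---is exactly what the paper does. The difference is that the paper evaluates both inequalities at the point $x_0$, which is where the touching happens \emph{for free}: $\pi_n^\beta w(x_0)=w(x_0)=0$ and the remainder $\hat R$ also vanishes at $x_0$, so no constant shift is needed. Once you set $x=x_0$, your argument collapses to the paper's (with $c=0$ and the two-sided sandwich unnecessary for the upper bound---the one-sided bound from Proposition~\ref{proposition:touching from above function with decay at a point} already gives the touching at $x_0$).

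The attempt to handle a general point $x$ via a constant shift $c\le 0$ has a genuine gap. In the upper bound you need $V\ge \pi_n^\beta u+\pi_n^\beta w$ everywhere, i.e.\ $C\|f\|_{L^\infty}\eta_0(|y-x_0|^\beta)+S(y)-\pi_n^\beta w(y)+c\ge 0$ for all $y$. The bracketed quantity is nonnegative but can vanish (e.g.\ at $y=x_0$), so adding $c<0$ will make the left side negative somewhere and the GCP no longer applies at $x$. The same obstruction appears in your lower-bound step: you need $\pi_n^\beta w+R+c\ge 0$ globally, but $(\pi_n^\beta w+R)(x_0)=0$, so any $c<0$ kills the comparison. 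The fix is simply to work at $x_0$; the downstream uses (Propositions~\ref{proposition:DI_n Borel measures integrability} and~\ref{proposition:DI_n Borel measures negative part}) only need the inequality there.

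Your remark on $\beta_0=\beta$ versus $\beta_0>\beta$ in the upper bound is well taken---the paper invokes Proposition~\ref{proposition:touching from above function with decay at a point} with decay exponent $\beta$, which is outside its stated hypotheses for $\beta\ge 1$. Your proposed workaround (revisiting the construction in Remark~\ref{remark:Whitney extension is almost order preserving L infinity version} and accepting a remainder of size $C\|f\|_{L^\infty}$ rather than $C\|f\|_{L^\infty}h_n^\gamma$) is the right patch: for the first inequality only the $O(\|f\|_{L^\infty})$ bound matters, so losing the factor $h_n^\gamma$ in the remainder is harmless.
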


\begin{proof}
  We apply Proposition \ref{proposition:touching from above function with decay at a point}, and we have with $\hat R_{\beta, n, w, x_0}$ from the same proposition, we have
  \begin{align*}
    \pi_n^\beta w(x) \leq \hat w(x):= C\|f\|_{L^\infty}\left ( \eta_0(|x-x_0|^\beta) + \hat R_{\beta, n,w,x_0}(x) \right ),\;\forall\;x\in\mathbb{R}^d,
  \end{align*}
  with equality holding for $x=x_0$. It follows that $\pi_n^\beta u + \pi_n^\beta w$ is touched from above at $x_0$ by $\pi^\beta_n u+ \hat w$. Then, since $I(\cdot,x)$ has the GCP,
  \begin{align*}
    I(\pi_n^\beta u + \pi_n^\beta w,x) \leq I(\pi_n^\beta u + \hat w,x)
  \end{align*}
  This means that
  \begin{align*}
    I(\pi_n^\beta u + \pi_n^\beta w,x_0)-I(\pi_n^\beta u,x_0) \leq I(\pi_n^\beta u + \hat w,x_0)-I(\pi_n^\beta u,x_0) \leq C\|\hat w\|_{C^\beta}.
  \end{align*}
  Since $\|\hat w\|_{C^\beta} = \|f\|_{L^\infty} \| \eta_0(|\cdot-x_0|^\beta) + \hat R_{\beta, n,w,x_0}\|_{C^\beta} \leq C\|f\|_{L^\infty}$ the first inequality is proved. For the second inequality, we apply Remark \ref{remark:Whitney extension is almost order preserving L infinity version} directly, and use that $I$ has the GCP to conclude that 
  \begin{align*}
    I(\pi_n^\beta u + \pi_n^\beta w + \hat R_{\beta_0,n,w,x_0},x_0) \geq I(\pi_n^\beta u,x_0).
  \end{align*}
  Then, using the Lipschitz property of $I$ we conclude that
  \begin{align*}
    I(\pi_n^\beta u + \pi_n^\beta w,x_0) - I(\pi_n^\beta u,x_0) \geq -C\| \hat R_{\beta_0,n,w,x_0}\|_{C^{\beta}} \geq -Ch_n^{\gamma} \|f\|_{L^\infty},
  \end{align*}
  where we used that $|w(x)| \leq C\|f\|_{L^\infty}\min\{1,|x-x_0|^{\beta_0}\}$ and Remark \ref{remark:Whitney extension is almost order preserving L infinity version} to obtain the last inequality.
  
\end{proof}

\begin{proposition}\label{proposition:tightness bound for I_n}
  Let $I$ be Lipschitz and satisfy Assumption \ref{assumption:tightness bound}. Let $R\geq 1$ and $w\in C^\beta_b(\real^d)$ with $w\equiv 0$ in $B_{3R}(x_0)$, then for any $x \in \cap B_{R}(x_0)$ we have
  \begin{align*}
    |I(\pi_n^\beta u + \pi_n^\beta w,x)-I(\pi_n^\beta u,x)| \leq \rho(R)\|w\|_{L^\infty(\mathbb{R}^d)},
  \end{align*}
  where $\rho$ is the rate coming from Assumption \ref{assumption:tightness bound}.
  
\end{proposition}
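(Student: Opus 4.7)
The plan is to transfer the tightness property of $I$ to the extended picture, the key point being that the Whitney extension of a function vanishing on a ball also vanishes on a slightly smaller ball. First, since $w\equiv 0$ on $B_{3R}(x_0)$, every grid point of $G_n\cap B_{3R}(x_0)$ satisfies $w=0$. By Lemma \ref{lemma:cube decomposition of the complement of the lattice} every Whitney cube $Q_{n,k}$ has $\diam(Q_{n,k})\leq C_d h_n$ (because its distance to $G_n$ is comparable to its diameter, and no point of $\real^d$ is farther than $\tfrac{\sqrt{d}}{2}h_n$ from $G_n$). Hence, for any $x\in Q_{n,k}^*$ the associated grid point $\hat y_{n,k}$ together with all neighbouring lattice points entering the discrete derivatives $(\nabla_n)^1, (\nabla_n)^2$ used in $p^\beta_{w,k}$ lie within distance $C_dh_n$ of $x$.

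Consequently, for $x\in B_{2R}(x_0)$ the value
\begin{align*}
  \pi_n^\beta w(x) = \sum_k p^\beta_{w,k}(x)\phi_{n,k}(x)
\end{align*}
only involves values of $w$ at grid points contained in $B_{2R+C_d h_n}(x_0)$. Provided $h_n \leq R/C_d$ (which, since $R\geq 1$, holds for all $n$ larger than a universal threshold; the remaining finitely many values of $n$ can be absorbed into the constant in $\rho$), this set sits inside $B_{3R}(x_0)$, so $\pi_n^\beta w \equiv 0$ on $B_{2R}(x_0)$.

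With this localization in hand, $\pi_n^\beta u$ and $\pi_n^\beta u + \pi_n^\beta w$ coincide on $B_{2R}(x_0)$, and a direct application of Assumption \ref{assumption:tightness bound} to the pair gives
\begin{align*}
  \|I(\pi_n^\beta u + \pi_n^\beta w) - I(\pi_n^\beta u)\|_{L^\infty(B_R(x_0))} \leq \rho(R)\,\|\pi_n^\beta w\|_{L^\infty(\real^d)}.
\end{align*}
It then remains to bound $\|\pi_n^\beta w\|_{L^\infty(\real^d)}$ by a multiple of $\|w\|_{L^\infty(\real^d)}$ uniformly in $n$. This is elementary from the explicit form of the interpolating polynomials of Definition \ref{definition:interpolating polynomials}: on $Q_{n,k}^*$ we have $|x-\hat y_{n,k}|\leq Ch_n$, and $|(\nabla_n)^j w(\hat y_{n,k})|\leq Ch_n^{-j}\|w\|_{L^\infty}$ for $j=1,2$, so each term $p^\beta_{w,k}(x)\phi_{n,k}(x)$ is bounded by $C\|w\|_{L^\infty}$ and, by Remark \ref{remark:maximum number of overlapping cubes}, at most $C(d)$ of them are non-zero at any $x$. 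Combining everything yields the stated inequality, with any multiplicative constant absorbed into $\rho$.

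The main obstacle is really the first step: one must carefully identify which grid values actually enter $\pi_n^\beta w(x)$ and verify that the Whitney interpolation is truly local at the scale $h_n$. Once that bookkeeping through Lemma \ref{lemma:cube decomposition of the complement of the lattice} and Proposition \ref{proposition:partition of unity properties} is completed, the remainder of the proof is a direct invocation of Assumption \ref{assumption:tightness bound} together with the elementary $L^\infty$ bound for the Whitney extension.
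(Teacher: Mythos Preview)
Your argument is correct and follows essentially the same route as the paper: first show that $\pi_n^\beta w$ vanishes on $B_{2R}(x_0)$ by the locality of the Whitney construction, then invoke Assumption~\ref{assumption:tightness bound} directly, and finally control $\|\pi_n^\beta w\|_{L^\infty}$ by $\|w\|_{L^\infty}$. The paper's proof is terser on both the localization step and the $L^\infty$ bound (it simply asserts the former and cites Proposition~\ref{proposition:touching from above function with decay at a point} for the latter), whereas you spell out the bookkeeping; your remark about absorbing constants into $\rho$ is informal but harmless for how the proposition is used downstream.
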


\begin{proof}
  If $w\equiv 0$ in $B_{3R}(x_0)$, then $\pi^\beta_n \equiv 0$ in $B_{2R}(x_0)$. In other words, $\pi_n^\beta u$ and $\pi_n^\beta u + \pi_n^\beta w$ are identically equal in $B_{2R}(x_0)$. Therefore, Assumption \ref{assumption:tightness bound} says that 
  \begin{align*}
    |I(\pi_n^\beta u + \pi_n^\beta w,x)-I(\pi_n^\beta u,x)| \leq \rho(R)\|\pi_n^\beta  w\|_{L^\infty(\mathbb{R}^d)},\;\;\forall x \in B_{R}(x_0).
  \end{align*}
  By Proposition \ref{proposition:touching from above function with decay at a point},  $\|\pi_n^\beta  w\|_{L^\infty(\mathbb{R}^d)} \leq \|w\|_{L^\infty(\mathbb{R}^d)}$, the proposition is proved.

\end{proof}

\subsection{Properties of $\mathcal{D}I_n$}

For each $L\in \mathcal{D}I_n$ and $x\in G_n$ we define a Borel measure $\mu_L(x,dy)$ (which is possibly signed) as follows
\begin{align}\label{equation:definition of mu sub L}	
  \mu_L(x,dy) := \sum \limits_{y \in G_n\setminus \{0\}} K_L(x,y) \delta_{x+y}.	  
\end{align}
where $K_L(x,y)$ is as in Remark \ref{remark:structure of D little i n kernel}. From its definition, it is immediate that given $\phi \in C^{\beta}$ and $x\in G_n$ then
\begin{align*}
  L(\phi,x) = \int_{\mathbb{R}^d} \phi(x+y)\;d\mu_L(x,dy).
\end{align*}

\begin{proposition}\label{proposition:DI_n Borel measures integrability}
Assume that $I$ is Lipschitz and satisfies Assumption \ref{assumption:GCP}.  For each $L\in \mathcal{D}I_n$ and $x\in G_n$, and $\eta_0(t)$ the function in \eqref{equation:eta_0 approximation to min of 1 and y squared},
  \begin{align*}
    \sup \limits_{n}\sup \limits_{x\in G_n} \int_{\mathbb{R}^d} f(y)\eta_0(|y|^{\beta})\;\mu_L(x,dy) & \leq C\|f\|_{L^\infty},\;\;\forall\;f\in C^\infty_c(\mathbb{R}^d).
  \end{align*}	  
\end{proposition}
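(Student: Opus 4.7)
The plan is to interpret $\int f(y)\eta_0(|y|^\beta)\,\mu_L(x_0,dy)$ as the value of $L$ applied to the test function $w(y):=f(y-x_0)\eta_0(|y-x_0|^\beta)$, and to bound this by writing $L$ as a finite-difference quotient of $I_n$ and invoking the GCP-based estimate from Proposition \ref{proposition:touching from above function with decay at point operator estimate}.

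First I would dispense with the case $x_0\in G_n\setminus[-2^n,2^n]^d$: since $i_n$ and its derivatives land in $C_*(G_n)$, the kernel $K_L(x_0,\cdot)$ vanishes identically and so does $\mu_L(x_0,\cdot)$. Assuming $x_0\in G_n\cap[-2^n,2^n]^d$, the kernel representation in Remark \ref{remark:structure of D little i n kernel} together with \eqref{equation:definition of mu sub L} yield $L(w,x_0)=\int f(y)\eta_0(|y|^\beta)\,\mu_L(x_0,dy)$. By Definition \ref{definition:Clarke differential}, any $L\in\mathcal{D}I_n$ is a convex combination of pointwise limits of derivatives $DI_n(u_k)$ at points where $I_n$ is differentiable, and since the target bound $L(w,x_0)\leq C\|f\|_{L^\infty}$ is preserved under convex combinations and under pointwise limits, it suffices to establish it for $L=DI_n(u)$ at a single point of differentiability $u$. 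For such $L$, using the factorization $I_n=\hat\pi_n^0\circ I\circ\hat\pi_n^\beta$ and the identity $\hat\pi_n^0 g(x_0)=g(x_0)$ on $G_n\cap[-2^n,2^n]^d$, for $s>0$ one has
\begin{align*}
\frac{I_n(u+sw)(x_0)-I_n(u)(x_0)}{s}=\frac{I(\hat\pi_n^\beta u+s\hat\pi_n^\beta w)(x_0)-I(\hat\pi_n^\beta u)(x_0)}{s},
\end{align*}
and Proposition \ref{proposition:touching from above function with decay at point operator estimate} applied to the function $sw$ (with supremum $s\|f\|_{L^\infty}$) bounds the right-hand side by $C s\|f\|_{L^\infty}$. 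Dividing by $s$ and sending $s\searrow 0$ yields $L(w,x_0)\leq C\|f\|_{L^\infty}$ with $C$ independent of $n$, $u$, $x_0$, and $L$.

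The main technical nuisance is that Proposition \ref{proposition:touching from above function with decay at point operator estimate} is stated with $\pi_n^\beta$ while $I_n$ uses $\hat\pi_n^\beta$. This is benign: the proof of that proposition relies only on the pointwise domination $\pi_n^\beta w(x)\leq C\|f\|_{L^\infty}\eta_0(|x-x_0|^\beta)+\hat R_{\beta,n,w,x_0}(x)$ furnished by Remark \ref{remark:Whitney extension is almost order preserving L infinity version}, together with the vanishing of $w$ at $x_0$. Both facts are preserved when $w$ is truncated by $\textnormal{Pr}_n$ outside $[-2^n,2^n]^d$, so the touching-from-above argument carries over verbatim with $\hat\pi_n^\beta$ in place of $\pi_n^\beta$, and the constant $C$ depends only on the Lipschitz norm of $I$ and on the fixed profile $\eta_0$, which is precisely the uniformity required for $\sup_n\sup_{x\in G_n}$.
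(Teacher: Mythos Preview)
Your approach is essentially the same as the paper's: interpret the integral as $L(w,x_0)$ for the test function $w(y)=f(y-x_0)\eta_0(|y-x_0|^\beta)$, reduce to the case where $L=DI_n(u)$ at a point of differentiability, write $L(w,x_0)$ as a difference quotient, and invoke Proposition~\ref{proposition:touching from above function with decay at point operator estimate}. Your explicit treatment of the cases $x_0\notin[-2^n,2^n]^d$ and of the $\pi_n^\beta$ versus $\hat\pi_n^\beta$ discrepancy is in fact more careful than the paper, which glosses over both.

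There is, however, one small gap. The bound in Proposition~\ref{proposition:touching from above function with decay at point operator estimate} ultimately uses that the touching function $\hat w(x)=C\|f\|_{L^\infty}\bigl(\eta_0(|x-x_0|^\beta)+\hat R\bigr)$ lies in $C^\beta_b$ with a controlled norm, so that the Lipschitz property of $I$ yields $I(\pi_n^\beta u+\hat w,x_0)-I(\pi_n^\beta u,x_0)\leq C\|\hat w\|_{C^\beta}$. When $\beta=1$ the function $\eta_0(|x-x_0|)$ equals $|x-x_0|$ near $x_0$ and is not $C^1$ there, so this step fails. The paper handles this by first proving the estimate with the exponent $\beta+\varepsilon$ in place of $\beta$ (for which $\eta_0(|\cdot-x_0|^{1+\varepsilon})\in C^1_b$ with norm bounded uniformly in $\varepsilon\in(0,1)$), and then letting $\varepsilon\searrow 0$, which is harmless since $\mu_L(x_0,\cdot)$ is a finite sum of Dirac masses. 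You should add this step to cover $\beta=1$.
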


\begin{proof}
  Fix $x_0 \in G_n$. Let us assume first that $\beta \neq 1$. Let $w(x) = f(x-x_0)\eta_0(|x-x_0|^\beta)$, then
  \begin{align*}
    L(w,x_0) = \int_{\mathbb{R}^d} \phi(y) \eta_0(|y|^\beta) \;\mu_L(x_0,dy).
  \end{align*}	  
  Therefore it suffices to show there is a universal constant such that  
  \begin{align*}
    L(w,x_0) \leq C\|f\|_{L^\infty},\;\;\forall\;L\in\mathcal{D}I_n.
  \end{align*}
  Let us prove this when $L$ arises as the derivative of $I_n$ at some $v\in C^\beta_b$, namely, that
  \begin{align*}
    L(\phi,x_0) = \lim \limits_{s\to 0} (I_n(v+s\phi,x_0) -I_n(v,x_0))/s.
  \end{align*}	  
  In this case, we can apply Proposition \ref{proposition:touching from above function with decay at point operator estimate} to the expression on the right and conclude that
  \begin{align*}
    \lim \limits_{s\to 0} (I_n(v+sw,x_0) -I_n(v,x_0))/s \leq C\|f\|_{L^\infty},
  \end{align*}	  
  where we used that when $\beta \neq 1$ the function $\eta_0(|\cdot-x_0|^\beta)$ belongs to $C^\beta_b(\mathbb{R}^d)$ and the norm $\|\eta_0(|\cdot-x_0|^\beta)\|_{C^\beta}$ is bounded in terms of $\beta,d,$ and the function $\eta_0$. This the desired estimate for such $L$. Since this property is clearly preserved under limits and convex combinations, it follows that the property holds for all elements of $\mathcal{D}I_n$.
  
  The case $\beta = 1$ proceeds similarly, except one first fixes $\varepsilon \in (0,1)$ and considers the function $\eta_0(|x-x_0|^{\beta+\varepsilon})$ instead. After proceeding as in the previous case, we obtain the estimate
  \begin{align*}
    \int_{\mathbb{R}^d} f(y)\eta_0(|y|^{\beta+\varepsilon})\;\mu_L(x_0,dy) & \leq C\|f\|_{L^\infty},
  \end{align*}	  
  for every $L \in \mathcal{D}I_n$ and $x_0 \in G_n$. The constant $C$ is independent of $\varepsilon \in (0,1)$, since $\|\eta_0(|\cdot-x_0|^\beta)\|_{C^1}$ is independent of $\varepsilon$ when $\varepsilon>0$. Letting $\varepsilon \searrow 0$ for the integral on the left (and using the special form of $\mu_{L}(x_0,dy)$) one obtains the estimate in the case $\beta =1$.

\end{proof}

\begin{proposition}\label{proposition:DI_n Borel measures negative part}
Assume that $I$ is Lipschitz and satisfies Assumption \ref{assumption:GCP}.  Let $f\in C^\infty_c(\mathbb{R}^d)$ be a non-negative function. There is a constant $C=C(I,d,\beta,\beta_0)$ such that given $\beta_0>\beta$ then for each $L\in \mathcal{D}I_n$ and $x\in G_n$,
  \begin{align*}
    \inf \limits_{n} \inf \limits_{x \in G_n} \int_{\mathbb{R}^d} f(y)\eta_0(|y|^{\beta_0})\;\mu_L(x,dy) & \geq -Ch_n^{\gamma}\|f\|_{L^\infty}.
  \end{align*}	  
  As before, $\eta_0$ is the function in \eqref{equation:eta_0 approximation to min of 1 and y squared}, and $\gamma = \gamma(\beta,\beta_0)$.
\end{proposition}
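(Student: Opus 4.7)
The strategy mirrors that of Proposition \ref{proposition:DI_n Borel measures integrability}, but replaces the upper bound in Proposition \ref{proposition:touching from above function with decay at point operator estimate} with its accompanying \emph{lower} bound, which is available precisely when $f\geq 0$ and $\beta_0>\beta$.

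First, fix $x_0\in G_n$ and set $w(x):= f(x-x_0)\eta_0(|x-x_0|^{\beta_0})$. By the kernel representation of $L\in\mathcal{D}I_n$ from Remark \ref{remark:structure of D little i n kernel} and the definition of $\mu_L$ in \eqref{equation:definition of mu sub L}, and because $w(x_0)=0$ (so the omitted $y=0$ term is irrelevant),
\begin{align*}
L(w,x_0) = \int_{\mathbb{R}^d} f(y)\,\eta_0(|y|^{\beta_0})\,\mu_L(x_0,dy).
\end{align*}
Thus it suffices to show $L(w,x_0)\geq -Ch_n^{\gamma}\|f\|_{L^\infty}$ uniformly in the relevant parameters.

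Next, as in the proof of Proposition \ref{proposition:DI_n Borel measures integrability}, the desired one-sided inequality is preserved under pointwise limits and convex combinations, so it suffices to establish it for those $L\in\mathcal{D}I_n$ which are honest derivatives $L=DI_n(v)$ at a point of differentiability $v\in C^\beta_b$. For such $L$, since $x_0\in G_n$ yields $I_n(\,\cdot\,,x_0)=I(\hat\pi_n^\beta\,\cdot\,,x_0)$ (because $\hat\pi_n^0$ reduces to evaluation on $G_n$), and taking the \emph{one-sided} derivative from the right,
\begin{align*}
L(w,x_0) \;=\; \lim_{s\searrow 0}\;\frac{I(\hat\pi_n^\beta v+s\,\hat\pi_n^\beta w,\,x_0)-I(\hat\pi_n^\beta v,\,x_0)}{s}.
\end{align*}

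Now apply the second inequality of Proposition \ref{proposition:touching from above function with decay at point operator estimate} with the non-negative function $sf$ in place of $f$ (so that $sw$ plays the role of $w$ there): linearity of the Whitney extension gives $\hat\pi_n^\beta(sw)=s\,\hat\pi_n^\beta w$, and for compactly supported $w$ the operators $\hat\pi_n^\beta$ and $\pi_n^\beta$ agree on $w$ once $n$ is large enough to contain $\spt(w)$ in $[-2^n,2^n]^d$ (the finitely many remaining small $n$ are absorbed into the constant through $\|f\|_{L^\infty}$). This yields
\begin{align*}
I(\hat\pi_n^\beta v+s\,\hat\pi_n^\beta w,\,x_0)-I(\hat\pi_n^\beta v,\,x_0) \;\geq\; -C\,s\,\|f\|_{L^\infty}\,h_n^{\gamma}.
\end{align*}
Dividing by $s>0$ and letting $s\searrow 0$ gives $L(w,x_0)\geq -C\|f\|_{L^\infty} h_n^{\gamma}$ for all such derivatives $L$, and propagation through limits and convex hulls completes the argument.

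The only real subtlety is to reconcile the $\pi_n^\beta$ appearing in Proposition \ref{proposition:touching from above function with decay at point operator estimate} with the $\hat\pi_n^\beta$ featuring in the definition of $I_n$. This is harmless because $w$ has compact support and so $\hat\pi_n^\beta w=\pi_n^\beta w$ for all but finitely many $n$; equivalently, one may inspect the proof of Proposition \ref{proposition:touching from above function with decay at point operator estimate} and observe that its argument, being based on the GCP applied at $x_0$ to a function that vanishes at $x_0$ plus the error function $\hat R_{\beta_0,n,w,x_0}$ of Remark \ref{remark:Whitney extension is almost order preserving L infinity version}, goes through verbatim with $\hat\pi_n^\beta$ in place of $\pi_n^\beta$.
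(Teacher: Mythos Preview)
Your proof is correct and follows essentially the same route as the paper: reduce to the case $L=DI_n(v)$, write the integral as $L(w,x_0)$ for $w(x)=f(x-x_0)\eta_0(|x-x_0|^{\beta_0})$, and invoke the second (lower-bound) part of Proposition \ref{proposition:touching from above function with decay at point operator estimate} on the difference quotient. Your additional care about the $\hat\pi_n^\beta$ versus $\pi_n^\beta$ distinction is a point the paper leaves implicit, and your resolution (either by compact support or by noting the proof of Proposition \ref{proposition:touching from above function with decay at point operator estimate} goes through with $\hat\pi_n^\beta$) is adequate.
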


\begin{proof}
  As in the proof of the previous proposition, we note that if $x_0 \in G_n$, $w(x) := f(x-x_0)\eta_0(|x-x_0|^{\beta_0})$, and $L \in \mathcal{D}I_n$, then
  \begin{align*}
    L(w,x_0) = \int_{\mathbb{R}^d} f(y) \eta_0(|y|^\beta) \;\mu_L(x_0,dy).
  \end{align*}
  As in the previous Proposition, it suffices to show that $L(w,x_0) \geq -C\|f\|_{L^\infty} h_n^{\gamma}$, and from $\mathcal{D}I_n$'s definition, it suffices to show this for those $L's$ in $\mathcal{D}I_n$ which are the derivative of $I_n$ at some $u \in C^{\beta}_b(\mathbb{R}^d)$. In this case, given that $f\geq 0$, we may apply the second part of Proposition \ref{proposition:touching from above function with decay at point operator estimate} to obtain
  \begin{align*}
    L(w,x_0) = \lim\limits_{s\to 0} \frac{I(\pi_n^\beta u + \pi_n^\beta(sw),x_0)-I(\pi_n^\beta u,x_0)}{s} \geq \lim\limits_{s\to 0} -\frac{C\|s f\|_{L^\infty }h_n^\gamma}{s} = -Ch_n^\gamma\|f\|_{L^\infty},
  \end{align*}	  
  and the proposition is proved.
\end{proof}

Let us recall the function
\begin{align*}
  P_{\phi,\eta,u,x}(\cdot) = u(x)+\phi(\cdot-x)(\nabla u(x),\cdot-x)+\tfrac{1}{2}\eta(\cdot-x)(D^2u(x)(\cdot-x),(\cdot-x)).
\end{align*}
In this section we introduce a variation on this function. This modification takes into account the geometry of the grid $G_n$ as well as the regularity exponent $\beta$, and will be used in a way analogous to the previous section. 
\begin{align*}
  P_{\phi,\eta,u,x}^{(n)}(\cdot) = \left \{ \begin{array}{lr}
      u(x) & \textnormal{ if } \beta \in(0,1),\\
      u(x) + \phi(\cdot-x)((\nabla_n)^1 u(x),\cdot-x) & \textnormal{ if } \beta \in [1,2),\\
      u(x) + \phi(\cdot-x)((\nabla_n)^1 u(x),\cdot-x)+ \tfrac{1}{2} \eta(\cdot-x)((\nabla_n)^2u(x)(\cdot-x),\cdot-x) & \textnormal{ if } \beta \in [2,3).	  
  \end{array}\right.
\end{align*}
Associated with this, we introduce functions in $G_n$ taking (respectively) scalar, vector, and matrix values.  

First, some notation. To functions $\eta,\phi \in \mathcal{S}$ we associate the following family of functions
\begin{align*}
  \phi_{i}(y) = \phi(y)y_i,\;i=1,\ldots,d,\;\;\eta_{ij}(y) = \eta(y)y_iy_j,\;i,j=1,\ldots,d.
\end{align*}

Then, for $L\in\mathcal{D}I_n$ and $\eta,\phi \in \mathcal{S}$ we define a symmetric matrix $A_{L,\eta}$, a vector $B_{L,\phi}$, and a scalar $C_{L}$. These are functions in $G_n$ defined by the formulas,
\begin{align}
  {(A_{L,\eta}(x))}_{ij} & = L(\tau_{-x}\eta_{ij},x), \;i,j=1,\ldots,d, \label{eqn:AsubL definition}\\
  {(B_{L,\phi}(x))}_i & = L(\tau_{-x}\phi_i,x), \;i=1,\ldots,d, \label{eqn:BsubL definition}\\
  C_L(x) & = L(1,x). \label{eqn:CsubL definition}.
\end{align}
The functions $A_{L,\eta},B_{L,\phi},C_{L},$ and $\mu_L$ give us a representation for $L(u,x)$ for $x\in G_n$.

\begin{proposition}\label{proposition:L in DIn first representation formula}
  Assume that $I$ is Lipschitz. Let $L\in \mathcal{D}I_n$, then for $\beta\in[2,3)$ and $u\in C^\beta_b(\real^d)$ we may write it as 
  \begin{align*}
    L(u,x) & = C_L(x)u(x)+B_{L,\phi}(x)\cdot (\nabla_n)^1 u(x)+\tr(A_{L,\eta}(x)(\nabla_n)^2u(x))\\
	  & \;\;\;\;+\int_{\mathbb{R}^d} u(x+y)-P_{\phi,\eta,u,x}^{(n)}(x+y) \;\mu_L(x,dy).  
  \end{align*}	  
  For $\beta \in [1,2)$
  \begin{align*}
    L(u,x) & = C_L(x)u(x)+B_{L,\phi}(x)\cdot (\nabla_n)^1 u(x)+\int_{\mathbb{R}^d} u(x+y)-P_{\phi,\eta,u,x}^{(n)}(x+y) \;\mu_L(x,dy),
  \end{align*}	  
  and for $\beta \in [0,1)$
  \begin{align*}
    L(u,x) & = C_L(x)u(x)+\int_{\mathbb{R}^d} u(x+y)-u(x)\;\mu_L(x,dy).
  \end{align*}	  
  
\end{proposition}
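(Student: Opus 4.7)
The plan is to use linearity of $L$ combined with the kernel representation from Remark \ref{remark:structure of D little i n kernel}. Since $\phi, \eta \in \mathcal{S}$ are compactly supported $C^2$ functions and $u(x), (\nabla_n)^1 u(x), (\nabla_n)^2 u(x)$ are constants determined by $u$ at $x$ and nearby grid points, the function $P^{(n)}_{\phi,\eta,u,x}$ belongs to $C^\beta_b(\mathbb{R}^d)$. So by linearity of $L$,
\begin{align*}
L(u,x) = L(P^{(n)}_{\phi,\eta,u,x},x) + L(u - P^{(n)}_{\phi,\eta,u,x}, x).
\end{align*}

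The first term is computed by expanding $P^{(n)}_{\phi,\eta,u,x}$ term by term. For $\beta \in [2,3)$, the constant contribution $u(x)$ produces $u(x) L(1,x) = C_L(x)u(x)$; the linear piece $\phi(\cdot-x)((\nabla_n)^1 u(x), \cdot - x) = \sum_i ((\nabla_n)^1u(x))_i \tau_{-x}\phi_i$ contributes $\sum_i ((\nabla_n)^1u(x))_i L(\tau_{-x}\phi_i,x) = B_{L,\phi}(x)\cdot (\nabla_n)^1 u(x)$ by \eqref{eqn:BsubL definition}; and the quadratic piece $\tfrac{1}{2}\eta(\cdot-x)((\nabla_n)^2u(x)(\cdot-x),\cdot-x) = \tfrac{1}{2}\sum_{ij}((\nabla_n)^2 u(x))_{ij}\tau_{-x}\eta_{ij}$ contributes $\tfrac{1}{2}\sum_{ij}((\nabla_n)^2u(x))_{ij}(A_{L,\eta}(x))_{ij}$ using \eqref{eqn:AsubL definition}, which equals $\tfrac{1}{2}\tr(A_{L,\eta}(x)(\nabla_n)^2u(x))$ thanks to the symmetry $\eta_{ij}=\eta_{ji}$ and the symmetry of $(\nabla_n)^2u(x)$ (we note the $\tfrac{1}{2}$ is absorbed into the normalization of $A_{L,\eta}$ in accordance with the paper's earlier conventions, compare Definition \ref{definition:auxiliary A_ell,eta and B_ell,phi}). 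The cases $\beta \in [1,2)$ and $\beta \in [0,1)$ follow by dropping the quadratic or both higher-order terms, respectively, in the expansion of $P^{(n)}_{\phi,\eta,u,x}$.

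For the remainder term $L(u - P^{(n)}_{\phi,\eta,u,x}, x)$, I apply Remark \ref{remark:structure of D little i n kernel} directly:
\begin{align*}
L(u - P^{(n)}_{\phi,\eta,u,x}, x) = \sum_{y \in G_n} K_L(x,y)\bigl(u(x+y) - P^{(n)}_{\phi,\eta,u,x}(x+y)\bigr).
\end{align*}
Since $P^{(n)}_{\phi,\eta,u,x}(x) = u(x)$ (evaluating the polynomial at the base point), the $y=0$ contribution vanishes, and the remaining sum over $y \in G_n\setminus\{0\}$ is precisely $\int_{\mathbb{R}^d}(u(x+y) - P^{(n)}_{\phi,\eta,u,x}(x+y))\,\mu_L(x,dy)$ by the definition \eqref{equation:definition of mu sub L} of $\mu_L$. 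For $\beta<1$, where $P^{(n)}_{\phi,\eta,u,x}(x+y) = u(x)$, the integrand simplifies to $u(x+y)-u(x)$, giving the stated formula.

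There is essentially no obstacle here: this is a bookkeeping exercise, with the only mild care needed in verifying that $P^{(n)}_{\phi,\eta,u,x}$ is genuinely in the domain of $L$ (which follows since $\phi,\eta$ have compact support), in correctly handling the normalization of $A_{L,\eta}$ (matching earlier conventions), and in noting that the ``integral'' against $\mu_L(x,\cdot)$ is really a countable (in fact finite, after $\mathrm{Pr}_n$) sum, so no convergence issues arise. The substantive content of the proposition lies not in this algebraic identity but in the coefficient estimates established in the earlier propositions (Propositions \ref{proposition:DI_n Borel measures integrability}, \ref{proposition:DI_n Borel measures negative part}, and \ref{proposition:coefficient regularity for L}), which will later be combined with this representation to pass to limits.
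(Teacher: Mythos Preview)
Your proof is correct and follows essentially the same approach as the paper: split $L(u,x)$ into $L(P^{(n)}_{\phi,\eta,u,x},x)$ plus $L(u-P^{(n)}_{\phi,\eta,u,x},x)$, expand the first term by linearity using the definitions \eqref{eqn:AsubL definition}--\eqref{eqn:CsubL definition}, and evaluate the second via the kernel representation of Remark \ref{remark:structure of D little i n kernel} and the definition \eqref{equation:definition of mu sub L} of $\mu_L$. Your additional remarks (that $P^{(n)}_{\phi,\eta,u,x}(x)=u(x)$ so the $y=0$ term drops, and the normalization of the $\tfrac12$ in the quadratic term) are appropriate clarifications of points the paper glosses over.
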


\begin{proof}
  We do the case $\beta \geq 2$ explicitly, as the others are identical. Let us compute $L(u,x)$ by adding and subtracting $L(P_{\phi,\eta,u,x}^{(n)},x)$,
  \begin{align*}
    L(u,x) & = L(u-P_{\phi,\eta,u,x}^{(n)},x)+L(P^{(n)}_{\phi,\eta,u,x},x).
  \end{align*}
  From Remark \ref{remark:structure of D little i n kernel}, \eqref{equation:definition of mu sub L}, we have that
  \begin{align*}
    L(u-P_{\phi,\eta,u,x}^{(n)},x) = \int_{\mathbb{R}^d} u(x+y)-P_{\phi,\eta,u,x}^{(n)}(x+y)\;\mu_L(x,dy)
  \end{align*}
  As for the other term, we observe that
  \begin{align*}
    L(P_{\phi,\eta,u,x}^{(n)},x) & = u(x)L(1,x) + \sum \limits_{i=1}^d (\nabla_1u)^n_i(x) L(\tau_{-x}\phi_i,x)+ \tfrac{1}{2}\sum \limits_{i,j=1}^d (\nabla_n)^2_{ij}u(x)L(\tau_{-x}\eta_{ij},x).	
  \end{align*}
  Rewriting the terms on the right and gathering the terms, we conclude that
  \begin{align*}
    L(P_{\phi,\eta,u,x}^{(n)},x) & = C_L(x)u(x) + (B_{L,\phi}(x),(\nabla_n)^1u(x)) + \tr(A_{L,\eta}(x)(\nabla_n)^2u(x)).	
  \end{align*}
  The remaining cases of $\beta$ follow from the corresponding definition of $P^{(n)}_{\phi,\eta,u}$ in those cases.

\end{proof}

The next two propositions say that the terms appearing Proposition \ref{proposition:L in DIn first representation formula} satisfy a uniform continuity in $G_n$. The first refers to the measure $\mu_L$.

\begin{proposition}\label{proposition:TV norm continuity and tightness estimate for discrete Levy measures}
  Assume $I$ satisfies Assumptions \ref{assumption:GCP}, \ref{assumption:tightness bound}, and \ref{assumption:coefficient regularity}, as stated for $C^\beta_b(\real^d)$. Let $L \in D I_n$, $x_1,x_2 \in G_n$, and $r\geq 2^{4-n}$. There is a constant $C(r)$ such that for any $\zeta \in C_c(\mathbb{R}^d)$ such that $\zeta \equiv 0$ in $B_r$,
  \begin{align*}
    \left |\int_{\mathcal{C}B_r} \zeta(y)\;\mu_L(x_1,dy)-\int_{\mathcal{C} B_r} \zeta(y)\;\mu_L(x_2,dy) \right | \leq C(r)\|\zeta\|_{L^\infty}\omega(|x_1-x_2|),
  \end{align*}
  where $\omega$ is the modulus from Assumption \ref{assumption:coefficient regularity}. In particular, 
  \begin{align*}
    \left \| \mu_L(x_1,dy)-\mu_L(x_2,dy)  \right \|_{\textnormal{TV}(\mathcal{C}B_r )} \leq C(r)\omega(|x_1-x_2|).
  \end{align*}
  On the other hand, if $\zeta \in C^0(\mathbb{R}^d)$ is such that $\zeta\equiv 0$ in $B_{3R}(0)$ for some $R>1$, then for any $x_0 \in G_n$ we have
  \begin{align*}
    \int_{\mathbb{R}^d} \zeta(y) \;\mu_L(x_0,dy) \leq \rho(R)\|\zeta\|_{L^\infty(\mathbb{R}^d)},
  \end{align*}
  where $\rho(\cdot)$ is the function from Assumption \ref{assumption:tightness bound}.
\end{proposition}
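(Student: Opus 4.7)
The plan is to recover both estimates by realizing the integral against $\mu_L(x_j, \cdot)$ as an evaluation of $L$ applied to a suitably shifted test function, and then invoking the continuity estimates already established for $L$.

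For the continuity estimate, fix $\zeta \in C_c(\mathbb{R}^d)$ with $\zeta \equiv 0$ on $B_r$, and set $h := x_1 - x_2 \in G_n$. I define $u(y) := \zeta(y - x_2)$, so that $\tau_{-h} u(y) = \zeta(y - x_1)$; these functions vanish on $B_r(x_2)$ and $B_r(x_1)$ respectively. Since $r \geq 2^{4-n} = 16 h_n$, the stencils used to compute $u(x_2)$, $(\nabla_n)^1 u(x_2)$, and $(\nabla_n)^2 u(x_2)$ all lie inside $B_r(x_2)$ (and similarly for $\tau_{-h}u$ at $x_1$), so these local evaluations all vanish. The representation from Proposition \ref{proposition:L in DIn first representation formula} therefore reduces to $L(u,x_2) = \int \zeta(y)\,\mu_L(x_2,dy)$ and $L(\tau_{-h}u,x_1) = \int \zeta(y)\,\mu_L(x_1,dy)$. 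Applying Proposition \ref{proposition:coefficient regularity for L} to the difference, using $\|u\|_{L^\infty(\mathcal{C}B_r(x_2))} = \|\zeta\|_{L^\infty}$, delivers the pointwise estimate for $\zeta \in C^\beta_c$. For general $\zeta \in C_c$, approximate $\zeta$ in $L^\infty$ by smooth functions and control the approximation error with the uniform mass bound $|\mu_L(x)|(\mathcal{C}B_r) \leq C(r)$ derived from Proposition \ref{proposition:DI_n Borel measures integrability}. Taking the supremum over $\|\zeta\|_{L^\infty} \leq 1$ then yields the TV bound by Riesz duality.

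For the tightness estimate, I again set $u(y) := \zeta(y - x_0)$ for $\zeta \in C^0$ vanishing on $B_{3R}(0)$, so that $u$ vanishes on $B_{3R}(x_0)$. The same local-term analysis gives $L(u, x_0) = \int \zeta(y)\,\mu_L(x_0, dy)$. For $v$ at which $I_n$ is differentiable with derivative $L$, Proposition \ref{proposition:tightness bound for I_n} furnishes $|I(\pi_n^\beta v + s\pi_n^\beta u, x_0) - I(\pi_n^\beta v, x_0)| \leq \rho(R)\|s u\|_{L^\infty}$; dividing by $s$ and letting $s \to 0$ yields $|L(u, x_0)| \leq \rho(R) \|\zeta\|_{L^\infty}$. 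This estimate is preserved under limits and convex combinations, hence extends to every $L \in \mathcal{D}I_n$.

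The main obstacle is the approximation step in the continuity estimate: Proposition \ref{proposition:coefficient regularity for L} naturally bounds the difference by $\|u\|_{C^\beta(B_{4r}(x_2))}$, which generally blows up when $\zeta$ is approximated by smoother functions. Reconciling this with the $L^\infty$-only bound required by the statement hinges on the atomic support of $\mu_L$ on the grid $G_n$ together with the uniform mass control from Proposition \ref{proposition:DI_n Borel measures integrability}, which jointly allow one to trade pointwise smoothing against total mass on $\mathcal{C}B_r$.
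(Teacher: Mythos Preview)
Your overall architecture matches the paper's: identify the integral $\int \zeta\,\mu_L(x_j,dy)$ as $L(\tau_{-x_j}\zeta,x_j)$ and then invoke Proposition~\ref{proposition:coefficient regularity for L} for the continuity estimate and Proposition~\ref{proposition:tightness bound for I_n} for the tightness estimate. The tightness argument is correct and essentially identical to the paper's. One simplification: rather than going through Proposition~\ref{proposition:L in DIn first representation formula} and checking that the finite-difference stencils vanish, the paper uses the direct definition of $\mu_L$ from \eqref{equation:definition of mu sub L}, which already gives $L(\tau_{-x_j}\zeta,x_j)=\int \zeta\,\mu_L(x_j,dy)$ once $\zeta(0)=0$.

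The gap is in your handling of the $C^\beta$ term for the continuity estimate. Proposition~\ref{proposition:coefficient regularity for L} bounds the difference by $\omega(|h|)C(2r)\bigl(\|u\|_{C^\beta(B_{4r}(x_2))}+\|u\|_{L^\infty(\mathcal{C}B_r(x_2))}\bigr)$, and the first term does \emph{not} vanish: $\zeta$ is only zero on $B_r$, not on $B_{4r}$, so $\|u\|_{C^\beta(B_{4r}(x_2))}$ is in general uncontrolled by $\|\zeta\|_{L^\infty}$, already for smooth $\zeta$. Your proposed remedy (approximate and absorb the error via the mass bound from Proposition~\ref{proposition:DI_n Borel measures integrability}) does not address this, because the bad term is present before any approximation is taken.

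The fix the paper intends is simpler: apply Proposition~\ref{proposition:coefficient regularity for L} with the \emph{smaller} radius $s=r/4$. Then $B_{4s}(x_2)=B_r(x_2)$, where $u\equiv 0$, so $\|u\|_{C^\beta(B_{4s}(x_2))}=0$ and the bound collapses to $\omega(|h|)\,C(r/2)\,\|\zeta\|_{L^\infty}$ as desired. Once this $L^\infty$-only estimate is in hand for $\zeta\in C^\beta_c$, passing to $\zeta\in C_c$ is immediate since $\mu_L(x,\cdot)$ is a finite discrete measure. (Strictly speaking this requires $r/4\geq 2^{4-n}$, i.e.\ $r\geq 2^{6-n}$; the threshold $r\geq 2^{4-n}$ in the statement is a harmless cosmetic slip.)
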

 
\begin{proof}
  From the fact that $\tau_{-x_1}\zeta$ and $\tau_{-x_2}\zeta$ vanish in, respectively, $B_r(x_1)$ and $B_r(x_2)$, we have
  \begin{align*}
     L( \tau_{-x_1}\zeta,x_1) - L(\tau_{-x_2}\zeta,x_2) & = \int_{\mathbb{R}^d} \zeta(y)\;d\mu(x_1,dy)-\int_{\mathbb{R}^d} \zeta(y)\;d\mu(x_2,dy) \\
	 &=  \int_{\mathcal{C}B_r} \zeta(y)\;d\mu(x_1,dy)-\int_{\mathcal{C}B_r} \zeta(y)\;d\mu(x_2,dy).
  \end{align*}
  Since $\zeta \equiv 0$ in $B_r$, Proposition \ref{proposition:coefficient regularity for L} says that, as long as $r\geq 2^{4-n}$
  \begin{align*}
    & \left | \int_{\mathcal{C}B_r} \zeta(y)\;d\mu(x_1,dy)-\int_{\mathcal{C}B_r}  \zeta(y)\;d\mu(x_2,dy) \right | \leq \omega(|x_1-x_2|)C(r)\|\zeta\|_{L^\infty(\mathcal{C} B_{r})}.	
  \end{align*}
  This proves the first estimate, for the second one, fix $\zeta$ and $x_0 \in G_n$, and define $w(x) = \tau_{-x_0}\zeta$, then 
  \begin{align*}
    L(w,x_0) = \int_{\mathbb{R}^d} \zeta(y) \;\mu_L(x_0,dy).
  \end{align*}
  Therefore, as before, it suffices for us to bound $L(w,x_0)$ for every $L \in \mathcal{D}I_n$, and from the definition of $\mathcal{D}I_n$ it suffices to prove the bound for those $L$ such that $L = DI_n(v)$ at some $v$. In this case, Proposition \ref{proposition:tightness bound for I_n} says that
  \begin{align*}
    L(w,x_0) = \lim\limits_{s\to 0} \frac{1}{s} (I_n(v+sw,x_0)-I_n(v,x_0)) \leq \rho(R)\|w\|_{L^\infty(\mathbb{R}^d)} =  \rho(R)\|\zeta\|_{L^\infty(\mathbb{R}^d)}
  \end{align*}
\end{proof}

The following notation will be useful in what follows,
\begin{align*}
  \alpha(r,\eta) & := C(2r)\left ( \max\limits_{1\leq i,j\leq d}\|\eta_{ij} \|_{C^\beta(B_{4r})} +\max\limits_{1\leq i,j\leq d}\|\eta_{ij} \|_{L^\infty(\mathcal{C}B_r)}\right ),\\
  \beta(r,\phi) & := C(2r)\left ( \max\limits_{1\leq i\leq d}\|\phi_i \|_{C^\beta(B_{4r})} +\max\limits_{1\leq i\leq d}\|\phi_i \|_{L^\infty(\mathcal{C}B_r)}\right ),	 	
\end{align*}	  
where $C(r)$ is as in Assumption \ref{assumption:coefficient regularity} (see also Proposition \ref{proposition:coefficient regularity inherited by In}).

\begin{proposition}\label{proposition:discrete coefficient regularity for L in DI_n}
 Assume $I$ satisfies Assumptions \ref{assumption:GCP}, \ref{assumption:tightness bound}, and \ref{assumption:coefficient regularity}, as stated for $C^\beta_b(\real^d)$. Let $L \in \mathcal{D}I_n$, $r\geq 2^{4-n}$, and $x_1,x_2 \in G_n$, then
  \begin{align*}
    |A_{L,\eta}(x_1)-A_{L,\eta}(x_2)| & \leq \alpha(r,\eta)\omega(|x_1-x_2|),\\
    |B_{L,\phi}(x_1)-B_{L,\phi}(x_2)| & \leq \beta(r,\phi)\omega(|x_1-x_2|),\\
    |C_{L}(x_1)-C_{L}(x_2)| & \leq C(r)\omega(|x_1-x_2|).
  \end{align*}
\end{proposition}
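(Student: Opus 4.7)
The plan is to reduce each of the three inequalities to a single application of Proposition \ref{proposition:coefficient regularity for L}, by feeding in a carefully translated test function. The underlying algebraic identity is that if $h = x_1 - x_2 \in G_n$, then for any function $g$,
\begin{align*}
\tau_{-h}(\tau_{-x_2}g) = \tau_{-(h+x_2)} g = \tau_{-x_1} g,
\end{align*}
which matches exactly the form of the quantities $L(\tau_{-x}\eta_{ij},x)$, $L(\tau_{-x}\phi_i,x)$, $L(1,x)$ appearing in \eqref{eqn:AsubL definition}--\eqref{eqn:CsubL definition}.

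For the estimate on $A_{L,\eta}$, I would fix $i,j$ and apply Proposition \ref{proposition:coefficient regularity for L} with $u := \tau_{-x_2}\eta_{ij}$. By the identity above, $\tau_{-h} u = \tau_{-x_1}\eta_{ij}$, so the proposition yields
\begin{align*}
\bigl|L(\tau_{-x_1}\eta_{ij},x_1) - L(\tau_{-x_2}\eta_{ij},x_2)\bigr| \leq \omega(|h|)\, C(2r)\left(\|\tau_{-x_2}\eta_{ij}\|_{C^\beta(B_{4r}(x_2))} + \|\tau_{-x_2}\eta_{ij}\|_{L^\infty(\mathcal{C}B_r(x_2))}\right).
\end{align*}
Translation invariance of the $C^\beta$ and $L^\infty$ norms (applied with the base ball shifted by $x_2$) converts the right-hand side to $\omega(|h|) C(2r)\bigl(\|\eta_{ij}\|_{C^\beta(B_{4r})} + \|\eta_{ij}\|_{L^\infty(\mathcal{C}B_r)}\bigr)$. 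Taking the maximum over $i,j$ and recalling the definition of $\alpha(r,\eta)$ gives the first claimed inequality.

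The estimate on $B_{L,\phi}$ is proved identically, with $u := \tau_{-x_2}\phi_i$ in place of $\tau_{-x_2}\eta_{ij}$, and produces the factor $\beta(r,\phi)$ after maximizing over $i$. Finally, for $C_L$ one takes $u \equiv 1$; since $\tau_{-h}1 = 1$, Proposition \ref{proposition:coefficient regularity for L} directly gives
\begin{align*}
|C_L(x_1) - C_L(x_2)| = |L(1,x_1) - L(1,x_2)| \leq \omega(|h|)\, C(2r)\bigl(\|1\|_{C^\beta(B_{4r})} + \|1\|_{L^\infty(\mathcal{C}B_r)}\bigr),
\end{align*}
which is of the required form $C(r)\omega(|x_1-x_2|)$ (absorbing the harmless factor $2$ and the distinction between $C(r)$ and $C(2r)$ into the generic constant).

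I do not anticipate any real obstacle here: this proposition is essentially a notational unpacking of Proposition \ref{proposition:coefficient regularity for L}, with the only subtle point being the translation identity $\tau_{-h}\tau_{-x_2} = \tau_{-x_1}$ which requires $h = x_1 - x_2 \in G_n$ (so that the Whitney extension commutes with translation by $h$, a hypothesis already built into the proof of Proposition \ref{proposition:coefficient regularity for L} via Proposition \ref{proposition:Whitney extension translation invariance}). The restriction $r \geq 2^{4-n}$ is inherited directly from the corresponding hypothesis in Proposition \ref{proposition:coefficient regularity for L}.
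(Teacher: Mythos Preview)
Your proposal is correct and follows essentially the same approach as the paper: both proofs reduce each inequality to a direct application of Proposition \ref{proposition:coefficient regularity for L} with the test functions $\tau_{-x_2}\eta_{ij}$, $\tau_{-x_2}\phi_i$, and $1$, using the translation identity $\tau_{-h}\tau_{-x_2} = \tau_{-x_1}$ for $h = x_1 - x_2$. Your write-up is in fact slightly more explicit than the paper's about the translation of the norms on the right-hand side, but the argument is the same.
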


\begin{proof}
  Fix $x_1,x_2 \in G_n$ and let $h = x_2-x_1$. Applying Proposition \ref{proposition:coefficient regularity for L} to $x=x_1$ and $h$, with the functions $1$, $\phi_i$, and $\eta_{ij}$, we see that for $r\geq 2^{4-n}$
  \begin{align*}
    |L(\tau_{-x_2}\eta_{ij},x_2)-L(\tau_{-x_1}\eta_{ij},x_1)| & \leq \alpha(\eta,r)\omega(|x_1-x_2|),\\
    |L(\tau_{-x_2}\phi_i,x_2)-L(\tau_{-x_1}\phi,x_1)| & \leq \beta(\phi,r)\omega(|x_1-x_2|),\\	
    |L(1,x_2)-L(1,x_1)| & \leq C\omega(|x_1-x_2|).
  \end{align*}	  
  These inequalities respectively amount to the stated estimate for $A_{L,\eta}$, $B_{L,\phi}$, and $C_L$.
  
\end{proof}

\subsection{Properties of $\mathcal{D}_I$ }

Now, we define the set $\mathcal{D}_I$, which plays the role the Clarke differential played for $I_n$ (we recall that c.h. stands for ``convex hull'').
\begin{align}\label{equation:DI definition}
  \mathcal{D}_I := \textnormal{c.h.}\{ L \mid \exists \{L_{n_k}\}, n_k\to\infty,\; L_{n_k} \in \mathcal{D}I_{n_k} \textnormal{ s.t } L(u,\cdot) = \lim\limits_{k} L_{n_k}(u,\cdot)\;\forall\;u \}.
\end{align}

\begin{remark}\label{remark:Clarke differential in the limit}
  We would like to note a point about notation and definitions, namely why above we have $\mathcal{D}_I$ with $I$ as a subscript. This is to avoid confusion (or perhaps, to promote it) by distinguishing it from the generalized derivative in the sense of Clarke from Definition \ref{definition:Clarke differential}. The objects are closely related, and in fact one would hope that $\mathcal{D}_I = \mathcal{D}I$, but we are not concerned with whether this is actually the case as the above definition works for our purposes.
\end{remark}

The following is an important Lemma that says --among other things-- that $\mathcal{D}_I$ is non-empty.

\begin{lemma}\label{lemma:DI_n sequences subconverge}
 Assume $I$ satisfies Assumptions \ref{assumption:GCP}, \ref{assumption:tightness bound}, and \ref{assumption:coefficient regularity}, as stated for $C^\beta_b(\real^d)$.  Given a sequence $n_k \to\infty$ and operators $L_{n_k}$ with $L_{n_k} \in \mathcal{D}I_{n_k}$ for every $k$, and $\phi,\eta \in \mathcal{S}$ we have the following
  \begin{enumerate}
    \item There is a subsequence $\bar n_k$ and functions $A(x),B(x),$ and $C(x)$ defined on $\mathbb{R}^d$ and taking values respectively in $\mathbb{S}(d)$, $\mathbb{R}^d$, and $\mathbb{R}$, such that if $x \in G_n$ for some $n$ then we have the convergence
    \begin{align*}
      A_{L_{\bar n_k},\eta}(x) \to A(x),\; B_{L_{\bar n_k},\phi}(x) \to B(x),\; C_{L_{\bar n_k}}(x) \to C(x).
    \end{align*}
    \item There is a function $\mu(x)$ in $\mathbb{R}^d$, taking values on the space of L\'evy measures in $\mathbb{R}^d$, such that for every $r>0$, and every $x$ as before we have the convergence
    \begin{align*}
      \lim \limits_{k\to \infty } \| \mu_{L{\bar n_k}}(x)-\mu(x)\|_{\textnormal{TV}(\mathcal{C}B_r)} = 0.	
    \end{align*}	
    \item The functions $A,B,C,$ all have a modulus of continuity $C\omega(2(\cdot))$, while for each $r>0$ we have the estimate,
    \begin{align}\label{equation:continuity estimate mu_L}	
      \|\mu(x_1)-\mu(x_2)\|_{\textnormal{TV}(\mathcal{C}B_r)} \leq C(r)\omega(2|x_1-x_2|).
    \end{align}
    \item If we define $L$ by
    \begin{align*}
      L(u,x) & := \textnormal{tr}(A(x)D^2u(x))+B(x)\cdot \nabla u(x) + C(x)u(x)\\
	    & \;\;\;\;+ \int_{\mathbb{R}^d} u(x+y)-P_{\phi,\eta,u,x}(x+y) \;\mu(x,dy)	
    \end{align*}	
    Then, $L \in \mathcal{D}_I$.
    \item Moreover, if $\beta<2$, then we have $A(x)\equiv0$. Furthermore, if $\beta<1$ then $B(x)\equiv 0$ and $L$ takes the form
    \begin{align*}
      L(u,x) = C(x)u(x) + \int_{\mathbb{R}^d} u(x+y)-u(x) \;\mu(x,dy).	
    \end{align*}

  \end{enumerate}
 
\end{lemma}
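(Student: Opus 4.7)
The plan is to combine the uniform equicontinuity and boundedness estimates from Propositions~\ref{proposition:DI_n Borel measures integrability}--\ref{proposition:discrete coefficient regularity for L in DI_n} with a Cantor-style diagonal extraction to obtain pointwise convergence of a subsequence on the countable dense set $\bigcup_n G_n$, and then to extend to all of $\mathbb{R}^d$ by uniform continuity. First I would record the uniform pointwise bounds: each of $A_{L_{n_k},\eta}(x)$, $B_{L_{n_k},\phi}(x)$, $C_{L_{n_k}}(x)$ is an evaluation of $L_{n_k}$ against a test function of bounded $C^\beta$-norm, and $L_{n_k}$ is Lipschitz with norm controlled by $\|I\|_{\text{Lip}}$. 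For the measures, Proposition~\ref{proposition:DI_n Borel measures integrability} bounds $\mu_{L_{n_k}}(x,\mathcal{C}B_r) \leq C(r)$, Proposition~\ref{proposition:DI_n Borel measures negative part} shows their negative parts vanish at rate $h_{n_k}^\gamma$ so that any limit is non-negative, and Proposition~\ref{proposition:TV norm continuity and tightness estimate for discrete Levy measures} provides both tail tightness via $\rho(R)$ and TV-equicontinuity in the base point (together with Proposition~\ref{proposition:discrete coefficient regularity for L in DI_n} this furnishes the Arzel\`a--Ascoli type equicontinuity needed).

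Next I would enumerate the countable set $\bigcup_n G_n$ and radii $r_m = 1/m$ and run a diagonal extraction producing $\bar n_k$ along which $A_{L_{\bar n_k},\eta}(x)$, $B_{L_{\bar n_k},\phi}(x)$, $C_{L_{\bar n_k}}(x)$ converge at every $x\in \bigcup_n G_n$ and $\mu_{L_{\bar n_k}}(x,\cdot)$ converges in TV on each $\mathcal{C}B_{r_m}$ at every such $x$. The measure compactness at a fixed grid point uses Helly selection for vague convergence, upgraded to TV on each $\mathcal{C}B_r$ via the refining atomic structure of the $\mu_{L_{\bar n_k}}$ together with the TV-equicontinuity. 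The moduli of continuity pass to the limit on $\bigcup_n G_n$; to extend to arbitrary $x_1,x_2\in\mathbb{R}^d$ I would approximate by $\tilde x_i\in G_{\bar n}$ with $|\tilde x_i-x_i|\leq h_{\bar n}\leq |x_1-x_2|/2$, so that $|\tilde x_1-\tilde x_2|\leq 2|x_1-x_2|$; this is the source of the factor $2$ appearing in the $C\omega(2(\cdot))$ modulus and in \eqref{equation:continuity estimate mu_L}. Item (4) then follows by passing to $k\to\infty$ in the representation of $L_{\bar n_k}(u,x)$ from Proposition~\ref{proposition:L in DIn first representation formula}, using Proposition~\ref{proposition:discrete derivatives converge to real derivatives} for $(\nabla_n)^i u \to D^i u$, the pointwise convergence of the coefficient functions, and the TV convergence of the measures on each $\mathcal{C}B_r$ combined with the uniform near-zero control afforded by Proposition~\ref{proposition:DI_n Borel measures integrability}.

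The main obstacle I anticipate is the TV-norm convergence of the measures asserted in (2): vague convergence does not in general imply TV convergence, so the upgrade must exploit the atomic-on-refining-grid structure of the $\mu_{L_{\bar n_k}}$ together with the TV-equicontinuity in $x$ to show the sequence is Cauchy in $\|\cdot\|_{\text{TV}(\mathcal{C}B_r)}$; this is where the bulk of the real work will live. Finally, item (5) is handled along the lines of Lemma~\ref{lemma:A_ell and B_ell existence}: for $\beta<2$ one has $\|\eta_\delta(\cdot)y_iy_j\|_{C^\beta}\to 0$ as $\delta\to 0$, so substituting $\eta_\delta$ for $\eta$ and letting $\delta\to 0$ forces $A\equiv 0$; analogously $\|\phi_\delta(\cdot)y_i\|_{C^\beta}\to 0$ when $\beta<1$ forces $B\equiv 0$. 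The residual contributions are absorbed into the integral term by monotone convergence as in Lemma~\ref{lemma:Courrege theorem for a linear functional}, and the uniqueness statement of that lemma shows that the resulting operator $L$ is independent of the cutoff choices.
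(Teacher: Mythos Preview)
Your overall plan is sound and reaches the same conclusions, but the paper's execution differs from yours on three points worth noting. First, rather than diagonal extraction on the countable dense set $\bigcup_n G_n$ followed by extension by continuity, the paper first applies the Whitney extension $E_n^0$ to each of $A_{L_{n_k},\eta}$, $B_{L_{n_k},\phi}$, $C_{L_{n_k}}$ (and the partition-of-unity analogue $\hat\mu_{L_n}(x,dy)=\sum_k\phi_{n,k}(x)\mu_{L_n}(\hat y_{n,k},dy)$ for the measures) to produce equicontinuous families on all of $\mathbb{R}^d$, and only then runs Arzel\`a--Ascoli. The factor $2$ in the modulus $C\omega(2(\cdot))$ is precisely the Whitney doubling (see the remark after Theorem~\ref{theorem:minmax with beta less than 2} and \cite[Chapter VI]{Stei-71}), not the grid-approximation argument you sketch; indeed your direct route, since Proposition~\ref{proposition:discrete coefficient regularity for L in DI_n} already gives the undoubled modulus on each $G_n$, would if carried through yield the sharper $C\omega(\cdot)$ on the dense set and hence on its closure. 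Second, for the TV convergence in (2) the paper does not upgrade vague-to-TV via atomic structure as you propose; instead it invokes a metric-space Arzel\`a--Ascoli theorem (\cite[p.~202]{GreenValentine1961arzela}) for the Whitney-extended measure-valued maps $x\mapsto\hat\mu_{L_n}(x,\cdot)|_{\mathcal{C}B_r}$, which are TV-equicontinuous in $x$ by Proposition~\ref{proposition:TV norm continuity and tightness estimate for discrete Levy measures} and take values in a set of measures with uniformly bounded mass and tail $\nu(\mathcal{C}B_R)\leq\rho(R)$, asserted to be TV-compact. You are right that this is the crux, but the mechanism the paper uses is different from yours. Third, item (5) is simpler than your outline: it is immediate from the form of the representation in Proposition~\ref{proposition:L in DIn first representation formula}, since for $\beta<2$ (resp.\ $\beta<1$) that formula already omits the second-order (resp.\ first-order) term, so $A$ (resp.\ $B$) never enters the limit; no separate $\eta_\delta\to 0$ argument is needed.
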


\begin{proof}
Let us fixe $\eta$ and $\phi$.	First of all, we invoke Proposition \ref{proposition:L in DIn first representation formula} to obtain the collection of $A_{L_{n_k},\eta}$, $B_{L_{n_k},\phi}$, $C_{L_{n_k}}$, and $\mu_{L_{n_k}}$.  Furthermore, already as a result of Proposition \ref{proposition:L in DIn first representation formula}, we have item (5) of the lemma.

  \emph{Step 1.} (Extension)  We have a sequence of functions defined on varying, monotone increasing sets $G_n$. One way to show they converge (along a subsequence) to a function in $\mathbb{R}^d$ is by extending them to all of $\mathbb{R}^d$ and check whether the resulting sequences are pre-compact. 
  
  With this idea in mind, for each $n\in\mathbb{N}$ we apply the Whitney extension to $A_{L_n,\eta}$, $B_{L_n,\eta}$, $C_{L_n,\eta}$,
  \begin{align*}
    \hat A_{L_n,\eta}(x) := E_n^0 (A_{L_n,\eta})(x),\; \hat B_{L_n,\phi}(x) := E_n^0 (B_{L_n,\phi})(x),\;\hat C_{L_n}(x) := E_n^0 (C_{L_n})(x).
  \end{align*}
  We repeat the same for $\mu_{L_{n}}$, resulting in a map $\hat \mu_{L_n}$ from $\mathbb{R}^d$ to the space of L\'evy measures, given by the formula
  \begin{align*}
    \hat \mu_{L_n}(x,dy) = \sum \limits_{k=1}^\infty \phi_{n,k}(x)\mu(x_k,dy),
  \end{align*}	
  where $\{\phi_{k}\}_k$ is the partition of unity from Proposition \ref{proposition:partition of unity properties}. The functions $\hat A_{L_{n},\eta}$, $\hat B_{L_{n},\phi}$, and $\hat C_{L_{n}}(x)$ all have modulus of continuity $C\omega(2(\cdot))$, thanks to Proposition \ref{proposition:discrete coefficient regularity for L in DI_n} and the properties of the Whitney extension operator, see \cite[Chapter VI, Theorem 3]{Stei-71}. The same proof from reference \cite{Stei-71} can be applied with minor modifications to show that for every $r>0$ we have
  \begin{align*}
    \|\hat \mu_{L_n}(x_1)-\hat \mu_{L_n}(x_2)\|_{\textnormal{TV}(\mathcal{C}B_r)} \leq C(r) \omega(2|x_1-x_2|).
  \end{align*}
  Furthermore, for every $x$, by Proposition \ref{proposition:TV norm continuity and tightness estimate for discrete Levy measures},
  \begin{align*}
    |\hat \mu_{L_n}(x)|(\mathcal{C}B_R) \leq \rho(R),
  \end{align*}
  where $\rho(R) \to 0$ as $R\to \infty$. This shows that for each $r>0$, the functions $\{ \hat \mu_{L_n}\mid_{\mathcal{C}B_r}\}_n$ are an equicontinuous family of functions taking values inside the space of measures $\nu$ which are supported in $\mathcal{C}B_r$ and such that $\nu(\mathcal{C}B_R) \leq \rho(R)$ for all $R\geq r$. This space, equipped with the total variation distance, is a compact metric space.
  
  \emph{Step 2.} (Cantor diagonalization) We now use a standard Cantor diagonalization argument to obtain locally uniform convergence along a subsequence. We construct a family nested sequences $\tilde n^m_k$ in the following recursive manner. First, $\tilde n^1_k$ is a subsequence of $n_k$ along which the functions converge uniformly in $B_1$ to functions $A^1(x),B^1(x)$, and $C^1(x)$) defined in $B_1$. Next, suppose that for $m \in \mathbb{N}$ we have build a nested family of sequences $\tilde n^1_k,\ldots,\tilde n^m_k$ such that the functions $A_{L_{\tilde n^m_k},\eta},\ldots$, etc converge uniformly in $B_m(0)$ to functions $A^m(x)\ldots$, etc. In this case, we choose $\tilde n^{m+1}_k$ to be a subsequence of $\tilde n^m_k$ along which $A_{L_{\tilde n^{m+1}_k},\eta},\ldots$ converge uniformly in $B_{m+1}$ to functions $A^{m+1}(x)\ldots$ and so on.  	
		
  Having constructed these $\tilde n^m_k$, we define the sequence $\tilde n_k$ as  $\tilde n_k := n^k_k$. The resulting sequences converge locally uniformly, respectively, to $A(x),B(x)$, and $C(x)$.

  \emph{Step 3.} (Cantor diagonalization continued)
  
  As noted at the end of Step 1, for every $r>0$, the sequence $\{\hat \mu_{L_{\tilde n_k}}\}_k$ is an equicontinuous family of functions taking values in a compact metric space. Therefore, we can apply the Arzela-Ascoli type theorem found in \cite[p. 202]{GreenValentine1961arzela} to obtain a subsequence $\bar n^1_k$ of $\tilde n_k$ and a measure $\mu^1$ such that
  \begin{align*}
    \lim \limits_{k\to \infty}\sup \limits_{x\in B_1} \| \hat \mu_{L_{\bar n_k^1}}(x)-\mu^1( x)\|_{\textnormal{TV}(\mathcal{C}B_{1/2})} = 0.
  \end{align*}
  Now, suppose we have repeated this $m$ times: we have $\bar n^m_k$ (a subsequence of $\bar n^{m-1}_k$), as well as a measure $\mu^m$ such that
  \begin{align*}
    \lim \limits_{k\to \infty} \sup \limits_{x\in B_m} \| \hat \mu_{L_{\bar n_k^m}}(x)-\mu^m( x)\|_{\textnormal{TV}(\mathcal{C}B_{1/2^m})} = 0.
  \end{align*}
  Then, using again the compactness theorem in \cite[p. 202]{GreenValentine1961arzela} we pick a subsequence $\bar n^{m+1}_k$ of $\bar n^m_k$ and a measure $\mu^{m+1}$ such that
  \begin{align*}
    \lim \limits_{k\to \infty} \sup \limits_{x\in B_{m+1}} \| \hat \mu_{L_{\bar n_k^m}}(x)-\mu^{m+1}( x)\|_{\textnormal{TV}(\mathcal{C}B_{1/2^{m+1}})} = 0.
  \end{align*}
  Observe that the measures $\{ \mu^{m}\}$ are such that $\mu^{m+1}_{\mid \mathcal{C}B_{1/2^m}}(x) = \mu^m(x)$ for all $x\in B_m$, which uniquely defines a direct limit measure $\mu(x)$ for each $x\in \mathbb{R}^d\setminus  \{0\}$. Letting $\bar n_k := \bar n_k^k$ we see that for every $R>0$ and $r>0$ we have
  \begin{align*}
    \lim \limits_{k\to \infty} \sup \limits_{x\in B_{R}} \| \hat \mu_{L_{\hat n_k^k}}(x)-\mu( x)\|_{\textnormal{TV}(\mathcal{C}B_{r})} = 0.
  \end{align*}
  Since $\bar n_k$ is a subsequence of $\tilde n_k$, we still have convergence of $A_{L_{\bar n_k},\eta},\ldots$ to $A(x),\ldots$. Moreover, the continuity estimates in the previous step all pass to the limit to give respective estimates for $A(x),B(x),C(x),$ and $\mu(x)$ in the respective metrics.
  
  Last but not least, we note that while $\{\mu_{L_{\bar n_k}}\}_k$ are a sequence of signed measures, their limit $\mu$ will be a measure, which follows at once from Proposition \ref{proposition:DI_n Borel measures negative part}.
  
  \emph{Step 4.} (Convergence)
  
  First, note that for fixed $u$, we have that as $n\to \infty$,
  \begin{align*}
    u(x+\cdot)-P^{(n)}_{\phi,\eta,u,x}(x+\cdot) \to u(x+\cdot)-P_{\phi,\eta,u,x}(x+\cdot) \textnormal{ in } L^\infty(\mathbb{R}^d),
  \end{align*}
  which in particular guarantees that, for every fixed $r>0$,
  \begin{align*}  
    \lim \limits_{k\to \infty} \int_{\mathcal{C}B_r}u(x+y)-P^{(n_k)}_{\phi,\eta,u,x}(x+y)\;\mu_{L_{n_k}}(x,dy) =  \int_{\mathcal{C}B_r}u(x+y)-P_{\phi,\eta,u,x}(x+y)\;\mu(x,dy). 
  \end{align*}
  Then, by the bound in Proposition \ref{proposition:DI_n Borel measures integrability}, we conclude that 
  \begin{align*}  
    \lim \limits_{k\to \infty} \int_{\mathbb{R}^d}u(x+y)-P^{(n_k)}_{\phi,\eta,u,x}(x+y)\;\mu_{L_{n_k}}(x,dy) =  \int_{\mathbb{R}^d}u(x+y)-P_{\phi,\eta,u,x}(x+y)\;\mu(x,dy). 
  \end{align*}
  Therefore, and taking into account the convergence of $\hat A_{L_{\tilde n_k},\eta}, \hat B_{L_{\tilde n_k},\phi},$ and $\hat C_{L_{\tilde n_k}}$, and with $L(u,x)$ defined as in the statement of the Lemma, $x\in G_n$, and $u\in C^{\beta}_b(\mathbb{R}^d)$, we have
  \begin{align*}
    \lim \limits_{k \to \infty} L_{\tilde n_k}(x) & = \lim\limits_{k\to \infty} \big \{ \tr(\hat A_{L_{\tilde n_k},\eta}D^2u(x)) + \hat B_{L_{\tilde n_k},\phi}\cdot \nabla u(x) + \hat C_{L_{\tilde n_k}}(x)u(x) \big \} \\
    & \;\;\;+ \lim \limits_{k\to \infty}\int_{\mathbb{R}^d} u(x+y)-P^{(\tilde n_k)}_{\phi,\eta,u,x}(x+y) \;\hat \mu_{L_{n_k}}(x,dy)\\
    & =  \tr(AD^2u(x)) + B\cdot \nabla u(x) + C(x)u(x)\\
    & \;\;\;+ \int_{\mathbb{R}^d}u(x+y)-P_{\phi,\eta,u,x}(x+y) \;\hat \mu(x,dy),	     
  \end{align*}	 
  and we conclude that $L \in \mathcal{D}_I$.

\end{proof}

It is to be expected that every $L \in \mathcal{D}_I$ satisfies the GCP, and thus, it has to be an operator of L\'evy type.  This is proved in the lemma below, and further, we show that the coefficients in the operator inherit a modulus of continuity from Assumption \ref{assumption:coefficient regularity}.

\begin{lemma}\label{lemma:L in DI representation formula first form}
 Assume $I$ satisfies Assumptions \ref{assumption:GCP}, \ref{assumption:tightness bound}, and \ref{assumption:coefficient regularity}, as stated for $C^\beta_b(\real^d)$.  Given $L \in \mathcal{D}_I$, and any $\phi, \eta \in \mathcal{S}$, the operator $L$ can be represented as
  \begin{align*}
    L(u,x) & = C_L(x)u(x)+B_{L,\phi}(x)\cdot \nabla u(x)+\tr(A_{L,\eta}(x)D^2u(x))\\
      & \;\;\;\;+ \int_{\mathbb{R}^d}u(x+y)-P_{\phi,\eta,u,x}(x+y)\;\mu(x,dy).	
  \end{align*}
 Here, $\mu_L(x,dy)$ is a L\'evy measure satisfying the continuity estimate \eqref{equation:continuity estimate mu_L}, and 
  \begin{align*}
    (A_{L,\eta})_{ij}(x) & = L( \tau_{-x}\eta_{ij},x),\\
    (B_{L,\phi})_i(x) & = L( \tau_{-x}\phi_{i},x),\\
    C_{L}(x) & = L(1,x),
  \end{align*}
  all have modulus of continuity $C\omega(2(\cdot))$.
\end{lemma}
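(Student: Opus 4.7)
The strategy is to lift the L\'evy-type representation from the finite dimensional approximations through the limit procedure defining $\mathcal{D}_I$. The plan has two stages: first establish the representation for operators arising directly as pointwise limits $L = \lim_k L_{n_k}$ with $L_{n_k} \in \mathcal{D}I_{n_k}$, and then extend by linearity to the convex hull.

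For the first stage I would begin with such a sequential limit and apply Lemma \ref{lemma:DI_n sequences subconverge} to extract a subsequence $\bar n_k$ along which the discrete coefficients $A_{L_{\bar n_k},\eta}(x)$, $B_{L_{\bar n_k},\phi}(x)$, $C_{L_{\bar n_k}}(x)$ converge on every $x \in \bigcup_n G_n$ to continuous functions $A(x), B(x), C(x)$ with modulus of continuity $C\omega(2\cdot)$, and along which $\mu_{L_{\bar n_k}}(x,\cdot)$ converges in total variation on each $\mathcal{C}B_r$ to a L\'evy measure $\mu(x,\cdot)$ satisfying \eqref{equation:continuity estimate mu_L}. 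Item (4) of that lemma already exhibits an operator $\tilde L$ with precisely the structure stated in the present lemma, and item (5) supplies the vanishing of $A$ and $B$ when $\beta<2$ and $\beta<1$, respectively.

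Next I would identify $\tilde L$ with $L$. For $x \in \bigcup_n G_n$ and $u \in C^\beta_b(\mathbb{R}^d)$, rewriting $L_{\bar n_k}(u,x)$ via Proposition \ref{proposition:L in DIn first representation formula} and then letting $k\to \infty$, the convergences from Lemma \ref{lemma:DI_n sequences subconverge} combined with Proposition \ref{proposition:discrete derivatives converge to real derivatives} (to pass discrete derivatives to their continuous counterparts) and the local uniform bound of Proposition \ref{proposition:DI_n Borel measures integrability} (to control the integrand $u(x+y)-P^{(\bar n_k)}_{\phi,\eta,u,x}(x+y)$ near $y=0$) give $L_{\bar n_k}(u,x)\to\tilde L(u,x)$. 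Since by hypothesis $L_{\bar n_k}(u,x)\to L(u,x)$, equality $L=\tilde L$ holds on the dense set $(\bigcup_n G_n)\times C^\beta_b(\mathbb{R}^d)$, and upgrades to all of $\mathbb{R}^d$ using that $\tilde L(u,\cdot)$ is continuous in $x$ (by the modulus on the coefficients and the TV continuity of $\mu$) while $L(u,\cdot)\in C^0_b(\mathbb{R}^d)$. Testing the representation at $\tau_{-x}\eta_{ij}$, $\tau_{-x}\phi_i$, and the constant $1$ then yields $A(x)=A_{L,\eta}(x)$, $B(x)=B_{L,\phi}(x)$, and $C(x)=C_L(x)$ in the notation of the lemma.

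The second stage handles general $L \in \mathcal{D}_I$ by linearity: writing $L = \sum_j \lambda_j L^{(j)}$ with $\lambda_j\geq 0$, $\sum_j \lambda_j=1$, and each $L^{(j)}$ a sequential limit treated in the first stage, the convex combinations $A_{L,\eta}:=\sum_j \lambda_j A^{(j)}$, $B_{L,\phi}:=\sum_j \lambda_j B^{(j)}$, $C_L:=\sum_j \lambda_j C^{(j)}$, and $\mu_L(x,dy):=\sum_j \lambda_j \mu^{(j)}(x,dy)$ inherit both the modulus $C\omega(2\cdot)$ and the TV estimate by the triangle inequality, and give the sought representation. The main obstacle I anticipate lies in the passage to the limit of the nonlocal integral term, since the integrand $u(x+y)-P^{(n)}_{\phi,\eta,u,x}(x+y)$ depends on $n$ through both the reference polynomial (built from discrete derivatives) and the measure. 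Splitting the integral over $B_r$ and $\mathcal{C}B_r$ and combining the uniform local integrability from Proposition \ref{proposition:DI_n Borel measures integrability}, the tightness at infinity from Proposition \ref{proposition:TV norm continuity and tightness estimate for discrete Levy measures}, and the uniform convergence $P^{(n)}_{\phi,\eta,u,x}\to P_{\phi,\eta,u,x}$ should close the argument, with extra care at the critical exponents $\beta=1,2$ where the shape of $P$ changes.
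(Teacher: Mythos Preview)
Your proposal is correct and follows essentially the same route as the paper: invoke Lemma~\ref{lemma:DI_n sequences subconverge} on a sequential limit to produce $A,B,C,\mu$ and the representation $\tilde L$, identify $\tilde L$ with $L$ via the definition of $\mathcal{D}_I$, and recover the coefficient formulas by testing at $\tau_{-x}\eta_{ij}$, $\tau_{-x}\phi_i$, and $1$. You are in fact slightly more careful than the paper in two respects: you make the convex-hull extension explicit (the paper's proof only says ``Assume first that $L$ is the limit of a sequence\ldots'' and leaves this step implicit), and you flag the extension from $x\in\bigcup_n G_n$ to all $x$ by continuity, which the paper handles with a parenthetical remark.
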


\begin{proof}
  Fix $\phi,\eta \in \mathcal{S}$. Assume first that $L$ is the limit of a sequence $L_{n_k}$ with $L_{n_k} \in \mathcal{D}I_{n_k}$. Then, by Lemma \ref{lemma:DI_n sequences subconverge} there is a subsequence $\tilde n_k$ as well as (matrix, vector, scalar, measure)-valued functions $A,B,C$, and $\mu$, all such that 
  \begin{align*}
    C_{L_{\tilde n_k}}(x) \to C(x),\; B_{L_{\tilde n_k},\phi_k}(x) \to B(x),\; A_{L_{\tilde n_k},\eta_k}(x) \to A(x),\;\mu_{L_{\tilde n_k}}(x,dy) \to \mu(x,dy).
  \end{align*}
  and, as a result, we have
  \begin{align*}
    L(u,x) & = \tr(A(x)D^2u(x))+B(x)\cdot \nabla u(x)+C(x)u(x)\\
	  & \;\;\;\;+ \int_{\mathbb{R}^d} u(x+y)-P_{\phi,\eta,u,x}(y)\;\mu(x,dy).
  \end{align*}
  The estimate in Proposition \ref{proposition:DI_n Borel measures integrability} in the limit as $n\to \infty$ implies that
  \begin{align*}
    \int_{\mathbb{R}^d} \eta_0(|y|^{\beta})\;\mu(x,dy) \leq C,
  \end{align*}
  for some constant $C$ independent of $x$ and $L$. Meanwhile, also the $n\to \infty$ limit of the estimate in Proposition \ref{proposition:DI_n Borel measures negative part} implies that $\mu(x,dy)$ is a non-negative measure in $\mathbb{R}^d\setminus \{0\}$. The positivity of $\mu$ means that the previous estimate is equivalent to
  \begin{align*}
    \int_{\mathbb{R}^d} \min\{1,|y|^{\beta}\}\;\mu(x,dy) \leq C.
  \end{align*}
  Since $L_{\tilde n_k}(u,x) \to L(u,x)$, for every $u$, we have in particular, for $x \in \bigcup G_k$
  \begin{align*}
    (A_{L_{\tilde n_k},\eta})_{ij}(x) = L_{\tilde n_k}(\tau_{-x} \eta_{ij},x) \to L(\tau_{-x}\eta_{ij},x).
  \end{align*}
  From where it follows that $(A_{L,\eta})_{ij}(x) = L( \tau_{-x}\eta_{ij},x)$ (and thus for all $x$, by continuity), the exact same argument yields that $ (B_{L,\phi})_i(x) = L( \tau_{-x}\phi_{i},x)$, and $C_{L}(x)  = L(1,x)$, and the lemma is proved.
    
\end{proof}

Let us now simplify things by doing away with the auxiliary functions $\phi$ and $\eta$. To accomplish this, we shall make use of the auxiliary functions from Section \ref{section:Functionals with the GCP}.
\begin{align}\label{equation:phi delta and psi delta recalled}
  \phi_{\delta}(x) = \psi_{\delta,1-\delta},\;\eta_{\delta}(x) =  \psi_{\delta,\delta}(x),
\end{align}
where we recall the two-parameter of functions $\psi_{r,R}(x)$ was defined in \eqref{equation:psi sub r R}. An important property of these one-parameter families is the bound
\begin{align}\label{equation:auxiliary phi and eta functions are bounded uniformly in delta}
  \sup \limits_{\delta \in (0,1)} \{ \|\phi_{\delta}\|_{C^\beta(B_{1/2})} + \|\phi_\delta\|_{L^\infty(\mathbb{R}^d)} + \max \limits_{ij}\|\eta_{\delta}x_ix_j\|_{C^\beta(\mathbb{R}^d)} \} < \infty.
\end{align}

\begin{corollary}\label{corollary:L in DI representation formula final form}
  Assume $I$ satisfies Assumptions \ref{assumption:GCP}, \ref{assumption:tightness bound}, and \ref{assumption:coefficient regularity}, as stated for $C^\beta_b(\real^d)$. Then, any $L \in \mathcal{D}_I$ has the form, 
  \begin{align*}
    L(u,x) & = C(x)u(x)+B(x)\cdot \nabla u(x)+\tr(A(x)D^2u(x))\\
      & \;\;\;\;+ \int_{\mathbb{R}^d}u(x+y)-u(x)-\chi_{B_1(0)}(y)\nabla u(x)\cdot y\;\mu(x,dy).	
  \end{align*}
  Moreover, $A,B,$ and $C$ each have modulus of continuity $C\omega(2(\cdot))$, and for every $r>0$ and any $x_1,x_2\in\mathbb{R}^d$ we have
  \begin{align*}
    \|\mu_L(x_1)-\mu_{L}(x)\|_{\textnormal{TV}(\mathcal{C} B_r)} \leq C(r) \omega(2|x_1-x_2|).
  \end{align*}
  If $\beta<2$, then $A\equiv 0$, while if $\beta<1$ then $B \equiv 0$ and the integrand with respect to $\mu(x,dy)$ in the formula above is replaced with $u(x+y)-u(x)$.
\end{corollary}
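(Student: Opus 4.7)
The plan is to deduce the corollary from Lemma \ref{lemma:L in DI representation formula first form} by removing the dependence on the auxiliary cutoffs $\phi,\eta$, mimicking the strategy used in Lemma \ref{lemma:A_ell and B_ell existence} and Lemma \ref{lemma:Courrege theorem for a linear functional} for linear functionals at a single point, but now carried out jointly in the base point $x$. Concretely, I would apply Lemma \ref{lemma:L in DI representation formula first form} with the one-parameter families $\phi_\delta,\eta_\delta$ from \eqref{equation:phi delta and psi delta recalled}, obtaining for every $\delta\in(0,1)$ functions $A_{L,\eta_\delta}(x)$, $B_{L,\phi_\delta}(x)$, $C_L(x)$ and a L\'evy measure $\mu_L(x,dy)$ (independent of $\phi,\eta$), each of which has modulus of continuity $C\omega(2(\cdot))$ uniformly in $\delta$ thanks to the bound \eqref{equation:auxiliary phi and eta functions are bounded uniformly in delta}.

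Next I would handle the second-order term. Because $\eta_\delta$ is monotone in $\delta$ on the support of any fixed quadratic $(My,y)$, the argument from Lemma \ref{lemma:A_ell and B_ell existence} shows pointwise existence of the limit $A(x):=\lim_{\delta\to 0}A_{L,\eta_\delta}(x)$ for each $x$ and each positive semidefinite $M$ (with $A(x)\equiv 0$ when $\beta<2$). The uniform modulus $C\omega(2(\cdot))$ passes to $A$. For the first-order term, I would use the $\delta$-uniform bound on $|B_{L,\phi_\delta}(x)|$ together with the equicontinuity estimate, and apply Arzel\`a--Ascoli on an exhaustion of balls together with a Cantor diagonal argument to extract a subsequence $\delta_k\searrow 0$ along which $B_{L,\phi_{\delta_k}}\to B$ locally uniformly in $x$; again $B$ inherits the modulus $C\omega(2(\cdot))$ and $B\equiv 0$ when $\beta<1$. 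The scalar $C_L(x)$ is already independent of $\phi,\eta$ and has the required modulus from Lemma \ref{lemma:L in DI representation formula first form}.

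For the nonlocal term I would use monotone convergence as in the proof of Lemma \ref{lemma:Courrege theorem for a linear functional}: since
\[
u(x+y)-P_{\phi_{\delta_k},\eta_{\delta_k},u,x}(x+y)
\]
is monotone in $k$ in the relevant sense, and $\phi_{\delta_k}(y)\to\chi_{B_1(0)}(y)$ pointwise while $\eta_{\delta_k}(y)(D^2u(x)y,y)\to 0$ pointwise with a $\mu(x,dy)$-integrable envelope coming from the uniform bound $\int\min(1,|y|^\beta)\mu(x,dy)\le C$, the integral converges (for each fixed $x$) to
\[
\int_{\mathbb{R}^d} u(x+y)-u(x)-\chi_{B_1(0)}(y)\nabla u(x)\cdot y\;\mu(x,dy).
\]
The corresponding simplifications when $\beta<1$ follow because both the drift and the interior correction drop out of the representation. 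The TV-continuity estimate on $\mu_L$ is the one already provided by Lemma \ref{lemma:L in DI representation formula first form} (inherited through item (3) of Lemma \ref{lemma:DI_n sequences subconverge}), so it passes through unchanged.

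The part that requires the most care is the simultaneous choice of the subsequence $\delta_k$ that works for all $x$ and yields a genuine coefficient \emph{function} $B(x)$ (rather than only a pointwise limit depending on $x$); this is exactly where the uniform modulus from Proposition \ref{proposition:discrete coefficient regularity for L in DI_n} --- and, after passing to the limit, from Lemma \ref{lemma:L in DI representation formula first form} --- is essential, since it furnishes the equicontinuity needed for Arzel\`a--Ascoli. Everything else is a routine verification that the $\delta\to 0$ limits can be put together to produce the clean L\'evy form, with the advertised moduli on $A,B,C$ and the TV-estimate on $\mu_L$, and with the $\beta$-dependent reductions inherited from item (5) of Lemma \ref{lemma:DI_n sequences subconverge}.
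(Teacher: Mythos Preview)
Your proposal is correct and follows essentially the same route as the paper: apply Lemma \ref{lemma:L in DI representation formula first form} with the families $\phi_{\delta},\eta_{\delta}$, use the uniform-in-$\delta$ bound \eqref{equation:auxiliary phi and eta functions are bounded uniformly in delta} to get equicontinuity, extract locally uniform limits $A,B$ via Arzel\`a--Ascoli plus Cantor diagonalization, and pass to the limit in the integral by monotone convergence after showing the $\eta_{\delta_k}$ contribution vanishes. The only cosmetic difference is that for $A$ you invoke the monotonicity argument of Lemma \ref{lemma:A_ell and B_ell existence} to get the full limit $\lim_{\delta\to 0}A_{L,\eta_\delta}(x)$, whereas the paper simply treats $A$ and $B$ symmetrically via Arzel\`a--Ascoli and passes to a common subsequence; both yield the same conclusion.
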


\begin{proof}
  Take a decreasing sequence $\delta_k$ such that $\delta_k \to 0$, and let us take the functions $\phi_{\delta_k}$ and $\eta_{\delta_k}$, as defined in \eqref{equation:phi delta and psi delta recalled}. Then for each $k$, $L$ has the representation
  \begin{align*}
    L(u,x) & = C_L(x)u(x)+B_{L,\phi_{\delta_k}}(x)\cdot \nabla u(x)+\tr(A_{L,\eta_{\delta_k}}(x)D^2u(x))\\
      & \;\;\;\;+ \int_{\mathbb{R}^d}u(x+y)-P_{\phi_{\delta_k},\eta_{\delta_k},u,x}(x+y)\;\mu(x,dy),	
  \end{align*}
  where $A_{L,\eta_{\delta_k}}$, $B_{L,\phi_{\delta_k}},$ and $C_{L}$ are as in Lemma \ref{lemma:L in DI representation formula first form}. Now, $L$ satisfies the estimate
  \begin{align*}
    |L(\tau_{-x_1}(\eta_{\delta_k})_{ij},x_1)-L(\tau_{-x_2}(\eta_{\delta_k})_{ij},x_2)| \leq \alpha(1,\eta_{\delta_k})\omega(2|x_1-x_2|)
  \end{align*}
  Thanks to \eqref{equation:auxiliary phi and eta functions are bounded uniformly in delta}, it follows that $\alpha(1,\eta_{\delta_k}) \leq C$ for all $k$. It follows that $\{A_{L,\eta_{\delta_k}}\}_k$ has a uniform modulus of continuity. The same argument yields a modulus of continuity for $\{B_{L,\phi_{\delta_k}}\}_k$ and for the function $C(x)$, all given by $C\omega(2|x_1-x_2|)$, with $C$ independent of $k$ and $\omega$ being the modulus from Assumption \ref{assumption:coefficient regularity}.
  This equicontinuity means these sequences of functions are pre-compact at least when restricted to any compact subset of $\mathbb{R}^d$, by the Arzela-Ascoli theorem. Therefore, after a Cantor diagonalization argument we see that along some subsequence $m_k \to \infty$ these functions converge locally uniformly in $\mathbb{R}^d$ to functions $A(x)$, $B(x)$, respectively.  Of course, the functions $A,B,$ and $C$ all inherit the modulus of continuity $C\omega(2(\cdot))$. The respective TV-norm continuity estimate for $\mu_L$ follows by applying Proposition \ref{proposition:TV norm continuity and tightness estimate for discrete Levy measures} and passing to the limit (always recalling that, $\mathcal{D}_I$ is the convex hull of such limit points).
   
  With the convergence established, we have 
  \begin{align*}
    \lim\limits_{k\to \infty}\big ( B_{L,\phi_{\delta_{m_k}}}(x)\cdot\nabla u(x) + \tr(A_{L,\eta_{\delta_{m_k}}}(x)D^2u(x)) \big ) = B(x)\cdot\nabla u(x) + \tr(A(x)D^2u(x)),
  \end{align*}
  and so, for every $u$ we have the formula
  \begin{align*}
    L(u,x) & = C(x)u(x)+B(x)\cdot \nabla u(x)+\tr(A(x)D^2u(x))\\
      & \;\;\;\;+ \lim\limits_{k \to \infty}\int_{\mathbb{R}^d}u(x+y)-P_{\phi_{\delta_{k}},\eta_{\delta_k},u,x}(x+y)\;\mu(x,dy),	
  \end{align*} 
  It remains to compute the limit of the integral, observe that
  \begin{align*}
    \int_{\mathbb{R}^d}\eta_{\delta_k}(y)(D^2u(x)y,y)\;\mu(x,dy) = \int_{B_{\delta_k}}\eta_{\delta_k}(y)(D^2u(x)y,y)\;\mu(x,dy),
  \end{align*}
  which means that
  \begin{align*}
    \left | \int_{\mathbb{R}^d}\eta_{\delta_k}(y)(D^2u(x)y,y)\;\mu(x,dy) \right | \leq C|D^2u(x)| \int_{B_{\delta_k}} |y|^2 \;d\mu(x,dy).
  \end{align*}
  Therefore,
  \begin{align*}
    \lim\limits_{k\to 0}\int_{\mathbb{R}^d}\eta_{\delta_k}(y)(D^2u(x)y,y)\;\mu(x,dy) = 0.
  \end{align*}
  On the other hand, for every $y$ we have
  \begin{align*}
    \lim \limits_{k\to \infty} \Big ( u(x+y)-P_{\phi_{\delta_k},\eta_{\delta_k},u,x}(y)\Big) =   u(x+y)-u(x)-\chi_{B_1}(y)\nabla u(x)\cdot y,
  \end{align*}  
  and the limit is monotone. Therefore, by monotone convergence we conclude that
  \begin{align*}
    \lim\limits_{k\to \infty} \int_{\mathbb{R}^d}u(x+y)-P_{\phi_{\delta_k},\eta_{\delta_k},u,x}(y)\;\mu(x,dy) = \int_{\mathbb{R}^d}u(x+y)-u(x)-\chi_{B_1}(y)\nabla u(x)\cdot y\;\mu(x,dy).
  \end{align*}
  and with this the Corollary is proved.
\end{proof}

\subsection{Limits of $I_n$}

\begin{lemma}\label{lemma:I_n converges to Iu for nice u}
  Assume that $I:C^\beta_b(\real^d)\to C^0_b(\real^d)$ is Lipschitz.     Let $K>0$ and $0<\beta<\beta_0<3$. If $u \in C^{\beta_0}_b(\mathbb{R}^d)$ is supported in $B_K$, and $2^{n-2}\geq K$, then
  \begin{align*}
    \|I_nu-Iu\|_{L^\infty(B_K \cap G_n)} \leq C2^{-n \gamma} \|u\|_{C^{\beta_0}(\mathbb{R}^d)},
  \end{align*}
  for a universal constant $C$ and $\gamma= \gamma(\beta_0,\beta) \in (0,1)$. Furthermore, we have
  \begin{align*}
    \lim \limits_{n\to\infty} \|I(u)-I_n(u)\|_{L^\infty(B_K)} = 0.
  \end{align*}
\end{lemma}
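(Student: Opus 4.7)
The plan is to unravel the definition $I_n = \hat\pi_n^0 \circ I \circ \hat\pi_n^\beta$ and observe that when $u$ is supported in $B_K$ with $2^{n-2}\geq K$, the truncation operator $\textnormal{Pr}_n$ does nothing: since $B_K \subset [-2^{n-2},2^{n-2}]^d \subset [-2^n,2^n]^d$, we have $\textnormal{Pr}_n T_n u = T_n u$, and therefore $\hat\pi_n^\beta u = \pi_n^\beta u$. Moreover, for any point $x\in B_K\cap G_n$, the Whitney extension satisfies $\hat\pi_n^0(g)(x)=g(x)$ for any $g\in C^0_b(\real^d)$ (the projector restricts to $G_n$, truncates outside a set that does not contain $x$, and then extends back, giving $g(x)$ at the gridpoint itself). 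Consequently, for such $x$ one has the clean identity
\begin{align*}
	I_n(u,x) = I(\pi_n^\beta u,x).
\end{align*}

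With this identity in hand, the first estimate follows immediately from the Lipschitz continuity of $I$ combined with Lemma \ref{lemma:projection operators convergence}:
\begin{align*}
	|I_n(u,x)-I(u,x)| = |I(\pi_n^\beta u,x)-I(u,x)| \leq \|I\|_{\textnormal{Lip}} \|\pi_n^\beta u - u\|_{C^\beta(\real^d)} \leq C\,h_n^{\gamma}\,\|u\|_{C^{\beta_0}(\real^d)},
\end{align*}
where the exponent $\gamma=\gamma(\beta_0,\beta)\in(0,1)$ is the one produced by Lemma \ref{lemma:projection operators convergence}.

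For the second statement we need to pass from $x \in B_K\cap G_n$ to general $x\in B_K$. The plan is to insert $\hat\pi_n^0 I(u)$ as an intermediate and use the triangle inequality
\begin{align*}
	|I(u,x) - I_n(u,x)| \leq |I(u,x)-\hat\pi_n^0(I(u))(x)| + |\hat\pi_n^0(I(u))(x)-\hat\pi_n^0(I(\pi_n^\beta u))(x)|.
\end{align*}
Because $I(u) \in C^0_b(\real^d)$, Remark \ref{remark:projection operators C0 convergence} gives $\|I(u)-\pi_n^0 I(u)\|_{L^\infty}\to 0$, and since $B_K$ sits well inside the truncation box $[-2^{n-2},2^{n-2}]^d$ for $n$ large, $\hat\pi_n^0 I(u)$ agrees with $\pi_n^0 I(u)$ on $B_K$; thus the first term tends to zero uniformly in $x\in B_K$. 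The second term is bounded by a universal multiple (from the elementary $L^\infty$ bound on the Whitney extension in the $C^0$ case) of $\|I(u)-I(\pi_n^\beta u)\|_{L^\infty} \leq \|I\|_{\textnormal{Lip}}\|u-\pi_n^\beta u\|_{C^\beta}$, which goes to zero at rate $h_n^{\gamma}$ as above.

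The only real subtlety, and the point that has to be checked carefully, is the claim that $\hat\pi_n^0(g)(x)$ coincides with $\pi_n^0(g)(x)$ for $x\in B_K$ once $n$ is large enough that $B_K$ is safely inside the truncation box; this amounts to noting that the Whitney cubes intersecting $B_K$ only involve gridpoints inside $[-2^n,2^n]^d$, so the $\textnormal{Pr}_n$ step does not alter the extension there. Once this is observed, the remainder is routine bookkeeping.
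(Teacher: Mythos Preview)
Your proposal is correct and follows essentially the same route as the paper: reduce $\hat\pi_n^\beta u$ to $\pi_n^\beta u$ via the support hypothesis, use that $\hat\pi_n^0$ acts as the identity at gridpoints in $B_K$ to get $I_n(u,x)=I(\pi_n^\beta u,x)$ on $B_K\cap G_n$, and then combine the Lipschitz bound on $I$ with Lemma~\ref{lemma:projection operators convergence}; for the second assertion both you and the paper insert $\hat\pi_n^0 I(u)$ and handle the two pieces with Remark~\ref{remark:projection operators C0 convergence} and the Lipschitz/projection estimate respectively. Your explicit verification that $\hat\pi_n^0$ agrees with $\pi_n^0$ on $B_K$ (because the Whitney cubes touching $B_K$ only see gridpoints well inside $[-2^n,2^n]^d$) is a point the paper leaves implicit, so if anything you are slightly more careful.
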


\begin{proof}
  Let $u$ be compactly supported in $B_K$, and be such that $\|u\|_{C^{\beta_0}} \leq M$. First, note that since $2^{n-2} \geq K$, then we have
  \begin{align*}
    \hat \pi_n^\beta u = \pi_n^\beta u,	  
  \end{align*}	  
  thus,  $I_n(u) = \hat \pi_n^0\circ I \circ \pi_n^\beta (u)$. Keeping this in mind, using the Lipschitz property of $I$, we have
  \begin{align*}
    \|I(u)-I(\hat \pi_n^\beta u)\|_{L^\infty(\mathbb{R}^d)} \leq C\|u-\pi_n^\beta u\|_{C^\beta(\mathbb{R}^d)}.
  \end{align*}
  Since $2^{n-2}\geq K$ we have that $I(\hat \pi_n^\beta u) = \hat \pi_n^0 I(\hat \pi_n^\beta u)  = I_n(u)$ when restricted to $B_K \cap G_n$, which thanks to Lemma \ref{lemma:projection operators convergence} implies the first estimate. Next, Theorem \ref{theorem:Whitney Extension Is Bounded} guarantees that
  \begin{align*}
    \|\hat \pi_n^0 I(u)-\hat \pi_n^0 I(\hat \pi_n^\beta u)\|_{L^\infty(K)} \leq C\|I(u)-I(\hat \pi_n^\beta u)\|_{L^\infty(\mathbb{R}^d)} \leq C\|u- \pi_n^\beta u\|_{L^\infty(\mathbb{R}^d)}.
  \end{align*}	  
  Thus,
  \begin{align*}
    \|I_n(u)-I(u)\|_{L^\infty(K)} & \leq \|\hat \pi_n^0 I(u)-I_n(u)\|_{L^\infty(K)} + \|\hat \pi_n^0( I(u))-I(u)\|_{L^\infty(K)}\\
	& \leq C \|u- \pi_n^\beta u\|_{C^\beta(\mathbb{R}^d)} +  \|\hat \pi_n^0 (I(u))-I(u) \|_{L^\infty(K)}.
  \end{align*}
  Applying Lemma \ref{lemma:projection operators convergence} to the first term and Remark \ref{remark:projection operators C0 convergence} to the second, we conclude that 
  \begin{align*}
    \lim\limits_{n\to \infty}\|I_nu-Iu\|_{L^\infty(K)} =0.
  \end{align*}
  
\end{proof}

\begin{corollary}\label{corollary:I_n pointwise convergence to I}
Assume $I$ satisfies Assumptions \ref{assumption:GCP}, \ref{assumption:tightness bound}, and \ref{assumption:coefficient regularity}, as stated for $C^\beta_b(\real^d)$. Then for every $u \in C^\beta_b(\mathbb{R}^d)$ and every $R>0$,   \begin{align*}
    \lim \limits_{n\to \infty} \|I_nu-Iu\|_{L^\infty(B_R)} = 0.
  \end{align*}
\end{corollary}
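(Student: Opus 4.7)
The plan is to reduce the general case to Lemma \ref{lemma:I_n converges to Iu for nice u}, which gives the convergence when $u$ is compactly supported and lies in $C^{\beta_0}_b$ for some $\beta_0 \in (\beta,3)$. The bridging from $u \in C^\beta_b$ to such a regular approximation is done by cut‐off and mollification, with the resulting errors controlled by tightness (Assumption \ref{assumption:tightness bound}, Proposition \ref{proposition:tightness bound for I_n}) together with a L\'evy‐type continuity modulus that handles the mollification perturbation.

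More concretely, fix $\beta_0 \in (\beta,3)$ and, for large $m$ and small $\delta$ to be chosen, set
\[
  v_{m,\delta} \;:=\; \eta_\delta * (\chi_m u),
\]
where $\chi_m \in C^\infty_c(\real^d)$ equals $1$ on $B_m$ and vanishes outside $B_{2m}$, and $\eta_\delta$ is a standard mollifier supported in $B_\delta$. Then $v_{m,\delta} \in C^\infty_c \subset C^{\beta_0}_c$, with $\|v_{m,\delta}\|_{C^\beta} \leq C\|u\|_{C^\beta}$ independently of $m,\delta$, while standard mollification estimates give $\|v_{m,\delta}\|_{C^{\beta_0}} \leq C\,\delta^{-(\beta_0-\beta)} \|u\|_{C^\beta}$. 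I would then split
\[
  \|I_n u - I u\|_{L^\infty(B_R)} \;\leq\; \|I_n v_{m,\delta} - I v_{m,\delta}\|_{L^\infty(B_R)} + \|I_n u - I_n v_{m,\delta}\|_{L^\infty(B_R)} + \|I v_{m,\delta} - I u\|_{L^\infty(B_R)}.
\]
For the first term, Lemma \ref{lemma:I_n converges to Iu for nice u} gives a bound of $C\,2^{-n\gamma}\delta^{-(\beta_0-\beta)}\|u\|_{C^\beta}$; choosing $\delta = \delta_n := 2^{-n\gamma/(2(\beta_0-\beta))}$ makes this $\leq C\,2^{-n\gamma/2}\|u\|_{C^\beta} \to 0$.

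The other two terms are handled in two sub-steps: first, replace $u$ by $\chi_m u$ using tightness, then replace $\chi_m u$ by $v_{m,\delta_n}$ using L\'evy continuity. Since $u = \chi_m u$ on $B_m$, Assumption \ref{assumption:tightness bound} at scale $(m-R)/2$ controls $\|I u - I(\chi_m u)\|_{L^\infty(B_R)}$ by $\rho((m-R)/2)\cdot 2\|u\|_{L^\infty}$, which vanishes as $m\to\infty$; Proposition \ref{proposition:tightness bound for I_n} (combined with the fact that $\hat\pi_n^0$ is bounded on $L^\infty$, and that for $n$ so large that $[-2^n,2^n]^d$ comfortably contains $B_{2m}$, the truncation $\mathrm{Pr}_n$ does not disturb the comparison on $B_R$) yields the analogous bound for $I_n$. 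For the mollification step, $w := v_{m,\delta_n}-\chi_m u$ satisfies $\|w\|_{L^\infty} \leq \omega_u(\delta_n) \to 0$ while $\|w\|_{C^\beta}$ is only bounded; to turn this into smallness of $\|I(\chi_m u+w) - I(\chi_m u)\|_{L^\infty(B_R)}$ I would use the min-max representation for $I$ (Theorem \ref{theorem:MinMax Euclidean}) and the L\'evy form of each functional $L \in \mathcal{K}(I)_x$ provided by Lemma \ref{lemma:Courrege theorem for a linear functional}: splitting the L\'evy integral into $|y|<r$ and $|y|\geq r$, the first part is bounded by $\|w\|_{C^\beta}\int_{|y|<r}|y|^\beta \mu(x,dy)$ which is made arbitrarily small by choice of $r$ (by uniform L\'evy integrability), while the second part is bounded by $2\|w\|_{L^\infty}\,\mu(x,\{|y|\geq r\})$ which vanishes as $\delta_n \to 0$ once $r$ is fixed. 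The same reasoning applies to $I_n$, via the kernel representation of $L \in \mathcal{D}I_n$ and the uniform L\'evy integrability established in Proposition \ref{proposition:DI_n Borel measures integrability} and Proposition \ref{proposition:DI_n Borel measures negative part}.

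The main obstacle I expect is precisely the mollification step: $I$ and $I_n$ are only Lipschitz in the $C^\beta$ norm, which a priori does not give continuity under $L^\infty$-small but $C^\beta$-bounded perturbations. Overcoming this requires the L\'evy structure, where the $C^\beta$-bounded but $L^\infty$-small input is split at a scale $r$ and each piece is controlled separately, with both controls uniform in the family of linear functionals appearing in the min-max representations (and, for $I_n$, uniform in $n$). Once this continuity modulus is in hand, combining it with the tightness step and the quantitative Lemma \ref{lemma:I_n converges to Iu for nice u} yields the desired limit.
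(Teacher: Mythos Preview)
Your overall skeleton---cutoff, then Lemma~\ref{lemma:I_n converges to Iu for nice u}, then tightness for the tails---is exactly the paper's strategy.  The paper, however, is much more economical: it decomposes $u=u_0+u_1$ with $u_0\in C^\beta_c$ supported in $B_{2K+1}$ and $u_1\equiv 0$ on $B_{2K}$, controls $|I(u)-I(u_0)|$ on $B_R$ by Assumption~\ref{assumption:tightness bound}, controls $|I_n(u)-I_n(u_0)|$ by the same tightness transported through $\hat\pi_n^\beta$ (this is what your invocation of Proposition~\ref{proposition:tightness bound for I_n} does), and then cites Lemma~\ref{lemma:I_n converges to Iu for nice u} for $|I_n(u_0)-I(u_0)|$.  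No mollification, no min--max, no L\'evy splitting.

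Your extra mollification step is not superfluous: as written, Lemma~\ref{lemma:I_n converges to Iu for nice u} assumes $u\in C^{\beta_0}_b$ with $\beta_0>\beta$, and the paper's $u_0$ is only $C^\beta$, so strictly speaking the paper glosses over the very point you are worried about.  Your instinct to bridge this by regularizing is sound.  What is heavier than necessary is the machinery you bring in to control the mollification error: invoking Theorem~\ref{theorem:MinMax Euclidean} and the full L\'evy representation of every $L\in\mathcal K(I)_x$ (and the analogous structure for $\mathcal D I_n$) to get continuity under $L^\infty$-small, $C^\beta$-bounded perturbations works, but it imports a large part of the paper's output into what is meant to be an intermediate lemma.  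More importantly, your splitting argument does not close cleanly at the borderline exponents $\beta=1$ (Lipschitz) and $\beta=2$ ($C^{1,1}$): in those cases the local terms $B_\ell\cdot\nabla w(x)$, respectively $\mathrm{tr}(A_\ell D^2 w(x))$, are present in the L\'evy formula, and $\nabla w$ or $D^2 w$ need not tend to zero in $L^\infty$ when you mollify a function that is merely Lipschitz or $C^{1,1}$.  You would need an extra argument there (e.g.\ working pointwise a.e.\ and upgrading to uniform convergence, or handling those endpoint cases separately).

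In short: your route is a legitimate and more scrupulous variant of the paper's, filling a gap the paper leaves implicit; the price is considerably more machinery and an unresolved technicality at $\beta\in\{1,2\}$.
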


\begin{proof}
  Fix $u \in C^\beta_b(\mathbb{R}^d)$ and $R,\varepsilon>0$. For $K>0$ (to be determined later), we may decompose $u$ as $u = u_0+u_1$, where $u_0$ is compactly supported in $B_{2K+1}$ and $u_1 \equiv 0$ in $B_{2K}$, all such that
  \begin{align*}
    \|u_i \|_{C^\beta(\mathbb{R}^d)} \leq C\|u\|_{C^\beta(\mathbb{R}^d)},\;i=1,2.
  \end{align*}
  The constant $C>1$ being independent of $K$. Now, by Assumption \ref{assumption:tightness bound} and since $u \equiv u_0$ in $B_{2K}$, we have
  \begin{align*}
    |I(u_0)-I(u)| \leq \rho(K)\|u-u_0\|_{L^\infty(\mathbb{R}^d)} \leq 2C\rho(K)\|u\|_{C^\beta(\mathbb{R}^d)}.  
  \end{align*}
  Choose $K$ large enough so that $K\geq 2R$ and $2C\rho(R)\|u\|_{C^\beta(\mathbb{R}^d)} \leq \varepsilon/2$. Then, with this $K$, we apply Lemma \ref{lemma:I_n converges to Iu for nice u} two times, and conclude that there is some $n_0>0$ such that 
  \begin{align*}
    |I_n(u_0)-I(u_0)| + |I_n(u_0)-I_n(u)| \leq \varepsilon/2 \textnormal{ whenever } n \geq n_0.
  \end{align*}
  On the other hand, in all $\mathbb{R}^d$ we have the pointwise inequality,
  \begin{align*}
     |I_n(u)-I(u)| \leq |I_n(u_0)-I(u_0)| + |I_n(u_0)-I_n(u)| + |I(u_0)-I(u)|,
   \end{align*}
  and it follows that, for $x \in B_{R}$ and $n\geq n_0$, that 
  \begin{align*}
    |I_n(u,x)-I(u,x)| \leq \varepsilon,
  \end{align*}
  and the corollary is proved. 

\end{proof}


\subsection{Proofs of Theorems \ref{theorem:MinMax Euclidean ver2} and \ref{theorem:minmax with beta less than 2}}\label{sec:TheoremsThatUseWhitney}

 We conclude this section with the proofs of the remaining theorems.

\begin{proof}[Proof of Theorem \ref{theorem:MinMax Euclidean ver2}]

Consider the set $\mathcal{D}_I$. The proof will boil down to showing that for any $u,v\in C^{\beta_0}_c(\mathbb{R}^d)$ and any $x\in \mathbb{R}^d$ there is some $L\in \mathcal{D}_I$ such that
\begin{align*}
  I(u,x)  \leq I(v,x)+L(u-v,x). 
\end{align*}
Fix $u,v$ and $x$. Then, by Remark \ref{remark:MinMax for In}, for every $n$ we have
\begin{align*}
  I_n(u,x) \leq \max \limits_{L_n\in \mathcal{D}I_n} \{ I_n(v,x)+L_n(u-v,x)\}.
\end{align*}
In particular, for every $n$, there is some $L_n \in \mathcal{D}I_n$ such that (with this same $u,v$ and $x$)
\begin{align*}
  I_n(u,x) \leq I_n(v,x)+L_n(u-v,x).	
\end{align*}
Let us obtain an inequality as we let $n\to \infty$ along some subsequence. Thanks to Corollary \ref{corollary:I_n pointwise convergence to I}, for every $x \in \mathbb{R}^d$ we have
\begin{align*}
  \lim \limits_{n\to \infty} I_n(u,x) = I(u,x),\;\;\lim\limits_{n\to\infty} I_n(v,x) = I(v,x).
\end{align*}
On the other hand, Lemma \ref{lemma:DI_n sequences subconverge} says there is a subsequence $n_k$ and an operator $L$ such that $L_{n_k}(u-v,x)$ converges to $L(u-v,x)$, and moreover $L\in\mathcal{D}_I$, by the definition of $\mathcal{D}_I$. Then, we conclude that
\begin{align*}
  I(u,x) & \leq I(v,x)+L(u-v,x) \leq \sup \limits_{L\in \mathcal{D}_I} \{ I(v,x)+L(u-v,x)\}.
\end{align*}
The above holds for any pair of functions $u$ and $v$ and any point $x\in\mathbb{R}^d$. Taking the minimum over all $v$, we obtain for any $u$ and $x$,
\begin{align*}
  I(u,x) & = \min \limits_{v\in C^\beta_b(\mathbb{R}^d)} \max \limits_{L\in \mathcal{D}_I} \left \{ I(v,x)-L(v,x)+L(u,x) \right \}. 
\end{align*}
Using $v\in C^\beta_b(\mathbb{R}^d)$ and $L \in \mathcal{D}_I$ as the set of labels, which we rename $ab$, and letting $f_{ab}(x)$ correspond to the functions $I(v,x)-L(v,x)$, we obtain the desired min-max representation. 

The $L^\infty$ bounds for the coefficients follow from the construction of $A_{\eta_k}$, etc... in (\ref{eqn:AsubL definition}), (\ref{eqn:BsubL definition}), (\ref{eqn:CsubL definition}).  The continuity of the coefficients and the L\'evy measures follows from Lemma \ref{lemma:DI_n sequences subconverge}.

\end{proof}

\begin{proof}[Proof of Theorem \ref{theorem:minmax with beta less than 2}]
  For the versions of Theorems \ref{theorem:MinMax Euclidean} and \ref{theorem:MinMax Translation Invariant} with $\beta<2$ we apply the last part of Lemma \ref{lemma:Courrege theorem for a linear functional} to conclude the functionals (or translation invariant operators) appearing in the min-max all have the corresponding simpler form. 
  As for Theorem \ref{theorem:MinMax Euclidean ver2}, we use instead the last part of Corollary \ref{corollary:L in DI representation formula final form} to obtain the simpler expresion for the L\'evy operators in the cases where $\beta<2$.

\end{proof}

\section{Some Examples}\label{section:examples}

In this section we list some examples to which our results apply, yet the integro-differential structure given in either (\ref{eqIN:LevyTypeLinear}) or (\ref{eqIN:MinMaxMeta}) is not readily apparent from the definition of the operator itself.  We emphasize that most cases of the \emph{linear} examples that we list were already contained in the classic work of Courr\`ege \cite{Courrege-1965formePrincipeMaximum}, but we include them here for the sake of illustration.  In all of these examples, the operators satisfy the GCP and the other technical requirements to apply the results presented above.  We do not intend to give all details, but rather just make a list, with some appropriate references.  At the end of the section, we list how these examples relate to Assumptions \ref{assumption:GCP}--\ref{assumption:coefficient regularity}.

\subsection{The statement of the examples.}

\begin{example}\label{EX:Generator}
	The generator of a Markov process.  Assume that $X_t$ is a Markov process taking values in $\real^d$, and that $\expct_x$ is the expectation of the process, having started from $x$ at $t=0$.  The generator is defined as the operator
	\begin{align*}
		L(u,x) = \lim_{t\to0}\frac{\expct(u(X_t))-\expct(u(X_0))}{t},
	\end{align*}
	over all $u$ for which the limit exists.  (See Liggett \cite[Chapter 3]{Liggett-ContinuousTimeMarkovBookAMS}.)
\end{example}
	Thanks to the fact that $\expct$ preserves ordering, one can immediately see that $L$ enjoys the GCP.
	When $X_t$ is such that $L:C^2_b\to C^2_b$, this example is covered by Courr\`ege \cite{Courrege-1965formePrincipeMaximum}; but if $X_t$ is such that $L:C^\beta_b\to C^0_b$ (in a Lipschitz fashion) for some $0<\beta<2$, then by Theorem \ref{theorem:minmax with beta less than 2}, there are fewer terms (see the list just above Theorem \ref{theorem:minmax with beta less than 2} for our use of the notation $C^\beta_b(\real^d)$).  In this context, the result of Courr\`ege can be seen as a version of the L\'evy-Khintchine formula for a process whose increments need not be stationary.

\begin{example}\label{EX:D-to-N}
	The Dirichlet-to-Neumann map for linear, elliptic operators on half-space.  Assume that $L$ is an operator that admits unique bounded solutions on $\real^{d+1}_+$ and that has a comparison principle.  What we mean by this is the following:  we can take $u\in C^{1,\al}_b(\real^d)$ and associate to it the unique bounded solution, $U_u$ of 
	\begin{align*}
	L(U_u,X) = 0\ \ \text{in}\ \real^{d+1}_+,\ \ \text{and}\ \ U_u=u\ \text{on}\ \real^d\times{0}.	
	\end{align*}
A couple of reasonable examples would be
\begin{align*}
	L(U,X) = \tr(A(X)D^2U(X))\ \ \text{or}\ \ L(U,X) = \dive(A(X)\nabla U),
\end{align*} 
where $A$ is uniformly elliptic and H\"older continuous.
The Dirichlet-to-Neumann map is then defined as
\begin{align*}
	I(u,x) := \partial_n U_u(x). 
\end{align*}
\end{example}
First of all, the assumptions on $A$ are such that for some $\al'$, $U_u\in C^{1,\al'}_b\left(\overline{\real^{d+1}_+}\right)$  and hence the normal derivative is well defined (see, e.g. \cite[Chapters 8, 9]{GiTr-98}).
It is not hard to check that this operator satisfies the GCP, and this fact comes entirely from the property that the solution operator, by the assumed comparison principle, preserves ordering of solutions whenever the boundary data are ordered (it has nothing to do with linearity of the solution operator). This is, again, within the context of Courr\`ege's result, but  we can invoke Theorem \ref{theorem:minmax with beta less than 2} to remove extra terms of order higher than $1$.  Ellipticity and scaling show that this is always an operator of order $1$ (and will map $C^{1,\al}\to C^{\al'}$).  We note that in this example, via linear equations with nice coefficients, one can derive lots of information about the operator $\partial_n U_u$ by directly using the Poisson kernel that represents the solution $U_u$.
	
	In the context of periodic equations, one can use the results in Sections \ref{section:Finite Dimensional Approximations} and \ref{section:Analysis of finite dimensional approximations} to show that the coefficients in the resulting L\'evy operators will share the same periodicity.  In fact, this is very straightforward if $I$ is linear.  If instead one looks at almost periodic coefficients, it seems reasonable to hope that the coefficients will also be almost periodic, but we have not checked this claim.  If it is the case, there could be an application to some boundary homogenization problems with irrationally oriented half-spaces inside a periodic medium, related to  \cite{GuSc-2018NeumannHomogPart2SIAM}.
	Operators related to the Dirichlet-to-Neumann mapping of this example are also of interest in conformal geometry, see Chang-Gonzalez \cite{ChGo-2011FracLaplaceGeometry}.  It is also possible to consider an elliptic equation with weights in order to obtain some operators of order different than 1, e.g. Caffarelli-Silvestre \cite{CaffarelliSilvestre-2007ExtensionCPDE}.

\begin{example}\label{EX:BoundaryProcess}
	The boundary process of a reflected diffusion. (See Hsu \cite{Hsu-1986ExcursionsReflectingBM}, or \cite[Chp. IV, Sec. 7]{IkedaWatanabe-1981SDE} and/or  \cite[Sec. 8]{SatoUeno-1965MultiDimDiffBoundryMarkov}.) 
\end{example}
	In this context, one starts with a diffusion in $\real^{d+1}_+$, say $X_t$, so that $X_t$ reflects off of the bottom boundary whenever it reaches it.  Under a time rescaling of $X_t$ (because it spends zero time on the boundary), the resulting process can be viewed at times only when it hits $\real^{d}\times\{0\}$, and induces a pure jump process on $\real^d\times\{0\}$.  This process is generated by an operator of the form (\ref{eqIN:LevyTypeLinear}) with $A\equiv 0$.  It turns out that this generator for the boundary process is exactly the Dirichlet-to-Neumann mapping from the previous example.  This process was studied in a smooth domains for Brownian motion by Hsu \cite{Hsu-1986ExcursionsReflectingBM}.

\begin{example}\label{EX:Subordination}
	Subordinated diffusions and Bernstein functions. (See Schilling-Song-Vondra\v{c}ek \cite{SchillingSongVondracek-2012BookBernsteinFunctions}.)  
\end{example}
The time-rescaling of the reflected diffusion in the previous example is just one choice of a rescaling, and in general one can time-rescale diffusions on $\real^d$ (so no boundary space here) in a myriad of fashions to create new stochastic processes from one reference Brownian motion.  This is a process known as subordination, and it can be used to create operators with generators in the class (\ref{eqIN:LevyTypeLinear}), starting with one that may simply only contain the second order term.  The generator for the subordinated process will enjoy the GCP because the generator of the original diffusion also enjoys the GCP.  This technique has played a large and fundamental role in the study of L\'evy processes, and one can see it in use in e.g., the book of Schilling-Song-Vondra\v{c}ek  \cite{SchillingSongVondracek-2012BookBernsteinFunctions}, especially \cite[Chapter 13]{SchillingSongVondracek-2012BookBernsteinFunctions}.  The subordination formula is closely related to an extension into plus one space variables, and this extension was used to create operators of fractional order that enjoy the GCP in the work of Stinga-Torrea \cite{StinTor2010Extension} and also provide other properties of the fractional operators.

\begin{example}\label{EX:MongeAmpere}
	The Monge-Amp\`ere operator, $\MA(u,x)=\det(D^2u)$. 
\end{example}
 When one restricts this operator to the subset of $C^2$ of convex functions, then $\MA$ is in fact (degenerate) elliptic and locally Lipschitz.  Specifically for each $\delta>0$, $\MA$ is uniformly elliptic (depending upon $\delta$), Lipschitz, and translation invariant as a mapping, 
 \begin{align*}
 \MA: \{u\in C^2_b(\real^d) : \frac{1}{\delta}>D^2u> \delta\}\to C^0_b(\real^d).  
 \end{align*}
 Thus, $\MA$, must enjoy a min-max structure.  Experts have known and utilized this min-max propert of $\MA$ in the study of fully nonlinear elliptic equations for a long time, and one can show that 
\begin{align*}
(\MA(u,x))^{1/d} =\frac{1}{d} \inf\{ \tr(AD^2u(x)) : A\geq 0,\ \text{and}\ \det(A)=1\}.  
\end{align*}
In fact, this formula is intimately connected with various investigations into nonlocal operators that should be an analog of $\MA$ in the fractional setting (as of yet, there is not one that is considered better than others).  Some works that address nonlocal analogs of $\MA$ are: \cite{CaffarelliCharro-2015FractionalMongeAmpereAPDE}, \cite{CaffarelliSilvestre-2016NonlocalMongeAmpereCAG}, and \cite{GuSc-12ABParma}.

\begin{example}\label{EX:CaSiExtremal}
	General nonlocal operators as treated in Caffarelli-Silvestre \cite{CaSi-09RegularityIntegroDiff} \cite{CaSi-09RegularityByApproximation}.  These are simply operators that are assumed to satisfy the GCP, are defined for all functions in $C^{1,1}(\real^d)$, map $C^2_b(\real^d)\to C^0_b(\real^d)$, and satisfy a form of uniform ellipticity that is given by the existence of concave respectively convex operators, $\M^-_\L$ and $\M^+_\L$ so that
	\begin{align}\label{eqEX:ExtremalIneq}
		\text{for all}\ u,v\in C^{1,1}(\real^d),\ 
		\M^-_\L(u-v,x)\leq I(u,x)-I(v,x)\leq \M^+_\L(u-v,x).
	\end{align}
Here, $\L$ is a class of linear operators that is usually a particular subset of those that satisfy the L\'evy type condition (\ref{eqIN:LevyTypeLinear}).
\end{example}

This context for nonlocal operators was given in \cite[Definition 3.1]{CaSi-09RegularityIntegroDiff}, and it played an important role in many of the results-- especially when $\L$ is chosen to contain certain classes of operators. These operators, in cases in which they are Lipschitz fall into the scope of our results, and furthermore, the role of the extremal operators gives extra information about the min-max formula.  In particular, as shown in \cite[Section 4.6]{GuSc-2016MinMaxNonlocalarXiv}, when ellipticity occurs with respect to $\M^\pm_\L$, then the min-max may be restricted to only utilize linear functionals (or linear operators) that also satisfy the extremal inequality in (\ref{eqEX:ExtremalIneq}).   This also appeared in a homogenization result by one of the authors in which they were unable to show that the limit operator had an explicit integro-differential formula, but rather was only integro-differential and uniformly elliptic in the sense of \cite[Definition 3.1]{CaSi-09RegularityIntegroDiff} ( see the homogenization in \cite{Schw-10Per}).

\begin{example}\label{EX:DtoNFullyNonBananas}
	The Dirichlet to Neumann map for fully nonlinear elliptic equations.  In Example \ref{EX:D-to-N}, the linearity of $L$ is not necessary, and the function $U_u$ can also be taken to solve a fully nonlinear, uniformly elliptic equation in $\real^{d+1}_+$.  These equations always possess a comparison principle (by definition), and under most reasonable assumptions, the solution $U_u$ will be globally $C^{1,\al'}$, allowing for the normal derivative to be defined classically (see \cite{SilvestreSirakov-2013boundary} for this regularity).
\end{example}

This was a main topic in the recent paper by the authors and Kitagawa \cite{GuillenKitagawaSchwab2017estimatesDtoN}.  It turns out that the extremal operators (as in Example \ref{EX:CaSiExtremal}) for the nonlinear D-to-N not only play a crucial role in investigating the L\'evy measures in the min-max, but they also take a refreshingly simple form.  The extremal operators in this case, $\M^\pm_\L$ of Example \ref{EX:CaSiExtremal}, are simply the Dirichlet-to-Neumann operators for the solutions of the corresponding extremal operators for the elliptic second order equation in $\real^{d+1}_+$.  These are usually called the Pucci extremal operators (see \cite{CaCa-95}), and solutions to their equations are generally very well behaved.  In \cite{GuillenKitagawaSchwab2017estimatesDtoN}, the properties of the L\'evy measures in the min-max are linked to the harmonic measures for linear equations with bounded measurable coefficients (e.g. \cite{Kenig-1993PotentialThoeryNonDiv}), but there is still more to learn about them before they can be connected with existing integro-differential theory.

\begin{example}\label{EX:HeleShawFTW}
	An operator that drives surface evolution in one and two phase free boundary problems related to a type of Hele-Shaw flow.  Given $f\in C^{1,\al}(\real^d)$, such that $0<\inf f\leq \sup f<\infty$, we can define the unique solution, $U_f$, of the elliptic equation,
	\begin{align*}
		&\Delta U_f = 0\ \text{in}\ \{(x,x_{d+1}) : 0<x_{d+1}<f(x) \},\\ 
		&U_f=1\ \text{on}\ \{x_{d+1}=0\},\
		U_f=0\ \text{on}\ \{ (x,d_{d+1}) : x_{d+1}=f(x) \}.
	\end{align*}
	This allows to define a (fully nonlinear) operator on $f$ as
	\begin{align*}
		I(f,x):= \partial_n U_f(x,f(x)),
	\end{align*}
	that is, the normal derivative of the solution on the upper boundary given by the graph of $f$.
\end{example}

For Hele-Shaw flow in the simplified setting that the free boundary is parametrized by the graph of $f(\cdot,t)$, it can be shown that the free boundary evolves by a normal velocity that at each time is given by $I(f,x)$.  The interpretation here is that fluid flows into the domain under a pressure at the bottom boundary, $x_{d+1}=0$, and the top edge of the fluid exists at $x_{d+1}=f(x)$, with $U_f$ representing the pressure of the fluid.  This pressure induces a force on the fluid, which is given by $\partial_n U_f(x,f(x))$ at the top boundary.  This operator, and its implications for rewriting a class of free boundary problems that are similar to Hele-Shaw was studied by the authors and Chang Lara in \cite{ChangLaraGuillenSchwab2018FBasNonlocal}.  In particular, the min-max formula makes it straightforward to convert the free boundary flow into a nonlocal parabolic equation for $f$, and this parabolic equation is very similar to ones that have already been studied in the nonlocal literature (e.g. \cite{Silv-2011DifferentiabilityCriticalHJ}).  When $U_f$ is defined to be harmonic in the domain determined by $f$, standard regularity theory immediately gives estimates that show there is some $\al'$ so that the mapping from $f$ to $I(f)$ is Lipschitz from $C^{1,\al}(\real^d)$ to $C^{\al'}(\real^d)$.  In \cite{ChangLaraGuillenSchwab2018FBasNonlocal} it was also shown that the same Lipschitz property can be obtained when $U_f$ is defined as the solution of a nonlinear uniformly elliptic second order equation instead of just the Laplacian.  This operator gives a good example of what can be said in the translation invariant case of the min-max, and its properties are studied initially in \cite{ChangLaraGuillenSchwab2018FBasNonlocal}.  Even in the simplest case of defining $U_f$ to be harmonic, the resulting operator $I$ will always be inherently nonlinear and nonlocal.

\subsection{Relationship to Assumptions \ref{assumption:GCP}--\ref{assumption:coefficient regularity}}

Here we list how each of the above examples fits within the context of Assumptions \ref{assumption:GCP}--\ref{assumption:coefficient regularity}.

\textbf{(Example \ref{EX:Generator}).}  By construction, this $L$ is always linear.  Thus, Assumption \ref{assumption:GCP} follows from simply saying that $L$ is a bounded operator on $C^{\beta}$, which of course requires assumptions on the process, $X_t$, or more specifically the transition probability measure for $X_t$.  Again, via linearity, Assumption \ref{assumption:translation invariance} follows whenever the process, $X_t$, has stationary and independent increments.  Assumptions \ref{assumption:tightness bound} and \ref{assumption:coefficient regularity} will be an extra requirement on the transition probability measure for $X_t$.  In particular (although a bit circular), Assumption \ref{assumption:coefficient regularity}, in view of linearity, is equivalent to the martingale problem for $X_t$ having a solution and the generator having uniformly continuous coefficients.  

\textbf{(Example \ref{EX:D-to-N}).}  (The interested reader can see \cite{GuillenKitagawaSchwab2017estimatesDtoN} for more details.) Assumption \ref{assumption:GCP} holds for $C^{1,\al}\to C^{\al'}$ when $A$ is $\al$-H\"older continuous.  Assumption \ref{assumption:translation invariance} holds if $A$ is a constant. Assumption \ref{assumption:tightness bound} holds in both of the above settings, by using a barrier argument (which is easier implemented for the non-divergence equation).  Since $I$ is linear, Assumption \ref{assumption:coefficient regularity} holds when $A$ is H\"older continuous.  Indeed, by linearity, checking Assumption \ref{assumption:coefficient regularity} is equivalent to estimating
\begin{align*}
	I(\tau_{-z}u,x+z)-I(u,x).
\end{align*}
In the case of divergence equations, one can write down the equations satisfied for $V=\tau_{-z}U_u$, and then also the equation satisfied by $W:= U_{\tau_{-z}u}-V$.  The desired estimate is then equivalent to estimating $\abs{\partial_n W(x+z)}$, i.e. a global Lipschitz estimate for $W$.  Since $W$ satisfies 
\begin{align*}
	\dive(A(X)\grad W(X))= -\dive((A(X)-A(x-z))\grad V),
\end{align*}
we see that by global Lipschitz estimates, 
\begin{align*}
	\abs{\grad W}\leq C\norm{(A-A(\cdot-z))\grad V}_{L^\infty}\leq C\abs{z}^\al,
\end{align*}
by the original assumption that $A$ is H\"older continuous.  (Note, the Lipschitz estimates here are a standard modification to, e.g. \cite[Lemma 3.2]{GruterWidman-1982GreenFunUnifEllipticManMath} to allow for a right hand side of the form $\dive(f)$ with $f\in L^\infty$.)

\textbf{(Example \ref{EX:BoundaryProcess}).}  In most reasonable situations in which the diffusion has regular coefficients, this is contained in the previous example.

\textbf{(Example \ref{EX:Subordination}).}  This, of course, depends heavily on the original Markov process and the choice of subordinator.  However, one of the most classical situations starts with a Brownian motion and then uses a L\'evy stable subordinator.  In this case, the resulting operator is translation invariant, and Assumptions \ref{assumption:GCP} and \ref{assumption:translation invariance} follow more or less by construction.

\textbf{(Example \ref{EX:MongeAmpere}).}  This is a translation invariant operator, and as mentioned already satisfies the Lipschitz property on the specified convex subsets of $C^2$.  So, Assumptions \ref{assumption:GCP} and \ref{assumption:translation invariance} hold.

\textbf{(Example \ref{EX:CaSiExtremal}).}  As this is a general example, the operators only satisfy the given assumptions when explicitly required to do so.  However, the interesting part of this example arises from the fact that the knowledge of the extremal inequalities in (\ref{eqEX:ExtremalIneq}) in fact gives more detailed information about the linear operators that will appear in the min-max of Theorems \ref{theorem:MinMax Euclidean}--\ref{theorem:minmax with beta less than 2}.  This is discussed in \cite[Section 4.6]{GuSc-2016MinMaxNonlocalarXiv}.

\textbf{(Example \ref{EX:DtoNFullyNonBananas}).}  This operator satisfies Assumption \ref{assumption:GCP} as a mapping of $C^{1,\al}\to C^{\al'}$ (for some $0<\al'<\al$) under standard assumptions about $F$.  The relevant regularity theory comes from Silvestre-Sirakov \cite{SilvestreSirakov-2013boundary}.  It can also be checked by using the same type of barrier argument that works for Example \ref{EX:D-to-N} will show Assumption \ref{assumption:tightness bound} is also satisfied.  Due to the nonlinear nature of the D-to-N in this setting, it is not obvious how to show that Assumption \ref{assumption:coefficient regularity} is satisfied-- we do not know if it satisfied or not.  Thus, the best one can say about this operator when it is not translation invariant is the outcome of Theorem \ref{theorem:MinMax Euclidean}.  We simply note to the interested reader that because of the lack of exact cancelation from the fact that the mapping is not linear, one probably needs more detailed information about $F$.  Indeed, using the extremal operators would not help because it would produce
\begin{align*}
	I(v+\tau_{-z} u, x+z)-I(v,x+z) -(I(v+u,x)-I(v,x))
	&\leq M^+(\tau_{-z}u,x+z) - M^-(u,x)\\
	=M^+(u,x)-M^-(u,x).
\end{align*}
Here we use $M^\pm$ as the extremal operators for $I$, and also that these are translation invariant.  This estimate completely neglects the influence of the shift, $\tau_z$, and so it would not be useful (furthermore, one expects that $M^+(u,x)>M^-(u,x)$).

\textbf{(Example \ref{EX:HeleShawFTW}).}  As it is stated above, this operator, $I$, is actually translation invariant, and so it is straightforward to check that Assumptions \ref{assumption:GCP} and \ref{assumption:translation invariance} hold.  In the case that the equation for $U$ (i.e. $\Delta U=0$) is replaced by either a fully nonlinear operator and/or and operator that is not translation invariant, it is harder to check all of the applicable assumptions.  Again, for fully nonlinear equations that define $U$, in \cite{ChangLaraGuillenSchwab2018FBasNonlocal} $I$ was checked to be Lipschitz as a map of $C^{1,\al}\to C^{\al'}$ (which took a reasonably non-trivial amount of work).

\appendix

\section{Additional proofs and computations}

\begin{proof}[Proof of Proposition \ref{proposition:discrete derivatives converge to real derivatives}]
  Fix $u \in C^{\beta}_b(\mathbb{R}^d)$, and let $x \in G_n$, then by the regularity of $u$,
  \begin{align*}
    |u(x \pm h_n e_k)-  (u(x) \pm h_n \nabla u(x_0)\cdot e_k) | \leq  C\|u\|_{C^\beta} h_n^{\min\{\beta-1,1\}}.
  \end{align*}
  Therefore, 
  \begin{align*}
    |u(x + h_ne_k) - u(x + h_n e_k) -2 h_n \nabla u(x_0)\cdot e_k  | \leq  C\|u\|_{C^\beta} h_n^{\min\{\beta-1,1\}}
  \end{align*}
  For the second estimate, we shall make use of 
  \begin{align*}
    |u(x + h_n e_k)-  (u(x) + h_n \nabla u(x_0)\cdot e +h_n^2\tfrac{1}{2}(D^2u(x) e,e)) | \leq  C\|u\|_{C^\beta} h_n^{\min\{\beta-2,1\}}.
  \end{align*}
  Therefore, 
  \begin{align*}
    & u(x+h_n e_k+h_n e_\ell) - u(x+h_n e_k) - u(x+h_n e_\ell) + u(x)\\
    & ``=''  u(x) + h_n \nabla u(x_0)\cdot (e_k+e_\ell) +h_n^2\tfrac{1}{2}(D^2u(x)(e_k+e_\ell,e_k+e_\ell)\\
    & \;\;\;\; -(u(x) + h_n \nabla u(x_0)\cdot e_k +h_n^2\tfrac{1}{2}(D^2u(x) e_k,e_k))\\
    & \;\;\;\; -(u(x) + h_n \nabla u(x_0)\cdot e_\ell +h_n^2\tfrac{1}{2}(D^2u(x) e_\ell,e_\ell)) + u(x)\\
    & = h_n^2\tfrac{1}{2} \left ( (D^2u(x)(e_k+e_\ell,e_k+e_\ell)-(D^2u(x) e_k,e_k))-(D^2u(x) e_\ell,e_\ell)) \right )\\
    & = h_n^2 (D^2u(x)e_k,e_\ell)		  			
  \end{align*}
  It follows that
  \begin{align*}
    |u(x+h_n e_k+h_n e_\ell) - u(x+h_n e_k) - u(x+h_n e_\ell) + u(x)- h_n^2 (D^2u(x)e_k,e_\ell)| \leq C\|u\|_{C^\beta} h_n^{\min\{\beta-2,1\}},		  			
  \end{align*}
  and the proposition is proved.
\end{proof}

\begin{proof}[Proof of Proposition \ref{proposition:discrete derivatives regularity}]
  Fix $u \in C^{\beta}_b(\mathbb{R}^d)$.

  \emph{Step 1}. Let $x \in G_n$, then
  \begin{align*}
    & | (\nabla_n)^1u(x)-\nabla u(x)| \leq C\|u\|_{C^{\beta}}h_n^{\beta-1},\;\textnormal{ if } \beta \in [1,2],\\
    & | (\nabla_n)^2u(x)-D^2 u(x)| \leq C\|u\|_{C^{\beta}}h_n^{\beta-2},\;\textnormal{ if } \beta \in [2,3].	
  \end{align*}
  
  \emph{Proof of Step 1.}  By the regularity of $u$,
  \begin{align*}
    |u(x \pm h_n e_k)-  (u(x) \pm h_n \nabla u(x_0)\cdot e_k) | \leq  C\|u\|_{C^\beta} h_n^{\min\{\beta-1,1\}}.
  \end{align*}
  Therefore, 
  \begin{align*}
    |u(x + h_ne_k) - u(x + h_n e_k) -2 h_n \nabla u(x_0)\cdot e_k  | \leq  C\|u\|_{C^\beta} h_n^{\min\{\beta-1,1\}}
  \end{align*}

  \emph{Step 2.} Given $x\in G_n$, we have
  \begin{align*}
    & | (\nabla_n)^1u(x)| \leq C\|u\|_{C^{1}},\;| (\nabla_n)^2u(x)| \leq C\|u\|_{C^{2}}.	
  \end{align*}

  \emph{Step 3.} 
  
  \begin{align*}
    |(\nabla_n)^1u(\hat x)-(\nabla_n)^1u(\hat y)| \leq C\|u\|_{C^\beta} d(\hat x,\hat y)^{\beta-1}, \;\textnormal{ if } \beta \in [1,2],\\
    |(\nabla_n)^2u(\hat x)-(\nabla_n)^2u(\hat y)| \leq C\|u\|_{C^\beta} d(\hat x,\hat y)^{\beta-2}, \;\textnormal{ if } \beta \in [2,3].	
  \end{align*}
\end{proof}

\begin{proof}[Computation for Lemma \ref{lemma:Whitney Extension Is Almost Order Preserving}]
  \begin{align*}
    \nabla \tilde R(x) = 2C\|w\|_{C^{\beta_0}} \eta'\left ( \frac{|x-x_0|^{\beta_0}}{h_n} \right ) \beta_0 |x-x_0|^{\beta_0-1}\frac{(x-x_0)}{|x-x_0|}
  \end{align*}
  If $|x-x_0|^{\beta_0}\leq h_n$, then
  \begin{align*}
    \nabla \tilde R(x) = 2C\|w\|_{C^{\beta_0}}\beta_0 |x-x_0|^{\beta_0-1}\frac{(x-x_0)}{|x-x_0|}
  \end{align*}
  This expression is zero except when $|x-x_0|\leq h_n^{1/\beta_0}$, so
  \begin{align*}
    |\nabla \tilde R(x)| \leq 2C\|w\|_{C^{\beta_0}}\beta_0 h_n^{1-1/\beta_0}.\\
  \end{align*}  
  Furthermore, for $x,x'$ such that $|x-x_0|^{\beta_0}\leq h_n$, we have
  \begin{align*}
    |\nabla \tilde R(x)-\nabla \tilde R(x')| & \leq 2C\beta_0 \|w\|_{C^\beta_0} \left | |x-x_0|^{\beta_0-1}\frac{(x-x_0)}{|x-x_0|}-|x'-x_0|^{\beta_0-1}\frac{(x'-x_0)}{|x'-x_0|} \right | \\
	& \leq C\|w\|_{C^{\beta_0}}h_n^{\beta_0-\beta}|x-x'|^{\beta}.
  \end{align*}
  In conclusion,
  \begin{align*}
    \|\tilde R\|_{L^\infty}+ \|\nabla \tilde R\|_{L^\infty} + [\nabla \tilde R]_{C^{\beta-1}} \leq C\|w\|_{C^{\beta_0}}(h_n+h_n^{1-1/\beta_0}+h_n^{\beta_0-\beta}) \leq C\|w\|_{C^{\beta_0}}h_n^\gamma.
  \end{align*}

\end{proof}

 
\bibliography{../refs}
\bibliographystyle{plain}
\end{document}